\documentclass[nonblindrev]{informs3}
\usepackage[T1]{fontenc}
\usepackage[utf8]{inputenc}

\usepackage[dvipsnames]{xcolor}
\usepackage{tikz}
\usetikzlibrary{patterns}

\OneAndAHalfSpacedXI

\usepackage{endnotes}
\let\footnote=\endnote
\let\enotesize=\normalsize
\def\notesname{Endnotes}%
\def\makeenmark{\hbox to1.275em{\theenmark.\enskip\hss}}
\def\enoteformat{\rightskip0pt\leftskip0pt\parindent=1.275em
  \leavevmode\llap{\makeenmark}}

\usepackage{hyperref}
\newcommand{\thmref}[2]{\hyperref[#1]{#2 \ref*{#1}}}
\newcommand\myshade{100}
\colorlet{mylinkcolor}{MidnightBlue}
\hypersetup{
	linkcolor  = mylinkcolor!\myshade!black,
	citecolor  = mylinkcolor!\myshade!black,
	urlcolor   = mylinkcolor!\myshade!black,
	colorlinks = true,
}

\usepackage{natbib}
 \bibpunct[, ]{(}{)}{,}{a}{}{,}%
 \def\bibfont{\footnotesize}%

\usepackage{verbatim}
\usepackage{amssymb}
\usepackage{enumerate}
\usepackage{mathrsfs}
\usepackage{graphicx}
\usepackage{epstopdf}

\usepackage{multirow} 
\usepackage{booktabs} 
\usepackage{array} 
\usepackage{makecell} 
\usepackage{diagbox} 
\usepackage{tabularx}
\usepackage{threeparttable}

\usepackage[ruled,vlined,linesnumbered]{algorithm2e}

\usepackage{subcaption}

\usepackage{soul}
\usepackage{url}
\usepackage{bm}
\usepackage{bbm}
\usepackage{mathtools}
\usepackage{pifont}

\usepackage[titletoc,toc,title]{appendix}

\def\qed{\hfill $\square$}

\newcommand\bb[1]{\mathbb{#1}}

\newcommand\romanone{\mathrm{\uppercase\expandafter{\romannumeral1}}}
\newcommand\romantwo{\mathrm{\uppercase\expandafter{\romannumeral2}}}

\newcommand{\yes}{\ding{51}}
\newcommand{\no}{\ding{55}}

\DeclareMathOperator{\dist}{dist}

\definecolor{blue1}{RGB}{138,171,202}
\definecolor{blue2}{RGB}{17,64,108}
\definecolor{blue3}{RGB}{3,41,81}

\newtheorem{condition}{Condition}

\theoremstyle{TH}

\TheoremsNumberedThrough
\ECRepeatTheorems

\EquationsNumberedThrough

\MANUSCRIPTNO{}

\begin{document}

\RUNAUTHOR{C. Tan, Y. Mao, S. Wang, H. Xu}

\RUNTITLE{Integrated Learning and Robust Optimization}

\TITLE{Integrated Learning and Robust Optimization}

\ARTICLEAUTHORS{%
\AUTHOR{Cheng Tan$^1$\!\!, Yuchen Mao$^2$\!\!, Shuming Wang$^1$\!\!, and Huan Xu$^{3,\footnote{Corresponding author.}}$}
\AFF{$^1$School of Economics \& Management, University of Chinese Academy of Sciences, China \\
Emails: tancheng241@mails.ucas.ac.cn, wangshuming@ucas.edu.cn \\
$^2$Department of Statistics, Rutgers University, United States \\ 
Email: yuchen.mao@rutgers.edu \\ 
$^3$Antai College of Economics and Management, Shanghai Jiao Tong University, China \\ 
Email: xuhuan\_antai@sjtu.edu.cn}
}

\ABSTRACT{Many operational decisions require solving a linear program whose cost vector is unknown at decision time and must be predicted from contextual information. Because prediction and decision are only weakly aligned, the emerging integrated learning and optimization (ILO) paradigm trains the predictor through the downstream problem, judging a prediction by the decision it induces. However, predictions are inevitably imprecise, so robustness often enters the decision stage. To address this issue, we propose an integrated learning and robust optimization (ILRO) framework, where a robust decision problem is used both to define the training problem (termed the $\mathcal{RSPO}$ loss problem), and to produce the deployed decision. Thus, this framework simultaneously achieves both robustness and learning--decision alignment. To tackle its computational challenges, we develop a convex surrogate, $\mathcal{RSPO}_+$, and characterize when it is Fisher consistent. Moreover, the $\mathcal{RSPO}$ loss possesses informative gradients, allowing us to develop first-order computational methods. We also derive finite-sample excess risk bounds for both $\mathcal{RSPO}$ and $\mathcal{RSPO}_+$ predictors. Numerical experiments on transportation and portfolio problems, in comparison with multiple benchmarks, show the advantage in decision quality of the proposed framework. The gain is more pronounced for scenarios with limited samples, high-dimensional decisions, and model misspecification.
}

\KEYWORDS{contextual stochastic optimization; decision-focused learning; robust optimization; surrogate loss; Fisher consistency; excess risk bounds}

\maketitle

\section{Introduction}
\label{sec:introduction}
Many operational decisions are made after contextual information is observed but before the relevant cost parameters are realized. In transportation planning, arc costs depend not only on observable weather, congestion, and demand signals at the time of routing, but also on unforeseen events---such as sudden traffic accidents or real-time weather shifts---that are revealed only after the vehicle has departed. Similarly, in portfolio allocation, asset returns are influenced by market covariates observed prior to the investment, yet the actual returns are determined by post-decision market shocks that are unknown at the time of allocation. Hence, such applications are naturally modeled as contextual stochastic optimization problems, where the decision is adapted to observed contextual information while the uncertain cost parameters are realized only after the decision is made.

In this paper, we focus on a fundamental and widely used class of contextual stochastic programs with a linear objective and a context-independent feasible region. Specifically, after observing a context vector $\bm{x}\in\mathcal{X}\subseteq\mathbb{R}^{p}$, the decision-maker chooses a feasible solution $\bm{z}\in\mathcal{Z}\subseteq\mathbb{R}^{d}$, where $\mathcal{Z}$ is a convex compact feasible set. The realized cost is linear in the decision, with a random coefficient vector $\bm{y}\in\mathcal Y\subseteq\mathbb{R}^{d}$ whose conditional distribution depends on the observed context $\bm{x}$. Under a risk-neutral objective, the benchmark decision solves
\begin{equation}    \label{eq:CSO}
\min_{\bm{z}\in\mathcal{Z}}
\mathbb{E}_{\bm{y}\sim\mathbb{P}_{\bm{y}|\bm{x}}}
[\bm{y}^\top\bm{z}]
=
\min_{\bm{z}\in\mathcal{Z}}
\mathbb{E}_{\bm{y}\sim\mathbb{P}_{\bm{y}|\bm{x}}}
[\bm{y}]^\top\bm{z}.
\end{equation}

Thus, if the conditional mean $\bar{\bm{y}}(\bm{x}):=\mathbb{E}_{\bm{y}\sim\mathbb{P}_{\bm{y}|\bm{x}}}[\bm{y}]$ were known, the problem would reduce to a deterministic optimization problem with this conditional mean as the cost vector. In practice, however, the conditional distribution of $\bm{y}$, and hence its conditional mean, is unknown and must be inferred from data. Given historical samples $\{(\bm{x}_i,\bm{y}_i)\}_{i\in[N]}$, where $[N]=\{1,\ldots,N\}$, a standard approach is to estimate the conditional mean of $\bm{y}$ and then optimize using the estimate. This predict-then-optimize (PTO) pipeline is modular and easy to implement, but it is \textit{not} designed directly for decision quality. Indeed, as emphasized in \cite{elmachtoub2022smart}, prediction accuracy and decision quality can be weakly aligned: two predictors with similar statistical accuracy may induce remarkably different optimal decisions.

The recent literature on contextual and prescriptive optimization formalizes this issue and develops methods that use covariates directly for decision making \citep{ban2019big,bertsimas2020predictive}. These approaches shift the learning cost from purely statistical accuracy toward downstream operational performance, thereby providing a more decision-aware alternative to the classical PTO paradigm. Integrated learning and optimization (ILO), also called decision-focused learning, takes a further step by training predictive models through the downstream optimization problem itself \citep{donti2017task,wilder2019melding,elmachtoub2022smart}. Rather than evaluating a prediction solely by its discrepancy from the realized cost vector, ILO assesses predictions through the decisions they induce and the corresponding objective values. It therefore provides a framework for training prediction models that are directly tailored for downstream decision quality.

To illustrate the basic ILO framework in the present linear-objective setting, consider a parametric prediction class $\{\bm{g}_\theta:\theta\in\Theta\}$, where $\bm{g}_{\theta}(\bm{x})$ predicts the cost vector conditional on the observed context $\bm x$. For a given context $\bm x$, the predicted cost vector $\hat{\bm y}=\bm g_\theta(\bm x)$ is passed to the nominal decision map, that is
\begin{equation}\label{eq:ilolearningoracle}
    \bm{z}^{\star}(\hat{\bm y})\in \argmin_{\bm{z}\in\mathcal{Z}}\hat{\bm y}^\top\bm{z},
\end{equation}
which returns a decision that is optimal under the predicted cost vector $\hat{\bm y}$. For each training sample $(\bm x_i,\bm y_i)$, the model first predicts the cost vector $\bm g_\theta(\bm x_i)$, which induces the downstream decision $\bm z^\star(\bm g_\theta(\bm x_i))$. The quality of this decision is then evaluated using the realized cost vector $\bm y_i$, yielding the realized decision cost $\bm y_i^\top \bm z^\star\big(\bm g_\theta(\bm x_i)\big)$. Accordingly, the empirical ILO problem selects the model parameter by minimizing the average realized cost of the decisions induced by its predictions:
\begin{equation} \label{eq:ilolearning}
    \theta^\star_N\in \argmin_{\theta\in\Theta} \frac{1}{N}\sum_{i\in[N]} \bm{y}_i^\top\bm{z}^{\star}\big(\bm{g}_{\theta}(\bm{x}_i)\big).
\end{equation}
After training, for a new context $\bm x$, the learned model produces the predicted cost vector $\bm g_{\theta_N^\star}(\bm x)$, and the corresponding prescription is obtained by solving
\begin{equation} \label{eq:iloopt}
    \bm{z}^{\star}\big(\bm{g}_{\theta^\star_N}(\bm{x})\big)\in \argmin_{\bm{z}\in\mathcal{Z}}\bm{g}_{\theta^\star_N}(\bm{x})^\top \bm{z}.
\end{equation}
The formulations in~\eqref{eq:ilolearning}--\eqref{eq:iloopt} capture the central idea of decision-focused learning: the prediction model is trained not according to prediction accuracy alone, but according to the downstream performance of the decisions induced by its predictions. Nevertheless, the framework continues to rely on a nominal linear program and does not explicitly account for uncertainty in the predicted costs. Moreover, in linear optimization, the nominal decision map may be set-valued and unstable with respect to small perturbations in the predicted cost vector. These limitations motivate our \textbf{I}ntegrated \textbf{L}earning and \textbf{R}obust \textbf{O}ptimization (ILRO) framework introduced below.

\subsection{Motivation and Our Approach}

The ILO framework at deployment still commits fully to the nominal decision map evaluated at a single point estimate $\hat{\bm{y}} = \bm{g}_{\theta}(\bm{x})$, so decision quality hinges on the gap $\hat{\bm{y}} - \bar{\bm{y}}(\bm{x})$---and decision-focused training does not close this gap. Sampling variability persists in the data-scarce regimes that motivate ILO in the first place; misspecification and omitted covariates leave a bias that no training criterion removes; distributional shift---drift in the covariates or in the conditional law of $\bm{y}$ given $\bm{x}$---biases even a decision-optimally trained predictor at deployment; and the realized cost stays random regardless. Worse, the nominal decision map in ILO amplifies these gaps: it is piecewise constant, so near a switching boundary a small prediction error sends the decision to a distant vertex. Robustness must therefore enter the decision stage itself. By accepting a controlled loss in nominal optimality, robust optimization hedges against a neighborhood of plausible cost vectors rather than a single prediction~\citep{bental1999robust,bertsimas2004price,bental2009robust,lu2021review}.

To incorporate robustness in the ILO framework, the most direct way is post hoc: train the predictor under the loss induced by the nominal decision map, then replace it with its robust counterpart at deployment. This pipeline is modular and easy to implement, but it optimizes the predictor for a decision rule that is never used---the model is trained for the nominal decision map yet deployed through a different, robust one, and a predictor that excels for the former need \emph{not} perform well for the latter. This mismatch motivates an integrated framework in which the same robust decision map generates the deployed decisions and defines the training loss. Figure~\ref{fig:method_pipeline} compares the four resulting pipelines---PTO, ILO, post-hoc robust ILO, and the proposed ILRO---and Table~\ref{tab:method_comparison1} locates them along the two design dimensions of robust decision-making and learning--decision alignment. ILRO is the only framework that captures both design dimensions.

\begin{figure}[!t]
    \centering
    \includegraphics[width=0.98\linewidth]{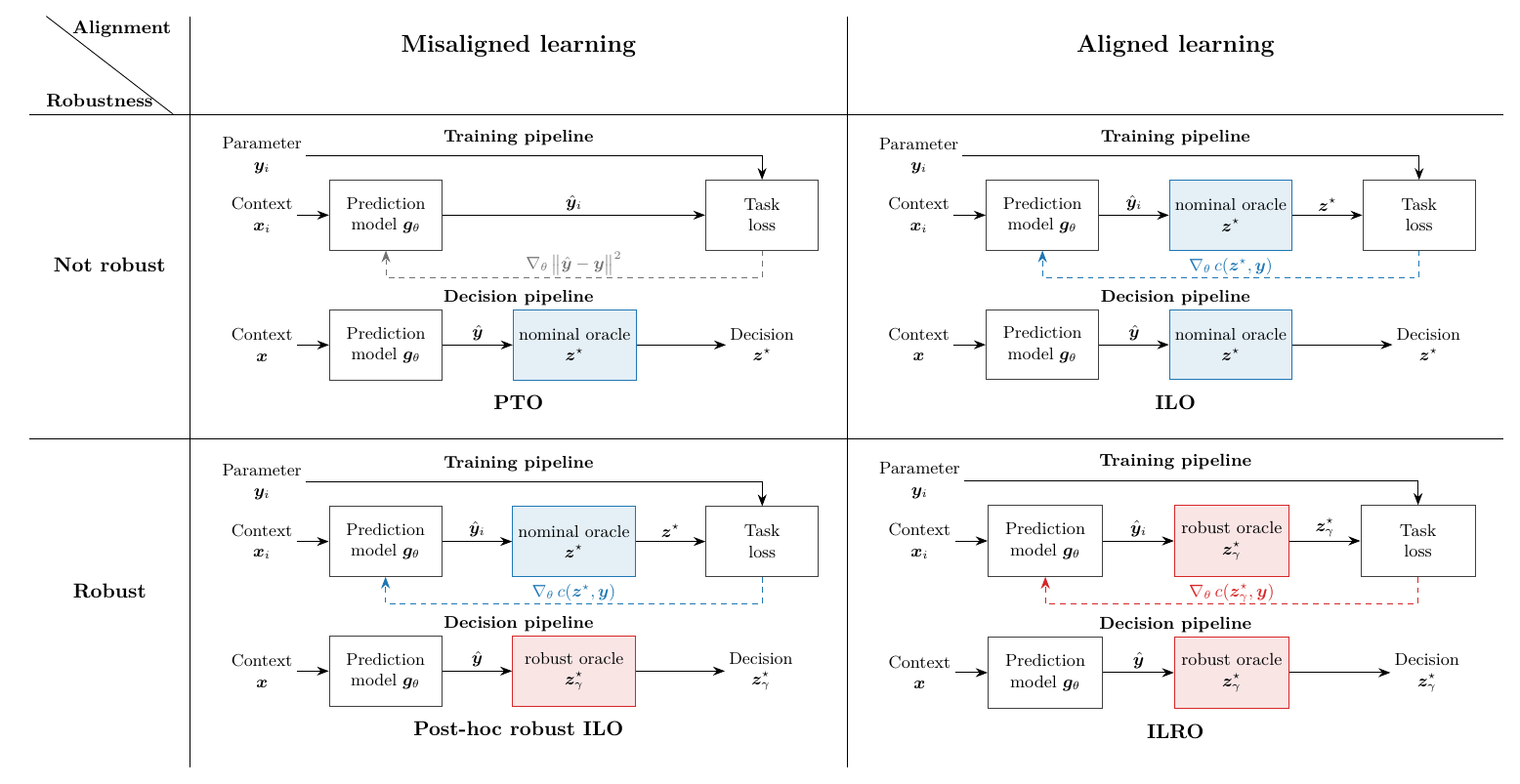}
    \caption{Pipelines of four methods.}
    \label{fig:method_pipeline}\vspace{-4mm}
\end{figure}

\begin{table}[b]\vspace{-2mm}
    \centering \footnotesize
        \renewcommand{\arraystretch}{1.0}\caption{Comparison of different frameworks.}
    \label{tab:method_comparison1}
        \begin{tabular}{l cc}
        \toprule
        \textbf{Method}
          & Robust decision-making
          & Learning--decision alignment\\
        \midrule
        \textbf{PTO}                  & \no  & \no \\
        \textbf{ILO}                  & \no  & \yes\\
        \textbf{Post-hoc robust ILO}  & \yes & \no \\
        \textbf{ILRO}                 & \yes & \yes\\
        \bottomrule
        \end{tabular}
    \vspace{-4mm}
\end{table}

In this paper, we formalize the ILRO framework by developing the following robust learning--decision aligned model components. For a predicted cost vector $\hat{\bm y}$ and a radius $\vartheta$, the robust decision problem
\begin{equation*}
    \argmin_{\bm{z}\in\mathcal{Z}}\;\max_{\|\bm{u}\|_2\leq \vartheta}\left\{(\hat{\bm y}+\bm{u})^\top\bm{z}\right\}
\end{equation*}
is equivalent to 
\begin{equation} \label{ilroopt}\tag{\textsc{ILRO}-\textsc{Decision}}
    \hspace{31mm}\bm{z}_{\gamma}^{\star}(\hat{\bm y})\in \argmin_{\bm{z}\in\mathcal{Z}}\hat{\bm y}^\top\bm{z}+\frac{\gamma}{2}\|\bm{z}\|^2_{2},
\end{equation}
up to a change of hyperparameter ($\vartheta\leftrightarrow\gamma$). See Remark~\ref{rem:soft-robustness} for a detailed explanation. Given a prediction class $\{\bm g_\theta:\theta\in\Theta\}$, ILRO trains the predictor through this robust decision map: 
\begin{equation}\label{ILRO-learning}\tag{\textsc{ILRO}-\textsc{Learning}}
\hspace{33mm}\theta^\star_{\gamma,N}\in \argmin_{\theta\in\Theta} \frac{1}{N}\sum_{i\in[N]} \bm{y}_i^\top\bm{z}_{\gamma}^{\star}(\bm{g}_{\theta}(\bm{x}_i)).    
\end{equation}
For a new context $\bm x$, the deployed decision is then
\begin{equation*}
    \bm{z}^{\star}_{\gamma}(\bm{g}_{\theta^\star_{\gamma,N}}(\bm{x}))\in 
    \argmin_{\bm{z}\in\mathcal{Z}}\bm{z}^\top\bm{g}_{\theta^\star_{\gamma,N}}(\bm{x})+\frac{\gamma}{2}\|\bm{z}\|^2_{2}.
\end{equation*}
ILRO differs from the ILO framework in that the former replaces the nominal linear program with a robust decision problem and uses this same decision problem to define the learning objective. Thus, ILRO aligns learning and deployment by training the predictor for the same robust decision rule. Throughout the paper, we take the hyperparameter $\gamma$ that reflects the robustness attitude of the decision-maker as exogenously given.

\begin{figure}[!t]
    \centering
    \begin{subfigure}[b]{0.49\textwidth}
        \centering
        \includegraphics[width=\textwidth]{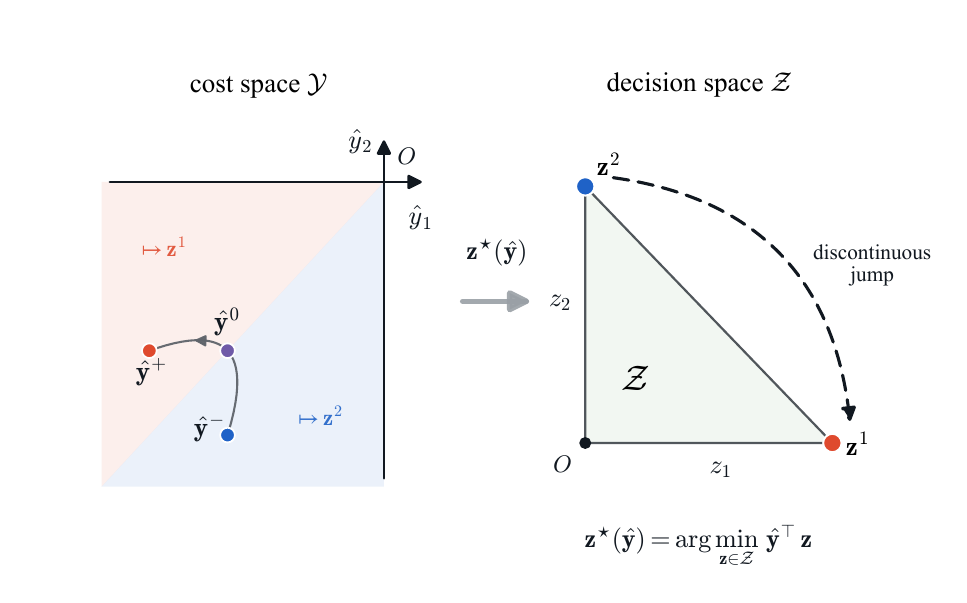}
    \end{subfigure}
    \hfill
    \begin{subfigure}[b]{0.49\textwidth}
        \centering
        \includegraphics[width=\textwidth]{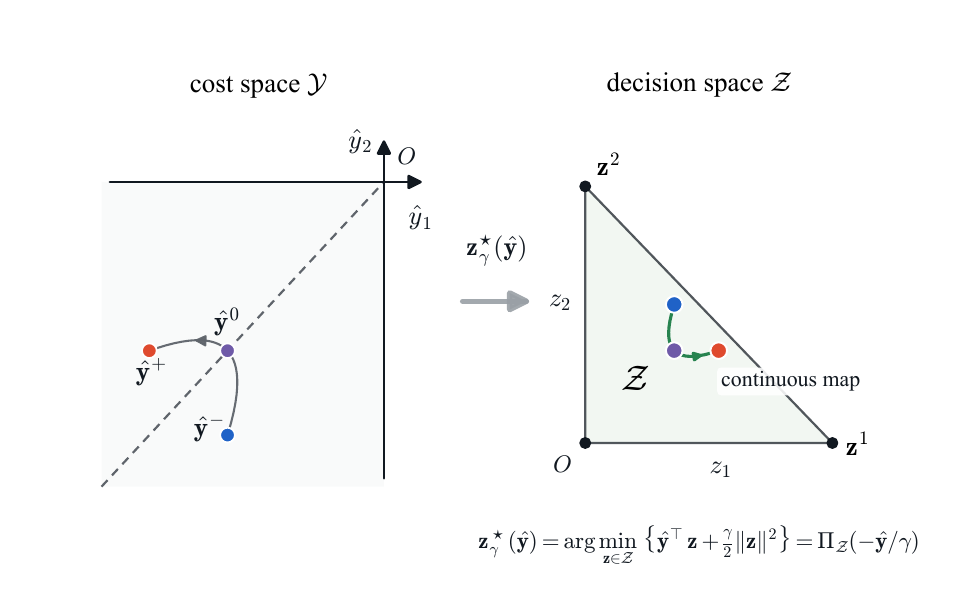}
    \end{subfigure}
    \caption{Comparison of solution paths of nominal and robust decision maps. The left panel shows the nominal decision map: nearby predictions $\hat{\bm y}^{-}$ and $\hat{\bm y}^{+}$ are mapped to different extreme points of $\mathcal{Z}$, producing a discontinuous decision map $\bm z^\star(\cdot)$. The right panel shows the robust decision map: nearby predictions are mapped to nearby feasible decisions, yielding a continuous decision map $\bm z^\star_{\gamma}(\cdot)$.}
    \label{fig:oracle_geometry_comparison}\vspace{-4mm}
\end{figure}
In addition to aligning learning and decision within the robustness modeling, we also emphasize that compared with ILO, the ILRO framework offers a \textit{structural advantage}---continuity of the solution mapping $\bm{z}_{\gamma}^{\star}(\cdot)$. Notice that the nominal decision map in ILO is typically set-valued (e.g., for a polyhedral feasible set), and its selected solution is piecewise constant as a function of the predicted cost vector, providing little useful gradient information for directly optimizing the ILO objective. As illustrated in Figure~\ref{fig:oracle_geometry_comparison}, two nearby predictions lying on opposite sides of a boundary, such as $\hat{\bm y}^{-}$ and $\hat{\bm y}^{+}$, select different extreme points, inducing a discontinuous change in the selected decision. In contrast, our framework smooths the sharp geometric structure of the nominal linear program---as it admits the projection representation $\bm z_\gamma^\star(\hat{\bm y})=\Pi_{\mathcal{Z}}(-\hat{\bm y}/\gamma)$---and hence replaces abrupt extreme-point selection with a continuous decision map. This benefits both the algorithm design of the optimization procedure and the analysis of the decision-focused learning performance.

These alignment and structural properties make ILRO a promising framework. Because the training loss is now defined through the robust decision problem in~\ref{ilroopt}, it evaluates predictions via the realized cost of a decision given by the optimization problem~\ref{ilroopt}, which is generally \textit{nonconvex} in the prediction---as is the ILO objective of the nominal decision-focused learning that shares this difficulty. Nevertheless, incorporating the robustness actually adds no difficulty of its own here; instead, it offers the appealing structural advantage that mitigates the difficulty. We therefore develop two complementary learning approaches: a tractable convex surrogate, and a gradient-based method that descends the original objective directly by exploiting the differentiability of the robust decision map, an option that the nominal $\mathcal{SPO}$ loss does not admit. We then establish theoretical guarantees for the resulting loss and its surrogate, and evaluate the framework through numerical experiments.

\subsection{Summary of Main Contributions}

The main contributions are summarized as follows.

\textbf{An integrated learning and robust optimization framework.} We develop an ILRO framework for contextual linear optimization problems with context-dependent objective coefficients, in which the same robust decision problem generates the deployed decisions and defines the training loss---the $\mathcal{RSPO}$ loss. The deployed decision problem is equivalent to a robust optimization problem up to a change of hyperparameter (Remark~\ref{rem:soft-robustness}), where $\gamma$ represents the robustness attitude of the decision-maker. The ILRO framework thus simultaneously achieves decision robustness and the alignment of learning and decision objectives. 

\textbf{Two complementary training schemes.} To solve the resulting learning problem, we develop two complementary training schemes. First, we construct a convex surrogate loss, $\mathcal{RSPO}_+$, which upper-bounds the $\mathcal{RSPO}$ loss, and is convex in the prediction. Moreover, $\mathcal{RSPO}_+$ admits an explicit gradient, in contrast to $\mathcal{SPO}_+$, which only admits subgradients in general. For polyhedral feasible sets, its empirical minimization admits a finite-dimensional reformulation (Theorem~\ref{thm:rspo_plus_reformulation}), which becomes a convex program under linear predictors and standard convex regularizers. Second, we exploit the differentiability of the robust decision map, whose Jacobian we derive in closed form, to refine the $\mathcal{RSPO}_+$ solution by descending the original $\mathcal{RSPO}$ objective directly via a local first-order scheme. This is unavailable for the nominal $\mathcal{SPO}$ loss, whose decision map is piecewise constant and carries no useful first-order information.

\textbf{Fisher consistency: sufficient conditions.} In contrast to the $\mathcal{SPO}$ setting, where Fisher consistency of the surrogate holds under mild distributional conditions for every feasible region, the population minimizer of the $\mathcal{RSPO}_+$ surrogate is given by $2\bar{\bm{y}}(\bm{x})$ (with $a=1$), whereas the set of target-risk minimizers explicitly depends on $\gamma$. We show that Fisher consistency holds whenever $\gamma$ does not exceed a threshold $\bar{\gamma}(\bm{x})$ defined by an explicitly computable linear program (Theorem~\ref{thm:Fisher_consistency}), and we show that when Fisher consistency fails, surrogate training can converge to a predictor that remains suboptimal under the target $\mathcal{RSPO}$ risk, resulting in a nonvanishing asymptotic bias. As a byproduct, our analysis at the boundary $\gamma = 0$ over bounded polyhedra (Corollary~\ref{cor:spo_plus_a1}) also enriches the Fisher consistency results of nominal $\mathcal{SPO}_+$ established by \citet{elmachtoub2022smart}.

\textbf{Theoretical guarantees.} We establish finite-sample excess-risk guarantees for both target and surrogate predictors. For the $\mathcal{RSPO}$ predictor, we derive a meta bound that depends on the robust decision problem through its Lipschitz factor of order $1/\gamma$ and the prediction class through its Rademacher complexity (Theorem~\ref{thm:rspo_meta_generalization}); specializing the latter yields $N^{-1/2}$ rates (up to logarithmic factors) for four representative classes, including infinite-dimensional ones (Appendix~\ref{app:gen_examples}). For the $\mathcal{RSPO}_+$ predictor, within the Fisher-consistent regime, the target excess risk converges at the rate $N^{-1/4}$ (Theorem~\ref{thm:direct_closure}), improving to $N^{-1/2}$ under an extra quadratic growth condition (Corollary~\ref{cor:direct_fast_rate}). Both analyses leverage the Lipschitz property of the prediction loss, which follows from the projection representation of the robust decision map. Our bounds scale as $1/\gamma$, which means that when the formulation is more robust (larger $\gamma$), fewer samples are required to achieve the same excess risk bound. This suggests that, when data are limited, the decision-maker should seek more robust decisions.

\textbf{Empirical validation and insights.} Numerical experiments on capacitated transportation and portfolio optimization problems---with benchmarks of standard $\mathcal{SPO}_+$, post-hoc robustification, and least-squares---show that $\mathcal{RSPO}_+$ attains the lowest or a comparable normalized decision loss across the reported settings, with its clearest advantages under scenarios of limited samples, high-dimensional decisions, and stronger model misspecification. Two findings merit emphasis. First, the decision gains of $\mathcal{RSPO}_+$ frequently occur despite \emph{higher} relative prediction loss than the benchmarks, consistent with an aligned training loss prioritizing decision-relevant accuracy over aggregate prediction accuracy. Second, its comparison with the post-hoc robust benchmark isolates the value of alignment itself: both pipelines tune over the same class of robust decision maps and differ only in whether the training loss is defined through that decision map. For practitioners, these results indicate that robustness should be embedded in the learning objective itself rather than appended after a predictor has been trained for a nominal problem. Moreover, our empirical results show that gradient-based refinement of the $\mathcal{RSPO}_+$ solution yields a further improvement in out-of-sample decision quality.

\vspace{2mm}
\noindent\textbf{Organization.}~The remainder of the paper is organized as follows. Section~\ref{sec:literature_review} reviews the related literature. Section~\ref{sec:model} formalizes the ILRO framework: it establishes the well-posedness and projection representation of the robust decision map, defines the $\mathcal{RSPO}$ loss through this map, and states the empirical learning problem. Section~\ref{sec:optimization_schemes} develops two complementary training schemes: the convex surrogate $\mathcal{RSPO}_+$ loss, together with its finite-dimensional reformulation (Section~\ref{subsec:convex_surrogate}) and Fisher consistency result characterizing the admissible range of the robustness parameter~$\gamma$ (Section~\ref{subsec:fisher_consistency}), and a gradient-based method that directly refines the empirical $\mathcal{RSPO}$ objective (Section~\ref{subsec:gradient_refinement}). Section~\ref{sec:statistical_guarantee} establishes finite-sample guarantees: excess risk bounds for the $\mathcal{RSPO}$ predictor via a meta generalization bound (Section~\ref{subsec:rspo_excess_risk_bounds}) and target excess risk bounds for the $\mathcal{RSPO}_+$ predictor within the Fisher-consistent regime (Section~\ref{subsec:rspo+_excess_risk_bounds}). Section~\ref{sec:numeric_study} reports numerical experiments on capacitated transportation and portfolio optimization problems, and Section~\ref{sec:conclusion} concludes. The electronic companion collects the technical lemmas and proofs (Appendices~\ref{app:technical_lemmas} and~\ref{app:technical_proofs}), the stochastic gradient-descent algorithm (Appendix~\ref{app:gradient_algorithm}), concrete excess risk bounds for four representative hypothesis classes (Appendix~\ref{app:gen_examples}), and additional experimental details (Appendix~\ref{app:experimental_details}).

\section{Literature Review}
\label{sec:literature_review}

The learning--decision pipelines introduced in Section~\ref{sec:introduction} belong to the contextual stochastic optimization literature surveyed by \cite{sadana2025survey}. To position ILRO, we first distinguish predict-then-optimize methods from integrated learning and optimization according to whether the prediction model is trained independently of or through the downstream problem. We then discuss robust optimization and efforts to incorporate robustness into learning--decision pipelines.

\subsection{Predict-then-Optimize Methods}

Predict-then-optimize methods first fit a prediction model using a statistical criterion and then pass its output to the downstream problem. In the linear-objective setting considered here, a point predictor estimates the conditional mean cost vector. Other implementations approximate the conditional distribution from contextual data. \cite{bertsimas2020predictive} use nonparametric methods to construct context-dependent weights for a weighted sample average approximation and establish asymptotic optimality for their $k$-nearest-neighbor and kernel-based prescriptions. \cite{kannan2025data} combine fitted predictions with empirical residuals to generate scenarios for sample average approximation and provide asymptotic and finite-sample guarantees.

Theoretical studies have compared PTO and ILO in terms of downstream regret. For contextual linear optimization, \cite{hu2022fast} compare the naive plug-in approach with its integrated counterpart over the same prediction class. Under correct specification of that class and a condition limiting near-dual-degeneracy, they show that the plug-in approach can attain faster regret rates. For a general class of nonlinear stochastic optimization problems with parametric distribution models, \cite{elmachtoub2023estimate} compare PTO with ILO. They show that PTO has stochastically smaller asymptotic regret when the model class is well specified and sufficient data are available, whereas integrated estimation can perform better under misspecification. These results show that the relative performance of the two approaches depends on problem structure and model specification.

\subsection{Integrated Learning and Optimization}

Integrated learning and optimization, also referred to as decision-focused learning, trains the prediction model through the downstream optimization problem rather than through a separate statistical criterion. For contextual linear optimization, our work builds most directly on \cite{elmachtoub2022smart}, who introduce the $\mathcal{SPO}$ loss, derive its convex upper bound $\mathcal{SPO}_+$, and establish Fisher consistency under distributional and geometric conditions. Subsequent work develops statistical guarantees for both the target and surrogate losses. \cite{elbalghiti2023generalization} derive generalization bounds for the $\mathcal{SPO}$ loss using margin and combinatorial-complexity arguments. For $\mathcal{SPO}_+$, \cite{liu2021risk} establish risk bounds and calibration results that transfer surrogate excess risk to $\mathcal{SPO}$ excess risk. Beyond these loss-specific results, \cite{honguyen2022risk} identify conditions under which prediction losses yield asymptotic consistency and nonasymptotic guarantees for optimization risk. The key difference between the ILO and ILRO frameworks is that the former uses a nominal decision problem, whereas the latter uses a robust decision problem. At the same time, ILRO essentially preserves the learning--decision alignment of the ILO framework, as each framework uses its deployed decision rule to define the training loss. Replacing the nominal decision map with a robust one leads to a distinct learning problem, for which we provide a comprehensive computational and statistical analysis. On the computational side, we construct the convex $\mathcal{RSPO}_+$ surrogate and exploit the differentiability of the robust decision map to develop a first-order method for optimizing the $\mathcal{RSPO}$ loss directly. On the statistical side, we characterize an explicit robustness threshold for the surrogate's Fisher consistency and establish finite-sample excess-risk guarantees for both predictors.

Another line develops gradient-based methods for solving ILO models through implicit differentiation under suitable regularity conditions or through smoothing techniques. \cite{amos2017optnet} differentiate the KKT system of a quadratic program, and \cite{donti2017task} use this implicit-differentiation approach to train probabilistic models through stochastic programs. Among smoothing methods, \cite{berthet2020learning} use stochastic perturbations to construct a differentiable expected optimizer, whereas \cite{wilder2019melding} add quadratic smoothing to a continuous relaxation of a linear combinatorial problem to obtain training gradients. Overall, these methods still focus on solving ILO problems with a nominal downstream decision map and use differentiation or smoothing to mitigate computational difficulties in training. We refer to \cite{mandi2024decision} for a survey of decision-focused learning.

\subsection{Robust Optimization in Learning--Decision Pipelines}

Robust optimization addresses parameter uncertainty by optimizing against realizations in a prescribed uncertainty set \citep{bental1999robust,bertsimas2004price,bental2009robust} and has been used across operations management \citep{lu2021review}. Most relevant here are models with uncertain objective coefficients, including discrete optimization and network flows with cost uncertainty \citep{bertsimas2003robust} and portfolio models with uncertain return and risk parameters \citep{goldfarb2003robust}. 
When uncertainty concerns a probability distribution rather than individual parameter realizations, distributionally robust optimization (DRO) optimizes against the worst-case distribution in an ambiguity set consistent with the available information \citep{delage2010distributionally,wiesemann2014distributionally,gao2023distributionally,kuhn2025distributionally}.

Several studies incorporate robustness into ILO from different perspectives. \cite{chenreddy2024endtoend} and \cite{yeh2025endtoend} use downstream decision-making losses to learn uncertainty sets for conditional robust optimization rather than point predictors of uncertain objective coefficients. For objective-coefficient predictors, \cite{schutte2024robust} modify the $\mathcal{SPO}$ loss by replacing the benchmark of the nominal clairvoyant optimal value with a robust counterpart. \cite{costa2023distributionally} develop a portfolio-specific DRO pipeline based on a $\phi$-divergence ball. \cite{im2025smart} extend $\mathcal{SPO}$ to robust constraints by replacing its fixed feasible set with a robust feasible set that enforces the constraints for every constraint-parameter realization in a separately constructed uncertainty set. They provide a convex surrogate for the resulting loss and establish the surrogate's Fisher consistency with respect to that loss. ILRO instead robustifies the predicted objective coefficients, leading to different computational and statistical challenges.

In addition, some studies introduce robustness into the downstream decision problem within a PTO pipeline. \cite{kannan2024residuals} first fit a prediction model and then use its residuals to construct an empirical distribution and corresponding ambiguity sets for the downstream DRO problem. \cite{sim2025analytics} develop a residual-based robust satisficing framework using a fitted regression model, followed by a fortification step to account for parameter-estimation uncertainty. Further examples include \cite{hu2025budget} and \cite{mao2026predictive}. In these approaches, predictor fitting remains separate from downstream robustness.

As the studies above illustrate, robustness is important for downstream decision-making when predictions are imprecise. It is therefore natural to incorporate downstream robustness into ILO. To the best of our knowledge, ILRO is the first attempt to do so while maintaining learning--decision alignment in the presence of objective-coefficient prediction errors. We further address the computational challenges and study the statistical properties of the ILRO framework.

\section{Model} 
\label{sec:model}
Recall that in the ILRO framework proposed in Section~\ref{sec:introduction}, the robust decision map~\ref{ilroopt} generates the deployed decision, and the learning problem~\ref{ILRO-learning} trains the predictor through that same decision map. What~\ref{ILRO-learning} minimizes is the average realized cost $\bm{y}_i^\top\bm{z}^{\star}_{\gamma}(\bm{g}_{\theta}(\bm{x}_i))$ of the induced decisions. This section formalizes this criterion as a loss function on prediction--realization pairs, following the route taken by \cite{elmachtoub2022smart} for the nominal decision map.

We begin by recalling their construction. For the ILO framework with a linear cost function, \cite{elmachtoub2022smart} propose the smart predict-then-optimize ($\mathcal{SPO}$) loss, defined as
\begin{equation*}
    \begin{aligned}
            \ell_{\mathcal{SPO}}(\hat{\bm{y}},\bm{y}):=\max_
            {\bm{z}\in \bm{Z}^{\star}(\hat{\bm{y}})}\bm
            {y}^\top\bm{z}-v^{\star}(\bm{y}),
    \end{aligned}
\end{equation*}
where $\bm{Z}^{\star}(\bm{y})=\argmin_{\bm{z}\in \mathcal{Z}}\bm{y}^\top\bm{z}$ denotes the optimal solution set. When this set is a singleton, or when a fixed selection rule is understood, we denote the corresponding optimizer by $\bm{z}^{\star}(\bm{y})$. The associated optimal value is denoted by $v^{\star}(\bm{y})=\min_{\bm{z}\in \mathcal{Z}}\bm{y}^\top\bm{z}$. The loss function characterizes the discrepancy between a predicted parameter $\hat{\bm{y}}$ and the true parameter $\bm{y}$ through their induced decisions. The inner maximization selects the worst decision from the optimal set under $\hat{\bm{y}}$ and evaluates its performance under $\bm{y}$, thereby quantifying the decision loss caused by prediction error. Based on the $\mathcal{SPO}$ loss, the empirical risk minimization problem takes the form
\begin{equation*}
    \min_{\theta\in\Theta} \frac{1}{N}\sum_{i\in[N]} \ell_{\mathcal{SPO}}(\bm{g}_{\theta}(\bm{x}_i),\bm{y}_i)=\min_{\theta\in\Theta}\frac{1}{N}\sum_{i\in[N]} \left(\max_
            {\bm{z}\in\bm{Z}^{\star} (\bm{g}_{\theta}(\bm{x}_i))}\bm
            {y}_i^\top\bm{z}-v^{\star}(\bm{y}_i)\right),
\end{equation*}
which is equivalent to the learning part of the ILO framework up to the choice of the optimal decision. Since the $\mathcal{SPO}$ loss is generally nonconvex and computationally difficult to minimize, \cite{elmachtoub2022smart} propose $\mathcal{SPO}_+$ as a convex surrogate.

Following a similar spirit, we define a robust counterpart of the $\mathcal{SPO}$ loss that evaluates predictions through the robust decision map in~\ref{ilroopt}. Lemma~\ref{lem:oracle_projection} presents several elementary properties of that map, which formalize the structural advantage discussed in Section~\ref{sec:introduction} and are used repeatedly in Sections~\ref{sec:optimization_schemes} and~\ref{sec:statistical_guarantee}.

\begin{lemma}[Well-Posedness]
\label{lem:oracle_projection}
Let $\gamma>0$ and let $\mathcal{Z}\subseteq\mathbb{R}^d$ be a nonempty compact convex set. Then, for every $\bm{y}_0\in\mathbb{R}^d$, the objective of the robust decision problem in~\ref{ilroopt} is strongly convex in $\bm{z}$, so the problem admits a unique minimizer $\bm{z}^{\star}_{\gamma}(\bm{y}_0)$. Moreover, completing the square yields the projection representation
\begin{equation}\label{eq:projection_representation}
    \bm{z}_{\gamma}^{\star}(\bm{y}_0)
    =
    \Pi_{\mathcal{Z}}\!\left(-\frac{\bm{y}_0}{\gamma}\right),
\end{equation}
where $\Pi_{\mathcal{Z}}(\cdot)$ denotes the Euclidean projection onto $\mathcal{Z}$. Thus, $\bm z_{\gamma}^{\star}(\cdot)$ is Lipschitz continuous with constant $1/\gamma$ and differentiable almost everywhere.
\end{lemma}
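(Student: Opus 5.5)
The plan is to work directly with the objective $f(\bm z):=\bm y_0^\top\bm z+\frac{\gamma}{2}\|\bm z\|_2^2$ on $\mathcal Z$. It is the sum of a linear function and $\frac{\gamma}{2}\|\bm z\|_2^2$. The quadratic term is $\gamma$-strongly convex, so $f$ is $\gamma$-strongly convex. Since $\mathcal Z$ is nonempty and compact and $f$ is continuous, Weierstrass gives existence of a minimizer. Strong (indeed strict) convexity over the convex set $\mathcal Z$ gives uniqueness: if $\bm z_1\neq\bm z_2$ were both minimizers, their midpoint would lie in $\mathcal Z$ and have strictly smaller value. This defines $\bm z^\star_\gamma(\bm y_0)$ unambiguously.

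For the projection representation, I will complete the square:
\begin{equation*}
\bm y_0^\top\bm z+\frac{\gamma}{2}\|\bm z\|_2^2=\frac{\gamma}{2}\Bigl\|\bm z+\frac{\bm y_0}{\gamma}\Bigr\|_2^2-\frac{\|\bm y_0\|_2^2}{2\gamma}.
\end{equation*}
The last term is constant in $\bm z$ and $\gamma>0$, so minimizing $f$ over $\mathcal Z$ is the same as minimizing $\|\bm z-(-\bm y_0/\gamma)\|_2$ over $\mathcal Z$. By definition, that minimizer is $\Pi_{\mathcal Z}(-\bm y_0/\gamma)$, which establishes~\eqref{eq:projection_representation}.

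For Lipschitz continuity, I will invoke the standard nonexpansiveness of Euclidean projection onto a closed convex set. Briefly, the variational inequality $(\bm v-\Pi_{\mathcal Z}(\bm v))^\top(\bm w-\Pi_{\mathcal Z}(\bm v))\le 0$ for all $\bm w\in\mathcal Z$ is applied at two points and the two inequalities are added. This yields $\|\Pi_{\mathcal Z}(\bm v_1)-\Pi_{\mathcal Z}(\bm v_2)\|_2^2\le(\bm v_1-\bm v_2)^\top(\Pi_{\mathcal Z}(\bm v_1)-\Pi_{\mathcal Z}(\bm v_2))$, and Cauchy--Schwarz then gives $\|\Pi_{\mathcal Z}(\bm v_1)-\Pi_{\mathcal Z}(\bm v_2)\|_2\le\|\bm v_1-\bm v_2\|_2$. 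Composing with the map $\bm y_0\mapsto-\bm y_0/\gamma$, which is $1/\gamma$-Lipschitz, gives $\|\bm z^\star_\gamma(\bm y_1)-\bm z^\star_\gamma(\bm y_2)\|_2\le\frac1\gamma\|\bm y_1-\bm y_2\|_2$.

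Almost-everywhere differentiability then follows from Rademacher's theorem applied componentwise to this Lipschitz map $\mathbb R^d\to\mathbb R^d$. None of these steps is a serious obstacle. The only point needing care is stating the nonexpansiveness argument cleanly, which rests on the first-order optimality characterization of the projection; I would cite it as standard rather than reprove it in full.
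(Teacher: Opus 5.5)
Your proposal is correct and follows essentially the same route as the paper: Weierstrass for existence, strong convexity for uniqueness, completing the square to obtain the projection representation, nonexpansiveness of the Euclidean projection for the $1/\gamma$-Lipschitz bound, and Rademacher's theorem for almost-everywhere differentiability. The only difference is that you sketch the nonexpansiveness argument via the projection's variational inequality, whereas the paper simply cites it as a known fact.
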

Since for given $\bm y_0$, the minimizer $\bm{z}^{\star}_{\gamma}(\bm{y}_0)$ is unique when $\gamma>0$, we henceforth write $=$ in place of $\in$. We also write
\begin{equation*}
    v^{\star}_{\gamma}(\bm{y}_0)
    :=
    \min_{\bm{z}\in\mathcal{Z}}\left\{\bm{y}_0^\top\bm{z}+\frac{\gamma}{2}\|\bm{z}\|_2^2\right\}
\end{equation*}
for the corresponding optimal value.

\begin{definition}[$\mathcal{RSPO}$ Loss]
\label{def:rspo_loss}
Given a vector prediction $\hat{\bm{y}}$ and a realized cost vector $\bm{y}$, the  $\mathcal{RSPO}$ loss (short for Robust $\mathcal{SPO}$ loss) is defined by
\begin{equation*}
    \ell_{\mathcal{RSPO}}(\hat{\bm{y}},\bm{y})
    :=
    \bm{y}^\top\bm{z}^{\star}_{\gamma}(\hat{\bm{y}})-v^{\star}(\bm{y}).
\end{equation*}
\end{definition}
The $\mathcal{RSPO}$ loss is well-defined, since the robust decision map $\bm{z}^{\star}_{\gamma}(\cdot)$ is single-valued by Lemma~\ref{lem:oracle_projection}. It measures the excess realized cost of the robust decision induced by $\hat{\bm{y}}$ under the realized cost $\bm{y}$, relative to a benchmark being the \emph{nominal} clairvoyant optimal value $v^{\star}(\bm{y})$. Notice that minimizing the empirical $\mathcal{RSPO}$ risk is equivalent to the learning problem~\ref{ILRO-learning}, regardless of whether the benchmark is taken as $v^{\star}(\bm{y})$ or $v_{\gamma}^{\star}(\bm{y})$. We intentionally choose $v^{\star}(\bm{y})$ as the benchmark, since it ensures $\ell_{\mathcal{RSPO}}\geq 0$, which facilitates our analysis of the excess risk in Section~\ref{sec:statistical_guarantee}.\footnotemark\footnotetext{In contrast, if $v^{\star}_{\gamma}(\bm{y})$ is chosen as the benchmark, the loss may take negative values as it measures the regret against a conservative benchmark.}

Given an i.i.d. sample $\{(\bm{x}_i,\bm{y}_i)\}_{i\in[N]}$, we define the empirical $\mathcal{RSPO}$ learning problem as
\begin{equation}\label{eq:rspo_problem}
    \min_{\bm{g}\in\mathcal{G}}\frac{1}{N}\sum_{i\in[N]}\ell_{\mathcal{RSPO}}(\bm{g}(\bm{x}_i),\bm{y}_i)+\lambda \Omega(\bm{g}),
\end{equation}
where $\mathcal{G}=\{\bm{g}_\theta:\theta\in\Theta\}$ is the hypothesis class and $\lambda \Omega(\bm{g})$ is the regularization term with $\lambda\ge0$. Here, $\Omega:\mathcal{G}\to[0,\infty)$ is a convex regularizer and for a parametric class we write $\Omega(\bm{g}_\theta)$.

Problem~\eqref{eq:rspo_problem} completes the ILRO formulation by training the predictor according to the same robust decision map used at deployment. The resulting problem is nonconvex, as is the $\mathcal{SPO}$ objective it generalizes. We will discuss the computational aspects in the next section. We now turn to some discussions on the components of the ILRO framework.

\begin{remark}[Robustness interpretation of \ref{ilroopt}]
\label{rem:soft-robustness}
We start from the classical robust problem with a ball uncertainty set below
\begin{equation*}
    \min_{\bm{z}\in\mathcal{Z}}\;\max_{\|\bm{u}\|_2\leq \vartheta}\left\{(\hat{\bm{y}}+\bm{u})^\top\bm{z}\right\},
\end{equation*}
which is equivalent to
\begin{equation*}   \min_{\bm{z}\in\mathcal{Z}}\hat{\bm{y}}^\top\bm{z}+\vartheta\|\bm{z}\|_2.
\end{equation*}
Both this problem and \ref{ilroopt} penalize the magnitude $\|\bm z\|_2$ of the decision, and the two families are equivalent at the level of solution paths: for any $\vartheta\geq 0$, every solution of the problem above solves \ref{ilroopt} for some $\gamma\in[0,\infty]$, and vice versa. Moreover, along this correspondence a larger $\gamma$ corresponds to a larger $\vartheta$, thereby yielding a more robust formulation of the downstream decision problem. We formalize these claims in Lemma~\ref{lem:path_equivalence} in Appendix~\ref{app:technical_lemmas}. When $\gamma=0$, \ref{ilroopt} reduces to the nominal ILO counterpart.
\hfill\Halmos

\end{remark}

\begin{remark}[Robustification through a general strongly convex penalty]
\label{rem:general_phi}
The $\ell_2$ ball in Remark~\ref{rem:soft-robustness} is not essential. Let
$\phi:\mathbb R^{d}\to\mathbb R$ be closed and $\mu$-strongly convex with $\mu>0$ and let $\phi^{*}$ be its convex conjugate. Since $\max_{\bm u}\{\bm u^{\top}\bm z-\phi^{*}(\bm u)\}=\phi(\bm z)$, the decision map
\begin{equation*}
    \bm z^{\star}_{\phi}(\hat{\bm y})
    :=\argmin_{\bm z\in\mathcal Z}\;\max_{\bm u\in\mathbb R^{d}}
      \left\{(\hat{\bm y}+\bm u)^{\top}\bm z-\phi^{*}(\bm u)\right\}
    =\argmin_{\bm z\in\mathcal Z}\left\{\hat{\bm y}^{\top}\bm z+\phi(\bm z)\right\}
\end{equation*}
is also an exact robust counterpart of the nominal problem, with an adversary perturbing the predicted cost by $\bm u$ at the price $\phi^{*}(\bm u)$. The choice $\phi=\frac{\gamma}{2}\|\cdot\|_2^{2}$, hence $\phi^{*}=\frac{1}{2\gamma}\|\cdot\|_2^{2}$, recovers \ref{ilroopt}, which leads to another robust optimization interpretation of \ref{ilroopt}.
 
We remark that all our results related to \ref{ilroopt} apply to this general robustification model with minimal modifications ($1/\gamma$ replaced by $1/\mu$). Notice that $\phi$ is $\mu$-strongly convex, which implies $\nabla\phi^{*}$ is $(1/\mu)$-Lipschitz. Thus, if $\phi^{*}$ is conic representable, $\phi$ is twice differentiable, and $L$-smooth, all technical results hold. The details are straightforward yet tedious, and hence we omit them. Accordingly, we model robustness through the $\ell_2$ ball throughout the paper, as it captures all essential properties of the framework.
\hfill\Halmos
\end{remark}

\begin{remark}[Structural property of the learning objective]
    Before concluding this section, we present a structural property of the empirical learning problem with $\mathcal{RSPO}$ loss, which may be of independent interest.
\begin{proposition}[Continuous Piecewise Affine Objective]
    \label{prop:rspo_piecewise_affine}
    Let $\gamma>0$, let $\mathcal Z=\{\bm z\in\mathbb R^{d}:\bm A\bm z\geq\bm b\}$ be a nonempty bounded polyhedron with $\bm A\in\mathbb R^{m\times d}$, $\bm b\in\mathbb R^m$, and let the prediction class be linear, $\bm g_{\theta}(\bm x)=\bm B\bm x$ with $\theta=\bm B\in \mathbb R^{d\times p}$. Then the unregularized ($\lambda=0$) empirical $\mathcal{RSPO}$ objective
    \begin{equation*}
        \bm B\ \longmapsto\ \frac{1}{N}\sum_{i\in[N]} \big[\bm y_i^\top\bm z^{\star}_{\gamma}(\bm B\bm x_i)-v^{\star}(\bm y_i)\big]
    \end{equation*}
    is continuous and piecewise affine: there is a finite family of full-dimensional polyhedra covering $\mathbb R^{d\times p}$ on the interior of each of which the objective is affine, with a constant gradient. 
\end{proposition}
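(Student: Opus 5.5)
The plan is to prove first that the projection map $\bm w\mapsto\Pi_{\mathcal Z}(\bm w)$ onto a bounded polyhedron is continuous and piecewise affine on $\mathbb R^d$. We then compose it with the linear maps $\bm B\mapsto -\bm B\bm x_i/\gamma$ (Lemma~\ref{lem:oracle_projection}) and with the linear functional $\bm z\mapsto \bm y_i^\top\bm z$, and finally sum over $i$. The constants $v^\star(\bm y_i)$ do not depend on $\bm B$, so they are irrelevant.

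\textbf{Step 1: projection onto $\mathcal Z$.} For each index set $I\subseteq[m]$ with $F_I=\{\bm z\in\mathcal Z:\bm A_I\bm z=\bm b_I\}\neq\emptyset$, let $\bm A_I$ have rows with linearly independent span. By the KKT conditions of $\min_{\bm z}\tfrac12\|\bm z-\bm w\|^2$ subject to $\bm A\bm z\ge\bm b$, the projection $\Pi_{\mathcal Z}(\bm w)$ equals the projection of $\bm w$ onto the affine hull $\{\bm A_I\bm z=\bm b_I\}$ exactly when $\bm w$ lies in the normal-cone region
\[
R_I=\bigl\{\bm w:\ \bm w=\bm z-\bm A_I^\top\bm\mu,\ \bm z\in F_I,\ \bm\mu\ge 0\bigr\}.
\]
The index set $I$ is chosen as a linearly independent subset of the active constraints via Carath\'eodory. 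On $R_I$ the projection is the affine map
\[
\bm w\mapsto \bm w-\bm A_I^\top(\bm A_I\bm A_I^\top)^{-1}(\bm A_I\bm w-\bm b_I).
\]
Each $R_I$ is a linear image of a polyhedron, hence a polyhedron. There are finitely many $I$, and $\bigcup_I R_I=\mathbb R^d$ since every $\bm w$ has a KKT certificate. Together with continuity (nonexpansiveness), this gives the claim. To get full-dimensional pieces, I would discard lower-dimensional $R_I$. The full-dimensional ones still cover $\mathbb R^d$, because a finite union of polyhedra equal to $\mathbb R^d$ has its lower-dimensional members contained in the closure of the rest. Continuity then extends each affine formula to that closure.

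\textbf{Step 2: pull back to $\bm B$.} For sample $i$, the map $\bm B\mapsto -\bm B\bm x_i/\gamma$ is linear, so $C_{i,I}=\{\bm B:-\bm B\bm x_i/\gamma\in R_I\}$ is a polyhedron in $\mathbb R^{d\times p}$. The sets $C_{i,I}$ cover $\mathbb R^{d\times p}$ as $I$ varies, and on each of them $\bm y_i^\top\bm z^\star_\gamma(\bm B\bm x_i)$ is affine in $\bm B$. Full-dimensionality can fail, for example when $\bm x_i=\bm 0$. In that case the $i$th term is constant, and I would simply take $C_{i}=\mathbb R^{d\times p}$. Otherwise the preimage of a full-dimensional polyhedron under a surjective linear map is full-dimensional, and $\bm B\mapsto\bm B\bm x_i$ is surjective onto $\mathbb R^d$ when $\bm x_i\neq\bm 0$.

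\textbf{Step 3: common refinement.} Take all intersections $\bigcap_i C_{i,I_i}$ and keep those with nonempty interior. They are finitely many polyhedra. They cover $\mathbb R^{d\times p}$: the union of all intersections is the whole space, and the lower-dimensional ones are absorbed into the closure of the others by the same density argument as before. On the interior of each piece every summand is affine, so the average is affine with a constant gradient. Continuity is inherited from the Lipschitz property in Lemma~\ref{lem:oracle_projection}.

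The main obstacle is Step 1, specifically handling degenerate active sets so that a single affine formula is valid on a closed polyhedral region, together with the bookkeeping that discards lower-dimensional pieces while preserving coverage. I would handle the latter uniformly with the lemma that if finitely many closed sets cover $\mathbb R^n$, then the union of those with nonempty interior is dense and closed, hence equal to $\mathbb R^n$.
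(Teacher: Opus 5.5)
Your proposal is correct and follows essentially the same route as the paper. Your regions $R_I$ are exactly the paper's sets $\mathcal Q_S$ after the change of variable $\bm w=-\hat{\bm y}/\gamma$ (with $\bm\mu=\bm\lambda/\gamma$), your conic Carath\'eodory reduction plays the role of the paper's extreme-point choice of multipliers, and the pull-back through $\bm B\mapsto\bm B\bm x_i$, common refinement, and discarding of empty-interior cells also match; your closed-cover density lemma is a valid substitute for the paper's hyperplane remark.

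One small correction: only the ``if'' direction of your ``exactly when'' claim in Step~1 holds in general. At a degenerate vertex, the projection onto $\mathcal Z$ can coincide with the affine-hull projection for a linearly independent $I$ whose unique multiplier has a negative entry. Only the ``if'' direction is used, so the argument stands.
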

To better understand this structural result, consider the case of $\mathcal{SPO}$. Here, since $\bm z^\star(\bm B \bm x_i)$ belongs to the finite set of extreme points, the resulting objective is piecewise constant and hence discontinuous (except in the trivial degenerate case). Thus, Proposition~\ref{prop:rspo_piecewise_affine} explains why the ILRO framework is more amenable to first-order methods in computation, since its objective is continuous and has informative gradient almost everywhere. Moreover, the piecewise affine structure may inspire design of approximation algorithms, which we leave for future study. \hfill\Halmos
\end{remark}

\section{Training Schemes and Fisher Consistency}
\label{sec:optimization_schemes}

In this section, we develop two complementary schemes to address the computational difficulty of ILRO. The first scheme, in Section~\ref{subsec:convex_surrogate}, replaces the target loss with a convex and differentiable surrogate, the $\mathcal{RSPO}_+$ loss. We develop the Fisher consistency theory of this surrogate in Section~\ref{subsec:fisher_consistency}, which identifies the regime of $\gamma$ where minimizing the surrogate also minimizes the target risk.
 
The second scheme, in Section~\ref{subsec:gradient_refinement}, exploits the structural advantage of the robust decision map discussed in Section~\ref{sec:introduction}---the decision map is Lipschitz continuous and differentiable almost everywhere---to apply gradient-descent-type algorithms directly to the empirical $\mathcal{RSPO}$ learning problem \eqref{eq:rspo_problem}. We can also initialize the gradient approach using the surrogate solution.

\subsection{Convex Surrogate: $\mathcal{RSPO}_+$ Loss}
\label{subsec:convex_surrogate}

Throughout Sections~\ref{subsec:convex_surrogate} and~\ref{subsec:fisher_consistency}, we let $\gamma>0$ and let $\mathcal Z\subseteq\mathbb R^d$ be a nonempty compact convex set, unless otherwise specified. We construct a convex surrogate for the generally nonconvex $\mathcal{RSPO}$ loss in three steps: we first reformulate the target loss as the limit of a family of penalized problems indexed by a scalar $a>0$; we then fix a finite $a$, which yields an upper bound; finally, we linearize the concave value function $v^{\star}_{\gamma}(\cdot)$ at the realized cost, which leads to our proposed surrogate loss that is convex with respect to the prediction.

For the first step, fix $\hat{\bm{y}}$ and $\bm{y}$ and define
\begin{equation}
\label{eq:q-function}
    q(a):=\max_{\bm{z}\in\mathcal{Z}}\left\{\bm{y}^\top\bm{z}-a\hat{\bm{y}}^\top\bm{z}-\frac{a\gamma}{2}\|\bm{z}\|^2_2\right\}+av^{\star}_{\gamma}(\hat{\bm{y}}),\quad a>0.
\end{equation}
The following proposition connects the function $q$ to the target loss.
 
\begin{proposition}[Limiting Representation]\label{prop:dual_representation}
    For fixed $\hat{\bm{y}}$ and $\bm{y}$, the function $q(\cdot)$ is convex and nonincreasing on $(0,\infty)$, and the $\mathcal{RSPO}$ loss admits the representation
\begin{equation*}
    \ell_{\mathcal{RSPO}}(\hat{\bm{y}},\bm{y})=\lim_{a\to+\infty} q(a)-v^{\star}(\bm{y}).
\end{equation*}
\end{proposition}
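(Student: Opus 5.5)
Rewrite $q(a)$ so the nominal–robust gap is isolated. Write $f(\bm z):=\hat{\bm y}^\top\bm z+\frac{\gamma}{2}\|\bm z\|_2^2$, so that $v^\star_\gamma(\hat{\bm y})=\min_{\mathcal Z} f$. Then
\begin{equation*}
q(a)=\max_{\bm z\in\mathcal Z}\Big\{\bm y^\top\bm z-a\big(f(\bm z)-v^\star_\gamma(\hat{\bm y})\big)\Big\}.
\end{equation*}
Set $h(\bm z):=f(\bm z)-v^\star_\gamma(\hat{\bm y})$. This function satisfies $h\ge 0$ on $\mathcal Z$, and by Lemma~\ref{lem:oracle_projection} it has the unique zero $\bm z^\star_\gamma(\hat{\bm y})$.

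\textbf{Convexity and monotonicity.} For each fixed $\bm z$, the map $a\mapsto \bm y^\top\bm z-a\,h(\bm z)$ is affine. Since $h(\bm z)\ge 0$, it is also nonincreasing. A pointwise supremum of affine nonincreasing functions is convex and nonincreasing, and this gives the first claim directly. Finiteness follows from compactness of $\mathcal Z$ and continuity of the objective, so the maximum is attained.

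\textbf{Limit.} Plugging in $\bm z=\bm z^\star_\gamma(\hat{\bm y})$ gives the lower bound $q(a)\ge \bm y^\top\bm z^\star_\gamma(\hat{\bm y})$ for all $a$. Since $q$ is nonincreasing and bounded below, the limit exists and is at least this value.

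For the matching upper bound, use strong convexity. Because $f$ is $\gamma$-strongly convex and minimized over the convex set $\mathcal Z$ at $\bm z^\star_\gamma:=\bm z^\star_\gamma(\hat{\bm y})$, the first-order optimality condition yields
\begin{equation*}
h(\bm z)\ge \frac{\gamma}{2}\|\bm z-\bm z^\star_\gamma\|_2^2 \quad\text{for all } \bm z\in\mathcal Z.
\end{equation*}
Hence
\begin{equation*}
q(a)-\bm y^\top\bm z^\star_\gamma\le \max_{\bm z\in\mathcal Z}\Big\{\|\bm y\|_2\|\bm z-\bm z^\star_\gamma\|_2-\frac{a\gamma}{2}\|\bm z-\bm z^\star_\gamma\|_2^2\Big\}\le \max_{t\ge 0}\Big\{\|\bm y\|_2\,t-\frac{a\gamma}{2}t^2\Big\}=\frac{\|\bm y\|_2^2}{2a\gamma},
\end{equation*}
and this tends to $0$ as $a\to\infty$. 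Therefore $\lim_{a\to\infty}q(a)=\bm y^\top\bm z^\star_\gamma(\hat{\bm y})$. Subtracting $v^\star(\bm y)$ and using Definition~\ref{def:rspo_loss} gives the stated representation.

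The step that needs care is the quadratic-growth inequality for $h$. It should be derived from the variational inequality $\nabla f(\bm z^\star_\gamma)^\top(\bm z-\bm z^\star_\gamma)\ge 0$ together with the exact expansion $f(\bm z)=f(\bm z^\star_\gamma)+\nabla f(\bm z^\star_\gamma)^\top(\bm z-\bm z^\star_\gamma)+\frac{\gamma}{2}\|\bm z-\bm z^\star_\gamma\|_2^2$, which holds because $f$ is quadratic. Everything else is elementary. Without this inequality one could still argue by compactness that maximizers $\bm z_a$ of $q(a)$ converge to the unique zero of $h$, but the explicit $O(1/a)$ rate is cleaner.
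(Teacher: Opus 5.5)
Your proof is correct, but it takes a different route from the paper at both steps.

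\textbf{Monotonicity.} The paper computes $q'(a)=v^\star_\gamma(\hat{\bm y})-\hat{\bm y}^\top\bm z_a-\frac{\gamma}{2}\|\bm z_a\|_2^2\le 0$ via Danskin's theorem at the unique maximizer $\bm z_a$. You instead observe that every affine piece $a\mapsto\bm y^\top\bm z-a\,h(\bm z)$ already has nonpositive slope, because $h\ge 0$ on $\mathcal Z$. This gives convexity and monotonicity together, without any differentiability or uniqueness of maximizers.

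\textbf{Limit.} The paper argues qualitatively. Since $h$ is continuous on the compact set $\mathcal Z$ with a unique zero, $\eta_\varepsilon>0$, which forces every maximizer $\bm z_a$ into an $\varepsilon$-neighborhood of $\bm z^\star_\gamma(\hat{\bm y})$ once $a>2M/\eta_\varepsilon$. The sandwich $\bm y^\top\bm z^\star_\gamma(\hat{\bm y})\le q(a)\le\bm y^\top\bm z_a$ plus continuity then finishes. You replace this with the exact quadratic growth $h(\bm z)\ge\frac{\gamma}{2}\|\bm z-\bm z^\star_\gamma(\hat{\bm y})\|_2^2$, obtained from the variational inequality and the quadratic expansion of $f$. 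This yields the explicit bound $0\le q(a)-\bm y^\top\bm z^\star_\gamma(\hat{\bm y})\le\|\bm y\|_2^2/(2a\gamma)$.

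\textbf{What each route buys.} The paper's argument needs only continuity of $h$ and uniqueness of its zero. It would therefore survive if strong convexity of the penalty were weakened to strict convexity. Your argument uses the quadratic structure (or, more generally, $\mu$-strong convexity as in Remark~\ref{rem:general_phi}). In exchange it gives a quantitative $O(1/a)$ rate for the penalty gap $q(a)-\lim_{a'\to\infty}q(a')$. That sharpens Remark~\ref{rem:role_of_a}: the vanishing penalty gap, of order $\|\bm y\|_2^2/(a\gamma)$, can be set directly against the linearization gap, which grows linearly in $a$.
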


Since $q$ is nonincreasing, fixing any finite $a>0$ in Proposition~\ref{prop:dual_representation} yields the upper bound
$\ell_{\mathcal{RSPO}}(\hat{\bm{y}},\bm{y})\leq q(a)-v^{\star}(\bm{y})$, which completes the second step. This bound, however, is not yet convex in the prediction: the maximization term is convex in $\hat{\bm{y}}$, but the term $av^{\star}_{\gamma}(\hat{\bm{y}})$ is concave. The third step removes the concave term by linearization. Since the robust problem has a unique
minimizer, $v^{\star}_{\gamma}(\cdot)$ is differentiable with $\nabla v^{\star}_{\gamma}(\bm{y})=\bm{z}^{\star}_{\gamma}(\bm{y})$ by Danskin's theorem, and concavity gives $v^{\star}_{\gamma}(\hat{\bm{y}})\leq v^{\star}_{\gamma}(\bm{y})+\bm{z}^{\star}_{\gamma}(\bm{y})^{\top}(\hat{\bm{y}}-\bm{y})$. Combining the three steps,
\begin{equation*}
    \begin{aligned}
        \ell_{\mathcal{RSPO}}(\hat{\bm{y}},\bm{y})&\leq\max
        _{\bm{z}\in\mathcal{Z}}\left\{\bm{y}^\top\bm{z}-a\hat{\bm{y}}^\top\bm{z}
        -\frac{a\gamma}{2}\|\bm{z}\|^2_2\right\}+av^{\star}_{\gamma}(\hat{\bm{y}})-v^{\star}(\bm{y})\\
        &\leq\max
        _{\bm{z}\in\mathcal{Z}}\left\{\bm{y}^\top\bm{z}-a\hat{\bm{y}}^\top\bm{z}
        -\frac{a\gamma}{2}\|\bm{z}\|^2_2\right\}+a\left\{v^{\star}_{\gamma}(\bm{y})+\bm{z}^{\star}_{\gamma}(\bm{y})^{\top}(\hat{\bm{y}}-\bm{y})\right\}-v^{\star}(\bm{y})\\
        &=\max
        _{\bm{z}\in\mathcal{Z}}\left\{\bm{y}^\top\bm{z}-a\hat{\bm{y}}^\top\bm{z}
        -\frac{a\gamma}{2}\|\bm{z}\|^2_2\right\}+a\left\{\bm{z}^{\star}_{\gamma}(\bm{y})^{\top}\hat{\bm{y}}+\frac{\gamma}{2}\|\bm{z}^{\star}_{\gamma}(\bm{y})\|^2_2\right\}-v^{\star}(\bm{y}),
    \end{aligned}
\end{equation*}
where the equality uses $v^{\star}_{\gamma}(\bm{y})=\bm{y}^\top\bm{z}^{\star}_{\gamma}(\bm{y})+\frac{\gamma}{2}\|\bm{z}^{\star}_{\gamma}(\bm{y})\|_2^2$. The resulting expression is convex in $\hat{\bm y}$ and upper-bounds the $\mathcal{RSPO}$ loss, which leads to the following convex surrogate.
 
\begin{definition}[$\mathcal{RSPO}_+$ Loss]
    \label{def:rspo+_loss}
    Given a vector prediction $\hat{\bm{y}}$, a realized cost vector $\bm{y}$, and a fixed scalar $a>0$, the $\mathcal{RSPO}_+$ loss is defined as
    \begin{equation*}
        \ell_{\mathcal{RSPO}_+}(\hat{\bm{y}},\bm{y}):=\max_{\bm{z}\in\mathcal{Z}}\left\{\bm{y}^\top\bm{z}-a\hat{\bm{y}}^\top\bm{z}
        -\frac{a\gamma}{2}\|\bm{z}\|^2_2\right\}+a\left\{\bm{z}^{\star}_{\gamma}(\bm{y})^\top\hat{\bm{y}}+\frac{\gamma}{2}\|\bm{z}^{\star}_{\gamma}(\bm{y})\|^2_2\right\}-v^{\star}(\bm{y}).
    \end{equation*}
\end{definition}
 
\begin{remark}[Role of the parameter $a$]\label{rem:role_of_a}
The two approximation steps introduce two gaps with opposite behavior in $a$. The penalty gap $q(a)-\lim_{a'\to\infty}q(a')$ vanishes as $a\to+\infty$, whereas the linearization gap equals $a$ times the ($a$-independent) Bregman gap $v^{\star}_{\gamma}(\bm{y})+\bm{z}^{\star}_{\gamma}(\bm{y})^{\top}(\hat{\bm{y}}-\bm{y})-v^{\star}_{\gamma}(\hat{\bm{y}})\geq0$ and therefore grows linearly in $a$. The tightness of $\ell_{\mathcal{RSPO}_+}$ is thus not monotone in $a$, and no single choice dominates pointwise. Section~\ref{subsec:fisher_consistency} singles out $a=1$ on statistical grounds.\hfill\Halmos
\end{remark}
 
\begin{remark}[Relation to the $\mathcal{SPO}_+$ construction]
Setting $\gamma=0$ in Definition~\ref{def:rspo+_loss} yields $\max_{\bm z\in\mathcal Z}\{\bm y^\top\bm z-a\hat{\bm y}^\top\bm z\}+a\bm z^{\star}(\bm y)^\top\hat{\bm y}-v^{\star}(\bm y)$, which for $a=2$ is precisely the $\mathcal{SPO}_+$ loss of~\cite{elmachtoub2022smart}. The $\mathcal{RSPO}_+$ loss is therefore the exact robust counterpart of the $\mathcal{SPO}_+$ loss, while for $\gamma>0$ the quadratic term additionally renders the loss differentiable, as shown next.\hfill\Halmos
\end{remark}
 
We next collect the analytical properties of the surrogate.
 
\begin{proposition}\label{prop:rspo+_loss_properties}
Given a fixed $\bm{y}\in\mathcal Y$, it holds that:
    \begin{enumerate}
        \item[(i)] $0\leq\ell_{\mathcal{RSPO}}(\hat{\bm{y}},\bm{y})\leq \ell_{\mathcal{RSPO}_+}(\hat{\bm{y}},\bm{y})$ for all $\hat{\bm{y}}$;
        \item[(ii)] $\ell_{\mathcal{RSPO}_+}(\cdot,\bm{y})$ is convex on $\mathbb R^d$;
        \item[(iii)] $\ell_{\mathcal{RSPO}_+}(\cdot,\bm{y})$ is differentiable everywhere, with gradient
        at $\hat{\bm{y}}$ given by
        \begin{equation*}
           a \left(\bm{z}^{\star}_{\gamma}(\bm{y})- \bm{z}^{\star}_{\gamma}\left(\hat{\bm{y}}-\frac{1}{a}\bm{y}\right) \right);
        \end{equation*}
        \item[(iv)] $\ell_{\mathcal{RSPO}_+}(\cdot,\bm{y})$ is $a\cdot D_{\mathcal Z}$-Lipschitz continuous,
        where $D_{\mathcal Z}:=\sup_{\bm z_1,\bm z_2\in\mathcal Z}\|\bm z_1-\bm z_2\|_2$ denotes the diameter of the feasible set $\mathcal Z$.
    \end{enumerate}
\end{proposition}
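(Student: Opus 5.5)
The plan is to rewrite the max term of $\ell_{\mathcal{RSPO}_+}$ in terms of the robust value function. Factoring out $a>0$,
\begin{equation*}
\max_{\bm z\in\mathcal Z}\Big\{\bm y^\top\bm z-a\hat{\bm y}^\top\bm z-\tfrac{a\gamma}{2}\|\bm z\|_2^2\Big\}
=-a\,v^{\star}_{\gamma}\!\Big(\hat{\bm y}-\tfrac{1}{a}\bm y\Big).
\end{equation*}
Hence $\ell_{\mathcal{RSPO}_+}(\hat{\bm y},\bm y)=-a\,v^{\star}_{\gamma}(\hat{\bm y}-\bm y/a)+a\,\bm z^{\star}_{\gamma}(\bm y)^\top\hat{\bm y}+\text{const}$, where the constant does not depend on $\hat{\bm y}$. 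Parts (ii)--(iv) will all follow from this identity.

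For (ii), $v^{\star}_{\gamma}$ is a pointwise minimum of functions affine in its argument, so it is concave. Composing with an affine map and negating gives a convex function, and adding a linear term keeps it convex.

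For (iii), Lemma~\ref{lem:oracle_projection} gives a unique minimizer, so Danskin's theorem applies as in the text. This yields $\nabla v^{\star}_{\gamma}=\bm z^{\star}_{\gamma}$, which is continuous by the $1/\gamma$-Lipschitz property. The chain rule then gives the gradient $a(\bm z^{\star}_{\gamma}(\bm y)-\bm z^{\star}_{\gamma}(\hat{\bm y}-\bm y/a))$.

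For (iv), both $\bm z^{\star}_{\gamma}(\bm y)$ and $\bm z^{\star}_{\gamma}(\hat{\bm y}-\bm y/a)$ lie in $\mathcal Z$, so the gradient norm is at most $aD_{\mathcal Z}$. The mean value theorem, valid since the function is $C^1$ and convex, then gives the Lipschitz constant.

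For (i), nonnegativity holds because $\bm z^{\star}_{\gamma}(\hat{\bm y})\in\mathcal Z$, so $\bm y^\top\bm z^{\star}_{\gamma}(\hat{\bm y})\ge v^{\star}(\bm y)$. The upper bound is exactly the displayed chain before Definition~\ref{def:rspo+_loss}, which I will reproduce. First, Proposition~\ref{prop:dual_representation} with monotonicity of $q$ gives $\ell_{\mathcal{RSPO}}\le q(a)-v^{\star}(\bm y)$. Second, concavity and differentiability of $v^{\star}_{\gamma}$ give the tangent bound $v^{\star}_{\gamma}(\hat{\bm y})\le v^{\star}_{\gamma}(\bm y)+\bm z^{\star}_{\gamma}(\bm y)^\top(\hat{\bm y}-\bm y)$, and multiplying it by $a>0$ closes the chain. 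Finally, I substitute $v^{\star}_{\gamma}(\bm y)=\bm y^\top\bm z^{\star}_{\gamma}(\bm y)+\frac{\gamma}{2}\|\bm z^{\star}_{\gamma}(\bm y)\|^2$.

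The only delicate step is justifying differentiability of $v^{\star}_{\gamma}$. Danskin needs a compact $\mathcal Z$ and a unique maximizer, and both are supplied by Lemma~\ref{lem:oracle_projection}. Alternatively, one can verify directly that $v^{\star}_{\gamma}(\bm w)=\frac{1}{2\gamma}\big(\gamma^2\operatorname{dist}(-\bm w/\gamma,\mathcal Z)^2-\|\bm w\|^2\big)$ and use that the squared distance is $C^1$ with gradient $2(\bm u-\Pi_{\mathcal Z}(\bm u))$. The remaining steps are routine.
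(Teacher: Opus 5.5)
Your proposal is correct and follows essentially the paper's argument: the paper also gets (ii) from a pointwise maximum of functions affine in $\hat{\bm y}$, applies Danskin's theorem to the max term (whose unique maximizer is $\bm z^{\star}_{\gamma}(\hat{\bm y}-\bm y/a)$) for (iii), bounds the gradient by $aD_{\mathcal Z}$ and uses the mean value theorem for (iv), and cites the construction preceding the definition for (i). Writing the max term as $-a\,v^{\star}_{\gamma}(\hat{\bm y}-\bm y/a)$ and then using the chain rule is only a repackaging of that same Danskin computation.
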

 
The differentiability in (iii) follows from the uniqueness induced by the strong convexity of the decision-problem objective, in contrast to the generally nondifferentiable $\mathcal{SPO}_+$ loss. This proposition is used repeatedly in the risk analysis of Section~\ref{sec:statistical_guarantee}.
 
Having established the convexity and differentiability of the $\mathcal{RSPO}_+$ loss, we now use it to define a surrogate learning problem. Replacing the generally nonconvex $\mathcal{RSPO}$ loss in Problem~\eqref{eq:rspo_problem} with $\mathcal{RSPO}_+$ yields the surrogate empirical risk minimization (ERM) problem:
\begin{equation}\label{eq:rspo+_ermprob}
    \min_{\theta\in\Theta} \frac{1}{N}\sum_{i\in[N]}\ell_{\mathcal{RSPO}_+}(\bm{g}_{\theta}(\bm{x}_i),\bm{y}_i)+\lambda\Omega(\bm{g}_{\theta}),
\end{equation}
where $\{\bm g_\theta:\theta\in\Theta\}$ is the prediction class, $\Omega(\cdot)$ is a regularizer on the prediction model, and $\lambda\geq 0$ controls the regularization strength. The surrogate ERM can be optimized by first-order methods based on the gradient derived in Proposition~\ref{prop:rspo+_loss_properties}, because when $\bm g_\theta$ is linear in $\theta$, the problem \eqref{eq:rspo+_ermprob} is convex in $\theta$. Note that this is consistent with the convexity requirements for empirical risk minimization of the $\mathcal{SPO}_+$ loss and standard statistical losses such as squared loss. Moreover, for a general prediction class $\{\bm g_\theta:\theta\in\Theta\}$, whose members are differentiable in $\theta$ (e.g., multilayer perceptron), the problem can be trained by (stochastic) gradient algorithms.

In addition to first-order methods, we can also resort to a reformulation approach. For a bounded polyhedral feasible set, Problem~\eqref{eq:rspo+_ermprob} admits an explicit single-level reformulation through duality.
 
\begin{theorem}[Reformulation of the $\mathcal{RSPO}_+$ ERM]
\label{thm:rspo_plus_reformulation}
     Suppose $\mathcal Z=\{\bm{z}\in\mathbb{R}^{d}:\bm{A}\bm{z}\geq\bm{b}\}$ is a nonempty bounded polyhedron, and fix $a>0$ and $\gamma>0$. Then Problem~\eqref{eq:rspo+_ermprob} is equivalent to
     \begin{equation*} 
        \begin{aligned}
            \min_{\theta,\bm{p}_i}\ &\frac{1}{N} \!\sum_{i\in[N]}\! \bigg\{ \frac{1}{2a\gamma} \|\bm{y}_i\!-\!a\bm{g}_{\theta}(\bm{x}_i)\!+\!\bm{A}^\top\!\bm{p}
            _i\|^2_2 \!-\! \bm{b}^\top\bm{p}_i\! +\!a\bm{z}^{\star}_{\gamma}(\bm{y}_i)^\top\!\bm{g}_{\theta}(\bm{x}_i)
            \!+\! \frac{a\gamma}{2} \|\bm{z}^{\star}_{\gamma}(\bm{y}_i)\|^2_2\!-\!v^{\star}(\bm{y}_i)\bigg\}\!+\!\lambda\Omega(\bm{g}_{\theta})\\
            \mathrm{s.t.}\ &\bm{p}_i\in \mathbb R^m_+,\, i\in[N],\, \theta\in\Theta,
        \end{aligned}
    \end{equation*}
    in the sense that the two problems have the same optimal value and $\theta^{\star}$ is optimal for Problem~\eqref{eq:rspo+_ermprob} if and only if there exist $\{\bm p_i^{\star}\}$ such that  $(\theta^{\star},\{\bm p_i^{\star}\})$ is optimal for the reformulation.
\end{theorem}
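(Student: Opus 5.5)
The plan is to dualize the only nonlinear, non-explicit term of the $\mathcal{RSPO}_+$ loss, namely the inner maximization
\begin{equation*}
    h_i(\theta):=\max_{\bm z\in\mathcal Z}\left\{(\bm y_i-a\bm g_\theta(\bm x_i))^\top\bm z-\frac{a\gamma}{2}\|\bm z\|_2^2\right\},
\end{equation*}
for each sample $i$. The remaining terms $a\bm z^\star_\gamma(\bm y_i)^\top\bm g_\theta(\bm x_i)+\frac{a\gamma}{2}\|\bm z^\star_\gamma(\bm y_i)\|_2^2-v^\star(\bm y_i)$ depend only on data and $\theta$, and they carry over verbatim. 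Write $\bm c_i:=\bm y_i-a\bm g_\theta(\bm x_i)$. The inner problem is then a strongly concave quadratic maximization over the polyhedron $\{\bm A\bm z\ge\bm b\}$.

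\textbf{Step 1 (Lagrangian dual).} For fixed $\bm c_i$ I form the Lagrangian with multiplier $\bm p_i\in\mathbb R^m_+$:
\begin{equation*}
    L(\bm z,\bm p_i)=\bm c_i^\top\bm z-\frac{a\gamma}{2}\|\bm z\|_2^2+\bm p_i^\top(\bm A\bm z-\bm b).
\end{equation*}
Maximizing over unconstrained $\bm z$ gives $\bm z=(\bm c_i+\bm A^\top\bm p_i)/(a\gamma)$. The dual function is therefore
\begin{equation*}
    \frac{1}{2a\gamma}\|\bm c_i+\bm A^\top\bm p_i\|_2^2-\bm b^\top\bm p_i,
\end{equation*}
which is exactly the first two terms of the reformulated summand.

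\textbf{Step 2 (strong duality and attainment).} The constraints are affine and $\mathcal Z$ is nonempty, so the refined Slater condition for linear constraints holds. Hence strong duality holds:
\begin{equation*}
    h_i(\theta)=\min_{\bm p_i\ge 0}\left\{\frac{1}{2a\gamma}\|\bm c_i+\bm A^\top\bm p_i\|_2^2-\bm b^\top\bm p_i\right\}.
\end{equation*}
The primal optimum is finite because $\mathcal Z$ is compact. Dual attainment also follows, since for convex quadratic programs with linear constraints a finite primal value implies that a dual optimum exists (the Frank--Wolfe/QP attainment result, or simply the existence of KKT multipliers under linear constraints). I expect this attainment point to be the main technical care needed. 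Without it, the minimum over $\bm p_i$ would only be an infimum, and the ``if and only if'' correspondence of optimal solutions could fail.

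\textbf{Step 3 (merging the minimizations).} Substituting the dual representation into \eqref{eq:rspo+_ermprob} turns the objective into $\min_\theta\frac1N\sum_i\min_{\bm p_i\ge0}F_i(\theta,\bm p_i)+\text{(rest)}$. Because the $\bm p_i$ are decoupled across $i$ and from the other terms, the nested minimizations combine into a joint minimization over $(\theta,\{\bm p_i\})$, and the two optimal values coincide. For the solution correspondence, first suppose $\theta^\star$ is optimal for the original problem. Picking dual minimizers $\bm p_i^\star$, which exist by Step 2, gives a point whose joint objective equals the optimal value, so it is optimal for the reformulation. Conversely, suppose $(\theta^\star,\{\bm p_i^\star\})$ is jointly optimal. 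Then each $\bm p_i^\star$ must minimize $F_i(\theta^\star,\cdot)$, since otherwise the objective could be decreased. Consequently the joint objective at this point equals the original objective at $\theta^\star$, which equals the common optimal value, so $\theta^\star$ is optimal for \eqref{eq:rspo+_ermprob}.
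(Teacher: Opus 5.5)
Your proposal is correct and follows essentially the same route as the paper: dualize each inner strongly concave quadratic maximization with multipliers $\bm p_i\in\mathbb R^m_+$, then invoke strong duality with dual attainment for linearly constrained convex QPs, where no interior Slater point is needed. The paper then merges the nested minimizations into a joint one just as you do; your explicit two-direction check of the optimal-solution correspondence, and your observation that dual attainment is exactly what secures the ``only if'' direction, spell out what the paper states more briefly.
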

Similarly to Problem~\eqref{eq:rspo+_ermprob}, the reformulation as given by Theorem~\ref{thm:rspo_plus_reformulation} is convex when $\bm g_\theta(\bm x)$ is linear in $\theta$. For example, when $\bm{g}_{\theta}(\bm{x})=\bm{B}\bm{x}$ and $\Omega(\bm{g}_{\theta})=\|\bm B\|_1$ or $\Omega(\bm{g}_{\theta})=\|\bm B\|_F^2$, it can be formulated as a convex quadratic program using standard epigraph reformulations.

To examine whether minimizing this tractable surrogate is statistically aligned with the target loss, we next study the Fisher consistency of $\mathcal{RSPO}_+$ with respect to $\mathcal{RSPO}$.

\subsection{Fisher Consistency}\label{subsec:fisher_consistency}
We now examine the optimality validity of the $\mathcal{RSPO}_+$ loss by asking whether minimizing this surrogate population risk also minimizes the target $\mathcal{RSPO}$ risk. As we show below, this property---known as Fisher consistency (see, e.g., \citealp{elmachtoub2022smart})---need not hold for all values of $\gamma$. We therefore characterize the regime of $\gamma$ under which it holds. We begin with the formal definition.
 
\begin{definition}
    Let $\mathbb{P}$ denote the joint distribution of $(\bm{x},\bm{y})$, and let
    $\mathcal{G}_{\rm all}$ denote the class of all measurable functions mapping $\mathcal X$ to
    $\mathbb R^d$. A loss function $\ell(\cdot,\cdot)$ is said to be \textit{Fisher consistent}
    with respect to the $\mathcal{RSPO}$ loss if every minimizer of
    \begin{equation}\label{eq:fisher_l}
        \min_{\bm{g}\in\mathcal{G}_{\rm all}}\mathbb{E}_{(\bm{x},\bm{y})\sim\mathbb{P}}[\ell(\bm{g}(\bm{x}),\bm{y})]
    \end{equation}
    is also a minimizer of
    \begin{equation}\label{eq:fisher_rspo}
        \min_{\bm{g}\in\mathcal{G}_{\rm all}}\mathbb{E}_{(\bm{x},\bm{y})\sim\mathbb{P}}[\ell_{\mathcal{RSPO}}(\bm{g}(\bm{x}),\bm{y})].
    \end{equation}
\end{definition}
 
Fisher consistency is a population-level property over the unrestricted class of measurable predictors. It rules out any intrinsic bias from replacing $\ell_{\mathcal{RSPO}}$ with $\ell_{\mathcal{RSPO}_+}$ when data are unlimited. Because the minimization in \eqref{eq:fisher_l} and \eqref{eq:fisher_rspo} is over $\mathcal{G}_{\rm all}$, the interchangeability principle (e.g., \citealp[Theorem~14.60]{rockafellar1998variational}) reduces both problems to pointwise minimization: for any loss $\ell$,
\begin{equation*}
    \min_{\bm g\in\mathcal{G}_{\rm all}}\mathbb{E}_{(\bm{x},\bm{y})\sim\mathbb{P}}[\ell(\bm{g}(\bm{x}),\bm{y})]
    =\mathbb{E}_{\bm{x}\sim\mathbb{P}_{\bm{x}}}\left[\min_{\hat{\bm{y}}\in\mathbb{R}^d}\mathbb{E}_{\bm{y}\sim\mathbb{P}_{\bm{y}|\bm{x}}}\left[\ell(\hat{\bm{y}},\bm{y})\right]\right],
\end{equation*}
where $\mathbb{P}_{\bm x}$ and $\mathbb{P}_{\bm y|\bm x}$ denote the marginal and conditional distributions. All statements in this subsection are understood pointwise, and the population statements follow by integrating over $\mathbb{P}_{\bm x}$. For a given $\bm x$, define the sets of minimizers of the pointwise target and surrogate risks,
\begin{equation*}
\mathcal{Y}^{\star}_{\mathcal{RSPO}}:=\argmin_{\hat{\bm{y}}\in\mathbb{R}^d}
\mathbb{E}_{\bm{y}\sim\mathbb{P}_{\bm{y}|\bm{x}}}\left[\ell_{\mathcal{RSPO}}(\hat{\bm{y}},\bm{y})\right],
    \qquad
\mathcal{Y}^{\star}_{\mathcal{RSPO}_+}:=\argmin_{\hat{\bm{y}}\in\mathbb{R}^d}\mathbb{E}_{\bm{y}\sim\mathbb{P}_{\bm{y}|\bm{x}}}\left[\ell_{\mathcal{RSPO}_+}(\hat{\bm{y}},\bm{y})\right].
\end{equation*}
Fisher consistency then amounts to the inclusion $\mathcal{Y}^{\star}_{\mathcal{RSPO}_+}\subseteq\mathcal{Y}^{\star}_{\mathcal{RSPO}}$ holding for $\mathbb{P}_{\bm x}$-almost every $\bm x$. We characterize the two sets in turn---the target set in Proposition~\ref{prop:minRSPO}, the surrogate set in Proposition~\ref{prop:optcondition_RSPO+} and Corollary~\ref{cor:RSPO+a1}---and then determine when the inclusion holds.
 
\begin{proposition}\label{prop:minRSPO}
Suppose that $\mathbb{E}_{\bm{y}\sim\mathbb{P}_{\bm{y}|\bm{x}}}[\|\bm{y}\|_2]<\infty$ and the nominal problem $\min_{\bm{z}\in\mathcal{Z}} \bar{\bm{y}}^\top\bm{z}$ at the conditional mean $\bar{\bm{y}}=\mathbb{E}_{\bm{y}\sim\mathbb{P}_{\bm{y}|\bm{x}}}[\bm{y}]$ admits a unique optimal solution $\bm{z}^{\star}(\bar{\bm{y}})$, and write ${\mathcal{N}}_{\mathcal{Z}}(\bm{z}^{\star}(\bar{\bm{y}}))$ for the normal cone to $\mathcal{Z}$ at $\bm{z}^{\star}(\bar{\bm{y}})$. Then
\begin{equation}\label{eq:RSPO_normalcone}
    \mathcal{Y}^{\star}_{\mathcal{RSPO}}
    =-{\mathcal{N}}_{\mathcal{Z}}\big(\bm{z}^{\star}(\bar{\bm{y}})\big)-\gamma\,\bm{z}^{\star}(\bar{\bm{y}}).
\end{equation}
In particular, if $\mathcal Z=\{\bm{z}\in\mathbb{R}^{d}:\bm{A}\bm{z}\geq\bm{b}\}$ is a nonempty bounded polyhedron with $\bm{A}\in\bb{R}^{m\times d}$, then $-{\mathcal{N}}_{\mathcal{Z}}(\bm{z}^{\star}(\bar{\bm{y}}))=\{\bm{A}^\top\bm{\lambda}:\bm{\lambda}\in\bm{\Lambda}\}$, and 
\begin{equation*}
    \mathcal{Y}^{\star}_{\mathcal{RSPO}}=\{\bm{A}^\top\bm{\lambda}-\gamma\bm{z}^{\star}(\bar{\bm{y}}):\bm{\lambda}\in\bm{\Lambda}\},
\end{equation*}
where $\bm{\Lambda}:=\{\bm{\lambda}\geq\bm{0}:\ \lambda_i=0,\ \forall i\in[m]\setminus\mathcal{I}\}$ and $\mathcal{I}\subseteq[m]$ collects the indices of the constraints $\bm{a}_i^\top\bm{z}\geq b_i$ active at $\bm{z}^{\star}(\bar{\bm{y}})$, with $\bm{a}_i^\top$ being the $i$-th row of $\bm{A}$.
\end{proposition}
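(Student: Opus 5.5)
The plan is to reduce the pointwise target risk to a deterministic problem in the decision variable and then invert the robust decision map.

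\textbf{Reduction.} Since $\bm z^\star_\gamma(\hat{\bm y})$ does not depend on $\bm y$ and $\mathcal Z$ is bounded, the integrability assumption and linearity of expectation give
\[
\mathbb E_{\bm y\sim\mathbb P_{\bm y|\bm x}}[\ell_{\mathcal{RSPO}}(\hat{\bm y},\bm y)]=\bar{\bm y}^\top\bm z^\star_\gamma(\hat{\bm y})-\mathbb E[v^\star(\bm y)].
\]
The second term is finite because $|v^\star(\bm y)|\le \|\bm y\|_2\sup_{\bm z\in\mathcal Z}\|\bm z\|_2$, and it does not depend on $\hat{\bm y}$. Hence $\hat{\bm y}\in\mathcal Y^\star_{\mathcal{RSPO}}$ if and only if $\bm z^\star_\gamma(\hat{\bm y})$ minimizes $\bar{\bm y}^\top\bm z$ over the range of the map $\hat{\bm y}\mapsto \bm z^\star_\gamma(\hat{\bm y})$.

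\textbf{Range of the map.} By the projection representation in Lemma~\ref{lem:oracle_projection}, $\bm z^\star_\gamma(-\gamma\bm z)=\Pi_{\mathcal Z}(\bm z)=\bm z$ for every $\bm z\in\mathcal Z$. So the map is onto $\mathcal Z$, and the minimum of $\bar{\bm y}^\top\bm z^\star_\gamma(\hat{\bm y})$ over $\hat{\bm y}$ equals $v^\star(\bar{\bm y})$. Combined with uniqueness of the nominal minimizer, this gives
\[
\mathcal Y^\star_{\mathcal{RSPO}}=\{\hat{\bm y}:\bm z^\star_\gamma(\hat{\bm y})=\bm z^\star(\bar{\bm y})\}.
\]

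\textbf{Inverting the map.} The preimage is characterized by the optimality condition of the strongly convex problem \ref{ilroopt}: $\bm z_0:=\bm z^\star(\bar{\bm y})$ is its minimizer if and only if $-(\hat{\bm y}+\gamma\bm z_0)\in\mathcal N_{\mathcal Z}(\bm z_0)$. This is equivalent to the projection characterization $-\hat{\bm y}/\gamma-\bm z_0\in\mathcal N_{\mathcal Z}(\bm z_0)$, since the normal cone is a cone. It follows that $\hat{\bm y}\in-\mathcal N_{\mathcal Z}(\bm z_0)-\gamma\bm z_0$, which is \eqref{eq:RSPO_normalcone}.

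\textbf{Polyhedral case.} Apply the standard description of the normal cone of $\{\bm A\bm z\ge\bm b\}$ at $\bm z_0$, namely the cone generated by $-\bm a_i$ for the active indices $i\in\mathcal I$; this follows from Farkas' lemma or from LP duality under linear constraints, where no constraint qualification is needed. This yields $-\mathcal N_{\mathcal Z}(\bm z_0)=\{\bm A^\top\bm\lambda:\bm\lambda\in\bm\Lambda\}$.

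\textbf{Main obstacle.} The argument is largely routine. The points needing care are the surjectivity step, which is what allows the minimization over $\hat{\bm y}$ to be replaced by minimization over $\mathcal Z$, and verifying that the expectation of $v^\star(\bm y)$ is finite so the risk is well defined.
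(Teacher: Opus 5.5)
Your proposal is correct and matches the paper's proof step for step. You reduce the pointwise risk to $\bar{\bm y}^\top\bm z^\star_\gamma(\hat{\bm y})$ minus a constant. You show the bound $v^\star(\bar{\bm y})$ is attained; the paper only exhibits the single preimage $-\gamma\bm z^\star(\bar{\bm y})$, a special case of your surjectivity observation. You then use uniqueness to identify $\mathcal Y^\star_{\mathcal{RSPO}}$ with the preimage of $\bm z^\star(\bar{\bm y})$ and invert via the first-order optimality condition. In the polyhedral case the paper writes out the KKT system instead of citing Farkas' lemma, which is the same fact.
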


Proposition~\ref{prop:minRSPO} shows that the set of target-risk minimizers is a closed convex cone---the negative normal cone at the nominal decision---translated by $-\gamma\bm{z}^{\star}(\bar{\bm{y}})$. As a limiting case when $\gamma=0$, the translation vanishes and the set is a cone, hence invariant under positive scaling of the prediction. We also note that $\mathcal{Y}^{\star}_{\mathcal{RSPO}}$ is typically large and unbounded, since distinct predictions can induce the same robust decision.

It remains to locate the surrogate minimizers, which we do via the first-order optimality condition of the convex $\mathcal{RSPO}_+$ conditional risk.
 
\begin{proposition}\label{prop:optcondition_RSPO+}
     Suppose that $\mathbb{E}_{\bm{y}\sim\mathbb{P}_{\bm{y}|\bm{x}}}[\|\bm{y}\|_2]<\infty$. Then
    \begin{equation*}
      \mathcal{Y}^{\star}_{\mathcal{RSPO}_+}=  \left\{\hat{\bm{y}}\in\mathbb{R}^d:\mathbb{E}_{\bm{y}\sim\mathbb{P}_{\bm{y}|\bm{x}}}\left[\bm{z}^{\star}_{\gamma}(\bm{y})\right]=\mathbb{E}_{\bm{y}\sim\mathbb{P}_{\bm{y}|\bm{x}}}\left[\bm{z}^{\star}_{\gamma}\Big(\hat{\bm{y}}-\frac{1}{a}\bm{y}\Big)\right]\right\}.
    \end{equation*}
\end{proposition}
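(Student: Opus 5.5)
The plan is to show that the conditional surrogate risk $R(\hat{\bm y}):=\mathbb{E}_{\bm y\sim\mathbb{P}_{\bm y|\bm x}}[\ell_{\mathcal{RSPO}_+}(\hat{\bm y},\bm y)]$ is a finite, convex, differentiable function on $\mathbb R^d$. Its gradient is obtained by differentiating under the expectation using Proposition~\ref{prop:rspo+_loss_properties}(iii). Once this is in place, the minimizer set is the zero set of the gradient: for a convex differentiable function on all of $\mathbb R^d$, $\hat{\bm y}$ is a global minimizer iff $\nabla R(\hat{\bm y})=\bm 0$. The claimed characterization then follows after dividing by $a>0$.

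\textbf{Finiteness.} Since $\mathcal Z$ is compact, set $M:=\sup_{\bm z\in\mathcal Z}\|\bm z\|_2$. The max term in Definition~\ref{def:rspo+_loss} is bounded in absolute value by $M\|\bm y\|_2+aM\|\hat{\bm y}\|_2+a\gamma M^2/2$. The linear term $a\bm z^\star_\gamma(\bm y)^\top\hat{\bm y}+\frac{a\gamma}{2}\|\bm z^\star_\gamma(\bm y)\|^2_2$ is bounded by $aM\|\hat{\bm y}\|_2+a\gamma M^2/2$. Finally, $|v^\star(\bm y)|\le M\|\bm y\|_2$. Hence $|\ell_{\mathcal{RSPO}_+}(\hat{\bm y},\bm y)|$ is dominated by an affine function of $\|\bm y\|_2$, which is integrable by the assumption $\mathbb E\|\bm y\|_2<\infty$. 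Measurability in $\bm y$ holds because $\bm z^\star_\gamma$ is continuous (Lemma~\ref{lem:oracle_projection}) and $v^\star$ is continuous. Convexity of $R$ is inherited from Proposition~\ref{prop:rspo+_loss_properties}(ii).

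\textbf{Gradient.} By Proposition~\ref{prop:rspo+_loss_properties}(iii), for each $\bm y$ the map $\hat{\bm y}\mapsto\ell_{\mathcal{RSPO}_+}(\hat{\bm y},\bm y)$ is differentiable with gradient $a(\bm z^\star_\gamma(\bm y)-\bm z^\star_\gamma(\hat{\bm y}-\bm y/a))$. This gradient is bounded in norm by $aD_{\mathcal Z}$, uniformly in $\hat{\bm y}$ and $\bm y$, and by (iv) the loss is $aD_{\mathcal Z}$-Lipschitz in $\hat{\bm y}$. Consider the difference quotients $[\ell(\hat{\bm y}+t\bm h,\bm y)-\ell(\hat{\bm y},\bm y)]/t$. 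They are dominated by $aD_{\mathcal Z}\|\bm h\|_2$ and converge pointwise to the directional derivative. Dominated convergence therefore gives
\[
\nabla R(\hat{\bm y})=a\Big(\mathbb E[\bm z^\star_\gamma(\bm y)]-\mathbb E\big[\bm z^\star_\gamma(\hat{\bm y}-\tfrac1a\bm y)\big]\Big).
\]
This expression is linear in $\bm h$, so $R$ is Gâteaux differentiable. Because $R$ is convex and finite on $\mathbb R^d$, Gâteaux differentiability implies Fréchet differentiability. Continuity of this gradient in $\hat{\bm y}$ also follows by dominated convergence, using the continuity of $\bm z^\star_\gamma$.

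\textbf{Conclusion and main obstacle.} For a finite convex differentiable $R$ on $\mathbb R^d$, the first-order condition $\nabla R(\hat{\bm y})=\bm 0$ is necessary and sufficient for global optimality. This yields exactly the stated set. The only delicate step is the interchange of differentiation and expectation. The uniform bound $aD_{\mathcal Z}$ on the gradients, coming from the compactness of $\mathcal Z$, makes dominated convergence apply without further moment assumptions. The integrability of $\|\bm y\|_2$ is needed only to ensure that $R$ itself is finite, so that the minimization is well posed.
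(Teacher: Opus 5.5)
Your proposal is correct and follows essentially the same route as the paper's proof. Both establish finiteness of the conditional surrogate risk from a linear-growth envelope in $\|\bm y\|_2$ together with $\mathbb{E}\|\bm y\|_2<\infty$, differentiate under the expectation by dominated convergence using the uniform $aD_{\mathcal Z}$ bound on difference quotients, and conclude via the first-order condition for a finite convex differentiable function on $\mathbb{R}^d$. Your explicit treatment of measurability and of the passage from G\^ateaux to Fr\'echet differentiability fills in details the paper leaves implicit.
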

 
\begin{remark}[No continuity assumption on cost distribution] 
In contrast to the $\mathcal{SPO}_+$ setting, no continuity assumption on the distribution of $\bm{y}$ is needed in Proposition~\ref{prop:optcondition_RSPO+}, since $\ell_{\mathcal{RSPO}_+}(\cdot,\bm{y})$ is differentiable everywhere when $\gamma>0$ as shown in Proposition~\ref{prop:rspo+_loss_properties}.       \hfill\Halmos
\end{remark}
 
Proposition~\ref{prop:optcondition_RSPO+} characterizes the surrogate minimizers through a balance condition: $\hat{\bm{y}}$ must match, in expectation, the two robust decisions appearing in the gradient of Proposition~\ref{prop:rspo+_loss_properties}. To solve this condition in closed form we exploit central symmetry of the conditional distribution, which pairs $\bm{y}$ with $2\bar{\bm{y}}-\bm{y}$. The argument $\hat{\bm{y}}-\frac{1}{a}\bm{y}$ matches this pairing exactly when the coefficient of $\bm{y}$ is one, which is why the choice $a=1$ anticipated in Remark~\ref{rem:role_of_a} (Section~\ref{subsec:convex_surrogate}) yields a closed-form minimizer.
 
\begin{corollary}\label{cor:RSPO+a1} 
    Given $a=1$, suppose that $\mathbb{E}_{\bm{y}\sim\mathbb{P}_{\bm{y}|\bm{x}}}[\|\bm{y}\|_2]<\infty$ and that the distribution of $\bm{y}$ admits a density that is positive on an open set containing $2\bar{\bm{y}}+\gamma\mathcal{Z}$ and is centrally symmetric about its mean     $\bar{\bm{y}}:=\mathbb{E}_{\bm{y}\sim\mathbb{P}_{\bm{y}|\bm{x}}}[\bm{y}]$. Then $2\bar{\bm{y}}\in\mathcal{Y}^{\star}_{\mathcal{RSPO}_+}$. Furthermore, if the interior of $\mathcal{Z}$ is nonempty, then $\mathcal{Y}^{\star}_{\mathcal{RSPO}_+}=\{2\bar{\bm{y}}\}$.
\end{corollary}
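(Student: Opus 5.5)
The plan is to verify the balance condition of Proposition~\ref{prop:optcondition_RSPO+} at $\hat{\bm y}=2\bar{\bm y}$ with $a=1$, and then use strict convexity for uniqueness. By central symmetry, $\bm y$ and $\bm y':=2\bar{\bm y}-\bm y$ have the same conditional law. With $a=1$ and $\hat{\bm y}=2\bar{\bm y}$ we have $\hat{\bm y}-\bm y=\bm y'$. Hence
\begin{equation*}
\mathbb E\big[\bm z^{\star}_{\gamma}(\hat{\bm y}-\bm y)\big]=\mathbb E\big[\bm z^{\star}_{\gamma}(\bm y')\big]=\mathbb E\big[\bm z^{\star}_{\gamma}(\bm y)\big].
\end{equation*}
Both expectations are finite because $\bm z^{\star}_{\gamma}$ takes values in the compact set $\mathcal Z$. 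Proposition~\ref{prop:optcondition_RSPO+} then gives $2\bar{\bm y}\in\mathcal Y^{\star}_{\mathcal{RSPO}_+}$. This step is routine; the density assumption is not needed here.

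For uniqueness, I will show that the conditional surrogate risk $R(\hat{\bm y}):=\mathbb E[\ell_{\mathcal{RSPO}_+}(\hat{\bm y},\bm y)]$ is strictly convex near $2\bar{\bm y}$. Combined with convexity on $\mathbb R^d$, this forces the minimizer set, which is convex, to be the single point $2\bar{\bm y}$. Up to terms affine in $\hat{\bm y}$, the nonlinear part of the loss is
\begin{equation*}
\max_{\bm z\in\mathcal Z}\{(\bm y-\hat{\bm y})^\top\bm z-\tfrac{\gamma}{2}\|\bm z\|^2\}=-v^{\star}_{\gamma}(\hat{\bm y}-\bm y).
\end{equation*}
Its gradient in $\hat{\bm y}$ is $-\bm z^{\star}_{\gamma}(\hat{\bm y}-\bm y)=-\Pi_{\mathcal Z}((\bm y-\hat{\bm y})/\gamma)$. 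So $\nabla R(\hat{\bm y})=\bm c-\mathbb E[\Pi_{\mathcal Z}((\bm y-\hat{\bm y})/\gamma)]$ for a constant vector $\bm c$.

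Strict convexity of $R$ is then equivalent to strict monotonicity of $\nabla R$: for $\hat{\bm y}_1\neq\hat{\bm y}_2$ we need $\langle\nabla R(\hat{\bm y}_1)-\nabla R(\hat{\bm y}_2),\hat{\bm y}_1-\hat{\bm y}_2\rangle>0$. Firm nonexpansiveness of the projection gives
\begin{equation*}
\langle\Pi(\bm u_2)-\Pi(\bm u_1),\bm u_2-\bm u_1\rangle\ge\|\Pi(\bm u_2)-\Pi(\bm u_1)\|^2,\qquad \bm u_k:=(\bm y-\hat{\bm y}_k)/\gamma.
\end{equation*}
Since $\bm u_2-\bm u_1=(\hat{\bm y}_1-\hat{\bm y}_2)/\gamma$, the inner product is nonnegative pointwise in $\bm y$. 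It is strictly positive whenever $\Pi(\bm u_1)\neq\Pi(\bm u_2)$. Near $2\bar{\bm y}$ this happens on a set of positive probability, for the following reason. Take $\hat{\bm y}$ close to $2\bar{\bm y}$ and $\bm y$ in a small ball around $2\bar{\bm y}+\gamma\bm z_0$, where $\bm z_0\in\operatorname{int}\mathcal Z$. Then $(\bm y-\hat{\bm y})/\gamma$ lies in $\operatorname{int}\mathcal Z$, where the projection is the identity. This requires $2\bar{\bm y}+\gamma\bm z_0$ to lie in the open set where the density is positive, which the hypothesis provides since $2\bar{\bm y}+\gamma\mathcal Z$ is contained in it. So both $\bm u_1,\bm u_2$ lie in $\operatorname{int}\mathcal Z$ for such $\bm y$, and there $\Pi(\bm u_2)-\Pi(\bm u_1)=(\hat{\bm y}_1-\hat{\bm y}_2)/\gamma\neq 0$. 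This event has positive probability, so the expectation is strictly positive.

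Finally, suppose there were another minimizer $\hat{\bm y}'\neq 2\bar{\bm y}$. The segment between them would consist of minimizers, so $R$ would be constant on it and $\nabla R$ would be constant along it. That contradicts strict monotonicity on the portion of the segment near $2\bar{\bm y}$, where the argument above applies. The main obstacle is making this local-neighborhood argument clean. Specifically, the ball around $2\bar{\bm y}+\gamma\bm z_0$ must stay inside the positive-density open set uniformly for $\hat{\bm y}$ in a small neighborhood of $2\bar{\bm y}$. This follows by choosing radii small, using openness of that set and of $\operatorname{int}\mathcal Z$.
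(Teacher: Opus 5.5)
Your proof is correct, but it follows a different route from the paper's.

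The paper never uses the first-order condition. It computes the risk gap $\mathbb E[\ell_{\mathcal{RSPO}_+}(2\bar{\bm y}+\Delta,\bm y)]-\mathbb E[\ell_{\mathcal{RSPO}_+}(2\bar{\bm y},\bm y)]$ directly. It then uses central symmetry to replace $\mathbb E[\bm z^{\star}_{\gamma}(\bm y)^\top\Delta]$ by $\mathbb E[\bm z^{\star}_{\gamma}(2\bar{\bm y}-\bm y)^\top\Delta]$. This rewrites the gap as the expectation of $(2\bar{\bm y}+\Delta-\bm y)^\top\bm z'+\frac{\gamma}{2}\|\bm z'\|_2^2-v^{\star}_{\gamma}(2\bar{\bm y}+\Delta-\bm y)\geq 0$, where $\bm z'=\bm z^{\star}_{\gamma}(2\bar{\bm y}-\bm y)$, which gives minimality of $2\bar{\bm y}$. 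Strictness comes from the open event $2\bar{\bm y}+\gamma\operatorname{int}\mathcal Z$. On that event $\bm z'\in\operatorname{int}\mathcal Z$, so $\bm z'$ cannot equal $\Pi_{\mathcal Z}(\bm z'-\Delta/\gamma)$ for any $\Delta\neq\bm 0$. The integrand is then strictly positive by uniqueness of the strongly convex minimizer.

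You instead obtain existence from the balance condition of Proposition~\ref{prop:optcondition_RSPO+}, which leans on differentiation under the expectation. You obtain uniqueness from local strict monotonicity of $\nabla R$ via firm nonexpansiveness, globalized through convexity of the argmin set. This is essentially the mechanism of the paper's Lemma~\ref{lem:local_sc}.

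What each route buys:
\begin{itemize}
\item The paper's argument is self-contained. It shows a strictly positive risk gap for every perturbation $\Delta$ at once, with no local-to-global step.
\item Your argument gives more. It yields strong convexity of the surrogate risk near $2\bar{\bm y}$, with modulus $1/\gamma$ times the probability of the event you construct. So one computation serves both this corollary and Condition~\ref{con:reference_condition}(ii) in Section~\ref{sec:statistical_guarantee}.
\end{itemize}
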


Combining the two characterizations shows that Fisher consistency reduces to a single membership condition: does the surrogate optimizer $2\bar{\bm{y}}$ belong to the translated cone \eqref{eq:RSPO_normalcone}? By the scale-invariance discussion following Proposition~\ref{prop:minRSPO}, when $\gamma=0$, if the conditional mean $\bar{\bm{y}}$ belongs to the cone, then so does $2\bar{\bm{y}}$. Based on this fact, for $\gamma>0$, the cone is translated further along $-\bm{z}^{\star}(\bar{\bm{y}})$, while the surrogate optimizer remains $2\bar{\bm{y}}$, so the membership condition holds up to a threshold. We quantify this threshold in the next result. For a given $\bm x$, define
\begin{equation}\label{eq:gamma}
       \bar{\gamma}(\bm x):= \sup\Big\{\gamma\geq0:\
        2\bar{\bm{y}}+\gamma\bm{z}^{\star}(\bar{\bm{y}})\in
        -{\mathcal{N}}_{\mathcal{Z}}\big(\bm{z}^{\star}(\bar{\bm{y}})\big)\Big\}
        \in[0,+\infty].
\end{equation}
When $\mathcal{Z}$ is a bounded polyhedron, \eqref{eq:gamma} is the linear program
\begin{equation*}
    \sup\Big\{\gamma\geq0:\ 2\bar{\bm{y}}=\bm{A}^\top\bm{\lambda}-\gamma\bm{z}^{\star}(\bar{\bm{y}}),\ \bm{\lambda}\in\bm{\Lambda}\Big\}
\end{equation*}
over $(\gamma,\bm{\lambda})$, and is thus computable from the problem data.
 
\begin{theorem}[Fisher Consistency]\label{thm:Fisher_consistency}
     Set $a=1$ and suppose the assumptions of Proposition~\ref{prop:minRSPO} and of Corollary~\ref{cor:RSPO+a1} hold for $\mathbb{P}_{\bm x}$-almost every $\bm x$, and
     $\operatorname{int}\mathcal{Z}\neq\emptyset$. Let $\bar{\gamma}(\bm{x})$ be defined by~\eqref{eq:gamma}. Then:
     \begin{enumerate}
        \item[(i)] The set in \eqref{eq:gamma} is a closed interval containing $0$, so $\bar{\gamma}(\bm{x})$ is well-defined and is attained whenever it is finite. If $0<\gamma\leq\bar{\gamma}(\bm{x})$ for $\mathbb{P}_{\bm x}$-almost every $\bm x$, then for $\mathbb{P}_{\bm x}$-almost every $\bm x$ the unique minimizer $2\bar{\bm{y}}(\bm{x})$ of the pointwise $\mathcal{RSPO}_+$ risk is also a minimizer of the pointwise $\mathcal{RSPO}$ risk. That is, the $\mathcal{RSPO}_+$ loss is Fisher consistent with the $\mathcal{RSPO}$ loss.
        \item[(ii)] Suppose in addition that $\mathcal Z=\{\bm{z}\in\mathbb{R}^d:\bm{A}\bm{z}\geq\bm{b}\}$ is a nonempty bounded polyhedron. The threshold is strictly positive:
        \begin{equation*}
            \bar{\gamma}(\bm{x})
            \;\geq\;
            \frac{2\,\dist\!\big(\bar{\bm{y}}(\bm x),\,\partial \mathcal{C}(\bm{x})\big)}{\|\bm{z}^{\star}(\bar{\bm{y}}(\bm x))\|_2}
            \;>\;0,
            \qquad
            \mathcal{C}(\bm{x}):=-\mathcal{N}_{\mathcal{Z}}\big(\bm{z}^{\star}(\bar{\bm{y}}(\bm x))\big),
        \end{equation*}
        where $\partial \mathcal{C}(\bm{x})$ denotes the boundary of $\mathcal{C}(\bm{x})$ and, when $\bm{z}^{\star}(\bar{\bm{y}}(\bm x))=\bm{0}$, one has $\bar{\gamma}(\bm{x})=+\infty$ and the bound holds trivially. In particular, the regime in {\rm(i)} is nonempty for $\mathbb{P}_{\bm x}$-almost every $\bm x$.
     \end{enumerate}
\end{theorem}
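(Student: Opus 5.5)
The proof works pointwise in $\bm x$. Write $\bm z^\star:=\bm z^{\star}(\bar{\bm y})$ and $\mathcal C:=-\mathcal N_{\mathcal Z}(\bm z^\star)$; $\mathcal C$ is a closed convex cone.

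\emph{Part (i).} Let $S:=\{\gamma\ge 0:\ 2\bar{\bm y}+\gamma\bm z^\star\in\mathcal C\}$. First we show $0\in S$. Uniqueness of the nominal minimizer gives $\bar{\bm y}\in-\mathcal N_{\mathcal Z}(\bm z^\star)$, since this is the first-order optimality condition for $\min_{\bm z\in\mathcal Z}\bar{\bm y}^\top\bm z$. Because $\mathcal C$ is a cone, $2\bar{\bm y}\in\mathcal C$. Next, $S$ is the preimage of the closed convex set $\mathcal C$ under the affine map $\gamma\mapsto 2\bar{\bm y}+\gamma\bm z^\star$, intersected with $[0,\infty)$. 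It is therefore a closed convex subset of $[0,\infty)$ containing $0$, that is, a closed interval $[0,\bar\gamma]$ or $[0,\infty)$. Hence $\bar\gamma(\bm x)$ is attained when finite, and every $\gamma\in(0,\bar\gamma(\bm x)]$ lies in $S$.

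For such $\gamma$, $2\bar{\bm y}+\gamma\bm z^\star\in\mathcal C$ means $2\bar{\bm y}\in\mathcal C-\gamma\bm z^\star$. By Proposition~\ref{prop:minRSPO}, this set is exactly $\mathcal Y^\star_{\mathcal{RSPO}}$. By Corollary~\ref{cor:RSPO+a1}, using $a=1$, $\operatorname{int}\mathcal Z\neq\emptyset$ and the symmetry assumption, $\mathcal Y^\star_{\mathcal{RSPO}_+}=\{2\bar{\bm y}\}$. So $\mathcal Y^\star_{\mathcal{RSPO}_+}\subseteq\mathcal Y^\star_{\mathcal{RSPO}}$ for a.e.\ $\bm x$. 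The interchangeability principle recalled before Proposition~\ref{prop:minRSPO} lifts this pointwise inclusion to minimizers over $\mathcal G_{\rm all}$: a measurable minimizer of the surrogate risk must equal $2\bar{\bm y}(\bm x)$ a.e., and hence pointwise minimizes the $\mathcal{RSPO}$ conditional risk. This gives Fisher consistency.

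\emph{Part (ii).} If $\bm z^\star=\bm 0$, then $2\bar{\bm y}+\gamma\bm z^\star=2\bar{\bm y}\in\mathcal C$ for every $\gamma$, so $\bar\gamma=\infty$.

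Otherwise, the first step is to show that $\bar{\bm y}$ lies in the interior of $\mathcal C$, so that $r:=\dist(\bar{\bm y},\partial\mathcal C)>0$. For a polytope, $\mathcal Z^\star(\bm c)=\{\bm z^\star\}$ holds exactly when $\bm c$ lies in the interior of $-\mathcal N_{\mathcal Z}(\bm z^\star)$. The argument: if $\bar{\bm y}$ were on the boundary of this normal cone, it would lie in a face of $\mathcal C$ shared with the normal cone of an adjacent face of $\mathcal Z$ of dimension at least one. Linear objectives in the relative interior of that face would then be minimized by a whole edge, contradicting uniqueness. I expect this to be the main obstacle. It rests on the standard normal-fan structure of polytopes: the outer normal cones of the faces of a polytope partition $\mathbb R^d$, and the normal cone of a vertex is full-dimensional, with the normal cones of higher-dimensional faces forming its boundary. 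I would invoke this as a known fact, and also note that uniqueness forces $\bm z^\star$ to be a vertex.

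Given $r>0$, the bound follows from a ball argument. Since $\mathcal C$ is a cone, the ball $B(2\bar{\bm y},2r)=2B(\bar{\bm y},r)$ is contained in $\mathcal C$. For $\gamma\le 2r/\|\bm z^\star\|_2$ we have $\|\gamma\bm z^\star\|_2\le 2r$, so $2\bar{\bm y}+\gamma\bm z^\star\in\mathcal C$. Hence $\bar\gamma\ge 2r/\|\bm z^\star\|_2>0$. Combined with part (i), this shows the consistent regime $(0,\bar\gamma(\bm x)]$ is nonempty for a.e.\ $\bm x$.
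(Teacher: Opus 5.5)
Your proof is correct, and its skeleton matches the paper's. Part (i) uses the same preimage-of-a-closed-convex-cone argument combined with Proposition~\ref{prop:minRSPO} and Corollary~\ref{cor:RSPO+a1}. Your scaling step $\mathcal B(2\bar{\bm y},2r)=2\,\mathcal B(\bar{\bm y},r)\subseteq\mathcal C$ is the paper's identity $2\bar{\bm y}+\gamma\bm z^\star=2\big(\bar{\bm y}+\frac{\gamma}{2}\bm z^\star\big)$ written differently.

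The genuine difference is how you establish $\bar{\bm y}\in\operatorname{int}\mathcal C$. You import the normal-fan structure of a polytope. That works, provided the contradiction is phrased as follows. A boundary point of $\mathcal C$ lies in a proper face of $\mathcal C$, which equals $-\mathcal N_{\mathcal Z}(F)$ for some face $F\ni\bm z^\star$ with $\dim F\geq1$. Membership in this closed cone already makes every point of $F$ optimal for $\bar{\bm y}$. You do not need $\bar{\bm y}$ to lie in the relative interior of that face, as your wording suggests.

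The paper avoids the fan entirely. Uniqueness makes $\bm z^\star$ a vertex with $\epsilon_0:=\min_{\bm v\in V(\mathcal Z)\setminus\{\bm z^\star\}}\bar{\bm y}^\top(\bm v-\bm z^\star)>0$, taken over the finite vertex set $V(\mathcal Z)$. Cauchy--Schwarz then shows that any $\bm c$ with $\|\bm c-\bar{\bm y}\|_2<\epsilon_0/D_{\mathcal Z}$ still has $\bm z^\star$ as its unique minimizer, so this open ball lies in $\mathcal C$. This argument is self-contained and settles in two lines the step you flagged as the main obstacle. It also yields the explicit bound $\bar\gamma(\bm x)\geq 2\epsilon_0/(D_{\mathcal Z}\|\bm z^\star\|_2)$, whereas your route rests on a cited combinatorial fact.

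Two minor points:
\begin{itemize}
\item $\bar{\bm y}\in\mathcal C$ follows from optimality of $\bm z^\star$, not from its uniqueness.
\item Going from $r=\dist(\bar{\bm y},\partial\mathcal C)$ to $\mathcal B(\bar{\bm y},r)\subseteq\mathcal C$ deserves one line. The open ball is connected, meets $\operatorname{int}\mathcal C$, and misses $\partial\mathcal C$; since $\mathcal C$ is closed, the closed ball follows.
\end{itemize}
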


Theorem~\ref{thm:Fisher_consistency} essentially provides sufficient conditions for Fisher consistency. This is not only important to the analysis of the population limit scenario, but also a fundamental requirement for finite-sample performance guarantees, as we elaborate on in Section~\ref{subsec:rspo+_excess_risk_bounds}. The mechanism behind part~(ii) is the same scale invariance identified after Proposition~\ref{prop:minRSPO}: uniqueness of the nominal solution forces $\bar{\bm{y}}(\bm x)$ into the \emph{interior} of the cone $\mathcal{C}(\bm{x})$; the interior of a cone is invariant under positive scaling, so $2\bar{\bm{y}}(\bm x)$ lies in the interior as well---at distance $2\dist(\bar{\bm{y}}(\bm{x}),\partial \mathcal{C}(\bm{x}))$ from the boundary---and the translation $-\gamma\bm{z}^{\star}(\bar{\bm{y}}(\bm x))$ must travel at least that far before the membership can fail.

The following example illustrates both sets of $\mathcal{Y}^{\star}_{\mathcal{RSPO}}$ and $\mathcal{Y}^{\star}_{\mathcal{RSPO}_+}$ in the theorem and shows that the restriction on $\gamma$ is essential: if $\gamma>\bar{\gamma}(\bm{x})$ for some $\bm x$, the unique surrogate minimizer $2\bar{\bm{y}}(\bm{x})$ lies outside $\mathcal{Y}^{\star}_{\mathcal{RSPO}}$ and induces a decision that is suboptimal under the $\mathcal{RSPO}$ criterion---a discrepancy that persists in the population limit.

\begin{example}\label{ex:fisher_consistency}
     Set $a=1$. Fix $\bm{x}$ and let $\bm{y}\mid\bm{x}\sim\mathcal{N}(\bar{\bm{y}},\sigma^2\bm{I}_2)$ with $\bar{\bm{y}}=(1,\frac{3}{2})^\top$ and any $\sigma>0$, so that the density is positive on all of $\mathbb{R}^2$ and centrally symmetric about $\bar{\bm{y}}$, and the assumptions of Corollary~\ref{cor:RSPO+a1} hold. Let the feasible set be
     \begin{equation*}
         \mathcal{Z}:=\Big\{\bm{z}=(z_1,z_2):z_1-z_2\leq1,\ z_1\geq -1,\ z_1,z_2\leq0 \Big\}.
     \end{equation*}
     The nominal solution is $\bm{z}^{\star}(\bar{\bm{y}})=(-1,-2)^\top$, at which the active constraints have normal vectors $(-1,1)^\top$ and $(1,0)^\top$. Therefore, the threshold of \eqref{eq:gamma}
     evaluates to $\bar{\gamma}=3/2$. 
     For a comparison with the nominal situation, we also compute the sets $\mathcal{Y}^{\star}_{\mathcal{SPO}}$ and $\mathcal{Y}^{\star}_{\mathcal{SPO}_+}$, where $\mathcal{Y}^{\star}_{\mathcal{SPO}}=\argmin_{\hat{\bm{y}}\in\mathbb{R}^d} \mathbb{E}_{\bm{y}\sim\mathbb{P}_{\bm{y}
    |\bm{x}}}\left[\ell_{\mathcal{SPO}}(\hat{\bm{y}},\bm{y})\right]$ and
    $\mathcal{Y}^{\star}_{\mathcal{SPO}_+}=\argmin_{\hat{\bm{y}}\in\mathbb{R}^d} \mathbb{E}_{\bm{y}\sim\mathbb{P}_{\bm{y}
    |\bm{x}}}\left[\ell_{\mathcal{SPO}_+}(\hat{\bm{y}},\bm{y})\right]$.
     Figure~\ref{fig:example_for_RSPO+} displays the two sets, together with $\mathcal{Y}^{\star}_{\mathcal{RSPO}}$ and $\mathcal{Y}^{\star}_{\mathcal{RSPO}_+}=\{2\bar{\bm{y}}\}$. For $\gamma=1\leq\bar{\gamma}$ the surrogate minimizer $2\bar{\bm{y}}$ lies in $\mathcal{Y}^{\star}_{\mathcal{RSPO}}$ and Fisher consistency holds; for $\gamma=2>\bar{\gamma}$ it lies outside and the property fails, even though the distribution of $\bm{y}$ is continuous and symmetric. In both panels $\mathcal{Y}^{\star}_{\mathcal{SPO}_+}\subseteq\mathcal{Y}^{\star}_{\mathcal{SPO}}$, so the nominal surrogate remains Fisher consistent regardless of $\gamma$.\hfill\Halmos
\end{example}
 
\begin{figure}[!t]
\centering
{\footnotesize
\tikz[baseline=-0.6ex]\fill[gray!40,draw=black!50] (0,0) rectangle (0.30,0.18);~feasible set $\mathcal{Z}$
\hspace{1.1em}
\tikz[baseline=-0.6ex]\fill[gray!15,draw=gray!60] (0,0) rectangle (0.30,0.18);~$\mathcal{Y}^{\star}_{\mathcal{SPO}}$
\hspace{1.1em}
\tikz[baseline=-0.6ex]{\fill[gray!15] (0,0) rectangle (0.30,0.18);
  \draw[pattern=north east lines,pattern color=black!55,draw=black!80] (0,0) rectangle (0.30,0.18);}~$\mathcal{Y}^{\star}_{\mathcal{RSPO}}$
\hspace{1.1em}
\tikz[baseline=-0.35ex]\draw[black,thick,fill=white] (0,0) circle (2.2pt);~$\mathcal{Y}^{\star}_{\mathcal{SPO}_+}=\{\bar{\bm{y}}\}$
\hspace{1.1em}
\tikz[baseline=-0.35ex]\fill[black] (0,0) circle (2.2pt);~$\mathcal{Y}^{\star}_{\mathcal{RSPO}_+}=\{2\bar{\bm{y}}\}$
}\\[1.2ex]
\begin{tikzpicture}[scale=0.62]
  \begin{scope}
    \clip (-2,-2.3) rectangle (6,5);
    \draw[gray!15,very thin] (-2,-2) grid (6,5);
    \fill[gray!15] (0,0) -- (6,0) -- (6,5) -- (-2,5) -- (-2,2) -- cycle;
    \draw[gray!60,semithick,dashed] (-2,2) -- (0,0) -- (6,0);
    \fill[pattern=north east lines,pattern color=black!55]
      (1,2) -- (6,2) -- (6,5) -- (-2,5) -- cycle;
    \draw[black!80,semithick] (-2,5) -- (1,2) -- (6,2);
    \fill[gray!40,draw=black!50] (-1,0) -- (0,0) -- (0,-1) -- (-1,-2) -- cycle;
    \node[black!70,font=\scriptsize] at (-0.45,-0.8) {$\mathcal{Z}$};
    \draw[->,black,thick] (0,0) -- (1,2)
      node[midway,left=1pt,font=\scriptsize] {$-\gamma\bm{z}^{\star}(\bar{\bm{y}})$};
    \fill (0,0) circle (1.6pt) node[above left=-1pt,font=\scriptsize] {origin};
    \draw[black,thick,fill=white] (1,1.5) circle (2.4pt);
    \node[below right=-1pt,font=\small] at (1,1.5) {$\bar{\bm{y}}$};
    \fill[black] (2,3) circle (2.4pt);
    \node[below right=2pt,font=\small,inner sep=0.5pt]
      at (2,3) {$2\bar{\bm{y}}$};
    \node[anchor=south east,font=\scriptsize,align=right] at (5.85,-2.2)
      {$\gamma=1\leq\bar{\gamma}$:\ $2\bar{\bm{y}}\in\mathcal{Y}^{\star}_{\mathcal{RSPO}}$};
  \end{scope}
  \draw (-2,-2.3) rectangle (6,5);
  \foreach \x in {-2,0,2,4,6}
    \node[below,font=\scriptsize] at (\x,-2.3) {$\x$};
  \foreach \y in {-2,0,2,4}
    \node[left,font=\scriptsize] at (-2,\y) {$\y$};
  \node[below=9pt,font=\small] at (2,-2.3) {$y_1$};
  \node[left=13pt,font=\small,rotate=90,anchor=south] at (-2,1.35) {$y_2$};
\end{tikzpicture}
\hspace{2ex}
\begin{tikzpicture}[scale=0.62]
  \begin{scope}
    \clip (-2,-2.3) rectangle (6,5);
    \draw[gray!15,very thin] (-2,-2) grid (6,5);
    \fill[gray!15] (0,0) -- (6,0) -- (6,5) -- (-2,5) -- (-2,2) -- cycle;
    \draw[gray!60,semithick,dashed] (-2,2) -- (0,0) -- (6,0);
    \fill[pattern=north east lines,pattern color=black!55]
      (2,4) -- (6,4) -- (6,5) -- (1,5) -- cycle;
    \draw[black!80,semithick] (1,5) -- (2,4) -- (6,4);
    \fill[gray!40,draw=black!50] (-1,0) -- (0,0) -- (0,-1) -- (-1,-2) -- cycle;
    \node[black!70,font=\scriptsize] at (-0.45,-0.8) {$\mathcal{Z}$};
    \draw[->,black,thick] (0,0) -- (2,4)
      node[pos=0.45,left=1pt,font=\scriptsize] {$-\gamma\bm{z}^{\star}(\bar{\bm{y}})$};
    \draw[black,dashed] (2,3) -- (2,4);
    \fill (0,0) circle (1.6pt) node[above left=-1pt,font=\scriptsize] {origin};
    \draw[black,thick,fill=white] (1,1.5) circle (2.4pt);
    \node[below right=-1pt,font=\small] at (1,1.5) {$\bar{\bm{y}}$};
    \fill[black] (2,3) circle (2.4pt);
    \node[below right=2pt,font=\small,inner sep=0.5pt]
      at (2,3) {$2\bar{\bm{y}}$};
    \node[anchor=south east,font=\scriptsize,align=right] at (5.85,-2.2)
      {$\gamma=2>\bar{\gamma}$:\ $2\bar{\bm{y}}\notin\mathcal{Y}^{\star}_{\mathcal{RSPO}}$};
  \end{scope}
  \draw (-2,-2.3) rectangle (6,5);
  \foreach \x in {-2,0,2,4,6}
    \node[below,font=\scriptsize] at (\x,-2.3) {$\x$};
  \foreach \y in {-2,0,2,4}
    \node[left,font=\scriptsize] at (-2,\y) {$\y$};
  \node[below=9pt,font=\small] at (2,-2.3) {$y_1$};
  \node[left=13pt,font=\small,rotate=90,anchor=south] at (-2,1.35) {$y_2$};
\end{tikzpicture}
\caption{Illustration of Example~\ref{ex:fisher_consistency}.
$\mathcal{Y}^{\star}_{\mathcal{RSPO}}$ (hatched) is the negative normal cone $-{\mathcal{N}}_{\mathcal{Z}}(\bm{z}^{\star}(\bar{\bm{y}}))$ translated by $-\gamma\bm{z}^{\star}(\bar{\bm{y}})$ (solid arrow), so it moves with $\gamma$ while the surrogate minimizer stays at $2\bar{\bm{y}}$. Dashed versus solid boundaries mark the open set $\mathcal{Y}^{\star}_{\mathcal{SPO}}$ against the closed set $\mathcal{Y}^{\star}_{\mathcal{RSPO}}$. \emph{Left}: $\gamma=1\leq\bar{\gamma}$, Fisher consistency holds; \emph{right}: $\gamma=2>\bar{\gamma}$, Fisher consistency fails.}
\label{fig:example_for_RSPO+}
\end{figure}
 
We close the subsection with a byproduct of the analysis by setting $\gamma=0$. For a bounded polyhedron, uniqueness of the nominal solution forces $\bar{\bm{y}}$ into the interior of the cone $\mathcal{C}(\bm{x})$, hence $2\bar{\bm{y}}$ as well by scale-invariance. Therefore, Fisher consistency holds for the nominal surrogate with $a=1$, as presented in Corollary~\ref{cor:spo_plus_a1}. Notice that this result \textit{complements} the Fisher consistency established for the $\mathcal{SPO}_+$ loss by \citet{elmachtoub2022smart}, which concerns $a=2$ over general compact convex feasible sets, whereas Corollary~\ref{cor:spo_plus_a1} shows that, under analogous distributional conditions, the $a=1$ variant holds over bounded polyhedra.

\begin{corollary}[Fisher Consistency of $\mathcal{SPO}_+$ with $a=1$ and Polyhedrality]\label{cor:spo_plus_a1}
    Suppose that $\mathcal{Z}$ is a bounded polyhedron, and that for $\mathbb{P}_{\bm x}$-almost every $\bm x$, the conditional distribution of $\bm{y}$ is centrally symmetric about $\bar{\bm{y}}$ with $\mathbb{E}_{\bm{y}\sim\mathbb{P}_{\bm{y}|\bm{x}}}[\|\bm{y}\|_2]<\infty$ and has a density positive on an open set containing $2\bar{\bm{y}}$. Further assume that the interior of $\mathcal Z$ is nonempty and the nominal problem $\min_{\bm{z}\in\mathcal{Z}}\bar{\bm{y}}^\top\bm{z}$ admits a unique optimal solution. Then the $\mathcal{SPO}_+$ loss with $a=1$ is Fisher consistent with the $\mathcal{SPO}$ loss.
\end{corollary}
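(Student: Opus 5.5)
Our plan is to reduce the corollary, pointwise in $\bm x$, to a membership statement, exactly as in the $\gamma>0$ analysis but with $\gamma=0$. By the interchangeability principle recalled before Proposition~\ref{prop:minRSPO}, it suffices to show that for $\mathbb{P}_{\bm x}$-almost every $\bm x$ every minimizer of the pointwise $\mathcal{SPO}_+$ risk (with $a=1$) minimizes the pointwise $\mathcal{SPO}$ risk. We therefore characterize the two sets of minimizers and then check the inclusion.

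\emph{Step 1: the target set.} Since $\ell_{\mathcal{SPO}}(\hat{\bm y},\bm y)+v^\star(\bm y)=\max_{\bm z\in\bm Z^\star(\hat{\bm y})}\bm y^\top\bm z$, the pointwise $\mathcal{SPO}$ risk is at least $\min_{\bm z\in\mathcal Z}\bar{\bm y}^\top\bm z-\mathbb E[v^\star(\bm y)]$. Equality holds whenever $\bm Z^\star(\hat{\bm y})=\{\bm z^\star(\bar{\bm y})\}$, where we use linearity of $\bm y\mapsto\bm y^\top\bm z$ for fixed $\bm z$ to take the expectation inside. For a bounded polyhedron, $\bm Z^\star(\hat{\bm y})=\{\bm z^\star(\bar{\bm y})\}$ holds precisely when $\hat{\bm y}\in\operatorname{int}\mathcal C(\bm x)$, with $\mathcal C(\bm x)=-\mathcal N_{\mathcal Z}(\bm z^\star(\bar{\bm y}))$. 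So it suffices to exhibit the surrogate minimizer inside $\operatorname{int}\mathcal C(\bm x)$. Uniqueness of the nominal solution at the vertex $\bm z^\star(\bar{\bm y})$ forces $\bar{\bm y}\in\operatorname{int}\mathcal C(\bm x)$; this is the fact already used in Theorem~\ref{thm:Fisher_consistency}(ii). Since $\mathcal C(\bm x)$ is a cone, its interior is invariant under positive scaling, and hence $2\bar{\bm y}\in\operatorname{int}\mathcal C(\bm x)\subseteq\mathcal Y^\star_{\mathcal{SPO}}$.

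\emph{Step 2: the surrogate set.} With $a=1$ and $\gamma=0$, the surrogate is $\max_{\bm z\in\mathcal Z}(\bm y-\hat{\bm y})^\top\bm z+\bm z^\star(\bm y)^\top\hat{\bm y}-v^\star(\bm y)$. This is convex in $\hat{\bm y}$, with subdifferential $\bm z^\star(\bm y)-\operatorname{conv}\bm Z^\star(\hat{\bm y}-\bm y)$. The optimality condition for the conditional risk reads $\bm 0\in\mathbb E[\bm z^\star(\bm y)]-\mathbb E[\partial(\cdot)]$, where we interchange expectation and subdifferential using integrability $\mathbb E\|\bm y\|_2<\infty$ and boundedness of $\mathcal Z$; this is the nominal analogue of Proposition~\ref{prop:optcondition_RSPO+}. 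At $\hat{\bm y}=2\bar{\bm y}$, central symmetry maps $\bm y\mapsto 2\bar{\bm y}-\bm y$. Hence $\hat{\bm y}-\bm y$ has the same law as $\bm y$, and $\bm z^\star(\hat{\bm y}-\bm y)$ is a.s.\ single-valued because the density exists. This makes $\mathbb E[\bm z^\star(2\bar{\bm y}-\bm y)]=\mathbb E[\bm z^\star(\bm y)]$, so $2\bar{\bm y}$ is a minimizer.

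\emph{Step 3: uniqueness, the main obstacle.} We must also show that the surrogate has no other minimizer, mirroring the uniqueness half of Corollary~\ref{cor:RSPO+a1}. Our plan is to show that the conditional surrogate risk is strictly convex near $2\bar{\bm y}$, or at least that its minimizer set is $\{2\bar{\bm y}\}$. The argument takes $\hat{\bm y}'\neq 2\bar{\bm y}$ and the direction $\bm w=\hat{\bm y}'-2\bar{\bm y}$. Consider the map $\hat{\bm y}\mapsto\mathbb E[\max_{\bm z\in\mathcal Z}(\bm y-\hat{\bm y})^\top\bm z]$, the expected support function of $\mathcal Z$ at $\bm y-\hat{\bm y}$. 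It is strictly convex along $\bm w$ on the segment, provided that with positive probability the maximizer $\bm z^\star(\hat{\bm y}-\bm y)$ changes to a vertex with different $\bm w$-component. Two facts supply this. The positive density on an open set around $2\bar{\bm y}$ means $\hat{\bm y}-\bm y$ has positive density near $\bm 0$, and every vertex's normal cone meets every neighborhood of $\bm 0$. Because $\operatorname{int}\mathcal Z\neq\emptyset$, there are vertices that $\bm w$ separates, so the expected support function strictly increases in the $\bm w$ term. If a direct strict-convexity argument proves delicate, we would instead fall back on citing the uniqueness argument of \citet{elmachtoub2022smart}, adapted from $a=2$ to $a=1$; that argument rests on the same density-near-origin condition. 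In either case, every surrogate minimizer equals $2\bar{\bm y}\in\mathcal Y^\star_{\mathcal{SPO}}$, and integrating over $\mathbb P_{\bm x}$ gives Fisher consistency.
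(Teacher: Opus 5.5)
Your Steps 1 and 2 are sound and follow the paper's outline. In Step 1 you reach $\bm Z^\star(2\bar{\bm y})=\{\bm z^\star(\bar{\bm y})\}$ through $\operatorname{int}\mathcal C(\bm x)$, where the paper simply uses invariance of the argmin under positive scaling. In Step 2 you check $\bm 0\in\partial R_+(2\bar{\bm y};\bm x)$ via a subdifferential interchange. The paper instead sets $\bm v:=2\bar{\bm y}-\bm y$ (which has the law of $\bm y$) and computes exactly
\[
R_+(2\bar{\bm y}+\bm w;\bm x)-R_+(2\bar{\bm y};\bm x)=\mathbb E\big[v^\star(\bm v)+\bm w^\top\bm z^\star(\bm v)-v^\star(\bm v+\bm w)\big],
\]
whose integrand is pointwise nonnegative.

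The gap is Step 3. It carries the real content of the corollary (uniqueness when the density is positive only near $2\bar{\bm y}$), and as written it does not produce the kink it needs. You cite two facts, but neither does the work:
\begin{itemize}
\item The fact that every vertex's normal cone meets every neighborhood of $\bm 0$ is vacuous, since $\bm 0$ lies in every normal cone.
\item Knowing that $\bm w$ separates some pair of vertices is not enough. If $\bm v\in\operatorname{int}(-\mathcal N_{\mathcal Z}(\bm z_a))$ for a vertex $\bm z_a$ that \emph{minimizes} $\bm w^\top\bm z$, then $\bm w\in-\mathcal N_{\mathcal Z}(\bm z_a)$. Hence $\bm v+s\bm w$ stays in that open cone for all $s\ge0$, and the maximizer never changes.
\end{itemize}
You must say which positive-probability event forces a change in the $\bm w$-component, and prove that it does.

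The missing construction is the paper's. Take a vertex $\bm z_1$ of $\mathcal Z$ maximizing $\bm w^\top\bm z$, and a vertex $\bm z_2$ with $\bm w^\top\bm z_2<\bm w^\top\bm z_1$; this is where $\operatorname{int}\mathcal Z\neq\emptyset$ is used. Take a ball $\mathcal B(\bm c_0,\varepsilon)\subseteq\operatorname{int}(-\mathcal N_{\mathcal Z}(\bm z_1))$; its rescalings $\mathcal B(t\bm c_0,t\varepsilon)$ stay in this open cone. For $\bm v\in\mathcal B(t\bm c_0,t\varepsilon)$ one has $\bm z^\star(\bm v)=\bm z_1$ and
\[
v^\star(\bm v)+\bm w^\top\bm z^\star(\bm v)-v^\star(\bm v+\bm w)\ \ge\ (\bm v+\bm w)^\top(\bm z_1-\bm z_2)\ \ge\ \bm w^\top(\bm z_1-\bm z_2)-t\big(\|\bm c_0\|_2+\varepsilon\big)D_{\mathcal Z}.
\]
This is at least $\tfrac12\bm w^\top(\bm z_1-\bm z_2)>0$ once $t$ is small. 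The smallness of $t$ relative to $\bm w^\top(\bm z_1-\bm z_2)/D_{\mathcal Z}$ is exactly the quantitative ingredient your sketch omits. For small $t$ the ball lies where $\bm v$ has positive density, which is exactly the hypothesis at $2\bar{\bm y}$, so the event has positive probability. The displayed identity then gives $R_+(2\bar{\bm y}+\bm w;\bm x)>R_+(2\bar{\bm y};\bm x)$ directly. In your framing you would also need the extra step that non-affinity on a positive-probability set forces non-affinity of the expected support function.

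The fallback cannot replace this. No adaptation from $a=2$ is needed, because $\ell^{a=1}_{\mathcal{SPO}_+}(\hat{\bm y},\bm y)=\ell^{a=2}_{\mathcal{SPO}_+}(\hat{\bm y}/2,\bm y)$ identically and the $\mathcal{SPO}$ risk is invariant under positive rescaling of $\hat{\bm y}$. So citing \citet{elmachtoub2022smart} proves the corollary exactly when their hypotheses are implied by the ones here. Your claim that their argument needs only positivity of the density near $2\bar{\bm y}$ is unchecked, and that local condition is precisely what Step 3 must handle.
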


\subsection{Gradient Descent Method}
\label{subsec:gradient_refinement}

Sections~\ref{subsec:convex_surrogate} and~\ref{subsec:fisher_consistency} have developed the convex surrogate scheme. We now turn to gradient descent methods, for which we begin by deriving the Jacobian of the decision map, based on which we sketch the algorithm.

\subsubsection*{The Jacobian of the decision map.}
Recall Lemma~\ref{lem:oracle_projection}, which shows the decision map $\bm z^{\star}_{\gamma}(\bm y_0)=\Pi_{\mathcal Z}(-\bm y_0/\gamma)$ is Lipschitz continuous with constant $1/\gamma$ and differentiable almost everywhere. To obtain an implementable algorithm, we need the derivative of the decision map. For a polyhedral feasible set it is available in closed form at points satisfying two standard regularity conditions: the linear independence constraint qualification (LICQ) and strict complementarity.
 
\begin{proposition} \label{prop:zstar_gradient_polyhedral}
    Let $\gamma>0$ and let $\mathcal Z=\{\bm{z}:\bm{A}\bm{z}\geq\bm{b}\}$ be a nonempty bounded polyhedron.
    For a given $\bm{y}_0$, let $S_{\bm{y}_0}=\{i:\bm{a}_i^\top\bm{z}^{\star}_{\gamma}(\bm{y}_0)=b_i\}$ denote the indices of active constraints at $\bm{z}^{\star}_{\gamma}(\bm{y}_0)$ and $\bm{A}_{S_{\bm{y}_0}}$ denote the submatrix of $\bm{A}$ containing only the rows $\bm{a}_i$ with indices $i\in S_{\bm{y}_0}$. 
    Assume that (i) $\bm{A}_{S_{\bm{y}_0}}$ has full row rank (LICQ), and (ii) strict complementarity holds at $\bm{z}^{\star}_{\gamma}(\bm{y}_0)$, \textit{i.e.}, the unique Karush--Kuhn--Tucker (KKT) multiplier $\bm{\lambda}^0$ associated with $\bm{z}^{\star}_{\gamma}(\bm{y}_0)$ satisfies $\lambda^0_i>0$ for all $i\in S_{\bm{y}_0}$. Then there exists a neighborhood of $\bm{y}_0$ on which the active set of $\bm{z}^{\star}_{\gamma}(\cdot)$ is identically $S_{\bm{y}_0}$ and $\bm{z}^{\star}_{\gamma}(\cdot)$ is affine. In particular, $\bm{z}^{\star}_{\gamma}(\cdot)$ is differentiable at $\bm{y}_0$, and its Jacobian is
    \begin{equation}\label{eq:zstar_gradient}
        \nabla\bm{z}^{\star}_{\gamma}(\bm{y}_0)=\frac{1}{\gamma}\big(\bm{A}_{S_{\bm{y}_0}}^
        {\top}(\bm{A}_{S_{\bm{y}_0}}\bm{A}_{S_{\bm{y}_0}}^\top)^{-1}\bm{A}_{S_{\bm{y}_0}}
        -\bm{I}\big).
    \end{equation}
\end{proposition}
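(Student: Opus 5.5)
Our plan is to work with the KKT system of the strongly convex quadratic program \ref{ilroopt}. Lemma~\ref{lem:oracle_projection} gives a unique minimizer $\bm z^{\star}_{\gamma}(\bm y)$. Because $\mathcal Z$ is polyhedral, the constraints are linear, so KKT is necessary and sufficient with no further qualification. Writing $\bm z=\bm z^{\star}_{\gamma}(\bm y)$, the conditions are
\begin{equation*}
    \bm y+\gamma\bm z-\bm A^\top\bm\lambda=\bm 0,\quad \bm A\bm z\ge\bm b,\quad \bm\lambda\ge\bm 0,\quad \lambda_i(\bm a_i^\top\bm z-b_i)=0 .
\end{equation*}
At $\bm y_0$, the LICQ assumption (i) makes the multiplier unique: $\bm A_{S}^\top\bm\lambda_S^0=\bm y_0+\gamma\bm z^{\star}_{\gamma}(\bm y_0)$ has at most one solution because $\bm A_S^\top$ has full column rank. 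Here we abbreviate $S=S_{\bm y_0}$.

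\textbf{Step 1: the equality-constrained system.} For $\bm y$ near $\bm y_0$ we solve the reduced system that treats $S$ as active and the rest as inactive:
\begin{equation*}
    \gamma\bm z-\bm A_S^\top\bm\lambda_S=-\bm y,\qquad \bm A_S\bm z=\bm b_S .
\end{equation*}
Substituting $\bm z=(\bm A_S^\top\bm\lambda_S-\bm y)/\gamma$ into the second equation gives
\begin{equation*}
    \bm\lambda_S(\bm y)=(\bm A_S\bm A_S^\top)^{-1}(\gamma\bm b_S+\bm A_S\bm y).
\end{equation*}
The matrix $\bm A_S\bm A_S^\top$ is invertible by LICQ. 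It follows that
\begin{equation*}
    \bm z(\bm y)=\frac1\gamma\Big(\bm A_S^\top(\bm A_S\bm A_S^\top)^{-1}\bm A_S-\bm I\Big)\bm y+\bm A_S^\top(\bm A_S\bm A_S^\top)^{-1}\bm b_S .
\end{equation*}
Both $\bm z(\bm y)$ and $\bm\lambda_S(\bm y)$ are affine in $\bm y$. By uniqueness of the KKT multiplier they coincide with $(\bm z^{\star}_{\gamma}(\bm y_0),\bm\lambda_S^0)$ at $\bm y=\bm y_0$.

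\textbf{Step 2: local feasibility, which is the main point.} We must check that $(\bm z(\bm y),\bm\lambda(\bm y))$, with $\lambda_i=0$ for $i\notin S$, satisfies all KKT conditions near $\bm y_0$.
\begin{itemize}
\item Strict complementarity (ii) gives $\lambda_i^0>0$ for $i\in S$. Since $\bm\lambda_S(\cdot)$ is continuous (affine), $\bm\lambda_S(\bm y)>\bm 0$ on a neighborhood.
\item For $i\notin S$ we have $\bm a_i^\top\bm z^{\star}_{\gamma}(\bm y_0)>b_i$. Continuity of $\bm z(\cdot)$ keeps these inequalities strict on a neighborhood.
\item Stationarity and complementarity hold by construction.
\end{itemize}
On the intersection $U$ of these finitely many open neighborhoods, the pair is a KKT point. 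Because KKT conditions are sufficient for the convex program and the minimizer is unique, $\bm z^{\star}_{\gamma}(\bm y)=\bm z(\bm y)$ on $U$.

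We must also confirm that the active set is exactly $S$ on $U$. Constraints in $S$ hold with equality by construction, and those outside $S$ are strictly inactive on $U$.

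\textbf{Step 3: conclusion.} Since $\bm z^{\star}_{\gamma}$ agrees with an affine map on the open set $U\ni\bm y_0$, it is differentiable at $\bm y_0$. Its Jacobian is the linear part read off in Step 1, which is exactly \eqref{eq:zstar_gradient}.

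As a sanity check, the matrix $\bm A_S^\top(\bm A_S\bm A_S^\top)^{-1}\bm A_S-\bm I$ is minus the orthogonal projector onto $\ker\bm A_S$. Hence the Jacobian is $-\frac1\gamma$ times the projection onto the tangent space of the active face. This matches the projection representation \eqref{eq:projection_representation} and the $1/\gamma$-Lipschitz bound of Lemma~\ref{lem:oracle_projection}.

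The only delicate step is Step 2. It is where strict complementarity is needed: without it, a multiplier could turn negative under perturbation, the active set would change, and the map would fail to be affine.
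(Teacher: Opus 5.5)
Your proposal is correct and follows essentially the same route as the paper's proof. You solve the reduced KKT system on the active set $S_{\bm y_0}$ to get affine primal--dual maps, use strict complementarity, strict inactivity and continuity to keep that pair a valid KKT point on a neighborhood, and conclude by sufficiency of KKT and uniqueness of the minimizer; your identification of the Jacobian as $-\frac{1}{\gamma}$ times the orthogonal projector onto $\ker\bm A_{S_{\bm y_0}}$ also matches the paper's remark after the proposition.
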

It is noted that the Jacobian~\eqref{eq:zstar_gradient} equals $-\frac{1}{\gamma}(\bm I-\bm P_{S_{\bm y_0}})$, where $\bm P_{S_{\bm y_0}}:=\bm A_{S_{\bm y_0}}^\top(\bm A_{S_{\bm y_0}}\bm A_{S_{\bm y_0}}^\top)^{-1}\bm A_{S_{\bm y_0}}$ is the orthogonal projection onto the row space of $\bm A_{S_{\bm y_0}}$. It is thus a scaled projection onto the tangent space of the active face, with operator norm at most $1/\gamma$, consistent with Lemma~\ref{lem:oracle_projection}. Three implementation points follow. First, since the expression depends on $\bm A_{S_{\bm y_0}}$ only through its row space, the inverse may be replaced by the Moore--Penrose pseudoinverse when $\bm A_{S_{\bm y_0}}$ is rank deficient; at such points, and at points where strict complementarity fails, \eqref{eq:zstar_gradient} should be read as the derivative of the affine piece selected by the current active set rather than as a Jacobian of $\bm z^{\star}_{\gamma}$. Second, in practice Jacobian--vector products are computed by implicitly differentiating the KKT system of the robust decision problem \ref{ilroopt}, which avoids forming \eqref{eq:zstar_gradient} explicitly. Third, this also accommodates feasible sets described through auxiliary variables: if $\mathcal Z$ is represented by a polyhedral lift in $(\bm z,\bm w)$, the formula applies in the lifted space and the required derivative is read off the $\bm z$-block. The portfolio experiment of Section~\ref{subsec:portfolio_rspo+} uses exactly this device for the $\ell_1$ risk constraint; see Appendices~\ref{app:portfolio_reformulation}
and~\ref{app:gradient_implementation} for more details.

\begin{remark}[Relation to differentiable optimization layers] \label{rem:vs_optnet}
Differentiating through a strongly convex quadratic program via its optimality conditions is standard \citep{amos2017optnet,agrawal2019differentiable}, and quadratic smoothing of a linear program for this purpose is introduced by \cite{wilder2019melding}. We highlight that the difference lies in the modeling aspect: in the existing literature the quadratic term is a training-time smoothing device that is discarded at deployment, where the nominal decision map is restored, so the derivative is taken through a decision rule that is never implemented. In the ILRO framework the same $\gamma$ governs training and deployment, so \eqref{eq:zstar_gradient} is the derivative of the decision rule actually used, and
descending the resulting objective decreases the regret that is actually
incurred.\hfill\Halmos
\end{remark}

\subsubsection*{Algorithm.}
Based on the explicit Jacobian of the robust decision map, we minimize the regularized empirical $\mathcal{RSPO}$ objective
\begin{equation}\label{eq:rspo_refine_obj}
    \hat{L}_N(\theta):=\frac{1}{N}\sum_{i\in[N]}\big[\bm y_i^\top \bm z_\gamma^\star(\bm g_\theta(\bm x_i))-v^\star(\bm y_i)\big]+\lambda\,\Omega(\bm{g}_{\theta})
\end{equation}
over a parametric class $\{\bm g_\theta:\theta\in\Theta\}$ with $\bm g_\theta$ differentiable in $\theta$, where $\Omega(\cdot)$ is the prediction-model regularizer and $\lambda\ge0$ its weight. Since $v^\star(\bm y_i)$ does not depend on $\theta$, the chain rule gives the per-sample gradient $[\nabla_\theta \bm g_\theta(\bm x_i)]^\top \bm q_i$ with $\bm q_i=\nabla \bm z_\gamma^\star(\bm g_\theta(\bm x_i))^\top \bm y_i$. We initialize $\theta^{(0)}$ at the $\mathcal{RSPO}_+$ minimizer, and apply standard stochastic gradient descent coupled with an Armijo test \citep{nocedal2006numerical} to decrease the objective sufficiently. The details of the algorithm are deferred to Appendix~\ref{app:gradient_algorithm}.

In summary, this section provides two complementary routes to the empirical $\mathcal{RSPO}$ learning problem~\eqref{eq:rspo_problem}: the convex $\mathcal{RSPO}_+$ surrogate, which is globally solvable for linear predictors over polyhedral feasible sets and targets the correct population predictor when the robustness parameter $\gamma$ is below the specific threshold; and the gradient method, which descends the $\mathcal{RSPO}$ objective directly for any $\gamma>0$, and can be used to refine the surrogate solution locally. The next section analyzes the statistical guarantees of the two predictors defined by the target and surrogate losses.

\section{Theoretical Guarantees}
\label{sec:statistical_guarantee}
This section is devoted to the finite-sample performance guarantees of the two predictors, namely, the $\mathcal{RSPO}$ predictor and the $\mathcal{RSPO}_+$ predictor. In both cases, we are interested in the population $\mathcal{RSPO}$ risk, since it is the target loss that measures the realized cost of the deployed robust decision.

Let $\mathbb{P}$ denote the population distribution of $(\bm x,\bm y)$ and fix $\gamma>0$. We define the following quantities for a predictor $\bm g$. For a concise exposition, we suppress two types of dependence throughout this section. First, all quantities implicitly depend on $\gamma$ through the losses $\ell_{\mathcal{RSPO}}$ and $\ell_{\mathcal{RSPO}_+}$. Second, sample-based quantities, including empirical risks, estimators, and Rademacher complexities, depend on the sample size $N$. We make these dependencies explicit only when they are relevant to the interpretation. 
The population risks under the target and surrogate losses are 
\begin{equation*}
    R(\bm g):=\mathbb{E}_{(\bm x,\bm y)\sim \mathbb P}\!\left[\ell_{\mathcal{RSPO}}(\bm g(\bm x),\bm y)\right],
    \qquad
    R_+(\bm g):=\mathbb{E}_{(\bm x,\bm y)\sim \mathbb P}\!\left[\ell_{\mathcal{RSPO}_+}(\bm g(\bm x),\bm y)\right],
\end{equation*}
respectively. Given i.i.d. samples $\{(\bm x_i,\bm y_i)\}_{i\in[N]}$, their empirical counterparts are
\begin{equation*}
    \widehat R(\bm g):=\frac1N\sum_{i\in[N]}\ell_{\mathcal{RSPO}}(\bm g(\bm x_i),\bm y_i),
    \qquad
    \widehat R_+(\bm g):=\frac1N\sum_{i\in[N]}\ell_{\mathcal{RSPO}_+}(\bm g(\bm x_i),\bm y_i),
\end{equation*}
respectively. For a class $\mathcal G$ of measurable predictors from $\mathcal X$ to $\mathbb R^d$, we set 
\begin{equation*}
    R^{\star}(\mathcal G):=\inf_{\bm g\in\mathcal G}R(\bm g),
    \qquad R^{\star}_+(\mathcal G):=\inf_{\bm g\in\mathcal G}R_+(\bm g).
\end{equation*}
Recall $\mathcal G_{\rm all}$ denotes the class of all measurable functions from $\mathcal X$ to $\mathbb R^d$, and define the lowest achievable target and surrogate risks by $R^{\star}:=R^{\star}(\mathcal G_{\rm all})$ and $R^{\star}_+:=R^{\star}_+(\mathcal G_{\rm all})$, respectively. Throughout the section we analyze the empirical $\mathcal{RSPO}$ and $\mathcal{RSPO}_+$ risk minimizers without regularization, corresponding to $\lambda=0$ in the formulations \eqref{eq:rspo_problem} and \eqref{eq:rspo+_ermprob}:
\begin{equation*}
    \widehat{\bm g}_{\rm RSPO}\in\argmin_{\bm g\in\mathcal G}\widehat R(\bm g),
    \qquad
    \widehat{\bm g}_{{\rm RSPO}_+}\in\argmin_{\bm g\in\mathcal G}\widehat R_+(\bm g),
\end{equation*}
which we assume to exist.

The two predictors are measured against different benchmarks, and the distinction drives the analysis that follows. The $\mathcal{RSPO}$ predictor minimizes an empirical version of the very risk we wish to control, so the natural benchmark is the best predictor available in the hypothesis class, and the object of interest is the within-class excess risk $R(\widehat{\bm g}_{\rm RSPO})-R^{\star}(\mathcal G)$, which is studied in Section~\ref{subsec:rspo_excess_risk_bounds}. The $\mathcal{RSPO}_+$ predictor, by contrast, minimizes a different risk, so to obtain a target-risk guarantee, we naturally exploit Fisher consistency. The latter is a statement about pointwise minimizers over all measurable predictors as opposed to a restricted function class $\mathcal G$. Its benchmark is therefore $R^{\star}$, and the object of interest is the target excess risk $R(\widehat{\bm g}_{{\rm RSPO}_+})-R^{\star}$, which is studied in Section~\ref{subsec:rspo+_excess_risk_bounds}.

\subsection{Excess Risk Bounds of $\mathcal{RSPO}$ Predictor}
\label{subsec:rspo_excess_risk_bounds}

The excess risk $R(\widehat{\bm g}_{\rm RSPO})-R^{\star}(\mathcal G)$ is controlled by a uniform deviation bound over the hypothesis class, and the analysis therefore reduces to measuring its complexity. We carry this out in two steps. We first bound the deviation by the Rademacher complexity of the function class induced by the loss, and then transfer that bound to the prediction class itself, using the Lipschitz continuity of the decision map.

Given a vector-valued hypothesis class $\mathcal{G}$, its $d$-dimensional multivariate Rademacher complexity, and its empirical counterpart conditional on a sample $\bm x^N=(\bm x_1,\ldots,\bm x_N)$, are
\begin{equation*}
    \mathfrak{R}_d(\mathcal{G}):= \mathbb{E}_{\bm{\sigma},\bm{X}}\left[\sup_{\bm{g}\in\mathcal{G}}\frac{1}{N}\sum_{i\in[N]}\bm{\sigma}_i^\top \bm{g}(\bm{x}_i)\right],
    \qquad
    \widehat{\mathfrak R}_d(\mathcal G;\bm x^N):= \mathbb E_{\bm\sigma}\left[\sup_{\bm g\in\mathcal G}\frac{1}{N}\sum_{i\in[N]}\bm\sigma_i^\top \bm g(\bm x_i)\right],
\end{equation*}
where $\bm{\sigma}_i\in\{+1,-1\}^d$ are independent Rademacher random vectors~\citep{maurer2016vector,bartlett2002rademacher} with $\bm{\sigma}=(\bm{\sigma}_i)_{i\in [N]}$, and $\bm{X}$ is the underlying population for the sample $\bm x^N$, so that $\mathfrak R_d(\mathcal G)=\mathbb E_{\bm X}[\widehat{\mathfrak R}_d(\mathcal G;\bm x^N)]$. We next define the loss-induced function class.

\begin{definition}[Loss-induced function class]
\label{def:loss_induced_class}
    Let $\mathcal G\subseteq \{\bm g:\mathcal X\to \mathbb R^d\}$ and let $\ell:\mathbb R^d\times\mathbb R^d\to \mathbb R$ be a loss function. The loss-induced function class associated with $\ell$ and $\mathcal G$ is
    \begin{equation*}
        \mathcal H_{\ell}
        =
        \left\{
        h_{\bm g}:(\bm x,\bm y)\mapsto \ell(\bm g(\bm x),\bm y)\mid
        \bm g\in\mathcal G
        \right\},
    \end{equation*}
    and its Rademacher complexity is
    \begin{equation*}
        \mathfrak{R}_1(\mathcal{H}_{\ell}):=
        \mathbb{E}_{(\sigma_i)_{i\in[N]},(\bm{X},\bm{Y})}\left[\sup_{\bm{g}\in\mathcal{G}}\frac{1}{N}\sum_{i\in[N]}\sigma_i\,\ell(\bm{g}(\bm{x}_i),\bm{y}_i)\right],
    \end{equation*}
    where $\sigma_i$ are independent Rademacher random variables and $\bm{Y}$ is the underlying population for the sample $(\bm y_1,\ldots,\bm y_N)$.
\end{definition}

To ease the analysis we impose the bounded objective-gap condition
\begin{equation*}
    b=\sup_{\bm{y}\in\mathcal{Y}}\left\{\max_{\bm{z}\in\mathcal{Z}}\bm{y}^\top\bm{z}-\min_{\bm{z}\in\mathcal{Z}}\bm{y}^\top\bm{z}\right\}<\infty,
\end{equation*}
which measures the largest difference in realized objective value between two feasible decisions under the same cost vector. Since $\bm z^{\star}_{\gamma}(\hat{\bm y})\in\mathcal Z$, it implies the uniform bound $0\leq \ell_{\mathcal{RSPO}}(\hat{\bm y},\bm y)\leq b$ for every $\hat{\bm y}\in\mathbb R^d$ and $\bm y\in\mathcal Y$. The first step is then a standard consequence of the symmetrization argument of \citet{bartlett2002rademacher}.

\begin{proposition}\label{prop:rspo_generalization}
    For any $\delta\in(0,1)$, with probability at least $1-\delta$,
    \begin{equation*}
    \sup_{\bm g\in \mathcal G}\left|R(\bm{g})-\widehat{R}(\bm{g})\right|\leq 2\mathfrak{R}_1(\mathcal{H}_{{\ell}_{\mathcal{RSPO}}})+b\sqrt{\frac{\log(2/\delta)}{2N}}.
    \end{equation*}
\end{proposition}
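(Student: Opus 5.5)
The proof is the standard symmetrization argument combined with McDiarmid's bounded-differences inequality, applied separately to the two one-sided deviations. We work with the centered suprema
\begin{equation*}
\Phi_+:=\sup_{\bm g\in\mathcal G}\big(R(\bm g)-\widehat R(\bm g)\big),\qquad \Phi_-:=\sup_{\bm g\in\mathcal G}\big(\widehat R(\bm g)-R(\bm g)\big).
\end{equation*}
Each is a function of the i.i.d.\ sample $\{(\bm x_i,\bm y_i)\}_{i\in[N]}$, and $\sup_{\bm g}|R(\bm g)-\widehat R(\bm g)|=\max\{\Phi_+,\Phi_-\}$. We assume measurability of these suprema, as is standard, or restrict to countable subclasses.

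\textbf{Bounded differences.} The bounded objective-gap condition gives $0\le \ell_{\mathcal{RSPO}}(\hat{\bm y},\bm y)\le b$ for all $\hat{\bm y}$ and all $\bm y\in\mathcal Y$. Replacing a single sample $(\bm x_i,\bm y_i)$ therefore changes $\widehat R(\bm g)$ by at most $b/N$, uniformly in $\bm g$. Hence each of $\Phi_\pm$ changes by at most $b/N$. McDiarmid's inequality then yields, for each sign, with probability at least $1-\delta/2$,
\begin{equation*}
\Phi_\pm\le \mathbb E[\Phi_\pm]+b\sqrt{\log(2/\delta)/(2N)}.
\end{equation*}

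\textbf{Symmetrization.} Introduce an independent ghost sample $\{(\bm x_i',\bm y_i')\}$ and write $R(\bm g)=\mathbb E'[\widehat R'(\bm g)]$. Jensen's inequality pulls the supremum inside the expectation. Inserting Rademacher signs $\sigma_i$ is allowed because swapping $(\bm x_i,\bm y_i)$ with $(\bm x_i',\bm y_i')$ preserves the joint distribution. Splitting the resulting supremum into two terms gives
\begin{equation*}
\mathbb E[\Phi_\pm]\le 2\,\mathfrak R_1(\mathcal H_{\ell_{\mathcal{RSPO}}}).
\end{equation*}
Here we use that $-\sigma_i$ has the same distribution as $\sigma_i$, which handles the minus sign in both $\Phi_+$ and $\Phi_-$.

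\textbf{Combination.} A union bound over the two one-sided events gives probability at least $1-\delta$ for both bounds simultaneously. On that event,
\begin{equation*}
\max\{\Phi_+,\Phi_-\}\le 2\mathfrak R_1(\mathcal H_{\ell_{\mathcal{RSPO}}})+b\sqrt{\log(2/\delta)/(2N)},
\end{equation*}
which is the claim.

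No step is a genuine obstacle. The only points needing care are tracking the $\delta/2$ split that produces $\log(2/\delta)$, and noting that the range bound $[0,b]$ is exactly what McDiarmid requires. That range bound comes from choosing $v^\star(\bm y)$ as the benchmark, which keeps the loss nonnegative, together with $\bm z^\star_\gamma(\hat{\bm y})\in\mathcal Z$.
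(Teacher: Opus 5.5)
Your proposal is correct and follows the paper's proof essentially step for step. Both use McDiarmid's inequality with bounded differences $b/N$ at confidence $1-\delta/2$ for each one-sided supremum, ghost-sample symmetrization bounding each expectation by $2\mathfrak{R}_1(\mathcal{H}_{\ell_{\mathcal{RSPO}}})$, and a union bound over the two events.
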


The second step relates the complexity of $\mathcal H_{\ell_{\mathcal{RSPO}}}$ to that of $\mathcal G$. This is possible because the robust decision map is a projection, so the loss is Lipschitz in the prediction. In particular, for every fixed $\bm y$, $\ell_{\mathcal{RSPO}}(\cdot,\bm y)$ is $\left(\|\bm y\|_2/\gamma\right)$-Lipschitz, because $\ell_{\mathcal{RSPO}}(\hat{\bm{y}},\bm{y})=\bm{y}^\top\bm{z}^{\star}_{\gamma}(\hat{\bm{y}})-v^{\star}(\bm{y})$ and $\bm{z}^{\star}_{\gamma}(\cdot)$ is $1/\gamma$-Lipschitz by Lemma~\ref{lem:oracle_projection}. This yields the following result by applying the vector contraction inequality of \cite{maurer2016vector}.

\begin{lemma}\label{lem:vector_contraction_inequality}
    Suppose that $\|\bm y\|_2\leq r$ for all $\bm y\in\mathcal Y$. Then $\mathfrak R_1(\mathcal H_{\ell_{\mathcal{RSPO}}})\leq \frac{\sqrt{2}r}{\gamma}\,\mathfrak R_d(\mathcal G)$.
\end{lemma}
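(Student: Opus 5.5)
The plan is to apply the vector contraction inequality of \citet{maurer2016vector} conditionally on the sample and then take expectations. That inequality states the following. Let $\bm x_1,\ldots,\bm x_N$ be points, let $h_i:\mathbb R^d\to\mathbb R$ be $L$-Lipschitz with respect to $\|\cdot\|_2$, and let $\mathcal G$ be a class of $\mathbb R^d$-valued functions. Then
\begin{equation*}
\mathbb E_{\sigma}\Big[\sup_{\bm g\in\mathcal G}\sum_{i\in[N]}\sigma_i h_i(\bm g(\bm x_i))\Big]\leq \sqrt{2}\,L\,\mathbb E_{\bm\sigma}\Big[\sup_{\bm g\in\mathcal G}\sum_{i\in[N]}\bm\sigma_i^\top\bm g(\bm x_i)\Big].
\end{equation*}
Here the $\sigma_i$ are scalar Rademacher variables and the $\bm\sigma_i$ are Rademacher vectors with independent coordinates. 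The Rademacher vectors are exactly those used in the definition of $\mathfrak R_d(\mathcal G)$.

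First, fix a realization of the sample $(\bm x_i,\bm y_i)_{i\in[N]}$ and set $h_i(\hat{\bm y}):=\ell_{\mathcal{RSPO}}(\hat{\bm y},\bm y_i)=\bm y_i^\top\bm z^\star_\gamma(\hat{\bm y})-v^\star(\bm y_i)$. Second, verify the Lipschitz constant. By Lemma~\ref{lem:oracle_projection}, $\bm z^\star_\gamma(\cdot)=\Pi_{\mathcal Z}(-\cdot/\gamma)$ is $1/\gamma$-Lipschitz, so Cauchy--Schwarz gives
\begin{equation*}
|h_i(\hat{\bm y})-h_i(\hat{\bm y}')|\leq \|\bm y_i\|_2\,\|\bm z^\star_\gamma(\hat{\bm y})-\bm z^\star_\gamma(\hat{\bm y}')\|_2\leq \frac{r}{\gamma}\|\hat{\bm y}-\hat{\bm y}'\|_2 .
\end{equation*}
This uses the standing assumption $\|\bm y\|_2\le r$ on $\mathcal Y$, so $L=r/\gamma$ uniformly in $i$. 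The additive constant $-v^\star(\bm y_i)$ has no effect on the Lipschitz property. Its contribution $\mathbb E_\sigma[\sum_i\sigma_i v^\star(\bm y_i)]=0$ anyway, since it does not depend on $\bm g$ and can be pulled out of the supremum.

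Third, apply the inequality conditionally on the sample and divide by $N$. This gives, pointwise in the sample,
\begin{equation*}
\mathbb E_\sigma\Big[\sup_{\bm g\in\mathcal G}\frac1N\sum_{i\in[N]}\sigma_i\ell_{\mathcal{RSPO}}(\bm g(\bm x_i),\bm y_i)\Big]\le\frac{\sqrt2 r}{\gamma}\,\widehat{\mathfrak R}_d(\mathcal G;\bm x^N).
\end{equation*}
The right-hand side depends only on $\bm x^N$. Taking expectation over $(\bm X,\bm Y)$ and using $\mathfrak R_d(\mathcal G)=\mathbb E_{\bm X}[\widehat{\mathfrak R}_d(\mathcal G;\bm x^N)]$ yields the claim.

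The only delicate point is matching conventions with Maurer's theorem. His statement uses the Euclidean norm on $\mathbb R^d$ and independent Rademacher coordinates, which is consistent with the definition of $\mathfrak R_d$ here. His theorem also allows the functions $h_i$ to depend on $i$, which is what permits the sample-dependent $\bm y_i$ to enter $h_i$. Measurability and finiteness of the suprema I would handle in the standard way, by restricting to finite subclasses and taking the supremum over them. This is justified because both sides are monotone in the class.
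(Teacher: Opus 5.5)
Your proposal is correct and follows essentially the same route as the paper's proof. You condition on the sample, show that each map $\bm u\mapsto\ell_{\mathcal{RSPO}}(\bm u,\bm y_i)$ is $(r/\gamma)$-Lipschitz via the $1/\gamma$-Lipschitz projection representation of Lemma~\ref{lem:oracle_projection}, apply Maurer's vector contraction inequality with index-dependent functions, divide by $N$, and integrate over the sample; your remarks on the constant $v^\star(\bm y_i)$ and on measurability are harmless details that the paper leaves implicit.
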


Combining the two steps gives the central result of this subsection.

\begin{theorem}[Meta Generalization Bound]
\label{thm:rspo_meta_generalization}
    For $\gamma>0$, suppose that $\|\bm y\|_2\leq r$ for all $\bm y\in\mathcal Y$. Then, for any $\delta\in(0,1)$, with probability at least $1-\delta$,
    \begin{equation*}
        R(\widehat{\bm g}_{\rm RSPO})
        -
        R^{\star}(\mathcal G)
        \leq
        \frac{4\sqrt{2}\,r}{\gamma}\,
        \mathfrak R_d(\mathcal G)
        +
        b\sqrt{\frac{2\log(2/\delta)}{N}}.
    \end{equation*}
\end{theorem}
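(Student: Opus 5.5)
The plan is the standard ERM decomposition, combining Proposition~\ref{prop:rspo_generalization} with Lemma~\ref{lem:vector_contraction_inequality}. Fix $\epsilon>0$ and choose $\bm g_\epsilon\in\mathcal G$ with $R(\bm g_\epsilon)\le R^\star(\mathcal G)+\epsilon$; this avoids assuming the infimum over $\mathcal G$ is attained. We then split
\begin{equation*}
R(\widehat{\bm g}_{\rm RSPO})-R(\bm g_\epsilon)
=\big[R(\widehat{\bm g}_{\rm RSPO})-\widehat R(\widehat{\bm g}_{\rm RSPO})\big]
+\big[\widehat R(\widehat{\bm g}_{\rm RSPO})-\widehat R(\bm g_\epsilon)\big]
+\big[\widehat R(\bm g_\epsilon)-R(\bm g_\epsilon)\big].
\end{equation*}
The middle bracket is nonpositive because $\widehat{\bm g}_{\rm RSPO}$ minimizes $\widehat R$ over $\mathcal G$. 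The first and third brackets are each at most $\sup_{\bm g\in\mathcal G}|R(\bm g)-\widehat R(\bm g)|$.

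Next we invoke Proposition~\ref{prop:rspo_generalization}, whose bounded-loss requirement holds because $0\le\ell_{\mathcal{RSPO}}\le b$. On an event of probability at least $1-\delta$, the uniform deviation is at most $2\mathfrak R_1(\mathcal H_{\ell_{\mathcal{RSPO}}})+b\sqrt{\log(2/\delta)/(2N)}$. Doubling this gives
\begin{equation*}
R(\widehat{\bm g}_{\rm RSPO})-R^\star(\mathcal G)\le 4\mathfrak R_1(\mathcal H_{\ell_{\mathcal{RSPO}}})+2b\sqrt{\frac{\log(2/\delta)}{2N}}+\epsilon,
\end{equation*}
and $2b\sqrt{\log(2/\delta)/(2N)}=b\sqrt{2\log(2/\delta)/N}$. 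Because the high-probability event does not depend on $\epsilon$, we may let $\epsilon\downarrow0$.

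Finally, Lemma~\ref{lem:vector_contraction_inequality} gives $\mathfrak R_1(\mathcal H_{\ell_{\mathcal{RSPO}}})\le\sqrt2\,r\,\mathfrak R_d(\mathcal G)/\gamma$, using $\|\bm y\|_2\le r$. Substituting yields the constant $4\sqrt2\,r/\gamma$, which is exactly the stated bound.

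The main thing to check is the Lipschitz step inside Lemma~\ref{lem:vector_contraction_inequality}, and it goes through. The vector contraction inequality of \cite{maurer2016vector} requires each map $\hat{\bm y}\mapsto\ell_{\mathcal{RSPO}}(\hat{\bm y},\bm y_i)$ to be Lipschitz with a constant uniform over $i$. Here $\bm y_i$ enters both the loss and the sample, but it is treated as fixed per index. Conditionally on the sample, the constant is $\|\bm y_i\|_2/\gamma\le r/\gamma$, which is uniform. Taking expectations over the sample afterwards then yields the unconditional bound.
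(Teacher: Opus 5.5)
Your proposal is correct and follows essentially the same route as the paper: an $\epsilon$-approximate comparator in $\mathcal G$, the ERM decomposition bounded by twice the uniform deviation from Proposition~\ref{prop:rspo_generalization}, letting $\epsilon\downarrow 0$ on the $\epsilon$-independent event, and then Lemma~\ref{lem:vector_contraction_inequality} to turn $4\mathfrak R_1(\mathcal H_{\ell_{\mathcal{RSPO}}})$ into $\frac{4\sqrt{2}\,r}{\gamma}\mathfrak R_d(\mathcal G)$. Your remark that Maurer's contraction allows index-dependent Lipschitz maps with a common constant $r/\gamma$ is also exactly how the paper justifies the lemma.
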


\begin{table}[t]
    \centering
    \footnotesize
    \renewcommand{\arraystretch}{1.6}
    \setlength{\tabcolsep}{6pt}
    \caption{Instantiations of the meta generalization bound. The third column reports the complexity bound available for each class and the last column the leading term of the resulting bound on the excess risk $R(\widehat{\bm g}_{\rm RSPO})-R^{\star}(\mathcal G)$.}
    \label{tab:rspo_generalization_methods}
    \begin{tabularx}{\textwidth}{
          >{\raggedright\arraybackslash}p{0.19\textwidth}
          >{\raggedright\arraybackslash}X
          >{\centering\arraybackslash}p{0.20\textwidth}
          >{\centering\arraybackslash}p{0.24\textwidth}}
        \toprule
        \textbf{Hypothesis class} &
        \textbf{Condition imposed} &
        \textbf{\makecell[c]{Complexity bound\\ $\mathfrak R_d(\mathcal G)\leq$}} &
        \textbf{\makecell[c]{Excess risk bound\\ (leading term)}} \\
        \midrule
        Bounded affine\newline (Example~\ref{ex:rspo_gen_affine}) &
        $\bm g_{\bm W}(\bm x)=\bm W\bm\varphi(\bm x)$ with $\|\bm\varphi(\bm x)\|_2\leq \kappa$ and $\|\bm W\|_F\leq B$ &
        $\displaystyle B\kappa\sqrt{\frac{d}{N}}$ &
        $\displaystyle \frac{4\sqrt2\,rB\kappa}{\gamma}\sqrt{\frac{d}{N}}$ \\
        Polynomial discrimination (Example~\ref{ex:rspo_gen_pd}) &
        Order $\nu$ in the sense of Definition~\ref{def:vector_polynomial_discrimination}, with $D_{\mathcal G}(\bm x^N)\leq B_1$ &
        $\displaystyle B_1\sqrt{\frac{2\nu\log(N+1)}{N}}$ &
        $\displaystyle \frac{8rB_2}{\gamma}\sqrt{\frac{\nu\log (N+1)}{N}}$ \\
        Dudley entropy integral (Example~\ref{ex:rspo_gen_dudley}) &
        Entropy integral bounded by $J_{\mathcal G}$ uniformly over samples &
        $\displaystyle C\,\frac{J_{\mathcal G}}{\sqrt N}$ &
        $\displaystyle \frac{4\sqrt2\,Cr}{\gamma}\,\frac{J_{\mathcal G}}{\sqrt N}$ \\
        RKHS ball\newline (Example~\ref{ex:rspo_gen_rkhs}) &
        $\sum_{j}\|g_j\|_{\mathcal H}^2\leq R_{\mathcal H}^2$ with $\sup_{\bm x}K(\bm x,\bm x)\leq\kappa^2$ &
        $\displaystyle \kappa R_{\mathcal H}\sqrt{\frac{d}{N}}$ &
        $\displaystyle \frac{4\sqrt2\,r\kappa R_{\mathcal H}}{\gamma}\sqrt{\frac{d}{N}}$ \\
        \bottomrule
        \addlinespace[3pt]
        \multicolumn{4}{@{}p{\dimexpr\textwidth-2\tabcolsep\relax}@{}}{%
        \emph{Note.} All bounds are stated under the assumptions $\gamma>0$ and $\|\bm y\|_2\leq r$ for every $\bm y\in\mathcal Y$. Each excess risk bound holds with probability at least $1-\delta$ for any $\delta\in(0,1)$ and reports only the leading term; the additive term $b\sqrt{2\log(2/\delta)/N}$, common to all four classes, is omitted. Details and proofs are given in Appendix~\ref{app:gen_examples}.}\\
    \end{tabularx}
\end{table}

Theorem~\ref{thm:rspo_meta_generalization} separates the two sources of statistical difficulty. The robust decision map enters only through the Lipschitz factor $r/\gamma$, and the richness of the prediction class only through $\mathfrak R_d(\mathcal G)$. Deriving a guarantee for a specific hypothesis class therefore reduces to the single task of bounding $\mathfrak R_d(\mathcal G)$. Appendix~\ref{app:gen_examples} carries this out for four representative classes---bounded affine predictors, polynomial-discrimination classes, classes with bounded Dudley entropy integral, and vector-valued RKHS balls. Table~\ref{tab:rspo_generalization_methods} collects the resulting bounds: across the four classes the excess risk decays at the parametric rate $N^{-1/2}$, up to a logarithmic factor in the polynomial-discrimination case, with the robustness parameter entering uniformly through $1/\gamma$.

\begin{remark}[Comparison with $\mathcal{SPO}$ risk bounds]
\label{rem:comparison_spo_generalization}
The argument leading to Theorem~\ref{thm:rspo_meta_generalization} is viable because the robust decision map is smooth. Vector contraction requires the loss to be Lipschitz in the prediction, which is satisfied by $\ell_{\mathcal{RSPO}}(\cdot,\bm y)$. This is not the case for $\ell_{\mathcal{SPO}}(\cdot,\bm y)$, as it is discontinuous. Generalization bounds for $\mathcal{SPO}$ are therefore obtained by controlling the induced loss class directly, through an elegant analysis of margin conditions and combinatorial complexity measures tailored to the feasible region \citep{elbalghiti2023generalization}. However, this comes at the cost that the resulting bounds depend on the complexity and dimensionality of the feasible set $\mathcal Z$.  \hfill\Halmos
\end{remark}

The fact that our bounds depend on the order of $1/\gamma$ highlights the relationship between the amount of data and the robustness requirement: when the formulation is more robust (larger $\gamma$), fewer samples are needed to achieve the same excess risk bound. This suggests that, when data are limited, the decision-maker should seek more robust decisions.

\subsection{Excess Risk Bounds of $\mathcal{RSPO}_+$ Predictor}
\label{subsec:rspo+_excess_risk_bounds}

Now we turn to the excess risk $R(\widehat{\bm g}_{{\rm RSPO}_+})-R^{\star}$ of the surrogate predictor. A natural first attempt is to compare $\widehat{\bm g}_{{\rm RSPO}_+}$ with $\widehat{\bm g}_{\rm RSPO}$ and invoke the bounds of Section~\ref{subsec:rspo_excess_risk_bounds}. However, $\widehat{\bm g}_{\rm RSPO}$ is the minimizer of a nonconvex optimization problem (typically NP-hard), whereas $\widehat{\bm g}_{{\rm RSPO}_+}$ is obtained by minimizing a convex surrogate problem (typically tractable), under finite samples and a restricted function class. Therefore, the computational complexity barrier prevents the two predictors from being close in general. In contrast, Fisher consistency results dictate that, in the population limit and when the function class consists of all measurable functions, the surrogate minimizer is also a target-risk minimizer. Thus, we derive the excess risk bounds through Fisher consistency.

To ease our discussion, we introduce the population ``oracle predictor''
\begin{equation}\label{eq:appE_reference}
    \bm g_0(\bm x):=2\bar{\bm y}(\bm x)=2\mathbb{E}_{\bm{y}\sim\mathbb{P}_{\bm{y}|\bm{x}}}[\bm{y}],
\end{equation}
which by Corollary~\ref{cor:RSPO+a1} is the unique pointwise minimizer of the surrogate risk when $a=1$ under the assumptions of this corollary.

The analysis can be roughly divided into three steps. First, using a standard generalization argument, we bound the excess risk of the surrogate $R_+(\widehat{\bm{g}}_{{\rm RSPO}_+}) - R^{\star}_+$. Next, by the local strong convexity, this implies $\widehat{\bm g}_{{\rm RSPO}_+}$ is close to $\bm g_0$. Finally, applying the Lipschitz property of $R(\cdot)$ and the fact that $R(\bm g_0)=R^{\star}$ due to Fisher consistency, we obtain the bound on target excess risk $R(\widehat{\bm g}_{{\rm RSPO}_+})-R^{\star}$. We begin by providing necessary definitions and conditions.

\subsubsection*{Pointwise risks and the conditions.} Throughout, $\mathcal B(\bm c,r):=\{\bm u\in\mathbb R^d:\|\bm u-\bm c\|_2\le r\}$ denotes the closed Euclidean ball. For a fixed context $\bm x$, define the pointwise target and surrogate risks as
\begin{equation}\label{eq:appE_pointwise_target}
    R(\bm c;\bm x):=\mathbb{E}_{\bm{y}\sim\mathbb{P}_{\bm{y}|\bm{x}}}[\ell_{\mathcal{RSPO}}(\bm c,\bm y)],
    \qquad
    R_+(\bm c;\bm x):=\mathbb{E}_{\bm{y}\sim\mathbb{P}_{\bm{y}|\bm{x}}}[\ell_{\mathcal{RSPO}_+}(\bm c,\bm y)],
\end{equation}
respectively. Since $\mathcal Z$ is compact, $R_{\mathcal Z}:=\max_{\bm z\in\mathcal Z}\|\bm z\|_2<\infty$ and
\begin{equation}\label{eq:appE_loss_envelope}
    |\ell_{\mathcal{RSPO}}(\bm c,\bm y)|\leq2R_{\mathcal Z}\|\bm y\|_2
    \qquad\text{for every }\bm c\in\mathbb R^d ,
\end{equation}
so whenever $\mathbb E\|\bm y\|_2<\infty$ every target risk appearing below is finite and the tower property $R(\bm g)=\mathbb E_{\bm x\sim \bb P_{\bm x}}[R(\bm g(\bm x);\bm x)]$ applies to each. Our analysis leverages the oracle predictor $\bm g_0$, and hence requires the following condition about $\bm g_0$.

\begin{condition}[Oracle Predictor Condition]
    \label{con:reference_condition}
    \begin{enumerate}
        \item[]
        \item[(i)] $\bm g_0$ is measurable and, for almost every $\bm x$, $\bm g_0(\bm x)$ minimizes $R_+(\cdot;\bm x)$ over $\mathbb R^d$;
        \item[(ii)] for $\mathbb{P}_{\bm x}$-almost every $\bm x$, $R_+(\cdot;\bm x)$ is convex on $\mathbb R^d$ and $\mu$-strongly convex on the closed ball $\mathcal B(\bm g_0(\bm x),\rho)$, which is centered at $\bm g_0(\bm x)$ with radius $\rho$. Here, $\mu,\rho>0$ do not depend on $\bm x$;
        \item[(iii)] $\bm g_0(\bm x)\in \mathcal{Y}^{\star}_{\mathcal{RSPO}}(\bm x)$ for $\mathbb P_{\bm x}$-almost every $\bm x$;
        \item[(iv)]  $\|\bm g_0(\bm x)\|_2\leq\bar\beta$ almost surely for some $0<\bar\beta<\infty$.
    \end{enumerate}
\end{condition}

Condition~\ref{con:reference_condition}(i) and condition~\ref{con:reference_condition}(iii) are implied by the results established in this paper and we provide sufficient conditions for (ii) in Lemma~\ref{lem:local_sc}. With $a=1$, Condition~\ref{con:reference_condition}(i) holds under the assumptions of Corollary~\ref{cor:RSPO+a1}. Condition~\ref{con:reference_condition}(ii) is a local curvature requirement. We provide an example where it holds by Lemma~\ref{lem:local_sc} in Appendix~\ref{app:technical_lemmas}. The example, roughly speaking, requires that the distribution of $\bm y$ must place enough mass on a region centered at $\bm g_0(\bm x)$ obtained by rescaling the feasible set. Since Lemma~\ref{lem:local_sc} is stated for a fixed $\bm x$, Condition~\ref{con:reference_condition}(ii) follows with $\mu=\operatorname*{ess\,inf}_{\bm x}p_0(\bm x)/\gamma$ provided $\operatorname*{ess\,inf}_{\bm x}p_0(\bm x)>0$. Condition~\ref{con:reference_condition}(iii) holds by Theorem~\ref{thm:Fisher_consistency} whenever $\mathcal Z$ is a bounded polyhedron with nonempty interior and $\gamma\leq\bar\gamma(\bm x)$ for almost every $\bm x$, which a fixed $\gamma>0$ satisfies as soon as $\gamma\leq\operatorname*{ess\,inf}_{\bm x}\bar\gamma(\bm x)$. When conditions (i)--(iii) hold, $\bm g_0(\bm x)$ is the unique minimizer of $R_+(\cdot;\bm x)$ due to strong convexity, and hence Fisher consistency holds by definition. Condition~(iv) holds trivially when $\bm y$ is uniformly bounded.

\subsubsection*{Bounds on surrogate excess risk.} In this step, we bound surrogate excess risk 
\begin{equation*}
    R_+(\widehat{\bm{g}}_{{\rm RSPO}_+}) - R^{\star}_+=\big[R_+(\widehat{\bm{g}}_{{\rm RSPO}_+}) - R^{\star}_+(\mathcal G)\big]+\big[R^{\star}_+(\mathcal G)-R^{\star}_+\big],
\end{equation*}
using generalization bound for the surrogate empirical risk minimizer. Define the uniform upper bound of the surrogate loss over the prediction image $\mathcal G(\mathcal X):=\{\bm g(\bm x):\bm g\in\mathcal G,\, \bm x \in \mathcal X\}$ and the label set $\mathcal Y$,
\begin{equation*}
    \mathfrak L(\gamma)
    :=
    \sup_{\hat{\bm y}\in\mathcal G(\mathcal X),\,\bm y\in\mathcal Y}
    \ell_{\mathcal{RSPO}_+}(\hat{\bm y},\bm y).
\end{equation*}

\begin{remark}\label{rem:L_gamma_bound}
If $\|\bm y\|_2\leq r$ on $\mathcal Y$ and $\|\hat{\bm y}\|_2\leq G_\infty$ on $\mathcal G(\mathcal X)$, the Cauchy--Schwarz inequality yields $\mathfrak L(\gamma)\leq 2(r+aG_\infty)R_{\mathcal Z}+\frac{a\gamma}{2}R_{\mathcal Z}^{2}$, which implies that $\mathfrak L(\gamma)$ grows at most linearly in $\gamma$.
\end{remark}

\begin{proposition}\label{prop:rspo+_generalization}
Let $\mathcal{G}$ be a hypothesis class from $\mathcal{X}$ to $\mathbb{R}^d$. Suppose that $\mathcal{Z}$ is bounded with diameter $D_{\mathcal{Z}}$, that $\mathcal{G}(\mathcal{X})$ and $\mathcal{Y}$ are bounded so that $\mathfrak{L}(\gamma)<\infty$, and that $\mathfrak{R}_d(\mathcal{G})\leq C_0/\sqrt{N}$ for a constant $C_0$. Then, for any $\delta\in(0,1)$, with probability at least $1 - \delta$,
\begin{equation*}
    R_+(\widehat{\bm{g}}_{{\rm RSPO}_+}) - R^{\star}_+(\mathcal{G}) \leq \frac{4\sqrt{2}a D_{\mathcal{Z}}C_0}{\sqrt{N}}+\mathfrak{L}(\gamma)\sqrt{\frac{2\log(2/\delta)}{N}} .
\end{equation*}
\end{proposition}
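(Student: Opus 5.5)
The plan mirrors the argument behind Theorem~\ref{thm:rspo_meta_generalization}, with $\ell_{\mathcal{RSPO}}$ replaced by $\ell_{\mathcal{RSPO}_+}$. There are three ingredients: a uniform deviation bound via symmetrization, a vector contraction step that uses the Lipschitz property of the surrogate, and the standard ERM decomposition.

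\textbf{Step 1: uniform deviation.} Proposition~\ref{prop:rspo+_loss_properties}(i) gives $\ell_{\mathcal{RSPO}_+}\geq 0$, and on $\mathcal G(\mathcal X)\times\mathcal Y$ the loss is bounded above by $\mathfrak L(\gamma)$. So every function in the loss-induced class $\mathcal H_{\ell_{\mathcal{RSPO}_+}}$ takes values in $[0,\mathfrak L(\gamma)]$. The symmetrization argument of \citet{bartlett2002rademacher}, combined with McDiarmid's bounded-differences inequality (each sample changes the supremum by at most $\mathfrak L(\gamma)/N$), gives, exactly as in Proposition~\ref{prop:rspo_generalization}, with probability at least $1-\delta$,
\begin{equation*}
\sup_{\bm g\in\mathcal G}\big|R_+(\bm g)-\widehat R_+(\bm g)\big|\leq 2\mathfrak R_1(\mathcal H_{\ell_{\mathcal{RSPO}_+}})+\mathfrak L(\gamma)\sqrt{\frac{\log(2/\delta)}{2N}}.
\end{equation*}

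\textbf{Step 2: contraction.} By Proposition~\ref{prop:rspo+_loss_properties}(iv), for each fixed $\bm y$ the map $\hat{\bm y}\mapsto\ell_{\mathcal{RSPO}_+}(\hat{\bm y},\bm y)$ is $aD_{\mathcal Z}$-Lipschitz in $\|\cdot\|_2$. The vector contraction inequality of \cite{maurer2016vector} allows the Lipschitz function to depend on the sample index, so it applies after conditioning on the $\bm y_i$. Taking expectations then gives $\mathfrak R_1(\mathcal H_{\ell_{\mathcal{RSPO}_+}})\leq \sqrt2\,aD_{\mathcal Z}\,\mathfrak R_d(\mathcal G)\leq \sqrt2\,aD_{\mathcal Z}C_0/\sqrt N$. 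This is the analogue of Lemma~\ref{lem:vector_contraction_inequality}, with the constant $r/\gamma$ replaced by $aD_{\mathcal Z}$.

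\textbf{Step 3: ERM decomposition.} Fix $\epsilon>0$ and choose $\bm g_\epsilon\in\mathcal G$ with $R_+(\bm g_\epsilon)\leq R^\star_+(\mathcal G)+\epsilon$. Then
\begin{equation*}
R_+(\widehat{\bm g}_{{\rm RSPO}_+})-R^\star_+(\mathcal G)\leq [R_+(\widehat{\bm g}_{{\rm RSPO}_+})-\widehat R_+(\widehat{\bm g}_{{\rm RSPO}_+})]+[\widehat R_+(\widehat{\bm g}_{{\rm RSPO}_+})-\widehat R_+(\bm g_\epsilon)]+[\widehat R_+(\bm g_\epsilon)-R_+(\bm g_\epsilon)]+\epsilon.
\end{equation*}
The middle bracket is nonpositive by optimality of $\widehat{\bm g}_{{\rm RSPO}_+}$, and each outer bracket is bounded by the uniform deviation from Step 1. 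Plugging in Step 2 gives $2\cdot 2\sqrt2 aD_{\mathcal Z}C_0/\sqrt N=4\sqrt2aD_{\mathcal Z}C_0/\sqrt N$, and $2\mathfrak L(\gamma)\sqrt{\log(2/\delta)/(2N)}=\mathfrak L(\gamma)\sqrt{2\log(2/\delta)/N}$. Letting $\epsilon\downarrow0$ yields the claimed bound.

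The only step needing care is Step 2. One must check that the Maurer inequality is applied conditionally on $(\bm x_i,\bm y_i)$, with the index-dependent contractions $\ell_{\mathcal{RSPO}_+}(\cdot,\bm y_i)$, so that $\mathfrak R_d(\mathcal G)$ is recovered after taking expectations. One must also confirm that the uniform-in-$\bm y$ Lipschitz constant $aD_{\mathcal Z}$ from Proposition~\ref{prop:rspo+_loss_properties}(iv) suffices, which it does because that constant does not depend on $\bm y$.
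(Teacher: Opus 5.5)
Your proposal is correct and follows the paper's proof essentially step for step. It combines a two-sided uniform deviation bound (McDiarmid plus symmetrization, using that $\ell_{\mathcal{RSPO}_+}$ takes values in $[0,\mathfrak L(\gamma)]$ on $\mathcal G(\mathcal X)\times\mathcal Y$), Maurer's vector contraction with the $\bm y$-independent Lipschitz constant $aD_{\mathcal Z}$ from Proposition~\ref{prop:rspo+_loss_properties}(iv), and the same ERM decomposition as in Theorem~\ref{thm:rspo_meta_generalization}, and your constants reproduce the stated bound exactly.
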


Proposition~\ref{prop:rspo+_generalization} controls the generalization error  within function class $\mathcal G$. However, Fisher consistency is only meaningful over the set of all measurable predictors, and we have to account for the expressiveness of $\mathcal G$ itself. Accordingly, for $\delta\in(0,1)$ we write, at $a=1$,
\begin{equation}\label{eq:approx_error}
    B_N:=\frac{4\sqrt{2}D_{\mathcal Z}C_0}{\sqrt{N}}+\mathfrak L(\gamma)\sqrt{\frac{2\log(2/\delta)}{N}},
    \qquad
    A^+_{\mathcal G}:=R^{\star}_+(\mathcal G)-R^{\star}_+\geq0.
\end{equation}
Under the conditions in Proposition~\ref{prop:rspo+_generalization}, we have
\begin{equation}\label{eq:surrogate_excess_risk}
    R_+(\widehat{\bm{g}}_{{\rm RSPO}_+}) - R^{\star}_+\leq B_N+A^+_{\mathcal G}.
\end{equation}
The first term $B_N$ vanishes at the rate $N^{-1/2}$, and the second term $A^+_{\mathcal G}$ is a deterministic property of the pair $(\mathcal G,\mathbb P)$. We briefly discuss when $A^+_{\mathcal G}$ vanishes. Under the conditions of Corollary~\ref{cor:RSPO+a1}, the pointwise minimizer of the surrogate risk is $\bm g_0(\bm x)=2\bar{\bm y}(\bm x)$, so $R^{\star}_+$ is attained by $\bm g_0$. Thus, $\bm g_0\in\mathcal G$ up to $\mathbb P_{\bm x}$-null sets implies $A^+_{\mathcal G}=0$. Conversely, if the infimum defining $R^{\star}_+(\mathcal G)$ is attained by some predictor in $\mathcal G$, $A^+_{\mathcal G}=0$ only if $\bm g_0\in\mathcal G$ up to null sets due to uniqueness of the pointwise minimizer.

\subsubsection*{Bounds on target excess risk.} Based on \eqref{eq:surrogate_excess_risk}, we bound the distance between $\widehat{\bm g}_{{\rm RSPO}_+}$ and $\bm g_0$ by the local strong convexity condition. Then, we derive the bound of excess risk $R(\widehat{\bm g}_{{\rm RSPO}_+})-R^\star$, by applying Lipschitz property of $R(\cdot)$ and the fact that $R(\bm g_0)=R^{\star}$ due to Fisher consistency, which leads to the following theorem. The detailed steps are elaborated in its proof.

\begin{theorem}[Target Excess Risk: General Case]
\label{thm:direct_closure}
Fix $a=1$ and $\gamma>0$, and suppose Condition~\ref{con:reference_condition} and the assumptions of Proposition~\ref{prop:rspo+_generalization} hold. Then, for every $\delta\in(0,1)$, with probability at least $1-\delta$,
\begin{equation}\label{eq:target_bound}
    R(\widehat{\bm g}_{{\rm RSPO}_+})-R^\star
    \leq
    \frac{\bar\beta}{{2}\gamma}
    \left[
        \sqrt{\frac{2\left(B_N+A^+_{\mathcal G}\right)}{\mu}}
        +
        \frac{2\left(B_N+A^+_{\mathcal G}\right)}{\mu\rho}
    \right],
\end{equation}
where $\mu,\rho$ are the curvature constants of Condition~\ref{con:reference_condition}(ii), $\bar\beta$ is the envelope constant of Condition~\ref{con:reference_condition}(iv), and $B_N,A^+_{\mathcal G}$ are defined in \eqref{eq:approx_error}. If, in addition, $\bm g_0\in\mathcal G$ up to $\mathbb P_{\bm x}$-null sets and the structural constants $D_{\mathcal Z},C_0,\mathfrak L(\gamma),\mu,\rho,\bar\beta,\gamma$ do not depend on $N$, then
\begin{equation}\label{eq:appE_vanishing}
    R(\widehat{\bm g}_{{\rm RSPO}_+})-R^\star
    =
     \mathcal O_p(N^{-1/4}).\footnotemark\footnotetext{Here $\mathcal O_p$ denotes stochastic order: $X_N=\mathcal O_p(a_N)$ if, for every $\varepsilon>0$, there exists $M>0$ such that $\mathbb P(|X_N|>Ma_N)<\varepsilon$ for all sufficiently large $N$.}
\end{equation}
\end{theorem}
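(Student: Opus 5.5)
The plan is to chain three inequalities: the high-probability surrogate bound \eqref{eq:surrogate_excess_risk}, a pointwise growth inequality for the surrogate risk around $\bm g_0$, and a pointwise Lipschitz bound for the target risk. Throughout, write $\hat{\bm g}:=\widehat{\bm g}_{{\rm RSPO}_+}$. For each $\bm x$ set
\[
\Delta(\bm x):=R_+(\hat{\bm g}(\bm x);\bm x)-R_+(\bm g_0(\bm x);\bm x)\ge 0,
\qquad
t(\bm x):=\|\hat{\bm g}(\bm x)-\bm g_0(\bm x)\|_2 .
\]
Nonnegativity of $\Delta$ follows from Condition~\ref{con:reference_condition}(i).

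\textbf{Step 1 (surrogate excess risk).} By Condition~\ref{con:reference_condition}(i) and the interchangeability principle, $R_+^\star=R_+(\bm g_0)$. The tower property then gives $\mathbb E_{\bm x}[\Delta(\bm x)]=R_+(\hat{\bm g})-R_+^\star$. On the event of probability at least $1-\delta$ from Proposition~\ref{prop:rspo+_generalization} (with $a=1$), this is at most $B_N+A^+_{\mathcal G}$.

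\textbf{Step 2 (growth around $\bm g_0$).} Fix $\bm x$ and write $F:=R_+(\cdot;\bm x)$ and $\bm c_0:=\bm g_0(\bm x)$. Since $F$ is convex and minimized at $\bm c_0$, $\bm 0$ is a subgradient there; it is in fact the gradient, by Proposition~\ref{prop:rspo+_loss_properties}(iii) and dominated convergence. Combined with $\mu$-strong convexity on $\mathcal B(\bm c_0,\rho)$, this yields $F(\bm c)-F(\bm c_0)\ge \frac{\mu}{2}\|\bm c-\bm c_0\|_2^2$ for every $\bm c$ in the ball.

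For $\bm c$ outside the ball, let $\bm c_\rho$ be the point of the segment $[\bm c_0,\bm c]$ at distance $\rho$ from $\bm c_0$. Convexity along the segment gives
\[
F(\bm c)-F(\bm c_0)\ \ge\ \frac{\|\bm c-\bm c_0\|_2}{\rho}\bigl(F(\bm c_\rho)-F(\bm c_0)\bigr)\ \ge\ \frac{\mu\rho}{2}\|\bm c-\bm c_0\|_2 .
\]
Inverting the two regimes gives, in all cases,
\[
t(\bm x)\le \sqrt{2\Delta(\bm x)/\mu}+\frac{2\Delta(\bm x)}{\mu\rho}.
\]

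\textbf{Step 3 (target Lipschitz bound and integration).} Because $\ell_{\mathcal{RSPO}}$ is linear in $\bm y$ given the decision,
\[
R(\bm c;\bm x)-R(\bm c';\bm x)=\bar{\bm y}(\bm x)^\top\bigl(\bm z^\star_\gamma(\bm c)-\bm z^\star_\gamma(\bm c')\bigr).
\]
Lemma~\ref{lem:oracle_projection} says $\bm z^\star_\gamma$ is $1/\gamma$-Lipschitz, and $\|\bar{\bm y}(\bm x)\|_2=\|\bm g_0(\bm x)\|_2/2\le\bar\beta/2$ by Condition~\ref{con:reference_condition}(iv). Hence
\[
R(\hat{\bm g}(\bm x);\bm x)-R(\bm g_0(\bm x);\bm x)\le \frac{\bar\beta}{2\gamma}\,t(\bm x).
\]
By Condition~\ref{con:reference_condition}(iii), $\bm g_0(\bm x)$ is a pointwise target minimizer, so interchangeability gives $R(\bm g_0)=R^\star$; the envelope \eqref{eq:appE_loss_envelope} keeps all these risks finite.

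Taking expectations over $\bm x$ and applying Jensen's inequality to the concave map $\sqrt{\cdot}$ gives $\mathbb E\sqrt{\Delta}\le\sqrt{\mathbb E\Delta}$. Combining with Steps 1 and 2 yields \eqref{eq:target_bound}.

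\textbf{Rate.} If $\bm g_0\in\mathcal G$ up to null sets, then $A^+_{\mathcal G}=0$. With $N$-independent constants, $B_N=\mathcal O(N^{-1/2}\sqrt{\log(2/\delta)})$, so the square-root term dominates and is of order $N^{-1/4}$. Choosing $\delta=\varepsilon$ gives $\mathcal O_p(N^{-1/4})$.

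The main obstacle I expect is Step 2. The quadratic lower bound needs zero gradient at $\bm c_0$, which requires differentiating under the expectation (justified via the $D_{\mathcal Z}$-Lipschitz property and $\mathbb E\|\bm y\|_2<\infty$). It also needs care in passing from local strong convexity to the global linear growth. Measurability of $t(\cdot)$ and $\Delta(\cdot)$ is routine given the measurability of $\hat{\bm g}$ and $\bm g_0$.
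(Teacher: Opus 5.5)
Your proposal is correct and follows the paper's proof essentially step by step. The shared steps are: the surrogate excess-risk bound $B_N+A^+_{\mathcal G}$ from Proposition~\ref{prop:rspo+_generalization}; the two-regime growth $\Delta^+\ge\frac{\mu}{2}\min\{t^2,\rho t\}$ obtained from local strong convexity plus convexity along the segment to $\bm c_\rho$; Jensen's inequality; the $\bar\beta/(2\gamma)$ Lipschitz transfer via $\|\bar{\bm y}(\bm x)\|_2=\|\bm g_0(\bm x)\|_2/2$; and $R(\bm g_0)=R^\star$ from Condition~\ref{con:reference_condition}(iii).

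The only difference is cosmetic: you invert the growth bound pointwise before integrating, whereas the paper first integrates $\min\{t^2,\rho t\}$ and then applies the same elementary inequality $t\le\sqrt{m}+m/\rho$ with $m=\min\{t^2,\rho t\}$. Also, the zero-gradient step you flag as the main obstacle is not needed: global minimality of $\bm g_0(\bm x)$ together with strong convexity on the ball already yields $F(\bm c)-F(\bm c_0)\ge\frac{\mu}{2}\|\bm c-\bm c_0\|_2^2$ there.
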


This is consistent with the excess risk bounds of the $\mathcal{RSPO}$ predictor derived in Section~\ref{subsec:rspo_excess_risk_bounds}: given the structural constants, as the formulation is more robust (larger $\gamma$), fewer samples are needed to achieve the same excess risk bound of the $\mathcal{RSPO}_+$ predictor, by noting that the term $\mathfrak L(\gamma)$ in $B_N$ grows at most linearly in $\gamma$ (Remark~\ref{rem:L_gamma_bound}). If we further impose a quadratic target upper-growth condition, we can improve the bound on target excess risk from order $N^{-1/4}$ to $N^{-1/2}$, as the following corollary shows.

\begin{corollary}[Target Excess Risk: Fast Rate]
\label{cor:direct_fast_rate}
Suppose the assumptions of Theorem~\ref{thm:direct_closure} hold, and assume further the following uniform quadratic target-growth condition: there exist constants $L,r_0>0$, independent of $\bm x$, such that, for almost every $\bm x$,
\begin{equation}\label{eq:target_quadratic}
    R(\bm c;\bm x)-\inf_{\bm c'\in\mathbb R^d}R(\bm c';\bm x)
    \leq
    \frac{L}{2}
    \|\bm c-\bm g_0(\bm x)\|_2^2,
    \quad
    \text{whenever }
    \|\bm c-\bm g_0(\bm x)\|_2\leq r_0 .
\end{equation}
Then, for every $\delta\in(0,1)$, with probability at least $1-\delta$,
\begin{equation}\label{eq:fast_target}
    R(\widehat{\bm g}_{{\rm RSPO}_+})-R^\star
    \leq
    \frac{B_N+A^+_{\mathcal G}}{\min\left\{
        \mu/L,\, \mu\gamma r_0/\bar\beta,\, \mu\gamma \rho/\bar\beta
    \right\}}.
\end{equation}
If, in addition, $\bm g_0\in\mathcal G$ up to $\mathbb P_{\bm x}$-null sets and the structural constants do not depend on $N$, then $R(\widehat{\bm g}_{{\rm RSPO}_+})-R^\star=\mathcal O_p(N^{-1/2})$.
\end{corollary}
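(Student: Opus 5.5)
The plan is to work pointwise in $\bm x$ and then integrate, reusing the chain from Theorem~\ref{thm:direct_closure}: surrogate excess risk $\Rightarrow$ distance to $\bm g_0$ $\Rightarrow$ target excess risk. The change is that in the last step the Lipschitz bound is replaced by the quadratic growth condition~\eqref{eq:target_quadratic}. This is what turns the square root into a linear dependence.

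Write $\hat{\bm c}(\bm x):=\widehat{\bm g}_{{\rm RSPO}_+}(\bm x)$, $\Delta(\bm x):=\|\hat{\bm c}(\bm x)-\bm g_0(\bm x)\|_2$, and $e_+(\bm x):=R_+(\hat{\bm c}(\bm x);\bm x)-R_+(\bm g_0(\bm x);\bm x)\ge 0$. By Condition~\ref{con:reference_condition}(i) and the interchangeability argument, $\mathbb E_{\bm x}[e_+(\bm x)]=R_+(\widehat{\bm g}_{{\rm RSPO}_+})-R^\star_+$. On the event of \eqref{eq:surrogate_excess_risk}, which has probability at least $1-\delta$, this is at most $B_N+A^+_{\mathcal G}$.

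The first step is a pointwise curvature lower bound. Since $\bm g_0(\bm x)$ minimizes the convex $R_+(\cdot;\bm x)$ and this function is $\mu$-strongly convex on $\mathcal B(\bm g_0(\bm x),\rho)$, I expect:
\begin{itemize}
\item if $\Delta\le\rho$, then $e_+\ge \frac{\mu}{2}\Delta^2$;
\item if $\Delta>\rho$, then convexity along the segment gives $e_+\ge \frac{\Delta}{\rho}\cdot\frac{\mu}{2}\rho^2=\frac{\mu\rho}{2}\Delta$.
\end{itemize}
This is presumably the same lemma as in the proof of Theorem~\ref{thm:direct_closure}, and I would reuse it.

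The second step is a pointwise upper bound on the target excess. Fisher consistency (Condition~\ref{con:reference_condition}(iii)) gives $R(\bm g_0(\bm x);\bm x)=\inf_{\bm c'}R(\bm c';\bm x)$. Write $e(\bm x):=R(\hat{\bm c}(\bm x);\bm x)-R(\bm g_0(\bm x);\bm x)$. If $\Delta\le r_0$, then \eqref{eq:target_quadratic} gives $e\le\frac L2\Delta^2$. Otherwise I use the Lipschitz estimate from Theorem~\ref{thm:direct_closure}, namely $e\le \frac{\bar\beta}{2\gamma}\Delta$; this constant is what appears in \eqref{eq:target_bound}. It should come from $\ell_{\mathcal{RSPO}}$ being $\|\bm y\|/\gamma$-Lipschitz, averaged using $\|\mathbb E\bm y\|=\|\bm g_0\|/2\le\bar\beta/2$ after rewriting the excess linearly in $\bar{\bm y}$.

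The third step combines the two bounds case by case to get $e(\bm x)\le e_+(\bm x)/m$, where $m=\min\{\mu/L,\mu\gamma r_0/\bar\beta,\mu\gamma\rho/\bar\beta\}$:
\begin{itemize}
\item If $\Delta\le\min(r_0,\rho)$: $e\le\frac L2\Delta^2\le\frac{L}{\mu}e_+$.
\item If $r_0<\Delta\le\rho$: $e\le\frac{\bar\beta}{2\gamma}\Delta\le\frac{\bar\beta}{2\gamma r_0}\Delta^2\le\frac{\bar\beta}{\mu\gamma r_0}e_+$.
\item If $\Delta>\rho$: $e\le\frac{\bar\beta}{2\gamma}\Delta\le\frac{\bar\beta}{\mu\gamma\rho}e_+$, regardless of how $\Delta$ compares with $r_0$.
\item If $\rho<\Delta\le r_0$: the quadratic bound would give the wrong power, so I use the Lipschitz bound instead, which again gives $\frac{\bar\beta}{\mu\gamma\rho}e_+$.
\end{itemize}

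Integrating over $\bm x$ then yields \eqref{eq:fast_target}. The $\mathcal O_p(N^{-1/2})$ claim follows because $A^+_{\mathcal G}=0$ when $\bm g_0\in\mathcal G$ and $B_N=\mathcal O(N^{-1/2})$ once the constants are fixed.

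I expect the main obstacle to be justifying the Lipschitz constant $\bar\beta/(2\gamma)$ for the pointwise target risk, as opposed to the cruder $\mathbb E\|\bm y\|/\gamma$. The pointwise risk equals $\bar{\bm y}^\top\bm z^\star_\gamma(\bm c)-\mathbb E v^\star(\bm y)$, so the excess is $\bar{\bm y}^\top(\bm z^\star_\gamma(\hat{\bm c})-\bm z^\star_\gamma(\bm g_0))\le\|\bar{\bm y}\|\Delta/\gamma\le\frac{\bar\beta}{2\gamma}\Delta$ by Lemma~\ref{lem:oracle_projection}. Given that linear dependence on $\bar{\bm y}$, the remaining steps are bookkeeping.
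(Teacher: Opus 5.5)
Your proposal is correct and follows essentially the same route as the paper's proof. Both arguments combine the two-regime surrogate lower bound, the quadratic target-growth bound near $\bm g_0(\bm x)$, and the $\bar\beta/(2\gamma)$ Lipschitz bound away from $\bm g_0(\bm x)$ into a pointwise comparison of target and surrogate excess risks, and then integrate. The only difference is bookkeeping: the paper merges your four cases into two via $\tilde r:=\min\{r_0,\rho\}$, using $\min\{t^2,\rho t\}\geq\tilde r\,t$ for $t>\tilde r$. This gives the same constant, since $\min\{\mu/L,\mu\gamma r_0/\bar\beta,\mu\gamma\rho/\bar\beta\}=\min\{\mu/L,\mu\gamma\tilde r/\bar\beta\}$.
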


Condition~\eqref{eq:target_quadratic} asks that the target risk not grow faster than quadratically near the oracle predictor, complementing from above the lower bound of the growth of surrogate risk from Condition~\ref{con:reference_condition}(ii). This implies the growth rate of the target risk is at most comparable to that of the surrogate risk.

\section{Numerical Experiments}\label{sec:numeric_study}

In this section, we examine whether using the same robust decision map to define the training loss and generate deployed decisions improves out-of-sample decision quality. 
We conduct computational experiments on synthetic instances of two canonical problem classes: capacitated transportation and risk-constrained portfolio optimization. 

\noindent\underline{Synthetic Data-Generation Process.}~Following a synthetic design similar to that of \citet{elmachtoub2022smart}, we use the same basic data-generating protocol for the two problem classes, where $d$ denotes the dimension of the cost and decision vectors and $p$ the dimension of the context vector. 
For each experimental configuration, we draw a ground-truth coefficient matrix $\bm B^\star\in\mathbb R^{d\times p}$ once, with independent $\operatorname{Bernoulli}(0.5)$ entries, and hold it fixed across the $20$ replications. For each observation, the context vector $\bm x_i\in\mathbb R^p$ is drawn from a standard multivariate Gaussian distribution; that is, $\bm x_i\sim\mathcal N(\bm 0,\bm I_p)$. Given $\bm x_i$, the cost vector $\bm y_i\in\mathbb R^d$ is generated according to $\bm y_i=f_{\mathtt{deg}}(\bm B^\star\bm x_i,\bm\epsilon_i)$, where $\bm\epsilon_i$ denotes the noise term and $f_{\mathtt{deg}}$ is polynomial in its first argument. The problem-specific form of $f_{\mathtt{deg}}$ and the noise distribution are stated in Sections~\ref{subsec:transportation} and~\ref{subsec:portfolio_rspo+}. The positive integer parameter, $\mathtt{deg}$, determines the polynomial degree. Importantly, since all methods employ a linear hypothesis class for the predictor, $\mathtt{deg}$ controls the extent of \textit{model misspecification}. Across the numerical experiments, we vary the polynomial degree $\mathtt{deg}$, the noise level, the decision (and cost) dimension $d$, and the training-sample size $N$ to evaluate different aspects of the methods' performance.

\noindent\underline{Methods Compared.}~As summarized in Table~\ref{tab:method_comparison1}, the four empirical methods instantiate the four learning--decision pipelines introduced in Section~\ref{sec:introduction}. All four methods use the same linear prediction class, $\bm g_{\theta}(\bm x)=\bm B\bm x$, but differ in the loss used to train the predictor and the decision map used to generate the deployed decision. 
First, least squares minimizes the squared prediction loss $\ell(\hat{\bm y},\bm y)=\frac{1}{2}\|\hat{\bm y}-\bm y\|_2^2$ and uses the nominal decision map. 
Second, $\mathcal{SPO}_+$ trains the predictor with the nominal $\mathcal{SPO}_+$ surrogate and uses the same nominal decision map for decision making. 
Third, $\mathcal{SPO}_+$ with robust decisions retains the $\mathcal{SPO}_+$ training loss but uses the robust decision map~\ref{ilroopt} to generate deployed decisions. 
Finally, $\mathcal{RSPO}_+$ trains the predictor by solving~\eqref{eq:rspo+_ermprob} and uses the same robust decision map to generate deployed decisions. Thus, among the four methods, $\mathcal{RSPO}_+$ is the only one that combines robust decision-making with learning--decision alignment.

\noindent\underline{Training, Validation, and Testing.}~
For each replication, we independently generate a new training sample and test sample. We apply the same sample-splitting protocol to all methods. 
We train the candidate predictors on $70\%$ of the training sample and select their hyperparameters using the remaining $30\%$. 
Every method selects the regularization parameter $\lambda$ from five logarithmically spaced values in $[10^{-3},10]$. 
For $\mathcal{RSPO}_+$ and $\mathcal{SPO}_+$ with robust decisions, we additionally select the robustness parameter $\gamma$ from five logarithmically spaced values in $[10^{-6},10]$. 
Least squares uses the average prediction loss on the validation data for hyperparameter selection, whereas $\mathcal{SPO}_+$, $\mathcal{SPO}_+$ with robust decisions, and $\mathcal{RSPO}_+$ use the average normalized decision loss. 
We refer to the predictor associated with the selected hyperparameters as the validated predictor. 
Each validated predictor is evaluated on the corresponding test sample of size $n_{\rm test}=100$ in each of the $20$ replications. 

\noindent\underline{Performance Metrics.}~
All quantities below are defined within one experimental replication, with the replication index suppressed. 
For each method $m$, let $\widehat{\bm g}_m$ denote its validated predictor and let $\gamma_m$ denote the robustness parameter of the robust decision map used to generate its deployed decision. 
For notational uniformity, we set $\gamma_m=0$ for least squares and $\mathcal{SPO}_+$ and write $\bm z_0^\star$ for the decision returned by the nominal decision map~\eqref{eq:ilolearningoracle}. 
For $\mathcal{RSPO}_+$ and $\mathcal{SPO}_+$ with robust decisions, $\gamma_m$ is the value of $\gamma$ selected separately for method $m$ by the validation procedure described above. 
Under this convention, every method deploys a decision denoted by $\bm z^\star_{\gamma_m}(\widehat{\bm g}_m(\bm x))$; when $\gamma_m>0$, this decision is returned by the robust decision map~\ref{ilroopt}. 
On the test sample $\{(\bm x_i,\bm y_i)\}_{i=1}^{n_{\rm test}}$, we measure prediction accuracy by
\begin{equation*}
{\rm Relative\ Prediction\ Loss}_m
=
\frac{\sum_{i=1}^{n_{\rm test}}\|\widehat{\bm g}_m(\bm x_i)-\bm y_i\|_2^2}
{\sum_{i=1}^{n_{\rm test}}\|\bm y_i\|_2^2}.
\end{equation*}
Let $v^\star(\bm y_i)=\min_{\bm z\in\mathcal Z}\bm y_i^\top\bm z$ denote the nominal clairvoyant cost under the realized cost vector $\bm y_i$. We measure decision quality by
\begin{equation*}
{\rm Normalized\ Decision\ Loss}_m
=
\frac{\sum_{i=1}^{n_{\rm test}}\{\bm y_i^\top\bm z^\star_{\gamma_m}(\widehat{\bm g}_m(\bm x_i))-v^\star(\bm y_i)\}}
{\sum_{i=1}^{n_{\rm test}}|v^\star(\bm y_i)|}.
\end{equation*}
The normalized decision loss compares the realized cost of each deployed decision with this nominal clairvoyant benchmark. 
We summarize the replication-level ratios over $20$ replications with boxplots; all comparisons below refer to their medians. 
We use normalized decision loss as the primary performance measure and relative prediction loss as a complementary diagnostic of whether improved decisions coincide with more accurate cost predictions. 

\subsection{Capacitated Transportation Problem}
\label{subsec:transportation}

We first consider a capacitated transportation problem with $I$ supply nodes and $J$ demand nodes. 
We index supply nodes by $s\in[I]$ and demand nodes by $t\in[J]$, and let $Q_s$ and $D_t$ denote their respective supply capacities and demands, which satisfy $\sum_{s\in[I]}Q_s\geq\sum_{t\in[J]}D_t$ to ensure feasibility. The variable $z_{st}$ denotes the amount shipped from supply node $s$ to demand node $t$. 
Given a predicted unit-cost vector $\hat{\bm y}=(\hat y_{st})$, the transportation problem is
\begin{equation*}
\bm z^\star_\gamma(\hat{\bm y})\in\argmin_{\bm z}\Bigg\{\hat{\bm y}^{\top}\bm z+\frac{\gamma}{2}\|\bm z\|_2^2:\ z_{st}\geq 0,\ \sum_{t\in[J]}z_{st}\leq Q_s, \ \sum_{s\in[I]}z_{st}=D_t, \ \forall s\in[I], t\in[J]\Bigg\}.
\end{equation*}
The network has $d=IJ$ arcs, and we identify each arc $(s,t)$ with the coordinate $k=(s-1)J+t\in[d]$ when forming the cost and decision vectors. 
For this problem, the unit cost of arc $k$ in observation $i$ is generated as
\begin{equation*}
y_{ik}
=
\left[
\left(
\frac{1}{\sqrt p}(\bm B^\star\bm x_i)_k+3
\right)^{\mathtt{deg}}
+1
\right]\epsilon_{ik},
\end{equation*}
where the $\epsilon_{ik}$ are independently drawn from $\operatorname{Unif}[1-\bar\epsilon,1+\bar\epsilon]$. Hence, $\mathtt{deg}$ controls nonlinearity relative to the  linear prediction class and $\bar\epsilon$ the noise level. 

We organize the transportation results around the four factors considered in our experiments---training-sample size, decision dimension, noise level, and model misspecification---varying one factor at a time while holding the others fixed. Each figure reports normalized decision loss and relative prediction loss to compare downstream decision quality with prediction accuracy.

\subsubsection{Effect of Training-Sample Size}
\begin{figure}[!t]
    \centering
    \includegraphics[width=0.9\linewidth]{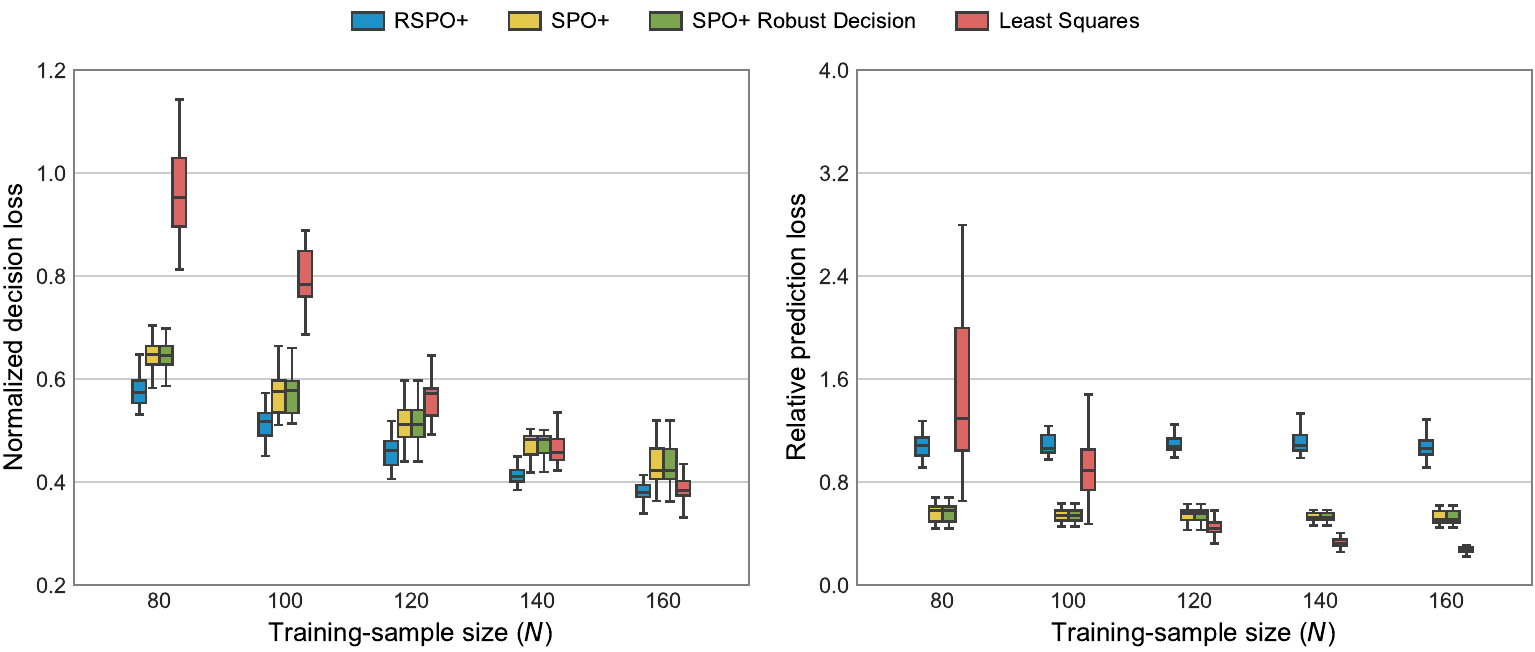}
    \caption{Normalized decision loss (left) and relative prediction loss (right) as the training-sample size $N$ varies on transportation optimization instances, with $p=60$, $d=100$, $\mathtt{deg}=4$, and $\bar\epsilon=0.4$ fixed.}
    \label{fig:transportation_sample_size_combined_metric}\vspace{-4mm}
\end{figure}

We first examine how the training-sample size affects out-of-sample performance. 
We fix $p=60$, $d=100$, $\mathtt{deg}=4$, and $\bar\epsilon=0.4$, and vary $N\in\{80,100,120,140,160\}$. 
Figure~\ref{fig:transportation_sample_size_combined_metric} reports the normalized decision loss and relative prediction loss. 
As $N$ increases, the normalized decision loss of $\mathcal{RSPO}_+$ decreases. It attains the lowest decision loss for $N\leq140$ and is comparable to the lowest value at $N=160$. Its margin over $\mathcal{SPO}_+$ and $\mathcal{SPO}_+$ with robust decisions is largest for the smaller training samples and becomes smaller as $N$ increases. 
The prediction panel separates this decision advantage from prediction accuracy: $\mathcal{SPO}_+$ and $\mathcal{SPO}_+$ with robust decisions attain lower relative prediction loss, while least squares improves substantially as $N$ increases. 

\subsubsection{Effect of Decision Dimension}
\begin{figure}[!t]
    \centering
    \includegraphics[width=0.9\linewidth]{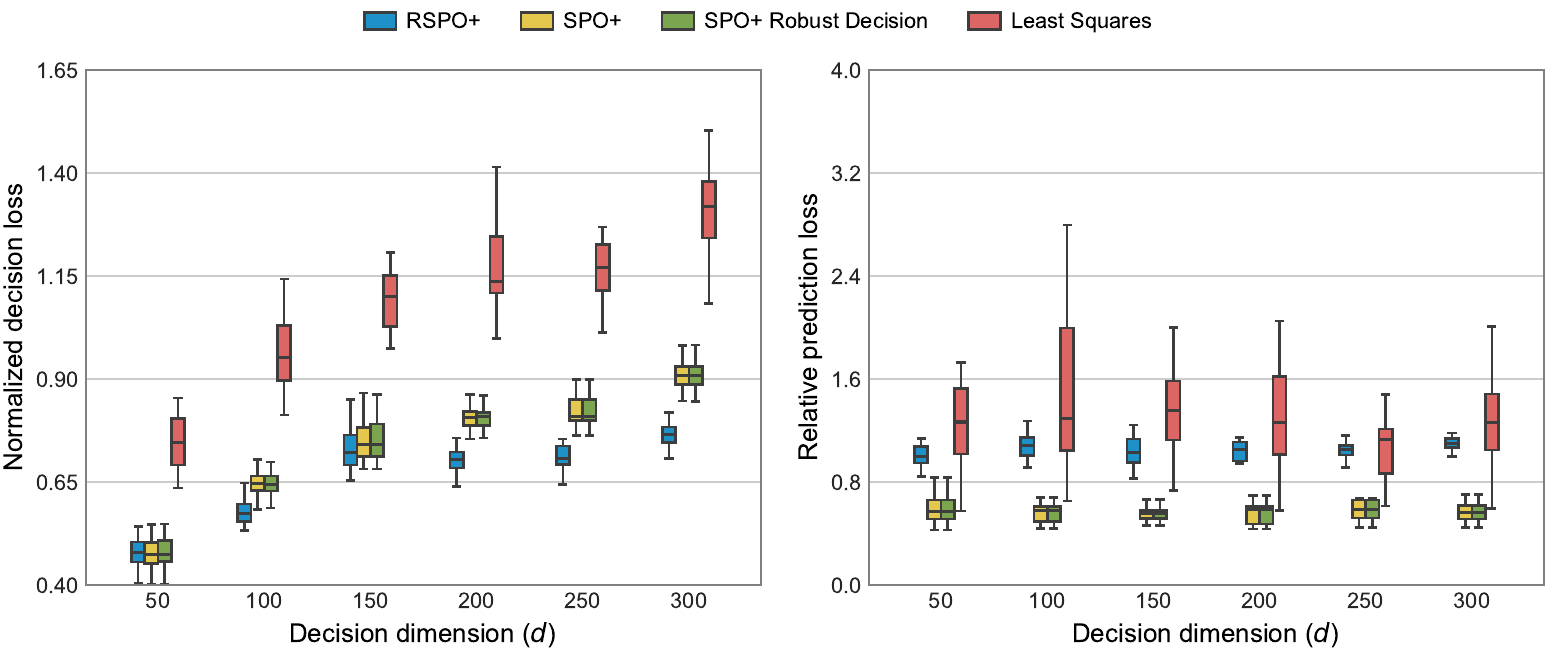}
    
    \caption{Normalized decision loss (left) and relative prediction loss (right) as the decision dimension $d$ varies on transportation optimization instances, with $N=80$, $p=60$, $\mathtt{deg}=4$, and $\bar\epsilon=0.4$ fixed.}
    \label{fig:transportation_decision_dimension_combined}\vspace{-4mm}
\end{figure}

We next examine how the decision dimension affects performance by scaling the transportation network. 
We fix $N=80$, $p=60$, $\mathtt{deg}=4$, and $\bar\epsilon=0.4$, and vary $d\in\{50,100,150,200,250,300\}$. 
Figure~\ref{fig:transportation_decision_dimension_combined} reports the normalized decision loss and relative prediction loss. 
As $d$ increases, the normalized decision loss of $\mathcal{RSPO}_+$ increases overall, while its relative advantage becomes more pronounced. $\mathcal{RSPO}_+$, $\mathcal{SPO}_+$, and $\mathcal{SPO}_+$ with robust decisions perform comparably at $d=50$, and $\mathcal{RSPO}_+$ attains the lowest decision loss for every $d\geq100$, with its largest margins over $\mathcal{SPO}_+$ and $\mathcal{SPO}_+$ with robust decisions at $d\in\{200,250,300\}$. 
Despite having higher relative prediction loss than $\mathcal{SPO}_+$ and $\mathcal{SPO}_+$ with robust decisions, $\mathcal{RSPO}_+$ yields lower decision loss throughout $d\geq100$. 

\subsubsection{Effect of Noise Level}
\begin{figure}[!t]
    \centering
    \includegraphics[width=0.9\linewidth]{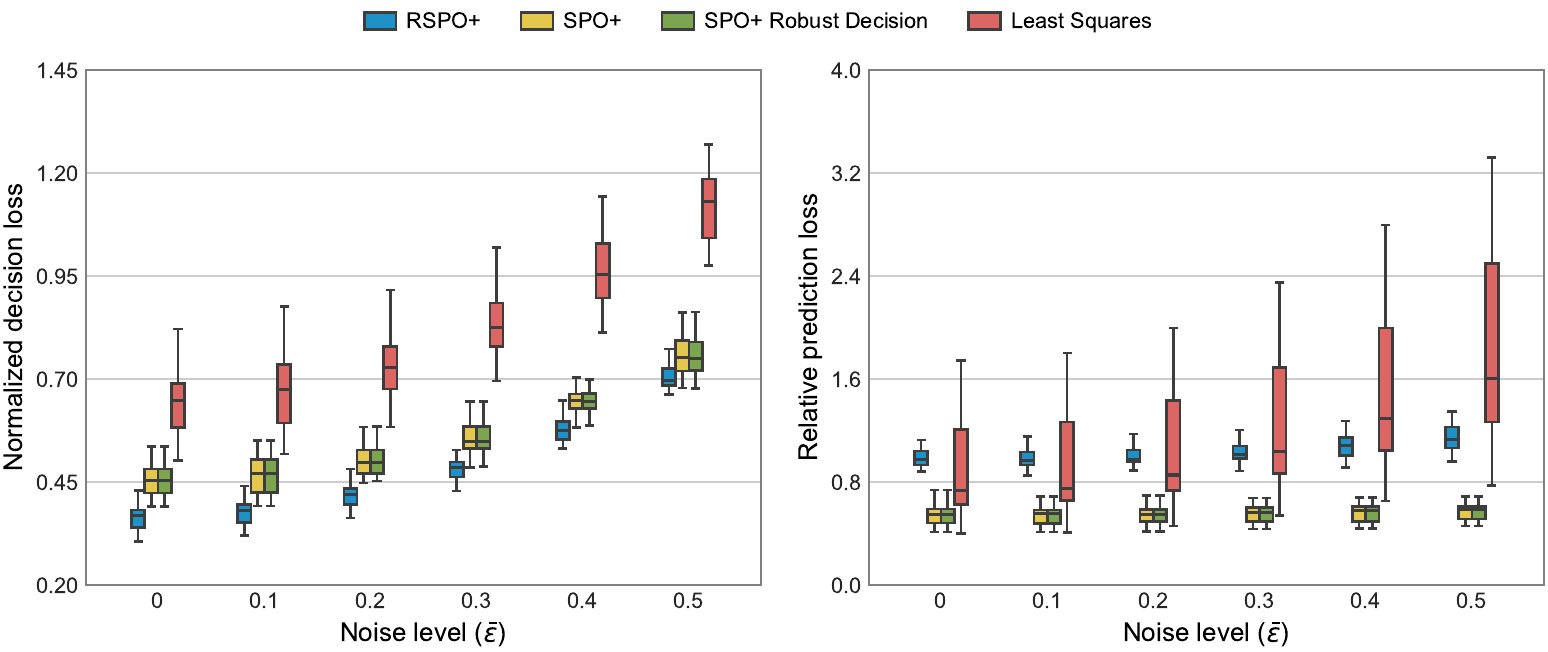}
    
    \caption{Normalized decision loss (left) and relative prediction loss (right) as the noise level $\bar\epsilon$ varies on transportation optimization instances, with $N=80$, $p=60$, $d=100$, and $\mathtt{deg}=4$ fixed.}
    \label{fig:transportation_noise_width_combined}\vspace{-4mm}
\end{figure}

We next examine how the noise level affects performance. 
We fix $N=80$, $p=60$, $d=100$, and $\mathtt{deg}=4$, and vary $\bar\epsilon\in\{0,0.1,0.2,0.3,0.4,0.5\}$. 
Figure~\ref{fig:transportation_noise_width_combined} shows that the normalized decision loss of $\mathcal{RSPO}_+$ increases as the noise level rises. It attains the lowest decision loss throughout the reported range, with its largest margins over $\mathcal{SPO}_+$ and $\mathcal{SPO}_+$ with robust decisions occurring at the lower noise levels. Least squares has the largest decision loss and the widest across-replication variation. 
$\mathcal{SPO}_+$ and $\mathcal{SPO}_+$ with robust decisions attain lower relative prediction loss than $\mathcal{RSPO}_+$, while $\mathcal{RSPO}_+$ attains lower decision loss at every reported noise level. 

\subsubsection{Effect of Model Misspecification}
\begin{figure}[!t]
    \centering
    \includegraphics[width=0.9\linewidth]{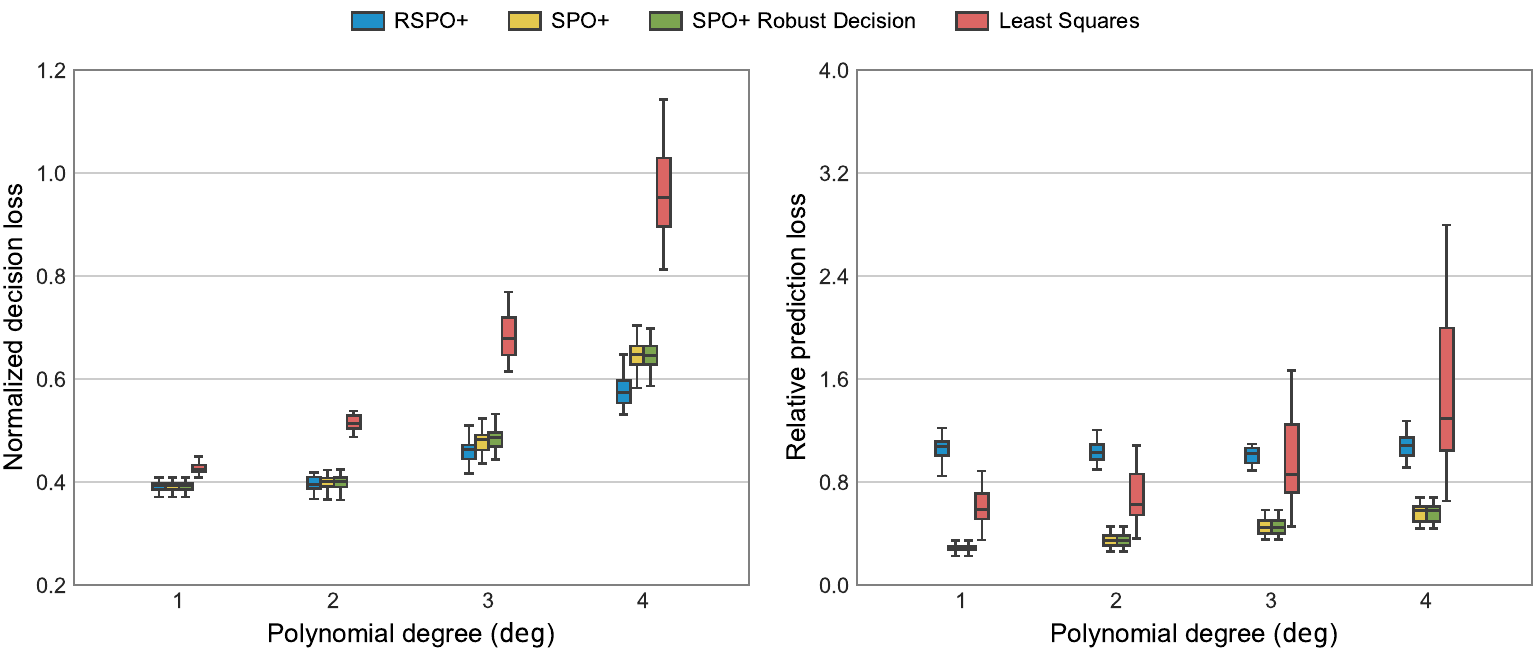}
    
    \caption{Normalized decision loss (left) and relative prediction loss (right) as model misspecification ($\mathtt{deg}$) varies on transportation optimization instances, with $N=80$, $p=60$, $d=100$, and $\bar\epsilon=0.4$ fixed.}
    \label{fig:transportation_polynomial_degree_combined}\vspace{-4mm}
\end{figure}

Finally, we examine how model misspecification affects performance. 
We fix $N=80$, $p=60$, $d=100$, and $\bar\epsilon=0.4$, and vary $\mathtt{deg}\in\{1,2,3,4\}$. 
Because all methods use a linear prediction class, increasing $\mathtt{deg}$ increases the misspecification of the conditional cost function. 
Figure~\ref{fig:transportation_polynomial_degree_combined} shows that the normalized decision loss of $\mathcal{RSPO}_+$ increases with misspecification, but less sharply than the losses of the other methods. $\mathcal{RSPO}_+$, $\mathcal{SPO}_+$, and $\mathcal{SPO}_+$ with robust decisions perform comparably at $\mathtt{deg}=1$; for every nonlinear specification, $\mathcal{RSPO}_+$ attains the lowest decision loss, and its margin over $\mathcal{SPO}_+$ and $\mathcal{SPO}_+$ with robust decisions grows with $\mathtt{deg}$. Least squares exhibits the largest deterioration. 
Despite having higher relative prediction loss than $\mathcal{SPO}_+$ and $\mathcal{SPO}_+$ with robust decisions throughout the sweep, $\mathcal{RSPO}_+$ produces better decisions under the nonlinear specifications.

\subsubsection{Value of Learning--Decision Alignment}

To assess the value of learning--decision alignment, we compare $\mathcal{RSPO}_+$ with $\mathcal{SPO}_+$ with robust decisions. Both methods tune their own $(\lambda,\gamma)$, but only $\mathcal{RSPO}_+$ uses the deployed robust decision map to define its training loss.
Across Figures~\ref{fig:transportation_sample_size_combined_metric}--\ref{fig:transportation_polynomial_degree_combined}, $\mathcal{RSPO}_+$ attains decision loss that is lower than or comparable to that of $\mathcal{SPO}_+$ with robust decisions. The two methods perform comparably at $d=50$ and $\mathtt{deg}=1$, while the difference is more visible at higher decision dimensions and under the nonlinear specifications. These decision gains occur even though $\mathcal{SPO}_+$ with robust decisions attains lower relative prediction loss. Thus, the transportation results support the value of learning--decision alignment.

\begin{figure}[t]
	\centering
	\includegraphics[width=0.85\linewidth]{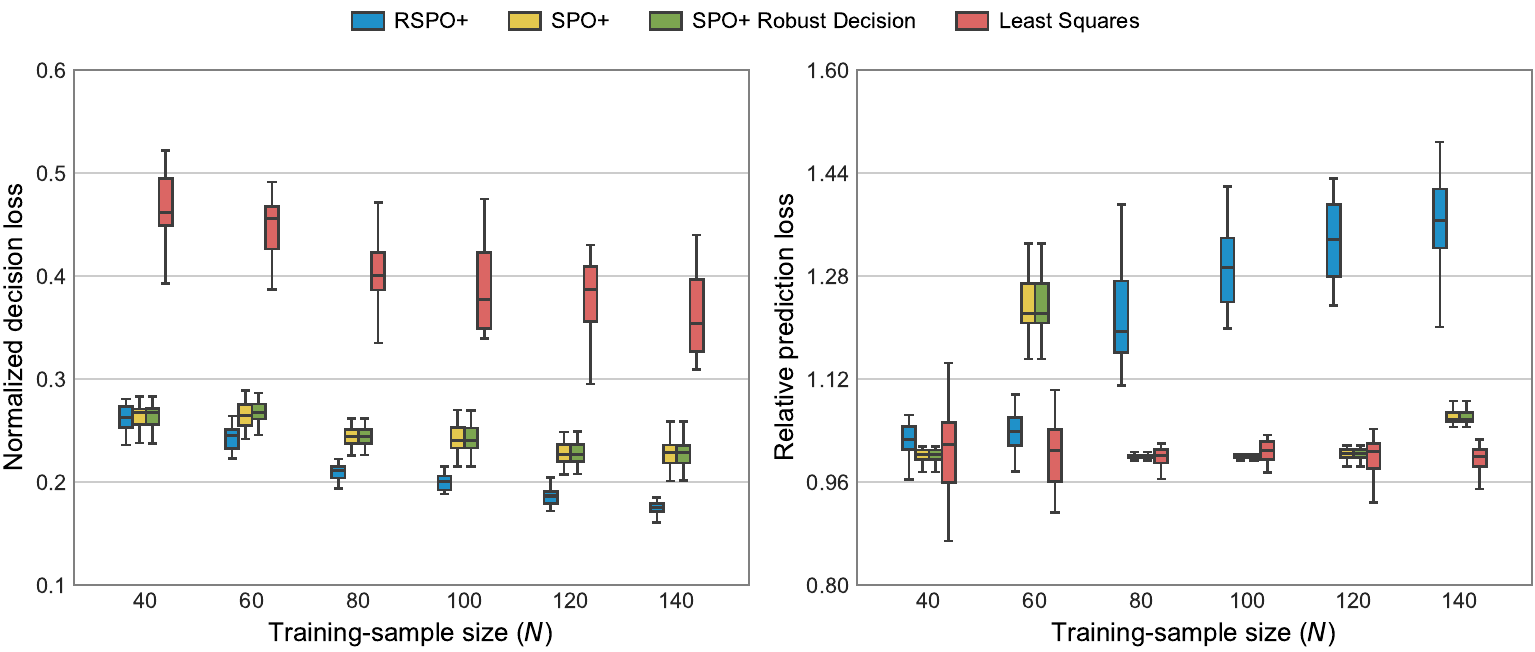}
	
	\includegraphics[width=0.85\linewidth,trim=0 0 0 24bp,clip]{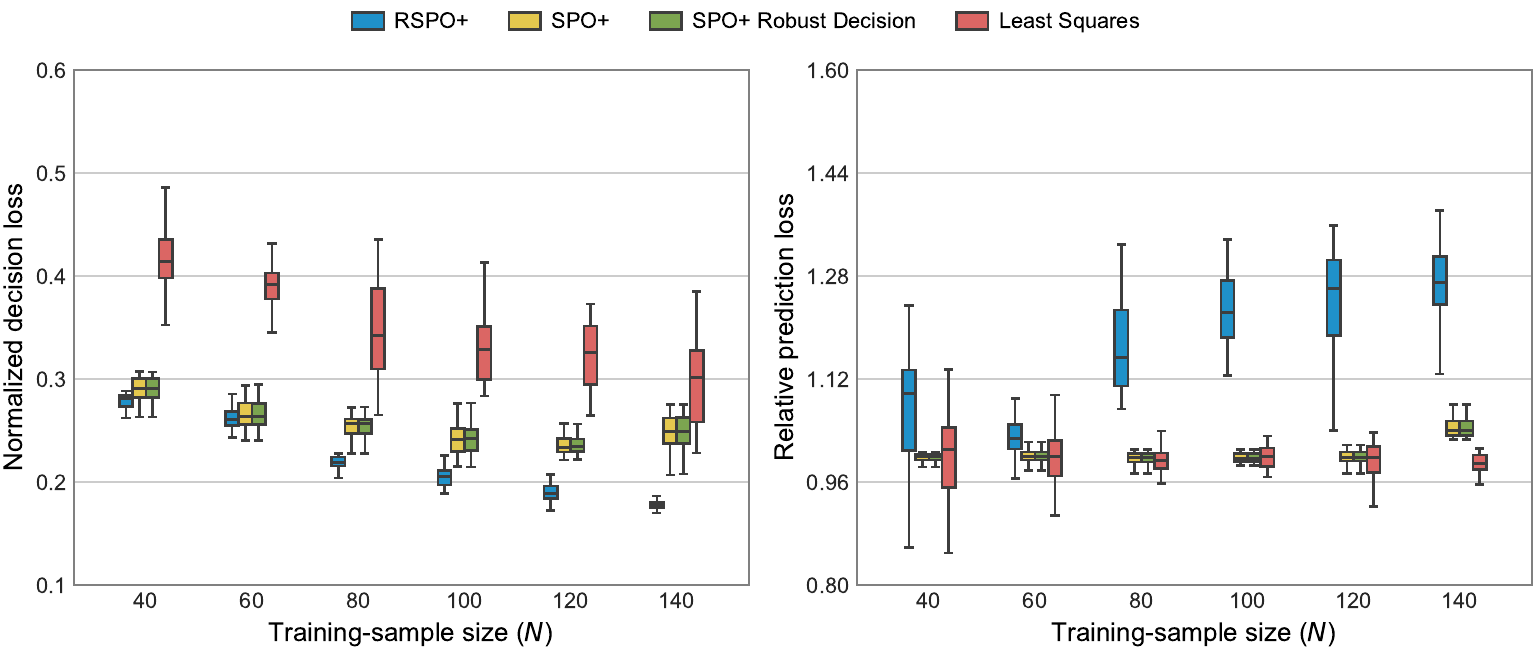}
	
	\includegraphics[width=0.85\linewidth,trim=0 0 0 24bp,clip]{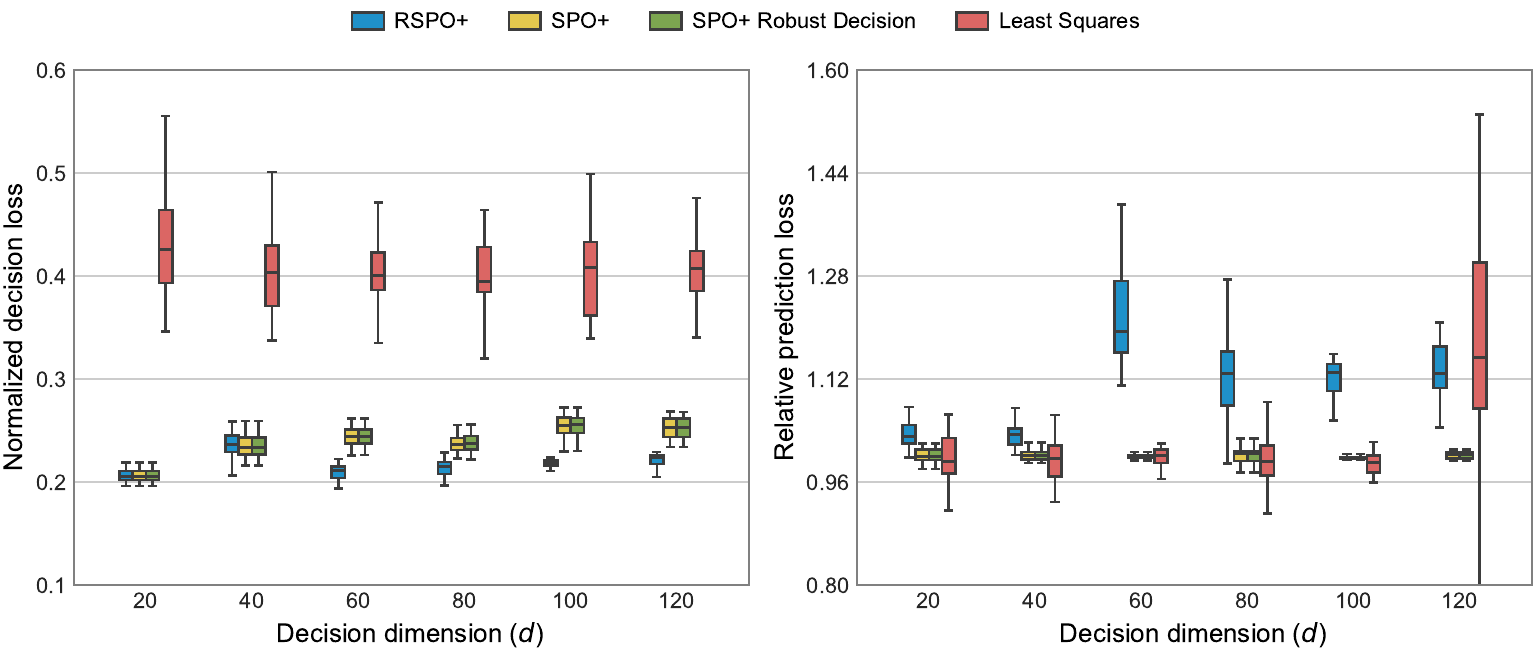}
	\caption{Normalized decision loss (left) and relative prediction loss (right) on portfolio optimization instances. The first and second rows vary the training-sample size $N$ under $\mathtt{deg}=4$ and $\mathtt{deg}=1$, respectively, with $p=80$, $d=60$, and $\tau=1$ fixed. The third row varies the number of assets $d$ with $N=80$, $p=80$, $\mathtt{deg}=4$, and $\tau=1$ fixed.}
	\label{fig:portfolio_combined_effects}\vspace{-6mm}
\end{figure}

\subsection{Portfolio Optimization Problem} \label{subsec:portfolio_rspo+}
We next study a risk-constrained portfolio problem in which $\bm z$ specifies the weights assigned to $d$ assets. 
This experiment examines whether the transportation results extend to a continuous allocation problem with an explicit risk constraint. 
We generate correlated returns by drawing the entries of a four-factor loading matrix $\bm F\in\mathbb R^{d\times4}$ independently from $\operatorname{Unif}[-0.0025\tau,0.0025\tau]$ and setting $\bm\Sigma=\bm F \bm F^\top+(0.01\tau)^2\bm I_d$. For observation $i$, the return vector is generated as
\begin{equation*}
\bm r_i=\left(\frac{0.05}{\sqrt p}\bm B^\star\bm x_i+0.1^{1/\mathtt{deg}}\bm 1\right)^{\mathtt{deg}}+\bm F\bm l_i+0.01\tau\bm\epsilon_i,
\end{equation*}
where $\bm l_i\sim\mathcal N(\bm 0,\bm I_4)$ and $\bm\epsilon_i\sim\mathcal N(\bm 0,\bm I_d)$ are independent, the power is applied component-wise, and the cost vector is $\bm y_i=-\bm r_i$. The terms $\bm F\bm l_i$ and $0.01\tau\bm\epsilon_i$ represent common-factor variation and asset-specific noise, respectively, while $\mathtt{deg}$ controls misspecification of the common linear prediction class. 
Given a predicted cost vector $\hat{\bm y}$, the deployed portfolio solves
\begin{equation*}
\bm z_\gamma^\star(\hat{\bm y})\in\argmin_{\bm z}\left\{\hat{\bm y}^{\top}\bm z+\frac{\gamma}{2}\|\bm z\|_2^2:\ \bm z\geq\bm 0,\ \bm 1^\top\bm z\leq1,\ \|\bm\Sigma\bm z\|_1\leq\beta\right\}, 
\end{equation*}
where $\|\bm\Sigma\bm z\|_1$ measures aggregate risk exposure, and $\beta$ is the risk budget. We use the $\ell_1$ risk constraint because its standard epigraph representation is polyhedral, allowing us to apply Theorem~\ref{thm:rspo_plus_reformulation}. Appendix~\ref{app:portfolio_reformulation} derives the corresponding learning reformulations, and Appendix~\ref{app:portfolio_l2} reports the quadratic-risk counterpart.

\subsubsection{Effects of Sample Size, Model Misspecification, and Decision Dimension}

We next examine how performance changes with training-sample size, model misspecification, and the number of assets. Figure~\ref{fig:portfolio_combined_effects} summarizes the three experiments.  Under $\mathtt{deg}=4$, the normalized decision loss of $\mathcal{RSPO}_+$ decreases steadily with $N$; it is comparable to $\mathcal{SPO}_+$ and $\mathcal{SPO}_+$ with robust decisions at $N=40$, attains the lowest value for every $N\geq60$, and develops a more visible margin as $N$ increases. Under $\mathtt{deg}=1$, $\mathcal{RSPO}_+$ has the lowest normalized decision loss at every reported $N$, with a particularly visible margin from $N=80$ onward. As $d$ varies, $\mathcal{RSPO}_+$ performs comparably to $\mathcal{SPO}_+$ and $\mathcal{SPO}_+$ with robust decisions at $d\in\{20,40\}$ and attains the lowest normalized decision loss for every $d\geq60$. Across the three experiments, $\mathcal{SPO}_+$ and $\mathcal{SPO}_+$ with robust decisions generally attain lower relative prediction loss, whereas $\mathcal{RSPO}_+$ attains lower or comparable normalized decision loss.

\subsubsection{Value of Learning--Decision Alignment}

To assess the value of learning--decision alignment, we compare $\mathcal{RSPO}_+$ with $\mathcal{SPO}_+$ with robust decisions. Both methods tune their own $(\lambda,\gamma)$, but only $\mathcal{RSPO}_+$ uses the deployed robust decision map to define its training loss. In Figure~\ref{fig:portfolio_combined_effects}, $\mathcal{RSPO}_+$ attains lower or comparable decision loss, with a more visible difference as the sample size and portfolio dimension increase. Thus, the portfolio results reinforce the transportation results and support the value of learning--decision alignment in a continuous allocation problem.

\subsubsection{Value of Gradient-Based Refinement}

Finally, we assess the decision improvement obtained by directly minimizing the empirical $\mathcal{RSPO}$ objective~\eqref{eq:rspo_refine_obj}. We first solve the $\mathcal{RSPO}_+$ empirical risk minimization problem~\eqref{eq:rspo+_ermprob} and use the resulting predictor to initialize Algorithm~\ref{alg:gradient-descent}. We apply the same refinement procedure to an $\mathcal{SPO}_+$ predictor to compare the two starting points and the improvements produced from each. Appendix~\ref{app:gradient_implementation} provides the complete tuning, validation, and implementation details.

\begin{figure}[t]
	\vspace{-3mm}
    \centering
    \includegraphics[width=0.85\linewidth]{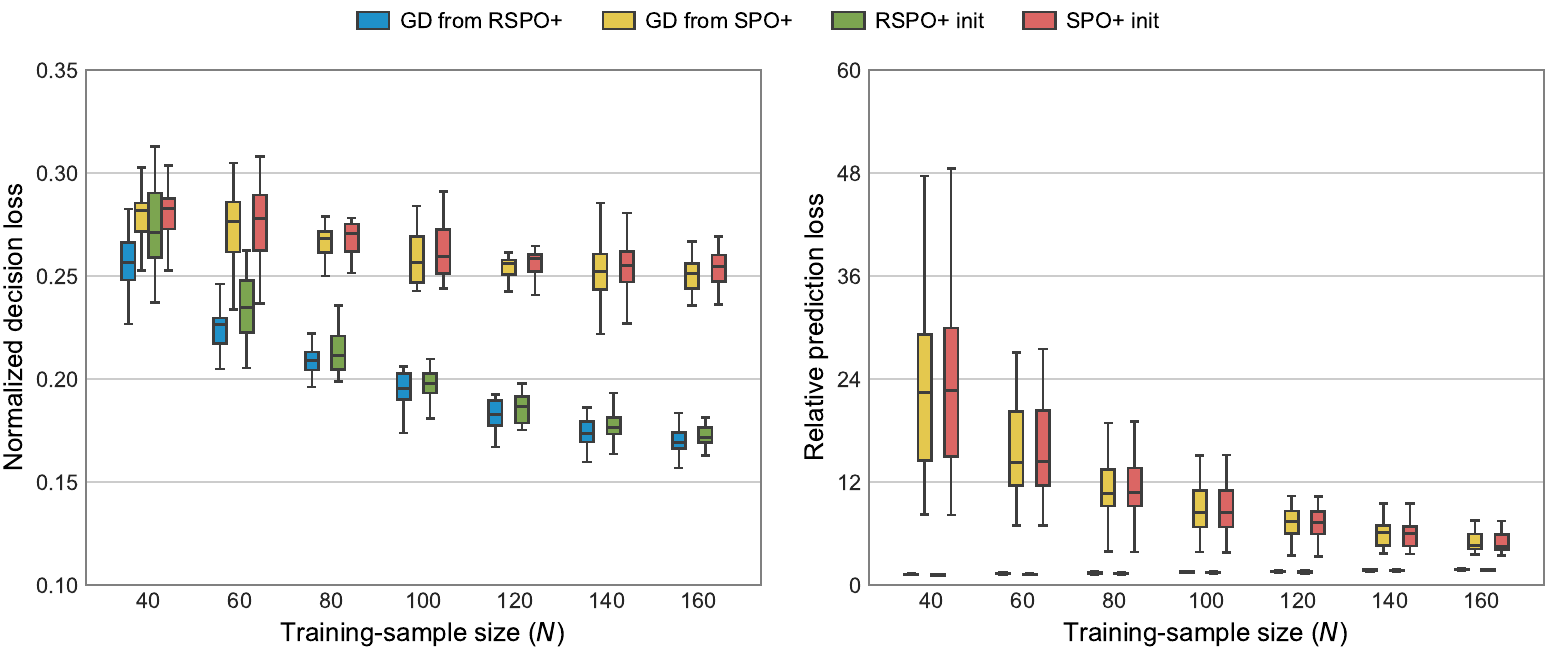}
    
    \caption{Test decision loss (left) and relative prediction loss (right) before and after gradient-based $\mathcal{RSPO}$ refinement as the training-sample size $N$ varies on portfolio optimization instances, with $p=80$, $d=40$, $\mathtt{deg}=4$, and $\tau=1$ fixed.}
    \label{fig:portfolio_rspo_four_methods}\vspace{-6mm}
\end{figure}

\begin{figure}[t]
    \centering
    \includegraphics[width=0.85\linewidth]{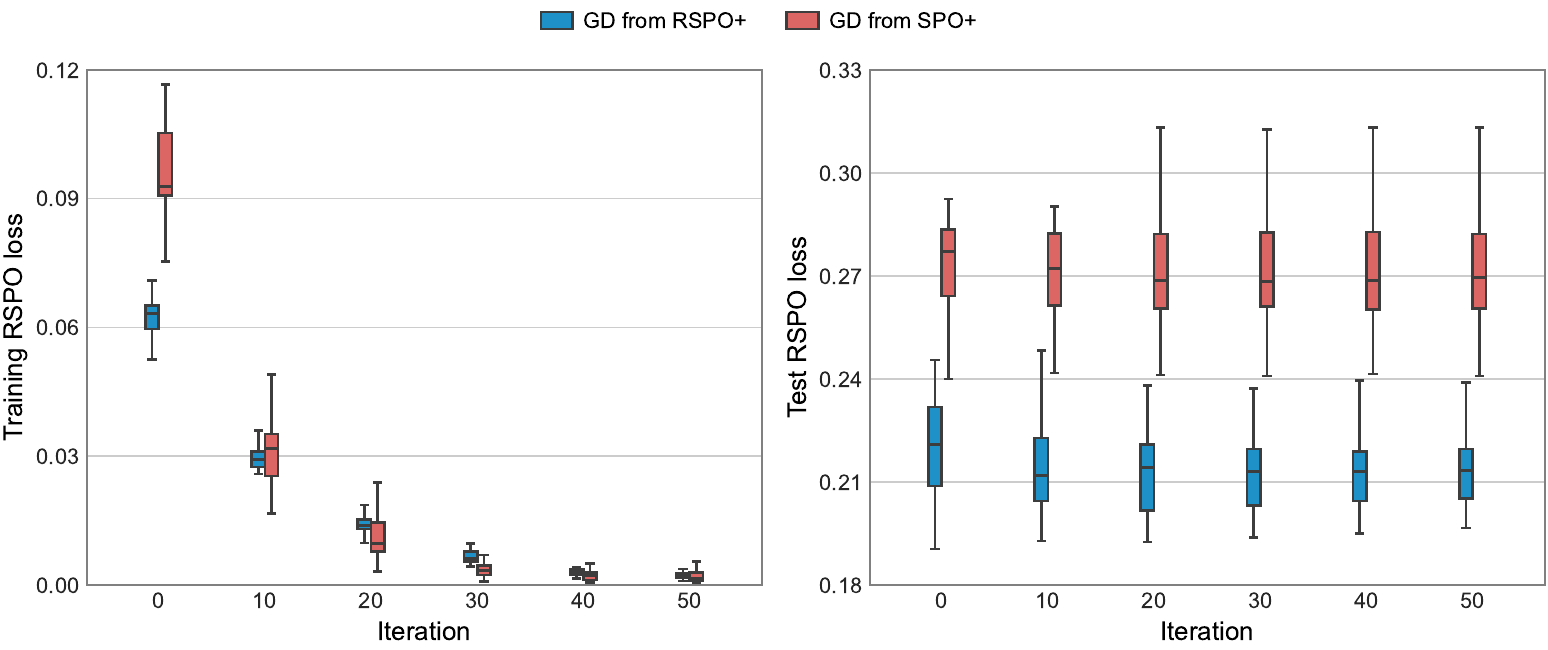}
    
    \caption{Training $\mathcal{RSPO}$ loss (left) and test $\mathcal{RSPO}$ loss (right) across gradient iterations on portfolio optimization instances, with $N=60$, $p=60$, $d=40$, $\mathtt{deg}=4$, and $\tau=1$ fixed.}
    \label{fig:portfolio_sgd_improvement_over_iterations}\vspace{-6mm}
\end{figure}

Figure~\ref{fig:portfolio_rspo_four_methods} shows that refinement from the $\mathcal{RSPO}_+$ predictor lowers test decision loss at every reported $N$, with the largest gains at $N\in\{40,60\}$ and the lowest final loss throughout the sweep. Refinement from the $\mathcal{SPO}_+$ predictor also lowers decision loss, with a higher final loss at every $N$. In both cases, refinement improves decision quality while leaving relative prediction loss broadly unchanged, further separating the decision gain from prediction accuracy. 
Figure~\ref{fig:portfolio_sgd_improvement_over_iterations} shows that the training $\mathcal{RSPO}$ losses decrease rapidly from both initializations and approach nearly the same low level. Most of the test improvement from the $\mathcal{RSPO}_+$ initialization occurs within the first ten iterations and is maintained thereafter. 
Together, these results show that $\mathcal{RSPO}_+$ provides the stronger initialization and that direct optimization of the target $\mathcal{RSPO}$ loss further improves out-of-sample decision quality in the early iterations.

\section{Conclusion}
\label{sec:conclusion}

This paper studies the contextual linear optimization problem with robust decisions. To simultaneously achieve both robustness and learning--decision alignment, we propose an integrated learning and robust optimization (ILRO) framework via $\mathcal{RSPO}$ loss. The resulting learning problem is nonconvex, as is the nominal counterpart ($\mathcal{SPO}$). We address this in two complementary ways: (i) we construct a convex surrogate, the $\mathcal{RSPO}_+$ loss, and we establish its Fisher consistency when the robustness parameter $\gamma$ is below an explicitly computable threshold; (ii) unlike the $\mathcal{SPO}$ loss, the $\mathcal{RSPO}$ loss carries informative first-order information, and hence we develop a gradient descent algorithm to optimize the target loss directly. On the statistical side, we derive finite-sample excess risk bounds for both predictors: (i) a meta generalization bound for the target predictor that separates the effect of the robust decision mapping from the prediction class, and yields $N^{-1/2}$ rates (up to logarithmic factors) for four representative hypothesis classes; and (ii) within the Fisher-consistent regime, target excess risk bounds of order $N^{-1/4}$ for the surrogate predictor, improving to $N^{-1/2}$ under a quadratic growth condition. Numerical experiments on capacitated transportation and risk-constrained portfolio optimization problems compare the framework against predict-then-optimize, decision-focused, and post-hoc robust benchmarks and support these findings, with the clearest gains under limited samples, high-dimensional decisions, and model misspecification. Gradient-based refinement of the $\mathcal{RSPO}_+$ solution improves out-of-sample decision quality further.
 
We outline several potential directions for future research. First, as Proposition~\ref{prop:rspo_piecewise_affine} shows, under certain conditions, the empirical $\mathcal{RSPO}$ objective is continuous and piecewise affine. This structural result invites approximation algorithms built from finitely many cuts, which may offer a globally convergent alternative to the local refinement discussed in Section~\ref{subsec:gradient_refinement}. Second, the selection of the robustness parameter $\gamma$ deserves further investigation. Unlike in the predict-then-robust-optimize framework where $\gamma$ affects only the decision stage, under ILRO it enters the training problem, and hence the decision quality attained at a given $\gamma$ reflects both the robustness it buys and the predictor it induces. Therefore, a judicious choice of $\gamma$ should consider the interaction of the two. Third, we focused on an uncertain objective in this paper. Extending the ILRO framework to the case of both predictive uncertain objective and constraints requires novel modeling methods and analysis techniques, which is a natural next step.

\bibliographystyle{informs2014}
\bibliography{myrefs}

\ECSwitch



\begin{appendices}
\ECHead{\centering Electronic Companion to\\ ``Integrated Learning and Robust Optimization''}

\section{Technical Lemmas}
\label{app:technical_lemmas}

This appendix provides the details of five technical lemmas used throughout our analysis. In particular, Lemma~\ref{lem:oracle_projection} establishes the well-posedness and projection representation of the robust decision map and is invoked in the proofs of Proposition~\ref{prop:optcondition_RSPO+} and Lemma~\ref{lem:rspo_pointwise_excess}; Lemma~\ref{lem:vector_contraction_inequality} is required for Theorem~\ref{thm:rspo_meta_generalization}; Lemma~\ref{lem:path_equivalence} proves the claims made in Remark~\ref{rem:soft-robustness}; and Lemma~\ref{lem:rspo_pointwise_excess} gives the closed-form pointwise target excess risk used in the proof of Theorem~\ref{thm:direct_closure}. Lemma~\ref{lem:local_sc} provides sufficient conditions, in terms of distributional primitives, for the local strong convexity of the surrogate risk invoked in Section~\ref{subsec:rspo+_excess_risk_bounds}. The proofs of the remaining results are provided in Appendix~\ref{app:technical_proofs}.

\bigskip
\noindent\textit{Proof of Lemma \ref{lem:oracle_projection}.}~
For every $\bm y_0\in\mathbb R^d$, completing the square gives
\begin{equation*}
    \bm y_0^\top\bm z+\frac{\gamma}{2}\|\bm z\|_2^2
    =
    \frac{\gamma}{2}
    \left\|\bm z+\frac{\bm y_0}{\gamma}\right\|_2^2
    -\frac{1}{2\gamma}\|\bm y_0\|_2^2.
\end{equation*}
Because $\mathcal Z$ is nonempty and compact and the objective is continuous, a minimizer exists. Since $\gamma>0$, the objective is strongly convex, so the minimizer is unique. The second term in the display is independent of $\bm z$; therefore,
\begin{equation*}
    \bm z_\gamma^\star(\bm y_0)
    =
    \argmin_{\bm z\in\mathcal Z}
    \left\|\bm z+\frac{\bm y_0}{\gamma}\right\|_2^2
    =
    \Pi_{\mathcal Z}\!\left(-\frac{\bm y_0}{\gamma}\right),
\end{equation*}
which proves the projection representation. Thus, for any $\bm y_1,\bm y_2$, the nonexpansiveness of the Euclidean projection gives
\begin{equation*}
\begin{aligned}
    \left\|
        \bm z_{\gamma}^{\star}(\bm y_1)
        -
        \bm z_{\gamma}^{\star}(\bm y_2)
    \right\|_2
    &=
    \left\|
        \Pi_{\mathcal Z}\left(-\frac{\bm y_1}{\gamma}\right)
        -
        \Pi_{\mathcal Z}\left(-\frac{\bm y_2}{\gamma}\right)
    \right\|_2  \\
    &\le
    \left\|
        -\frac{\bm y_1}{\gamma}
        +
        \frac{\bm y_2}{\gamma}
    \right\|_2  \\
    &=
    \frac{1}{\gamma}\|\bm y_1-\bm y_2\|_2.
\end{aligned}
\end{equation*}
Therefore, $\bm z_{\gamma}^{\star}(\cdot)$ is Lipschitz continuous with constant $1/\gamma$. By Rademacher's theorem, every Lipschitz continuous mapping from $\mathbb R^d$ to $\mathbb R^d$ is differentiable almost everywhere. Hence, $\bm z_{\gamma}^{\star}(\cdot)$ is differentiable almost everywhere.
\qed

\bigskip
\noindent\textit{Proof of Lemma \ref{lem:vector_contraction_inequality}.}~
Condition on the sample $\{(\bm x_i,\bm y_i)\}_{i\in[N]}$. For each $i\in[N]$, define $\psi_i:\mathbb R^d\to\mathbb R$ by
\begin{equation*}
    \psi_i(\bm u)
    :=
    \ell_{\mathcal{RSPO}}(\bm u,\bm y_i)
    =
    \bm y_i^\top\bm z^{\star}_{\gamma}(\bm u)-v^{\star}(\bm y_i).
\end{equation*}
By Lemma~\ref{lem:oracle_projection}, each $\psi_i$ is Lipschitz continuous with constant $\|\bm y_i\|_2/\gamma\leq r/\gamma$. By the vector contraction inequality \citep[Corollary~4]{maurer2016vector}, which allows the Lipschitz functions to depend on the index $i$, the Rademacher complexity of a vector-valued function class composed with $L$-Lipschitz scalar functions is bounded, up to a multiplicative factor of $\sqrt{2}L$, by the vector-valued Rademacher complexity of this function class. Since each $\psi_i$ is $(r/\gamma)$-Lipschitz, it follows that
\begin{equation*}
    \mathbb E_{\bm\sigma}
    \left[
    \sup_{\bm g\in\mathcal G}
    \sum_{i\in[N]}
    \sigma_i\,\psi_i(\bm g(\bm x_i))
    \right]
    \leq
    \sqrt{2}\,\frac{r}{\gamma}\,
    \mathbb E_{\bm\sigma}
    \left[
    \sup_{\bm g\in\mathcal G}
    \sum_{i\in[N]}
    \bm\sigma_i^\top\bm g(\bm x_i)
    \right],
\end{equation*}
where on the left-hand side $\sigma_i$ are independent Rademacher random variables and on the right-hand side $\bm\sigma_i\in\{+1,-1\}^d$ are independent Rademacher random vectors. Dividing both sides by $N$ and taking the expectation with respect to the sample yields
\begin{equation*}
    \mathfrak R_1(\mathcal H_{\ell_{\mathcal{RSPO}}})
    \leq
    \frac{\sqrt{2}r}{\gamma}
    \mathfrak R_d(\mathcal G).
\end{equation*}
This completes the proof.
\qed

\bigskip
\begin{lemma}[Solution-path Equivalence]
\label{lem:path_equivalence}
Fix $\hat{\bm y}\in\mathbb R^d$ and let $\mathcal Z\subseteq\mathbb R^d$ be nonempty, compact, and convex. For $\vartheta\geq0$, let $\mathcal S_{\rm ball}(\vartheta):=\argmin_{\bm z\in\mathcal Z} \{\hat{\bm y}^\top\bm z+\vartheta\|\bm z\|_2\}$. For $\gamma>0$, let $\bm z^{\star}_{\gamma}(\hat{\bm y})$ be the unique solution of \ref{ilroopt}. By convention, define $\mathcal Z_0(\hat{\bm y})= \argmin_{\bm z\in \mathcal{Z}}\hat{\bm y}^\top\bm z$ and $\bm z^{\star}_{\infty}(\hat{\bm y}):=\Pi_{\mathcal Z}(\bm 0)=\lim_{\gamma\to\infty}\bm z^{\star}_{\gamma}(\hat{\bm y})$. Then the following hold.
\begin{enumerate}
\item[(i)] For every $\vartheta\geq0$ and every $\bar{\bm z}\in\mathcal S_{\rm ball}(\vartheta)$: if $\bar{\bm z}\neq\bm 0$ and $\vartheta>0$, then $\bar{\bm z}=\bm z^{\star}_{\gamma}(\hat{\bm y})$ with $\gamma=\vartheta/\|\bar{\bm z}\|_2$; if $\vartheta=0$, then $\bar{\bm z}\in\mathcal Z_0(\hat{\bm y})$; if $\bar{\bm z}=\bm 0$, then $\bar{\bm z}=\bm z^{\star}_{\infty}(\hat{\bm y})$.
\item[(ii)] For every $\gamma>0$, $\bm z^{\star}_{\gamma}(\hat{\bm y})\in\mathcal S_{\rm ball}(\vartheta(\gamma))$ with $\vartheta(\gamma):=\gamma\|\bm z^{\star}_{\gamma}(\hat{\bm y})\|_2$.
\item[(iii)] The map $\gamma\mapsto\vartheta(\gamma)$ is nondecreasing on $(0,\infty)$.
\end{enumerate}
\end{lemma}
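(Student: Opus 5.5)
The whole argument will run through first-order optimality conditions of the two convex problems, written with the normal cone $\mathcal N_{\mathcal Z}$. For \ref{ilroopt}, $\bm z=\bm z^{\star}_{\gamma}(\hat{\bm y})$ holds if and only if $-\hat{\bm y}-\gamma\bm z\in\mathcal N_{\mathcal Z}(\bm z)$; this is the projection representation of Lemma~\ref{lem:oracle_projection}. For the ball problem, the objective $\hat{\bm y}^\top\bm z+\vartheta\|\bm z\|_2$ is convex, and $\mathcal Z$ is convex and compact. Hence $\bar{\bm z}\in\mathcal S_{\rm ball}(\vartheta)$ if and only if $\bm 0\in\hat{\bm y}+\vartheta\,\partial\|\cdot\|_2(\bar{\bm z})+\mathcal N_{\mathcal Z}(\bar{\bm z})$. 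Since $\|\cdot\|_2$ is finite everywhere, the subdifferential sum rule applies with no qualification issue.

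\textbf{Part (i).} For $\bar{\bm z}\neq\bm 0$, the subdifferential is the single vector $\bar{\bm z}/\|\bar{\bm z}\|_2$. The ball optimality condition then reads $-\hat{\bm y}-(\vartheta/\|\bar{\bm z}\|_2)\bar{\bm z}\in\mathcal N_{\mathcal Z}(\bar{\bm z})$. With $\gamma=\vartheta/\|\bar{\bm z}\|_2>0$, this is exactly the optimality condition for $\bm z^{\star}_{\gamma}(\hat{\bm y})$, and uniqueness gives $\bar{\bm z}=\bm z^{\star}_{\gamma}(\hat{\bm y})$. The case $\vartheta=0$ holds by definition. If $\bar{\bm z}=\bm 0$, then $\bm 0\in\mathcal Z$, so $\Pi_{\mathcal Z}(\bm 0)=\bm 0=\bar{\bm z}$. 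I will also check that the limit claim $\lim_{\gamma\to\infty}\Pi_{\mathcal Z}(-\hat{\bm y}/\gamma)=\Pi_{\mathcal Z}(\bm 0)$ follows from continuity of the projection.

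\textbf{Part (ii).} Set $\bm z=\bm z^{\star}_{\gamma}(\hat{\bm y})$ and $\vartheta=\gamma\|\bm z\|_2$. If $\bm z\neq\bm 0$, then $\gamma\bm z=\vartheta\,\bm z/\|\bm z\|_2\in\vartheta\,\partial\|\cdot\|_2(\bm z)$, so the robust optimality condition yields the ball optimality condition directly. If $\bm z=\bm 0$, then $\vartheta=0$, and $-\hat{\bm y}\in\mathcal N_{\mathcal Z}(\bm 0)$ says that $\bm 0$ minimizes $\hat{\bm y}^\top\bm z$ over $\mathcal Z$, so $\bm 0\in\mathcal S_{\rm ball}(0)$.

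\textbf{Part (iii).} This is the main step, because $\|\bm z^{\star}_{\gamma}\|_2$ itself decreases in $\gamma$ while the factor $\gamma$ increases, so monotonicity of the product is not obvious. Take $0<\gamma_1<\gamma_2$ and write $\bm z_k=\bm z^{\star}_{\gamma_k}(\hat{\bm y})$. Optimality of each $\bm z_k$ in its own problem, compared against the other point, gives
\begin{equation*}
\hat{\bm y}^\top\bm z_1+\tfrac{\gamma_1}{2}\|\bm z_1\|_2^2\le\hat{\bm y}^\top\bm z_2+\tfrac{\gamma_1}{2}\|\bm z_2\|_2^2,\qquad \hat{\bm y}^\top\bm z_2+\tfrac{\gamma_2}{2}\|\bm z_2\|_2^2\le\hat{\bm y}^\top\bm z_1+\tfrac{\gamma_2}{2}\|\bm z_1\|_2^2 .
\end{equation*}
Adding the two yields $\|\bm z_2\|_2\le\|\bm z_1\|_2$, but that bound points the wrong way for the product. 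The variational inequalities do better. Testing the robust optimality condition for $\bm z_k$ against the other point gives $(\hat{\bm y}+\gamma_1\bm z_1)^\top(\bm z_2-\bm z_1)\ge0$ and $(\hat{\bm y}+\gamma_2\bm z_2)^\top(\bm z_1-\bm z_2)\ge0$. Adding them gives $(\gamma_1\bm z_1-\gamma_2\bm z_2)^\top(\bm z_2-\bm z_1)\ge0$, that is,
\begin{equation*}
\gamma_1\bm z_1^\top\bm z_2+\gamma_2\bm z_2^\top\bm z_1\ge\gamma_1\|\bm z_1\|_2^2+\gamma_2\|\bm z_2\|_2^2 .
\end{equation*}
Cauchy--Schwarz bounds the left side by $(\gamma_1+\gamma_2)\|\bm z_1\|_2\|\bm z_2\|_2$. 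Write $u=\|\bm z_1\|_2$ and $v=\|\bm z_2\|_2$. The inequality becomes $\gamma_1u^2+\gamma_2v^2\le(\gamma_1+\gamma_2)uv$, which rearranges to $(u-v)(\gamma_1u-\gamma_2v)\le0$. If $u>v$, this forces $\gamma_1u\le\gamma_2v$, which is the claim. If $u=v$, the claim follows from $\gamma_1<\gamma_2$. The case $u<v$ is excluded by the bound $v\le u$ from the first step. Hence $\vartheta(\gamma_1)=\gamma_1u\le\gamma_2v=\vartheta(\gamma_2)$.
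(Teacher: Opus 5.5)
Your proof is correct. Parts (i) and (ii) follow the paper's argument essentially verbatim: you match the gradient of $\|\cdot\|_2$ at a nonzero point with the stationarity condition of \ref{ilroopt}, and you handle the zero cases directly.

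Part (iii) is where you diverge. Your first step, adding the two value inequalities to get $\|\bm z_2\|_2\le\|\bm z_1\|_2$, is exactly the paper's exchange inequality with $h=\frac12\|\cdot\|_2^2$. After that the paper argues by contradiction. It assumes $\vartheta(\gamma_1)>\vartheta(\gamma_2)$ and invokes part (ii) to view $\bm z_k$ as a solution of the ball problem with radius $\vartheta(\gamma_k)$. Applying the same exchange inequality to that family with $h=\|\cdot\|_2$ gives $\|\bm z_1\|_2\le\|\bm z_2\|_2$, hence equal norms, and then a contradiction.

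You instead add the two variational inequalities of \ref{ilroopt}, which amounts to monotonicity of $\mathcal N_{\mathcal Z}$, and apply Cauchy--Schwarz to obtain $(u-v)(\gamma_1u-\gamma_2v)\le0$ directly. This argument is first-order rather than purely value-based. It needs neither part (ii) nor a contradiction. It is also slightly stronger than you used: if $u<v$, both factors would be negative, so this single inequality already yields $v\le u$ and $\vartheta(\gamma_1)\le\vartheta(\gamma_2)$ together. Your preliminary value-comparison step is therefore redundant.

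The paper's route, by contrast, uses only optimality of objective values. It ties the monotonicity of $\vartheta(\cdot)$ explicitly to the ball-problem reinterpretation established in part (ii).
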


\noindent\textit{Proof.}~
We first provide an elementary comparison fact. Let $h$ be any function on $\mathcal Z$ and, for $c_1<c_2$, let $\bm x_i$ minimize $\hat{\bm y}^\top\bm x+c_i h(\bm x)$ over $\mathcal Z$. Summing the two optimality inequalities $\hat{\bm y}^\top\bm x_1+c_1h(\bm x_1)\leq\hat{\bm y}^\top\bm x_2+c_1h(\bm x_2)$ and $\hat{\bm y}^\top\bm x_2+c_2h(\bm x_2)\leq\hat{\bm y}^\top\bm x_1+c_2h(\bm x_1)$ yields $(c_2-c_1)(h(\bm x_2)-h(\bm x_1))\leq0$, i.e., $h(\bm x_2)\leq h(\bm x_1)$.

\emph{Part (i).} Let $\bar{\bm z}\in\mathcal S_{\rm ball}(\vartheta)$ with $\bar{\bm z}\neq\bm 0$ and $\vartheta>0$. The objective $\bm z\mapsto\hat{\bm y}^\top\bm z+\vartheta\|\bm z\|_2$ is convex and differentiable at $\bar{\bm z}$, so optimality over the convex set $\mathcal Z$ is equivalent to $\bm 0\in\hat{\bm y}+\vartheta\bar{\bm z}/\|\bar{\bm z}\|_2 +\mathcal N_{\mathcal Z}(\bar{\bm z})$. Setting $\gamma=\vartheta/\|\bar{\bm z}\|_2$, this reads $\bm 0\in\hat{\bm y}+\gamma\bar{\bm z}+\mathcal N_{\mathcal Z}(\bar{\bm z})$, which is the necessary and sufficient optimality condition of \ref{ilroopt}. Since the objective of \ref{ilroopt} is strongly convex for $\gamma>0$, its
minimizer is unique and $\bar{\bm z}=\bm z^{\star}_{\gamma}(\hat{\bm y})$.
If $\vartheta=0$, the ball problem is the nominal problem and $\bar{\bm z}\in\mathcal Z_0(\hat{\bm y})$ by definition. 
Finally, if $\bar{\bm z}=\bm 0\in\mathcal S_{\rm ball}(\vartheta)$, then $\bm 0\in\mathcal Z$ and hence $\Pi_{\mathcal Z}(\bm 0)=\bm 0$. Therefore, $\bar{\bm z}=\bm 0=\Pi_{\mathcal Z}(\bm 0)=\bm z^{\star}_{\infty}(\hat{\bm y})$.

\emph{Part (ii).} Write $\bar{\bm z}=\bm z^{\star}_{\gamma}(\hat{\bm y})$, so that $\bm 0\in\hat{\bm y}+\gamma\bar{\bm z}+\mathcal N_{\mathcal Z}(\bar{\bm z})$. If $\bar{\bm z}\neq\bm 0$, then $\gamma\bar{\bm z}=\vartheta(\gamma)\,\bar{\bm z}/\|\bar{\bm z}\|_2$, so the same inclusion is the (sufficient) optimality condition of the convex ball problem with radius $\vartheta(\gamma)$, and this gives $\bar{\bm z}\in\mathcal S_{\rm ball}(\vartheta(\gamma))$. If $\bar{\bm z}=\bm 0$, the inclusion reads $\bm 0\in\hat{\bm y}+\mathcal N_{\mathcal Z}(\bm 0)$, and this leads to $\bar{\bm z}\in\mathcal S_{\rm ball}(0)$.

\emph{Part (iii).} Let $0<\gamma_1<\gamma_2$ and write $\bm z_i=\bm z^{\star}_{\gamma_i}(\hat{\bm y})$. Applying the comparison fact with $h=\frac12\|\cdot\|_2^2$ gives $\|\bm z_2\|_2\leq\|\bm z_1\|_2$. Suppose, for contradiction, that $\vartheta(\gamma_1)>\vartheta(\gamma_2)$. By part (ii), $\bm z_i\in\mathcal S_{\rm ball}(\vartheta(\gamma_i))$, and the comparison fact with $h=\|\cdot\|_2$ applied to $\vartheta(\gamma_2)<\vartheta(\gamma_1)$ gives $\|\bm z_1\|_2\leq\|\bm z_2\|_2$. Hence $\|\bm z_1\|_2=\|\bm z_2\|_2=:r$, so $\vartheta(\gamma_i)=\gamma_i r$. If $r>0$, then $\vartheta(\gamma_1)=\gamma_1r<\gamma_2r=\vartheta(\gamma_2)$, a contradiction; if $r=0$, then $\vartheta(\gamma_1)=\vartheta(\gamma_2)=0$, again a contradiction. Therefore, $\vartheta(\gamma_1)\leq\vartheta(\gamma_2)$.
\qed

\bigskip
\begin{lemma}\label{lem:rspo_pointwise_excess}
Suppose $\mathcal Z$ is a nonempty compact convex set, $\gamma>0$, $\mathbb E_{\bm{y}\sim\mathbb{P}_{\bm{y}|\bm{x}}}[\|\bm y\|_2]<\infty$, and fix $\bm x$. Define $\Delta_\gamma(\bm c;\bm x):=R(\bm c;\bm x)-\inf_{\bm c'\in\mathbb R^d}R(\bm c';\bm x)$. Recall $\bar{\bm y}=\mathbb E_{\bm{y}\sim\mathbb{P}_{\bm{y}|\bm{x}}}[\bm y]$. Then
\begin{equation*}
\Delta_{\gamma}(\hat{\bm y};\bm x)=\bar{\bm y}^\top\bm z^{\star}_{\gamma}(\hat{\bm y})-v^{\star}(\bar{\bm y}), 
\qquad \text{with } v^{\star}(\bar{\bm y})=\min_{\bm z\in\mathcal Z}\bar{\bm y}^\top\bm z .
\end{equation*}
In particular, $\Delta_{\gamma}(\hat{\bm y};\bm x)\geq0$, with equality if and only if $\bm z^{\star}_{\gamma}(\hat{\bm y})$ is optimal for the nominal problem $\min_{\bm z\in\mathcal Z}\bar{\bm y}^\top\bm z$, that is,
\begin{equation*}
    \Delta_{\gamma}(\hat{\bm y};\bm x)=0 \Leftrightarrow \hat{\bm y}\in\mathcal Y^{\star}_{\mathcal{RSPO}} \Leftrightarrow \bm z^{\star}_{\gamma}(\hat{\bm y})\in \argmin _{\bm z\in\mathcal Z}\bar{\bm y}^\top\bm z.
\end{equation*}
\end{lemma}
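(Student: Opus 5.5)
The proof is a direct computation that exploits the linearity of $\ell_{\mathcal{RSPO}}$ in $\bm y$. Fix $\bm x$ and any $\bm c\in\mathbb R^d$. The robust decision $\bm z^{\star}_{\gamma}(\bm c)$ depends only on $\bm c$ and not on $\bm y$ (Lemma~\ref{lem:oracle_projection}). Moreover, $|\ell_{\mathcal{RSPO}}(\bm c,\bm y)|\le 2R_{\mathcal Z}\|\bm y\|_2$ by \eqref{eq:appE_loss_envelope}, and $\mathbb E\|\bm y\|_2<\infty$, so all expectations below are finite. Taking conditional expectations therefore gives
\begin{equation*}
R(\bm c;\bm x)=\bar{\bm y}^\top\bm z^{\star}_{\gamma}(\bm c)-\mathbb E_{\bm y\sim\mathbb P_{\bm y|\bm x}}[v^{\star}(\bm y)].
\end{equation*}
The second term is a finite constant, independent of $\bm c$. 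Hence minimizing $R(\cdot;\bm x)$ is equivalent to minimizing $\bm c\mapsto\bar{\bm y}^\top\bm z^{\star}_{\gamma}(\bm c)$ over $\mathbb R^d$.

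The key step is to show $\inf_{\bm c}\bar{\bm y}^\top\bm z^{\star}_{\gamma}(\bm c)=v^{\star}(\bar{\bm y})$, with the infimum attained.

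\emph{Lower bound.} Since $\bm z^{\star}_{\gamma}(\bm c)\in\mathcal Z$, we have $\bar{\bm y}^\top\bm z^{\star}_{\gamma}(\bm c)\ge v^{\star}(\bar{\bm y})$ for every $\bm c$.

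\emph{Attainment.} This step uses surjectivity of the robust decision map onto $\mathcal Z$. Take any nominal optimizer $\bm z_0\in\argmin_{\bm z\in\mathcal Z}\bar{\bm y}^\top\bm z$, which exists because $\mathcal Z$ is compact. Set $\bm c_0:=-\gamma\bm z_0$. The projection representation \eqref{eq:projection_representation} then gives $\bm z^{\star}_{\gamma}(\bm c_0)=\Pi_{\mathcal Z}(\bm z_0)=\bm z_0$. So every point of $\mathcal Z$ is realized by some prediction, and the value $v^{\star}(\bar{\bm y})$ is attained.

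Combining the two, $\inf_{\bm c'}R(\bm c';\bm x)=v^{\star}(\bar{\bm y})-\mathbb E[v^{\star}(\bm y)]$. Subtracting this from $R(\hat{\bm y};\bm x)$ yields the claimed formula
\begin{equation*}
\Delta_\gamma(\hat{\bm y};\bm x)=\bar{\bm y}^\top\bm z^{\star}_{\gamma}(\hat{\bm y})-v^{\star}(\bar{\bm y}).
\end{equation*}

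The remaining claims follow directly from this formula.
\begin{itemize}
\item Nonnegativity is the lower bound above.
\item Since the infimum is attained, $\mathcal Y^{\star}_{\mathcal{RSPO}}$ is exactly the zero set of $\Delta_\gamma(\cdot;\bm x)$. This gives the first equivalence.
\item $\Delta_\gamma(\hat{\bm y};\bm x)=0$ means $\bar{\bm y}^\top\bm z^{\star}_{\gamma}(\hat{\bm y})=\min_{\bm z\in\mathcal Z}\bar{\bm y}^\top\bm z$ with $\bm z^{\star}_{\gamma}(\hat{\bm y})\in\mathcal Z$. This is precisely nominal optimality, which gives the second equivalence.
\end{itemize}

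The only genuinely non-routine point is attainment, i.e., that the infimum over predictions equals the nominal optimum rather than something strictly larger. The projection representation settles it immediately via $\bm c_0=-\gamma\bm z_0$. The integrability bookkeeping that justifies separating the constant term is handled by \eqref{eq:appE_loss_envelope}.
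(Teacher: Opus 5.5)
Your proposal is correct and follows essentially the same route as the paper. Taking conditional expectations isolates the constant $\mathbb E[v^{\star}(\bm y)]$, and the surjectivity of $\bm c\mapsto\Pi_{\mathcal Z}(-\bm c/\gamma)$ onto $\mathcal Z$ identifies the infimum with $v^{\star}(\bar{\bm y})$; your explicit preimage $\bm c_0=-\gamma\bm z_0$ is exactly how that surjectivity is realized. Your integrability bookkeeping is slightly more explicit than the paper's, which splits the expectation without comment.
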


\noindent\textit{Proof of Lemma \ref{lem:rspo_pointwise_excess}.}~
Fix $\bm x$ and recall $\ell_{\mathcal{RSPO}}(\hat{\bm y},\bm y)=\bm y^\top\bm z^{\star}_{\gamma}(\hat{\bm y})-v^{\star}(\bm y)$. Taking conditional expectation gives
\begin{equation*}
\mathbb E_{\bm{y}\sim\mathbb{P}_{\bm{y}|\bm{x}}}[\ell_{\mathcal{RSPO}}(\hat{\bm y},\bm y)]=\bar{\bm y}^\top\bm z^{\star}_{\gamma}(\hat{\bm y})-\mathbb E_{\bm{y}\sim\mathbb{P}_{\bm{y}|\bm{x}}}[v^{\star}(\bm y)],
\end{equation*}
since $\bm z^{\star}_{\gamma}(\hat{\bm y})$ does not depend on $\bm y$. The second term is constant in $\hat{\bm y}$, so
\begin{equation*}
\inf_{\bm c\in\mathbb R^d}\mathbb E_{\bm{y}\sim\mathbb{P}_{\bm{y}|\bm{x}}}[\ell_{\mathcal{RSPO}}(\bm c,\bm y)]
=\inf_{\bm c\in\mathbb R^d}\bar{\bm y}^\top\bm z^{\star}_{\gamma}(\bm c)-\mathbb E_{\bm{y}\sim\mathbb{P}_{\bm{y}|\bm{x}}}[v^{\star}(\bm y)].
\end{equation*}
By Lemma~\ref{lem:oracle_projection}, $\bm z^{\star}_{\gamma}(\bm c)=\Pi_{\mathcal Z}(-\bm c/\gamma)$, and since the Euclidean projection maps $\mathbb R^d$ onto $\mathcal Z$ (every $\bm z_0\in\mathcal Z$ satisfies $\Pi_{\mathcal Z}(\bm z_0)=\bm z_0$), the image $\{\bm z^{\star}_{\gamma}(\bm c):\bm c\in\mathbb R^d\}$ equals $\mathcal Z$. Hence $\inf_{\bm c}\bar{\bm y}^\top\bm z^{\star}_{\gamma}(\bm c)=\min_{\bm z\in\mathcal Z}\bar{\bm y}^\top\bm z=v^{\star}(\bar{\bm y})$. Subtracting, the constant term cancels and
\begin{equation*}
\Delta_{\gamma}(\hat{\bm y};\bm x)=\bar{\bm y}^\top\bm z^{\star}_{\gamma}(\hat{\bm y})-v^{\star}(\bar{\bm y}).
\end{equation*}
This is nonnegative because $\bm z^{\star}_{\gamma}(\hat{\bm y})\in\mathcal Z$, and equals zero if and only if $\bm z^{\star}_{\gamma}(\hat{\bm y})$ attains $\min_{\bm z\in\mathcal Z}\bar{\bm y}^\top\bm z$. By the definition, $\Delta_{\gamma}(\hat{\bm y};\bm x)=0$ holds exactly when $\hat{\bm y}$ minimizes the pointwise $\mathcal{RSPO}$ risk, i.e., $\hat{\bm y}\in\mathcal Y^{\star}_{\mathcal{RSPO}}$. Therefore,
\begin{equation*}
    \Delta_{\gamma}(\hat{\bm y};\bm x)=0 \Leftrightarrow \hat{\bm y}\in\mathcal Y^{\star}_{\mathcal{RSPO}} \Leftrightarrow \bm z^{\star}_{\gamma}(\hat{\bm y})\in \argmin _{\bm z\in\mathcal Z}\bar{\bm y}^\top\bm z.
\end{equation*}
This completes the proof.
\qed

\begin{lemma}[Sufficient Conditions for Local Strong Convexity of Surrogate Risk]\label{lem:local_sc}
Suppose $\mathcal Z$ is a nonempty compact convex set, $\gamma>0$, and $\mathbb E_{\bm{y}\sim\mathbb{P}_{\bm{y}|\bm{x}}}[\|\bm y\|_2]<\infty$. Set $a=1$ and fix $\bm x$. Let $\rho>0$ be such that $\mathcal Z$ contains a Euclidean ball of radius strictly larger than $\rho/\gamma$, and define
\begin{equation*}
K_{\rho}:=\left\{\bm w\in\mathbb R^d: \mathcal B(\bm w,\rho/\gamma)\subseteq\mathcal Z\right\},
\qquad
p_0:=\mathbb P_{\bm{y}|\bm{x}}\big(\bm y\in 2\bar{\bm y}+\gamma K_{\rho}\big).
\end{equation*}
Then $R_{+}(\cdot;\bm x)$ is $(p_0/\gamma)$-strongly convex on $\mathcal B(2\bar{\bm y},\rho)$. In particular, $p_0>0$ whenever $\mathbb P_{\bm y|\bm x}$ admits a density that is positive on an open set containing $2\bar{\bm y}+\gamma K_{\rho}$.
\end{lemma}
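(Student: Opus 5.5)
The plan is to prove strong convexity through strong monotonicity of the gradient of $R_+(\cdot;\bm x)$ on the ball $\mathcal B(2\bar{\bm y},\rho)$. The key input is the explicit gradient from Proposition~\ref{prop:rspo+_loss_properties}(iii) combined with the projection representation of Lemma~\ref{lem:oracle_projection}.

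\textbf{Step 1: the gradient of the surrogate risk.} With $a=1$, Proposition~\ref{prop:rspo+_loss_properties}(iii) and Lemma~\ref{lem:oracle_projection} give
\begin{equation*}
\nabla_{\hat{\bm y}}\ell_{\mathcal{RSPO}_+}(\hat{\bm y},\bm y)=\bm z^{\star}_{\gamma}(\bm y)-\Pi_{\mathcal Z}\!\left(\frac{\bm y-\hat{\bm y}}{\gamma}\right).
\end{equation*}
Both terms lie in the compact set $\mathcal Z$, so this gradient is bounded by a constant, and $\ell_{\mathcal{RSPO}_+}(\cdot,\bm y)$ is $D_{\mathcal Z}$-Lipschitz by Proposition~\ref{prop:rspo+_loss_properties}(iv). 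Because $\mathbb E\|\bm y\|_2<\infty$, $R_+(\cdot;\bm x)$ is finite. Dominated convergence then lets us differentiate under the expectation:
\begin{equation*}
\nabla R_+(\bm c;\bm x)=\mathbb E[\bm z^{\star}_{\gamma}(\bm y)]-\mathbb E\big[\Pi_{\mathcal Z}((\bm y-\bm c)/\gamma)\big].
\end{equation*}
This step is routine.

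\textbf{Step 2: strong monotonicity on the ball.} Fix $\bm c_1,\bm c_2\in\mathcal B(2\bar{\bm y},\rho)$ and set $\bm u=(\bm y-\bm c_2)/\gamma$ and $\bm v=(\bm y-\bm c_1)/\gamma$, so that $\bm u-\bm v=(\bm c_1-\bm c_2)/\gamma$. Then
\begin{equation*}
\langle\nabla R_+(\bm c_1;\bm x)-\nabla R_+(\bm c_2;\bm x),\bm c_1-\bm c_2\rangle=\gamma\,\mathbb E\big[\langle\Pi_{\mathcal Z}(\bm u)-\Pi_{\mathcal Z}(\bm v),\bm u-\bm v\rangle\big].
\end{equation*}
By firm nonexpansiveness of the projection, the integrand is nonnegative for every $\bm y$.

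Now consider the event $\bm y\in 2\bar{\bm y}+\gamma K_\rho$, and write $\bm y=2\bar{\bm y}+\gamma\bm w$ with $\mathcal B(\bm w,\rho/\gamma)\subseteq\mathcal Z$. Then $(\bm y-\bm c_i)/\gamma=\bm w+(2\bar{\bm y}-\bm c_i)/\gamma$. Since $\|(2\bar{\bm y}-\bm c_i)/\gamma\|_2\le\rho/\gamma$, this point lies in $\mathcal Z$. Hence both projections act as the identity on this event, and the integrand equals $\|\bm u-\bm v\|_2^2=\|\bm c_1-\bm c_2\|_2^2/\gamma^2$.

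Dropping the complementary event, which contributes a nonnegative amount, gives
\begin{equation*}
\langle\nabla R_+(\bm c_1;\bm x)-\nabla R_+(\bm c_2;\bm x),\bm c_1-\bm c_2\rangle\ge (p_0/\gamma)\|\bm c_1-\bm c_2\|_2^2.
\end{equation*}
A differentiable function whose gradient is $(p_0/\gamma)$-strongly monotone on a convex set is $(p_0/\gamma)$-strongly convex there. To see this, integrate along the segment: with $\varphi(t)=R_+(\bm c_2+t(\bm c_1-\bm c_2);\bm x)$, the monotonicity bound shows $\varphi'$ grows at rate at least $(p_0/\gamma)\|\bm c_1-\bm c_2\|_2^2$.

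\textbf{Step 3: positivity of $p_0$.} Since $\mathcal Z$ contains a ball $\mathcal B(\bm w_0,s)$ with $s>\rho/\gamma$, the set $K_\rho$ contains $\mathcal B(\bm w_0,s-\rho/\gamma)$ and therefore has nonempty interior. Consequently $2\bar{\bm y}+\gamma K_\rho$ has nonempty interior. A density positive on an open set containing it integrates to a positive mass there, so $p_0>0$.

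The main point requiring care is Step 2. Its crux is the observation that shifting by at most $\rho/\gamma$ keeps the scaled point inside $\mathcal Z$, which is exactly how $K_\rho$ is defined. Apart from that, the only technical check is the interchange of derivative and expectation in Step 1.
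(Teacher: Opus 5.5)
Your proposal is correct and follows essentially the same route as the paper's proof. You use the same gradient formula for $R_+(\cdot;\bm x)$, write the monotonicity pairing as $\gamma$ times an expectation of $\langle\Pi_{\mathcal Z}(\bm u)-\Pi_{\mathcal Z}(\bm v),\bm u-\bm v\rangle$, apply firm nonexpansiveness off the event, use the identity action of the projections on $\{\bm y\in 2\bar{\bm y}+\gamma K_\rho\}$, pass from strong monotonicity to strong convexity, and obtain $p_0>0$ from the nonempty interior of $K_\rho$, exactly as in the paper.
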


\noindent\textit{Proof of Lemma \ref{lem:local_sc}.}~
We prove that the gradient of $R_{+}(\cdot;\bm x)$ is $(p_0/\gamma)$-strongly monotone on $\mathcal B(2\bar{\bm y},\rho)$, which is equivalent to the claimed strong convexity.

\emph{Step 1: gradient formula.} By the proof of Proposition~\ref{prop:optcondition_RSPO+} (with $a=1$), $R_{+}(\cdot;\bm x)$ is finite, convex, and differentiable on $\mathbb R^d$ with
\begin{equation*}
\nabla_{\bm y} R_{+}(\hat{\bm y};\bm x)=\mathbb E_{\bm{y}\sim\mathbb{P}_{\bm{y}|\bm{x}}}\left[\bm z^{\star}_{\gamma}(\bm y)\right]-\mathbb E_{\bm{y}\sim\mathbb{P}_{\bm{y}|\bm{x}}}\left[\bm z^{\star}_{\gamma}(\hat{\bm y}-\bm y)\right],
\qquad
\bm z^{\star}_{\gamma}(\bm c)=\Pi_{\mathcal Z}\!\left(-\frac{\bm c}{\gamma}\right).
\end{equation*}
Note that the first expectation does not depend on $\hat{\bm y}$ and will cancel in gradient differences.

\emph{Step 2: the monotonicity pairing as an expectation.} Fix $\hat{\bm y}_1,\hat{\bm y}_2\in\mathcal B(2\bar{\bm y},\rho)$ and set $\bm w_i(\bm y):=(\bm y-\hat{\bm y}_i)/\gamma$, so that $\bm z^{\star}_{\gamma}(\hat{\bm y}_i-\bm y)=\Pi_{\mathcal Z}(\bm w_i(\bm y))$ and $\hat{\bm y}_1-\hat{\bm y}_2=\gamma(\bm w_2(\bm y)-\bm w_1(\bm y))$ for every $\bm y$. By Step 1,
\begin{equation*}
\left\langle\nabla_{\bm y} R_{+}(\hat{\bm y}_1;\bm x)-\nabla_{\bm y} R_{+}(\hat{\bm y}_2;\bm x),\,\hat{\bm y}_1-\hat{\bm y}_2\right\rangle
=\gamma\,\mathbb E_{\bm{y}\sim\mathbb{P}_{\bm{y}|\bm{x}}}\left[\big\langle\Pi_{\mathcal Z}(\bm w_2)-\Pi_{\mathcal Z}(\bm w_1),\,\bm w_2-\bm w_1\big\rangle\right].
\end{equation*}

\emph{Step 3: lower bound on the expectation.} Applying firm nonexpansiveness of the Euclidean projection, $\langle\Pi_{\mathcal Z}(\bm u)-\Pi_{\mathcal Z}(\bm v),\bm u-\bm v\rangle\geq\|\Pi_{\mathcal Z}(\bm u)-\Pi_{\mathcal Z}(\bm v)\|_2^2\geq0$. Hence the expectation in Step 2 is nonnegative. Moreover, on the event $E:=\{\bm y:\bm w_1(\bm y)\in\mathcal Z\text{ and }\bm w_2(\bm y)\in\mathcal Z\}$, the projections act as the identity, so the integrand over $E$ equals $\|\bm w_2-\bm w_1\|_2^2=\|\hat{\bm y}_1-\hat{\bm y}_2\|_2^2/\gamma^2$. Hence,
\begin{equation*}
\left\langle\nabla R_{+}(\hat{\bm y}_1)-\nabla R_{+}(\hat{\bm y}_2),\,\hat{\bm y}_1-\hat{\bm y}_2\right\rangle
\geq\frac{\mathbb P(E)}{\gamma}\,\|\hat{\bm y}_1-\hat{\bm y}_2\|_2^2 .
\end{equation*}

\emph{Step 4: the event $E$ contains $\{\bm y\in2\bar{\bm y}+\gamma K_{\rho}\}$.} For any $\bm y\in2\bar{\bm y}+\gamma K_{\rho}$, we have $(\bm y-2\bar{\bm y})/\gamma\in K_{\rho}$, and since $\|\hat{\bm y}_i-2\bar{\bm y}\|_2\leq\rho$,
\begin{equation*}
\bm w_i(\bm y)=\frac{\bm y-2\bar{\bm y}}{\gamma}+\frac{2\bar{\bm y}-\hat{\bm y}_i}{\gamma}\in \mathcal B\!\left(\frac{\bm y-2\bar{\bm y}}{\gamma},\frac{\rho}{\gamma}\right)\subseteq\mathcal Z,\ i=1,2
\end{equation*}
by the definition of $K_{\rho}$. Hence, $E\supseteq\{\bm y\in2\bar{\bm y}+\gamma K_{\rho}\}$ and $\mathbb P(E)\geq p_0$, so that
\begin{equation*}
\left\langle\nabla R_{+}(\hat{\bm y}_1)-\nabla R_{+}(\hat{\bm y}_2),\,\hat{\bm y}_1-\hat{\bm y}_2\right\rangle
\geq\frac{p_0}{\gamma}\,\|\hat{\bm y}_1-\hat{\bm y}_2\|_2^2
\qquad\forall\,\hat{\bm y}_1,\hat{\bm y}_2\in\mathcal B(2\bar{\bm y},\rho).
\end{equation*}

\emph{Step 5: strong convexity and positivity of $p_0$.} For a differentiable convex function, $(p_0/\gamma)$-strong monotonicity of the gradient on the convex set $\mathcal B(2\bar{\bm y},\rho)$ is equivalent to $(p_0/\gamma)$-strong convexity of $R_{+}$ on that set, which proves the main claim. Finally, if $\mathcal Z$ contains a ball $\mathcal B(\bm z_0,\rho/\gamma+\eta)$ with $\eta>0$, then $\mathcal B(\bm z_0,\eta)\subseteq K_{\rho}$, so $2\bar{\bm y}+\gamma K_{\rho}$ has nonempty interior. If in addition the conditional density of $\bm y$ is positive on an open set containing $2\bar{\bm y}+\gamma K_{\rho}$, then $p_0>0$.
\qed

\newpage
\section{Technical Proofs}
\label{app:technical_proofs}

This appendix presents the proofs of the main results stated in the paper, following the same order as their appearance in the main text.

\noindent\textit{Proof of Proposition \ref{prop:rspo_piecewise_affine}.}~
By Lemma~\ref{lem:oracle_projection}, $\bm z^{\star}_{\gamma}(\hat{\bm y})=\Pi_{\mathcal Z}(-\hat{\bm y}/\gamma)$ is single-valued and $(1/\gamma)$-Lipschitz continuous, so the unregularized empirical $\mathcal{RSPO}$ objective is continuous. It remains to establish the piecewise affine structure. Call $S\subseteq[m]$ \emph{regular} if the rows $\{\bm a_j\}_{j\in S}$ of $\bm A$ are linearly independent (this includes $S=\emptyset$). For a regular $S$, define the affine maps
\begin{equation*}
    \bm z_S(\hat{\bm y})=\frac{1}{\gamma}\big(\bm A_S^\top(\bm A_S\bm A_S^\top)^{-1}\bm A_S-\bm I\big)\hat{\bm y}+\bm A_S^\top(\bm A_S\bm A_S^\top)^{-1}\bm b_S,
    \qquad
    \bm\lambda_S(\hat{\bm y})=(\bm A_S\bm A_S^\top)^{-1}\big(\gamma\bm b_S+\bm A_S\hat{\bm y}\big),
\end{equation*}
with the convention $\bm z_{\emptyset}(\hat{\bm y})=-\hat{\bm y}/\gamma$, and the closed polyhedron
\begin{equation*}
    \mathcal Q_S:=\big\{\hat{\bm y}\in\mathbb R^d:\ \bm a_j^\top\bm z_S(\hat{\bm y})\geq b_j\ \ (j\notin S),\ \ \bm\lambda_S(\hat{\bm y})\geq\bm 0\big\},
\end{equation*}
where all defining conditions are affine in $\hat{\bm y}$. A direct computation gives $\bm A_S\bm z_S(\hat{\bm y})=\bm b_S$ and the stationarity identity $\gamma\bm z_S(\hat{\bm y})+\hat{\bm y}=\bm A_S^\top\bm\lambda_S(\hat{\bm y})$ for every $\hat{\bm y}$. Hence, for $\hat{\bm y}\in\mathcal Q_S$, the pair $(\bm z_S(\hat{\bm y}),\bm\lambda_S(\hat{\bm y}))$ (with multipliers extended by zero off $S$) satisfies primal feasibility, dual feasibility, complementary slackness, and stationarity for the strongly convex problem $\min_{\bm A\bm z\geq\bm b}\hat{\bm y}^\top\bm z+\frac{\gamma}{2}\|\bm z\|_2^2$. Since the constraints are affine, the KKT conditions are sufficient, and by strong convexity the minimizer is unique. Therefore, $\bm z^{\star}_{\gamma}(\hat{\bm y})=\bm z_S(\hat{\bm y})$ when $\hat{\bm y}\in\mathcal Q_S$.

We next show that the sets $\{\mathcal Q_S: S\ \text{is regular}\}$ cover $\mathbb R^d$. Fix $\hat{\bm y}$, write $\bm z^{\star}=\bm z^{\star}_{\gamma} (\hat{\bm y})$, and let $S_{\hat{\bm y}}$ be its active set. By the KKT conditions (which are also necessary here, the constraints being affine), the multiplier set $\{\bm\lambda\in\mathbb R^m_+:\bm A^\top\bm\lambda=\gamma\bm z^{\star}+\hat{\bm y},\ \lambda_j=0\ (j\notin S_{\hat{\bm y}})\}$ is nonempty. It is closed, convex, and contains no line, and hence possesses an extreme point $\hat{\bm\lambda}$. By the standard characterization of extreme points of polyhedra in standard form, the rows $\{\bm a_j\}_{j\in S}$ with $S:=\operatorname{supp}(\hat{\bm\lambda})\subseteq S_{\hat{\bm y}}$ are linearly independent, so $S$ is regular. Since $\bm A_S\bm z^{\star}=\bm b_S$ and $\bm A_S^\top\hat{\bm\lambda}_S=\gamma\bm z^{\star}+\hat{\bm y}$, left-multiplying the latter by $(\bm A_S\bm A_S^\top)^{-1}\bm A_S$ yields $\hat{\bm\lambda}_S=\bm\lambda_S(\hat{\bm y})\geq\bm0$, and substituting back gives $\bm z^{\star}=\bm z_S(\hat{\bm y})$. Feasibility of $\bm z^{\star}$ then shows $\hat{\bm y}\in\mathcal Q_S$. As $S$ ranges over the finitely many regular subsets of $[m]$, the closed polyhedra $\{\mathcal Q_S\}$ therefore cover $\mathbb R^d$, and $\bm z^{\star}_{\gamma}$ is continuous and piecewise affine.

Now fix $i\in[N]$. The map $\bm B\mapsto\bm B\bm x_i$ is linear, and the preimage of a polyhedron under a linear map is a polyhedron, so $\bm B\mapsto\bm y_i^\top\bm z^{\star}_{\gamma}(\bm B\bm x_i)$ is continuous and is affine on each member of the finite polyhedral family $\{\bm B:\bm B\bm x_i\in\mathcal Q_S\}_{S\ \text{is regular}}$. Taking the common refinement of these $N$ families yields a finite polyhedral subdivision of $\mathbb R^{d\times p}$ on each cell of which every summand, and hence their average, is affine. Subtracting the constant $\frac1N\sum_{i\in[N]}v^{\star}(\bm y_i)$ preserves this. Discarding the cells with empty interior leaves a finite family of full-dimensional polyhedra whose union is still $\mathbb R^{d\times p}$, because the discarded cells are contained in the union of the finitely many hyperplanes defining the subdivision. The gradient on the interior of each cell is the constant linear coefficient of the corresponding affine piece.
\qed

\subsection{Proofs of Section \ref{sec:optimization_schemes}}
\noindent\textit{Proof of Proposition \ref{prop:dual_representation}.}~
For fixed $\bm z\in\mathcal Z$, the function
\begin{equation*}
    a\mapsto \bm y^\top\bm z-a\left(\hat{\bm y}^\top\bm z+\frac{\gamma}{2}\|\bm z\|_2^2-v_\gamma^\star(\hat{\bm y})\right)
\end{equation*}
is affine. Hence $q(\cdot)$, as the pointwise maximum of affine functions, is convex on $(0,\infty)$. Let $\bm z_a$ be the unique maximizer in the definition of $q(a)$ for $a>0$. By Danskin's theorem, the derivative of $q$ at $a$ is
\begin{equation*}
    q'(a)=v_\gamma^\star(\hat{\bm y})-\hat{\bm y}^\top\bm z_a-\frac{\gamma}{2}\|\bm z_a\|_2^2.
\end{equation*}
Since $v_\gamma^\star(\hat{\bm y})$ is the minimum of $\hat{\bm y}^\top\bm z+\frac{\gamma}{2}\|\bm z\|_2^2$ over $\mathcal Z$, we have $q'(a)\leq0$. Thus, $q$ is nonincreasing.

It remains to show the limiting representation. Write $h(\bm z):=\hat{\bm y}^\top\bm z+\frac{\gamma}{2}\|\bm z\|_2^2-v_\gamma^\star(\hat{\bm y})\geq0$, where equality holds if and only if $\bm z=\bm z_\gamma^\star(\hat{\bm y})$, because $\gamma>0$ makes the minimizer unique. Then, $q(a)=\max_{\bm z\in\mathcal Z}\{\bm y^\top\bm z-a h(\bm z)\}$. Let the diameter $D_{\mathcal Z}<\infty$ and $M:=\sup_{\bm z\in\mathcal Z}|\bm y^\top\bm z|<\infty$. For any $\varepsilon>0$, compactness and uniqueness imply that
\begin{equation*}
    \eta_\varepsilon:=\inf_{\bm z\in\mathcal Z:\|\bm z-\bm z_\gamma^\star(\hat{\bm y})\|_2\geq\varepsilon}h(\bm z)>0.
\end{equation*}
For $a>2M/\eta_\varepsilon$, any maximizer of $q(a)$ must lie within $\varepsilon$ of $\bm z_\gamma^\star(\hat{\bm y})$; otherwise its objective value is at most $M-a\eta_\varepsilon<-M$, whereas evaluating at $\bm z_\gamma^\star(\hat{\bm y})$ gives $\bm y^\top\bm z_\gamma^\star(\hat{\bm y})\geq -M$. Therefore, every sequence of maximizers converges to $\bm z_\gamma^\star(\hat{\bm y})$ as $a\to\infty$, i.e., $\bm z_a\to \bm z_\gamma^\star(\hat{\bm y})$. Note that $\bm y^\top\bm z_\gamma^\star(\hat{\bm y})\le q(a) \le \bm y^\top\bm z_a$ and by continuity, we have
\begin{equation*}
    \lim_{a\to+\infty} q(a)=\bm y^\top\bm z_\gamma^\star(\hat{\bm y}).
\end{equation*}
Subtracting $v^\star(\bm y)$ gives the stated representation of $\ell_{\mathcal{RSPO}}(\hat{\bm y},\bm y)$.
\qed

\bigskip
\noindent\textit{Proof of Proposition \ref{prop:rspo+_loss_properties}.}~
For part (i), the inequality $\ell_{\mathcal{RSPO}}\leq\ell_{\mathcal{RSPO}_+}$ is the construction preceding the definition in the main text. Nonnegativity holds because $\bm z^{\star}_{\gamma}(\hat{\bm y})\in\mathcal Z$, so that 
\begin{equation*}
    \ell_{\mathcal{RSPO}}(\hat{\bm y},\bm y)=\bm y^\top\bm z^{\star}_{\gamma}(\hat{\bm y})-v^{\star}(\bm y)\geq\min_{\bm z\in\mathcal Z}\bm y^\top\bm z-v^{\star}(\bm y)=0.
\end{equation*}

For part (ii), for each fixed $\bm z\in\mathcal Z$, the term $\bm y^\top\bm z-a\hat{\bm y}^\top\bm z-\frac{a\gamma}{2}\|\bm z\|_2^2$ is affine in $\hat{\bm y}$. Taking the maximum over $\bm z\in\mathcal Z$ preserves convexity, and the remaining terms in $\ell_{\mathcal{RSPO}_+}(\hat{\bm y},\bm y)$ are affine or constant in $\hat{\bm y}$. Hence $\ell_{\mathcal{RSPO}_+}(\cdot,\bm y)$ is convex.

For part (iii), define
\begin{equation*}
    \psi(\hat{\bm y})
    :=
    \max_{\bm z\in\mathcal Z}
    \left\{
    \bm y^\top\bm z-a\hat{\bm y}^\top\bm z-\frac{a\gamma}{2}\|\bm z\|_2^2
    \right\}.
\end{equation*}
The maximizer in $\psi(\hat{\bm y})$ is unique because the objective is strongly concave in $\bm z$, and it is
\begin{equation*}
    \argmax_{\bm z\in\mathcal Z}
    \left\{
    (\bm y-a\hat{\bm y})^\top\bm z-\frac{a\gamma}{2}\|\bm z\|_2^2
    \right\}
    =
    \bm z_\gamma^\star\left(\hat{\bm y}-\frac{1}{a}\bm y\right).
\end{equation*}
By Danskin's theorem,
\begin{equation*}
    \nabla \psi(\hat{\bm y})
    =
    -a\bm z_\gamma^\star\left(\hat{\bm y}-\frac{1}{a}\bm y\right).
\end{equation*}
Adding the gradient of the affine term $a\bm z_\gamma^\star(\bm y)^\top\hat{\bm y}$ gives
\begin{equation*}
    \nabla_{\hat{\bm y}}\ell_{\mathcal{RSPO}_+}(\hat{\bm y},\bm{y})
    =
    a \left(\bm{z}^{\star}_{\gamma}(\bm{y})- \bm{z}^{\star}_{\gamma}\left(\hat{\bm{y}}-\frac{1}{a}\bm{y}\right) \right).
\end{equation*}
The uniqueness of the maximizer and Danskin's theorem establish differentiability at every prediction, so the displayed vector is the gradient everywhere.

For part (iv), by (ii)--(iii) the map $\ell_{\mathcal{RSPO}_+}(\cdot,\bm y)$ is convex and differentiable, and its gradient is $a$ times the difference of two points of $\mathcal Z$, hence uniformly bounded in norm by $aD_{\mathcal Z}$. The mean value theorem then yields the claimed $aD_{\mathcal Z}$-Lipschitz continuity.
\qed

\bigskip
\noindent\textit{Proof of Theorem \ref{thm:rspo_plus_reformulation}.}~
It suffices to reformulate the maximization term in each summand of $\ell_{\mathcal{RSPO}_+}$. Fix a sample $i$ and write $\bm c_i:=\bm y_i-a\bm g_\theta(\bm x_i)$. The optimization term in the surrogate loss is
\begin{equation*}
    \max_{\bm z\in\mathcal Z}
    \left\{
    \bm c_i^\top\bm z-\frac{a\gamma}{2}\|\bm z\|_2^2
    \right\}
    \quad
    \text{with }\mathcal Z=\{\bm z:\bm A\bm z\geq\bm b\}.
\end{equation*}
This is a concave quadratic maximization problem over a nonempty bounded polyhedron. Introduce a multiplier $\bm p_i\geq\bm0$ for the constraint $\bm b-\bm A\bm z\leq\bm0$. The Lagrangian for the maximization problem is
\begin{equation*}
    L_i(\bm z,\bm p_i)
    =
    \bm c_i^\top\bm z-\frac{a\gamma}{2}\|\bm z\|_2^2
    -\bm p_i^\top(\bm b-\bm A\bm z)
    =
    -\bm b^\top\bm p_i
    +(\bm c_i+\bm A^\top\bm p_i)^\top\bm z
    -\frac{a\gamma}{2}\|\bm z\|_2^2 .
\end{equation*}
For fixed $\bm p_i$, maximizing over $\bm z\in\mathbb R^d$ gives
\begin{equation*}
    \sup_{\bm z\in\mathbb R^d}L_i(\bm z,\bm p_i)
    =
    -\bm b^\top\bm p_i
    +
    \frac{1}{2a\gamma}\|\bm c_i+\bm A^\top\bm p_i\|_2^2,
\end{equation*}
where the supremum is attained at $\bm z=\frac{1}{a\gamma}(\bm c_i+\bm A^\top\bm p_i)$. Strong duality and dual attainment hold because the primal is a concave quadratic maximization over a nonempty bounded polyhedron. Indeed, for convex quadratic programs with affine constraints, strong duality requires no Slater condition \citep[see, e.g.,][Section~5.2.3]{boyd2004convex}. Therefore,

\begin{equation*}
    \max_{\bm z\in\mathcal Z}
    \left\{
    \bm c_i^\top\bm z-\frac{a\gamma}{2}\|\bm z\|_2^2
    \right\}
    =
    \min_{\bm p_i\geq\bm0}
    \left\{
    -\bm b^\top\bm p_i
    +
    \frac{1}{2a\gamma}\|\bm y_i-a\bm g_\theta(\bm x_i)+\bm A^\top\bm p_i\|_2^2
    \right\}.
\end{equation*}
Substituting this identity into each $\mathcal{RSPO}_+$ summand and retaining the remaining terms
\begin{equation*}
    a\bm z_\gamma^\star(\bm y_i)^\top\bm g_\theta(\bm x_i)
    +\frac{a\gamma}{2}\|\bm z_\gamma^\star(\bm y_i)\|_2^2
    -v^\star(\bm y_i)
\end{equation*}
yields the stated finite-dimensional reformulation. Since the inner minimum over $\bm p_i\in\mathbb R^m_+$ is attained for every $\theta$, minimizing the reformulation jointly over $(\theta,\{\bm p_i\})$ is equivalent to minimizing the partially minimized objective (the $\mathcal{RSPO}_+$ objective) over $\theta$ alone. The two problems therefore share the same optimal value, and $\theta^{\star}$ is optimal for Problem~\eqref{eq:rspo+_ermprob} if and only if $(\theta^{\star},\{\bm p_i^{\star}\})$ is optimal for the reformulation, with $\{\bm p_i^{\star}\}$ the attaining multipliers. The regularizer $\lambda\Omega(\bm g_\theta)$ and the constraint $\theta\in\Theta$ are unchanged. \qed

\bigskip
\noindent\textit{Proof of Proposition \ref{prop:minRSPO}.}~
Throughout this proof, $\bar{\bm z}:=\bm z^{\star}(\bar{\bm y})$ denotes the fixed vector given by the unique nominal solution at the conditional mean $\bar{\bm y}$.

\emph{Step 1: pointwise risk and lower bound.} For any $\hat{\bm y}$, the pointwise $\mathcal{RSPO}$ risk at the given $\bm x$ satisfies
\begin{equation*}
    \begin{aligned}
        \mathbb{E}_{\bm{y}\sim\mathbb{P}_{\bm{y}|\bm{x}}}\left[ \ell_{\mathcal{RSPO}}(\hat{\bm{y}},\bm{y})\right]
        =&\mathbb{E}_{\bm{y}\sim\mathbb{P}_{\bm{y}|\bm{x}}}\left[ \bm{y}^\top\bm{z}^{\star}_{\gamma}(\hat{\bm{y}})-v^{\star}(\bm{y})\right]\\
        =&\bar{\bm{y}}^\top\bm{z}^{\star}_{\gamma}(\hat{\bm{y}})-\mathbb{E}_{\bm{y}\sim\mathbb{P}_{\bm{y}|\bm{x}}}\left[ v^{\star}(\bm{y})\right]\\
        \geq& \bar{\bm{y}}^\top\bar{\bm z}-\mathbb{E}_{\bm{y}\sim\mathbb{P}_{\bm{y}|\bm{x}}}\left[ v^{\star}(\bm{y})\right],
    \end{aligned}
\end{equation*}
where the second equality holds because $\bm z^{\star}_{\gamma}(\hat{\bm y})$ is independent of $\bm y$, and the inequality holds because $\bm z^{\star}_{\gamma}(\hat{\bm y})\in\mathcal Z$ while $\bar{\bm z}$ minimizes $\bar{\bm y}^\top\bm z$ over $\mathcal Z$.

\emph{Step 2: the lower bound is attained.} Consider the candidate prediction $\hat{\bm y}_0:=-\gamma\bar{\bm z}$. Completing the square, the robust problem at $\hat{\bm y}_0$ becomes
\begin{equation*}
    \min_{\bm z\in\mathcal Z}\left\{\hat{\bm y}_0^\top\bm z+\frac{\gamma}{2}\|\bm z\|_2^2\right\}
    =\min_{\bm z\in\mathcal Z}\frac{\gamma}{2}\left\|\bm z-\bar{\bm z}\right\|_2^2-\frac{\gamma}{2}\|\bar{\bm z}\|_2^2,
\end{equation*}
i.e., it finds the point of $\mathcal Z$ closest to $\bar{\bm z}$. Since $\bar{\bm z}\in\mathcal Z$, the unique minimizer is $\bar{\bm z}$ itself, so $\bm z^{\star}_{\gamma}(\hat{\bm y}_0)=\bar{\bm z}$ and the risk at $\hat{\bm y}_0$ equals the lower bound of Step 1. Hence the infimum of the pointwise risk coincides with that bound, and because $\bar{\bm z}$ is the unique minimizer of $\bar{\bm y}^\top\bm z$ over $\mathcal Z$, equality in Step 1 holds precisely when $\bm z^{\star}_{\gamma}(\hat{\bm y})=\bar{\bm z}$. Therefore,
\begin{equation*}
    \hat{\bm y}\in\mathcal{Y}^{\star}_{\mathcal{RSPO}}
    \quad\Longleftrightarrow\quad
    \bm z^{\star}_{\gamma}(\hat{\bm y})=\bar{\bm z}.
\end{equation*}

\emph{Step 3: normal-cone characterization.} The condition $\bm z^{\star}_{\gamma}(\hat{\bm y})=\bar{\bm z}$ states that $\bar{\bm z}$ minimizes the convex differentiable function $\bm z\mapsto\hat{\bm y}^\top\bm z+\frac{\gamma}{2}\|\bm z\|_2^2$ over the closed convex set $\mathcal Z$. By the first-order optimality condition for convex problems, this holds if and only if
\begin{equation*}
    -\big(\hat{\bm y}+\gamma\bar{\bm z}\big)\in {\mathcal{N}}_{\mathcal{Z}}(\bar{\bm z})
    \quad\Longleftrightarrow\quad
    \hat{\bm y}\in -{\mathcal{N}}_{\mathcal{Z}}(\bar{\bm z})-\gamma\bar{\bm z}.
\end{equation*}
Combined with the equivalence of Step 2, this proves the general representation \eqref{eq:RSPO_normalcone}:
\begin{equation*}
    \mathcal{Y}^{\star}_{\mathcal{RSPO}}
    =-{\mathcal{N}}_{\mathcal{Z}}\big(\bm z^{\star}(\bar{\bm y})\big)-\gamma\,\bm z^{\star}(\bar{\bm y}).
\end{equation*}

\emph{Step 4: polyhedral specialization.} Suppose now that $\mathcal Z=\{\bm z\in\mathbb R^d:\bm A\bm z\geq\bm b\}$ is a bounded polyhedron. By Step 3, it suffices to verify $-{\mathcal{N}}_{\mathcal{Z}}(\bar{\bm z})=\{\bm A^\top\bm\lambda:\bm\lambda\in\bm\Lambda\}$, which we do via the KKT conditions. The condition $\bm z^{\star}_{\gamma}(\hat{\bm y})=\bar{\bm z}$ holds if and only if the KKT system
\begin{equation*}
    \begin{aligned}
        \hat{\bm{y}}+\gamma\bar{\bm z}-\bm{A}^\top\bm{\lambda}&=\bm{0},\\
        \bm A\bar{\bm z}&\geq \bm b,\qquad
        \bm{\lambda}\geq\bm{0},\\
        \lambda_i(\bm a_i^\top\bar{\bm z}-b_i)&=0,\quad i=1,\ldots,m,
    \end{aligned}
\end{equation*}
admits a multiplier $\bm\lambda$. Feasibility $\bm A\bar{\bm z}\geq\bm b$ holds automatically. Complementary slackness forces $\lambda_i=0$ for every constraint inactive at $\bar{\bm z}$, i.e., $\bm\lambda\in\bm\Lambda$; conversely, for any $\bm\lambda\in\bm\Lambda$ complementary slackness holds by the definition of $\bm\Lambda$. Hence the KKT system is solvable if and only if the stationarity equation can be met with some $\bm\lambda\in\bm\Lambda$, that is,
\begin{equation*}
    \bm z^{\star}_{\gamma}(\hat{\bm y})=\bar{\bm z}
    \quad\Longleftrightarrow\quad
    \hat{\bm y}=\bm A^\top\bm\lambda-\gamma\bar{\bm z}\ \text{ for some }\bm\lambda\in\bm\Lambda .
\end{equation*}
(Note that $\hat{\bm y}_0$ of Step 2 corresponds to the choice $\bm\lambda=\bm0\in\bm\Lambda$.)

Combining the two displayed equivalences,
\begin{equation*}
    \mathcal{Y}^{\star}_{\mathcal{RSPO}}
    =
    \left\{\bm{A}^\top\bm{\lambda}-\gamma\bar{\bm z}:\bm{\lambda}\in\bm{\Lambda} \right\},
\end{equation*}
and comparing with Step 3 shows $-{\mathcal{N}}_{\mathcal{Z}}(\bar{\bm z})=\{\bm A^\top\bm\lambda:\bm\lambda\in\bm\Lambda\}$, which is precisely the polyhedral description asserted in the proposition with $\bar{\bm z}=\bm z^{\star}(\bar{\bm y})$.
\qed

\bigskip
\noindent\textit{Proof of Proposition \ref{prop:optcondition_RSPO+}.}~
\emph{Step 1: finiteness.} Recall $R_{\mathcal Z}=\max_{\bm z\in\mathcal Z}\|\bm z\|_2<\infty$. For any fixed $\hat{\bm y}$ and any $\bm y$, every inner product appearing in $\ell_{\mathcal{RSPO}_+}(\hat{\bm y},\bm y)$ is bounded via Cauchy--Schwarz, giving
\begin{equation*}
0\leq\ell_{\mathcal{RSPO}_+}(\hat{\bm y},\bm y)\leq \left(2\|\bm y\|_2+2a\|\hat{\bm y}\|_2\right)R_{\mathcal Z}+\frac{a\gamma}{2}R_{\mathcal Z}^{2},
\end{equation*}
where nonnegativity follows from $\ell_{\mathcal{RSPO}_+}\geq\ell_{\mathcal{RSPO}}\geq0$ by Proposition~\ref{prop:rspo+_loss_properties}(i). Since $\mathbb E_{\bm y\sim\mathbb P_{\bm y|\bm x}}[\|\bm y\|_2]<\infty$, the pointwise surrogate risk $R_{+}(\hat{\bm y};\bm x):=\mathbb E_{\bm y\sim\mathbb P_{\bm y|\bm x}}[\ell_{\mathcal{RSPO}_+}(\hat{\bm y},\bm y)]$ is finite for every $\hat{\bm y}$.

\emph{Step 2: differentiation under the expectation.} By Proposition~\ref{prop:rspo+_loss_properties}, $\ell_{\mathcal{RSPO}_+}(\cdot,\bm y)$ is differentiable everywhere with $\nabla_{\hat{\bm y}}\ell_{\mathcal{RSPO}_+}(\hat{\bm y},\bm y)=a(\bm z^{\star}_{\gamma}(\bm y)-\bm z^{\star}_{\gamma}(\hat{\bm y}-\frac1a\bm y))$. The gradient is continuous in $\bm y$ because $\bm z^{\star}_{\gamma}(\cdot)=\Pi_{\mathcal Z}(-\cdot/\gamma)$ is $1/\gamma$-Lipschitz by Lemma~\ref{lem:oracle_projection}, and uniformly bounded in norm by $aD_{\mathcal Z}$. Since the difference quotients of $\ell_{\mathcal{RSPO}_+}(\cdot,\bm y)$ are bounded by $aD_{\mathcal Z}$ uniformly in $\bm y$, the dominated convergence theorem yields that $R_{+}$ is differentiable with
\begin{equation*}
\nabla_{\bm y} R_{+}(\hat{\bm y};\bm x)=\mathbb E_{\bm y\sim\mathbb P_{\bm y|\bm x} }\left[\nabla_{\hat{\bm y}}\,\ell_{\mathcal{RSPO}_+}(\hat{\bm y},\bm y)\right]
=a\,\mathbb E_{\bm y\sim\mathbb P_{\bm y|\bm x}}\left[\bm z^{\star}_{\gamma}(\bm y)\right]-a\,\mathbb E_{\bm y\sim\mathbb P_{\bm y|\bm x}}\left[\bm z^{\star}_{\gamma}\Big(\hat{\bm y}-\frac1a\bm y\Big)\right].
\end{equation*}

\emph{Step 3: optimality condition.} $R_{+}$ is convex (Proposition~\ref{prop:rspo+_loss_properties}) and differentiable, so $\hat{\bm y}\in\mathcal Y^{\star}_{\mathcal{RSPO}_+}$ if and only if $\nabla_{\bm y} R_{+}(\hat{\bm y};\bm x)=\bm 0$, i.e.
\begin{equation*}
\mathbb{E}_{\bm{y}\sim \mathbb P_{\bm y|\bm x}}\left[\bm{z}^{\star}_{\gamma}(\bm{y})\right]=\mathbb{E}_{\bm{y}\sim \mathbb P_{\bm y|\bm x}}\left[\bm{z}^{\star}_{\gamma}\Big(\hat{\bm{y}}-\frac{1}{a}\bm{y}\Big)\right].
\end{equation*}
The proof is complete.
\qed

\bigskip
\noindent\textit{Proof of Corollary \ref{cor:RSPO+a1}.}~
All expectations below are finite because $\mathcal Z$ is compact and $\mathbb E[\|\bm y\|_2]<\infty$; see Step~1 of the proof of Proposition~\ref{prop:optcondition_RSPO+}. Consider any vector $\Delta\neq\bm{0}$. We compute the difference $\mathbb{E}_{\bm{y}\sim \mathbb P_{\bm y|\bm x}}[\ell_{\mathcal{RSPO}_+}(2\bar{\bm{y}}+\Delta,\bm{y})]-\mathbb{E}_{\bm{y}\sim \mathbb P_{\bm y|\bm x}}[\ell_{\mathcal{RSPO}_+}(2\bar{\bm{y}},\bm{y})]$ as follows:

\begin{equation*}
    \begin{aligned}
        &\mathbb{E}_{\bm{y}\sim \mathbb P_{\bm y|\bm x}}[\ell_{\mathcal{RSPO}_+}(2\bar{\bm{y}}+\Delta,\bm{y})]-\mathbb{E}_{\bm{y}\sim \mathbb P_{\bm y|\bm x}}[\ell_{\mathcal{RSPO}_+}(2\bar{\bm{y}},\bm{y})]\\
        =&\mathbb{E}_{\bm{y}\sim \mathbb P_{\bm y|\bm x}}\left[\max
        _{\bm{z}\in\mathcal{Z}}\left\{\bm{y}^\top\bm{z}-(2\bar{\bm{y}}+\Delta)^\top\bm{z}
        -\frac{\gamma}{2}\|\bm{z}\|^2_2\right\}+\{\bm{z}^{\star}_{\gamma}(\bm{y})^\top(2\bar{\bm{y}}+\Delta)+\frac{\gamma}{2}\|\bm{z}^{\star}_{\gamma}(\bm{y})\|^2_2\}-v^{\star}(\bm{y})\right]\\
        &-\mathbb{E}_{\bm{y}\sim \mathbb P_{\bm y|\bm x}}\left[\max
        _{\bm{z}\in\mathcal{Z}}\left\{\bm{y}^\top\bm{z}-2\bar{\bm{y}}^\top\bm{z}
        -\frac{\gamma}{2}\|\bm{z}\|^2_2\right\}+\{2\bm{z}^{\star}_{\gamma}(\bm{y})^\top\bar{\bm{y}}+\frac{\gamma}{2}\|\bm{z}^{\star}_{\gamma}(\bm{y})\|^2_2\}-v^{\star}(\bm{y})\right]\\
        =&\mathbb{E}_{\bm{y}\sim \mathbb P_{\bm y|\bm x}}\left[\max
        _{\bm{z}\in\mathcal{Z}}\left\{\bm{y}^\top\bm{z}-(2\bar{\bm{y}}+\Delta)^\top\bm{z}
        -\frac{\gamma}{2}\|\bm{z}\|^2_2\right\}-\max
        _{\bm{z}\in\mathcal{Z}}\left\{\bm{y}^\top\bm{z}-2\bar{\bm{y}}^\top\bm{z}-\frac{\gamma}{2}\|\bm{z}\|^2_2\right\}+\bm{z}^{\star}_{\gamma}(\bm{y})^\top\Delta\right]\\
        =&\mathbb{E}_{\bm{y}\sim \mathbb P_{\bm y|\bm x}}\left[-v^{\star}_{\gamma}(2\bar{\bm{y}}+\Delta-\bm{y})-(\bm{y}-2\bar{\bm{y}})^\top \bm{z}^{\star}_{\gamma}(2\bar{\bm{y}}-\bm{y})+\frac{\gamma}{2}\|\bm{z}^{\star}_{\gamma}(2\bar{\bm{y}}-\bm{y})\|^2_2+\bm{z}^{\star}_{\gamma}(\bm{y})^\top\Delta\right]\\
        =&\mathbb{E}_{\bm{y}\sim \mathbb P_{\bm y|\bm x}}\left[-v^{\star}_{\gamma}(2\bar{\bm{y}}+\Delta-\bm{y})-(\bm{y}-2\bar{\bm{y}}-\Delta)^\top \bm{z}^{\star}_{\gamma}(2\bar{\bm{y}}-\bm{y})+\frac{\gamma}{2}\|\bm{z}^{\star}_{\gamma}(2\bar{\bm{y}}-\bm{y})\|^2_2\right]\\
        =&\mathbb{E}_{\bm{y}\sim \mathbb P_{\bm y|\bm x}}\left[-v^{\star}_{\gamma}(2\bar{\bm{y}}+\Delta-\bm{y})+(2\bar{\bm{y}}+\Delta-\bm{y})^\top \bm{z}^{\star}_{\gamma}(2\bar{\bm{y}}-\bm{y})+\frac{\gamma}{2}\|\bm{z}^{\star}_{\gamma}(2\bar{\bm{y}}-\bm{y})\|^2_2\right],
    \end{aligned}
\end{equation*}
where the fourth equality follows from the fact that the conditional distribution of $\bm{y}$ is symmetric about its conditional mean, which implies that $\mathbb{E}_{\bm{y}\sim \mathbb P_{\bm y|\bm x}}[\bm{z}^{\star}_{\gamma}(2\bar{\bm{y}}-\bm{y})]=\mathbb{E}_{\bm{y}\sim \mathbb P_{\bm y|\bm x}}[\bm{z}^{\star}_{\gamma}(\bm{y})]$.

Since $v^{\star}_{\gamma}(2\bar{\bm{y}}+\Delta-\bm{y})$ is the minimum value for the optimization problem $\min_{\bm{z}\in\mathcal{Z}}(2\bar{\bm{y}}+\Delta-\bm{y})^\top\bm{z}+\frac{\gamma}{2}\|\bm{z}\|^2_2$, we have
\begin{equation*}
    -v^{\star}_{\gamma}(2\bar{\bm{y}}+\Delta-\bm{y})+(2\bar{\bm{y}}+\Delta-\bm{y})^\top \bm{z}^{\star}_{\gamma}(2\bar{\bm{y}}-\bm{y})+\frac{\gamma}{2}\|\bm{z}^{\star}_{\gamma}(2\bar{\bm{y}}-\bm{y})\|^2_2\geq 0.
\end{equation*}
This proves that $2\bar{\bm y}\in\mathcal Y^\star_{\mathcal{RSPO}_+}$. Suppose now that $\operatorname{int}\mathcal Z\neq\emptyset$.
We now show that the expectation of the nonnegative integrand is strictly positive, by exhibiting an explicit open event on which the integrand is strictly positive. Define the open set
\begin{equation*}
    A:=2\bar{\bm{y}}+\gamma\,\mathrm{int}\,\mathcal{Z}
    =\left\{\bm{y}\in\mathbb{R}^d:\ -\frac{2\bar{\bm{y}}-\bm{y}}{\gamma}\in\mathrm{int}\,\mathcal{Z}\right\}.
\end{equation*}
We claim that
\begin{equation*}
    \bm{z}^{\star}_{\gamma}(2\bar{\bm{y}}-\bm{y})\neq\bm{z}^{\star}_{\gamma}(2\bar{\bm{y}}-\bm{y}+\Delta),\qquad \forall\bm{y}\in A.
\end{equation*}
Indeed, write $\bm{w}:=-(2\bar{\bm{y}}-\bm{y})/\gamma$, so that $\bm{w}\in\mathrm{int}\,\mathcal{Z}$ for $\bm{y}\in A$ and, by the projection representation, $\bm{z}^{\star}_{\gamma}(2\bar{\bm{y}}-\bm{y})=\Pi_{\mathcal{Z}}(\bm{w})=\bm{w}$, while $\bm{z}^{\star}_{\gamma}(2\bar{\bm{y}}-\bm{y}+\Delta)=\Pi_{\mathcal{Z}}(\bm{w}-\Delta/\gamma)$. Suppose for contradiction that $\Pi_{\mathcal{Z}}(\bm{w}-\Delta/\gamma)=\bm{w}$. If $\bm{w}-\Delta/\gamma\in\mathcal{Z}$, then $\Pi_{\mathcal{Z}}(\bm{w}-\Delta/\gamma)=\bm{w}-\Delta/\gamma\neq\bm{w}$ since $\Delta\neq\bm{0}$, a contradiction. If $\bm{w}-\Delta/\gamma\notin\mathcal{Z}$, then its projection lies on the boundary of $\mathcal{Z}$, whereas $\bm{w}\in\mathrm{int}\,\mathcal{Z}$, again a contradiction. This proves the claim.

For $\bm{y}\in A$, the point $\bm{z}^{\star}_{\gamma}(2\bar{\bm{y}}-\bm{y})$ is therefore not the unique minimizer (uniqueness holds since $\gamma>0$) of $\min_{\bm{z}\in\mathcal{Z}}(2\bar{\bm{y}}+\Delta-\bm{y})^\top\bm{z}+\frac{\gamma}{2}\|\bm{z}\|^2_2$, so the integrand is strictly positive on $A$. Since $\mathcal{Z}$ has nonempty interior, $A$ is a nonempty open set, and since the density of $\bm{y}$ is positive on an open set containing $2\bar{\bm{y}}+\gamma\mathcal{Z}\supseteq A$, we have $\mathbb{P}_{\bm y|\bm x}(\bm{y}\in A)>0$.
Therefore,
\begin{equation*}
    \mathbb{E}_{\bm{y}\sim \mathbb P_{\bm y|\bm x}}\left[-v^{\star}_{\gamma}(2\bar{\bm{y}}+\Delta-\bm{y})+(2\bar{\bm{y}}+\Delta-\bm{y})^\top \bm{z}^{\star}_{\gamma}(2\bar{\bm{y}}-\bm{y})+\frac{\gamma}{2}\|\bm{z}^{\star}_{\gamma}(2\bar{\bm{y}}-\bm{y})\|^2_2\right]>0.
\end{equation*}
Thus $2\bar{\bm{y}}+\Delta$ is not a minimizer of the $\mathcal{RSPO}_+$ risk, and $2\bar{\bm{y}}$ must be the unique minimizer.
\qed

\bigskip
\noindent\textit{Proof of Theorem \ref{thm:Fisher_consistency}.}~
Throughout, write $\bar{\bm z}:=\bm z^{\star}(\bar{\bm y})$ and $\mathcal C:=-{\mathcal{N}}_{\mathcal{Z}}(\bar{\bm z})$, which is a closed convex cone because $\mathcal Z$ is closed and convex. We may suppress the dependence of $\bm x$ for concise exposition. Define the feasibility set
\begin{equation*}
    \Gamma:=\left\{\gamma\geq0:
    2\bar{\bm y}+\gamma\bar{\bm z}
    \in \mathcal C\right\},
\end{equation*}
so that, by the general representation \eqref{eq:RSPO_normalcone} of Proposition~\ref{prop:minRSPO}, for every $\gamma>0$, membership $\gamma\in\Gamma$ is equivalent to $2\bar{\bm y}\in\mathcal Y^\star_{\mathcal{RSPO}}$, and $\bar{\gamma}(\bm x)=\sup\Gamma$ coincides with \eqref{eq:gamma}.

\emph{Part (i).} Since $\mathcal C$ is convex and the mapping $\gamma\mapsto 2\bar{\bm{y}}+\gamma\bar{\bm z}$ is affine, $\Gamma$ is convex as the preimage of a convex set under an affine mapping, and therefore is an interval. Since $\mathcal C$ is closed and the mapping is continuous, $\Gamma$ is closed. Moreover, by the first-order optimal condition, the optimality of $\bar{\bm z}$ for the nominal problem $\min_{\bm z\in\mathcal Z}\bar{\bm y}^\top\bm z$ means $-\bar{\bm y}\in {\mathcal{N}}_{\mathcal{Z}}(\bar{\bm z})$, i.e., $\bar{\bm y}\in \mathcal C$, and since $\mathcal C$ is a cone, $2\bar{\bm y}\in \mathcal C$. Thus, $0\in\Gamma$. Consequently, $\bar{\gamma}=\sup\Gamma\in[0,+\infty]$ is attained whenever it is finite. If $\bar{\gamma} <\infty$, then $\Gamma=[0,\bar{\gamma}]$; otherwise, we have $\Gamma=[0,+\infty)$. It follows that for $0<\gamma\leq\bar{\gamma}$, we have $2\bar{\bm{y}}\in\mathcal{Y}^{\star}_{\mathcal{RSPO}}$. Combining this with $\mathcal{Y}^{\star}_{\mathcal{RSPO}_+}=\{2\bar{\bm{y}}\}$ from Corollary~\ref{cor:RSPO+a1}, whose uniqueness part uses $\operatorname{int}\mathcal Z\neq\emptyset$, we have $\mathcal{Y}^{\star}_{\mathcal{RSPO}_+}(\bm x)\subseteq \mathcal{Y}^{\star}_{\mathcal{RSPO}}(\bm x)$ for fixed $\bm x$. If $0<\gamma\leq\bar{\gamma}(\bm x)$ for $\mathbb P_{\bm x}$-almost every $\bm x$, the $\mathcal{RSPO}_+$ loss is Fisher consistent with the $\mathcal{RSPO}$ loss by definition. When $\mathcal Z=\{\bm z:\bm A\bm z\geq\bm b\}$ is a bounded polyhedron, Proposition~\ref{prop:minRSPO} gives $\mathcal C=\{\bm A^\top\bm\lambda:\bm\lambda\in\bm\Lambda\}$, so $\bar\gamma$ is the value of the linear program
\begin{equation*}
    \sup\Big\{\gamma\geq0:\ 2\bar{\bm{y}}=\bm{A}^\top\bm{\lambda}-\gamma\bm{z}^{\star}(\bar{\bm{y}}),\ \bm{\lambda}\in\bm{\Lambda}\Big\}.
\end{equation*}

\emph{Part (ii).} Suppose that $\mathcal Z$ is a bounded polyhedron. Since $\operatorname{int}\mathcal Z\neq\emptyset$ by assumption, $\mathcal Z$ is full-dimensional and hence has at least two vertices. If $\bar{\bm z}=\bm 0$, then $2\bar{\bm y}+\gamma\bar{\bm z}=2\bar{\bm y}\in \mathcal C$ for every $\gamma\geq0$ by Part (i), so $\Gamma=[0,+\infty)$ and $\bar\gamma=+\infty$, as stipulated. Assume henceforth that $\bar{\bm z}\neq\bm 0$.

\emph{Step 1: uniqueness places $\bar{\bm y}$ in the interior of $\mathcal C$.} Let $V(\mathcal Z)$ denote the finite vertex set of the bounded polyhedron $\mathcal Z$. Since a linear objective attains its minimum over $\mathcal Z$ at a vertex, uniqueness of $\bar{\bm z}$ implies $\bar{\bm z}\in V(\mathcal Z)$ and
\begin{equation*}
    \epsilon_0:=\min_{\bm v\in V(\mathcal Z)\setminus\{\bar{\bm z}\}}\bar{\bm y}^\top(\bm v-\bar{\bm z})>0 .
\end{equation*}
Recall that $D_{\mathcal Z}$ is the diameter of $\mathcal Z$ and set $\epsilon:=\epsilon_0/D_{\mathcal Z}$. For any $\bm c$ with $\|\bm c-\bar{\bm y}\|_2<\epsilon$ and any vertex $\bm v\neq\bar{\bm z}$, the Cauchy--Schwarz inequality gives
\begin{equation*}
    \bm c^\top(\bm v-\bar{\bm z})
    =\bar{\bm y}^\top(\bm v-\bar{\bm z})+(\bm c-\bar{\bm y})^\top(\bm v-\bar{\bm z})
    \geq \epsilon_0-\|\bm c-\bar{\bm y}\|_2\,D_{\mathcal Z}>0 ,
\end{equation*}
so $\bar{\bm z}$ is optimal (indeed uniquely optimal) for the cost vector $\bm c$, i.e., $\bm c\in \mathcal C$. Hence the open ball $\mathcal B_{\rm o}(\bar{\bm y},\epsilon)\subseteq \mathcal C$, so $\bar{\bm y}\in\operatorname{int} \mathcal C$ and $\delta:=\dist(\bar{\bm y},\partial \mathcal C)\geq\epsilon>0$. Moreover, since $\mathcal C$ is closed and convex, the closed ball $\mathcal B(\bar{\bm y},\delta)\subseteq \mathcal C$.

\emph{Step 2: membership under the translation.} Let $0\leq\gamma\leq 2\delta/\|\bar{\bm z}\|_2$. Then $\|(\gamma/2)\bar{\bm z}\|_2\leq\delta$, so $\bar{\bm y}+(\gamma/2)\bar{\bm z}\in \mathcal B(\bar{\bm y},\delta)\subseteq \mathcal C$, and since $\mathcal C$ is a cone,
\begin{equation*}
    2\bar{\bm y}+\gamma\bar{\bm z}=2\Big(\bar{\bm y}+\frac{\gamma}{2}\bar{\bm z}\Big)\in \mathcal C .
\end{equation*}
Hence $\gamma\in\Gamma$, and therefore $\bar{\gamma}(\bm x)\geq 2\delta/\|\bar{\bm z}\|_2=2\dist(\bar{\bm y},\partial \mathcal C)/\|\bm z^{\star}(\bar{\bm y})\|_2>0$. In particular the regime $(0,\bar\gamma(\bm x)]$ in Part (i) is nonempty.
\qed

\bigskip
\noindent\textit{Proof of Example \ref{ex:fisher_consistency}.}~
The nominal problem at $\bar{\bm y}=(1,3/2)^\top$ has the unique solution $\bm z^\star(\bar{\bm y})=(-1,-2)^\top$. At this solution, the active inequalities in the representation $\bm A\bm z\geq\bm b$ are $-z_1+z_2\geq-1$ and $z_1\geq-1$, whose normal vectors are $(-1,1)^\top$ and $(1,0)^\top$, respectively. Hence, by Proposition~\ref{prop:minRSPO}, the condition $2\bar{\bm y}\in\mathcal Y^\star_{\mathcal{RSPO}}$ is equivalent to the existence of $\lambda_1,\lambda_2\geq0$ such that
\begin{equation*}
    (2,3)^\top
    =
    \lambda_1(-1,1)^\top+\lambda_2(1,0)^\top-\gamma(-1,-2)^\top.
\end{equation*}
Equating coordinates gives
\begin{equation*}
    \lambda_1=3-2\gamma,
    \qquad
    \lambda_2=5-3\gamma.
\end{equation*}
Both multipliers are nonnegative if and only if $0\leq\gamma\leq3/2$, so the threshold in \eqref{eq:gamma} is $\bar\gamma=3/2$. In particular, $\gamma=1$ lies in the Fisher-consistent regime, whereas $\gamma=2$ does not, as illustrated in Figure~\ref{fig:example_for_RSPO+}.
\qed

\bigskip
\noindent\textit{Proof of Corollary \ref{cor:spo_plus_a1}.}~
Fix $\bm x$ and write $\bar{\bm z}:=\bm z^{\star}(\bar{\bm y})$, $\bm Z^{\star}(\bm c):=\argmin_{\bm z\in\mathcal Z}\bm c^\top\bm z$, and let $V(\mathcal Z)$ denote the finite vertex set of the bounded polyhedron $\mathcal Z$. All expectations below are finite because $\mathcal Z$ is compact and $\mathbb E[\|\bm y\|_2]<\infty$. Since $\bm y$ admits a density and the set of cost vectors $\bm c$ for which $\bm Z^{\star}(\bm c)$ is not a singleton is contained in the union of the boundaries of the finitely many cones $\{-{\mathcal{N}}_{\mathcal{Z}}(\bm v):\bm v\in V(\mathcal Z)\}$, a Lebesgue-null set, the nominal decision map $\bm z^{\star}(\bm y)$ is single-valued almost surely; the same applies to $\bm z^{\star}(2\bar{\bm y}-\bm y)$, because $2\bar{\bm y}-\bm y$ also admits a density. If $\mathcal Z$ is a singleton the statement is trivial, so assume $|V(\mathcal Z)|\geq2$.

\emph{Step 1: the oracle predictor $2\bar{\bm y}$ is a target-risk minimizer.} For any $\hat{\bm y}$ and any fixed $\bm z_0\in\bm Z^{\star}(\hat{\bm y})$,
\begin{equation*}
    \begin{aligned}
        \mathbb{E}_{\bm{y}\sim \mathbb P_{\bm y|\bm x}}\big[\ell_{\mathcal{SPO}}(\hat{\bm y},\bm y)\big]
        =&\mathbb{E}_{\bm{y}\sim \mathbb P_{\bm y|\bm x}}\Big[\max_{\bm z\in\bm Z^{\star}(\hat{\bm y})}\bm y^\top\bm z\Big]-\mathbb{E}_{\bm{y}\sim \mathbb P_{\bm y|\bm x}}[v^{\star}(\bm y)]\\
        \geq&\bar{\bm y}^\top\bm z_0-\mathbb{E}_{\bm{y}\sim \mathbb P_{\bm y|\bm x}}[v^{\star}(\bm y)]
        \geq v^{\star}(\bar{\bm y})-\mathbb{E}_{\bm{y}\sim \mathbb P_{\bm y|\bm x}}[v^{\star}(\bm y)].
    \end{aligned}
\end{equation*}
Since the nominal argmin is invariant under positive scaling of the cost vector, uniqueness of $\bar{\bm z}$ for $\bar{\bm y}$ gives $\bm Z^{\star}(2\bar{\bm y})=\{\bar{\bm z}\}$, so the pointwise $\mathcal{SPO}$ risk at $2\bar{\bm y}$ equals $\bar{\bm y}^\top\bar{\bm z}-\mathbb{E}_{\bm{y}\sim \mathbb P_{\bm y|\bm x}}[v^{\star}(\bm y)]=v^{\star}(\bar{\bm y})-\mathbb{E}_{\bm{y}\sim \mathbb P_{\bm y|\bm x}}[v^{\star}(\bm y)]$, which attains the lower bound. Hence $2\bar{\bm y}\in\mathcal Y^{\star}_{\mathcal{SPO}}$.

\emph{Step 2: the oracle predictor minimizes the surrogate risk.} With $a=1$, using $\max_{\bm z\in\mathcal Z}(\bm y-\hat{\bm y})^\top\bm z=-v^{\star}(\hat{\bm y}-\bm y)$, the $\mathcal{SPO}_+$ loss reads $\ell_{\mathcal{SPO}_+}(\hat{\bm y},\bm y)=-v^{\star}(\hat{\bm y}-\bm y)+\hat{\bm y}^\top\bm z^{\star}(\bm y)-v^{\star}(\bm y)$ almost surely. Therefore, for any $\Delta\in\mathbb R^d$,
\begin{equation*}
    \begin{aligned}
        &\mathbb{E}_{\bm{y}\sim \mathbb P_{\bm y|\bm x}}\big[\ell_{\mathcal{SPO}_+}(2\bar{\bm y}+\Delta,\bm y)\big]
        -\mathbb{E}_{\bm{y}\sim \mathbb P_{\bm y|\bm x}}\big[\ell_{\mathcal{SPO}_+}(2\bar{\bm y},\bm y)\big] \\
        =&\mathbb{E}_{\bm{y}\sim \mathbb P_{\bm y|\bm x}}\big[-v^{\star}(2\bar{\bm y}+\Delta-\bm y)+v^{\star}(2\bar{\bm y}-\bm y)+\Delta^\top\bm z^{\star}(\bm y)\big].
    \end{aligned}
\end{equation*}
By central symmetry, $\bm w:=2\bar{\bm y}-\bm y$ has the same distribution as $\bm y$, so $\mathbb{E}_{\bm{y}\sim \mathbb P_{\bm y|\bm x}}[\Delta^\top\bm z^{\star}(\bm y)]=\mathbb{E}_{\bm{y}\sim \mathbb P_{\bm y|\bm x}}[\Delta^\top\bm z^{\star}(\bm w)]$, and the difference becomes
\begin{equation}\label{eq:spo_plus_a1_diff}
    \mathbb{E}_{\bm{y}\sim \mathbb P_{\bm y|\bm x}}\big[v^{\star}(\bm w)+\Delta^\top\bm z^{\star}(\bm w)-v^{\star}(\bm w+\Delta)\big].
\end{equation}
Since $v^{\star}(\bm w+\Delta)\leq(\bm w+\Delta)^\top\bm z^{\star}(\bm w)=v^{\star}(\bm w)+\Delta^\top\bm z^{\star}(\bm w)$, the integrand is nonnegative pointwise, so the difference \eqref{eq:spo_plus_a1_diff} is nonnegative for every $\Delta$, i.e., $2\bar{\bm y}$ minimizes the pointwise $\mathcal{SPO}_+$ risk with $a=1$.

\emph{Step 3: uniqueness of the surrogate minimizer.} Let $\Delta\neq\bm 0$. Since $\operatorname{int}\mathcal Z\neq\emptyset$, the linear function $\bm z\mapsto\Delta^\top\bm z$ is nonconstant on $\mathcal Z$, so there exist vertices $\bm z_1\in\argmax_{\bm z\in V(\mathcal Z)}\Delta^\top\bm z$ and $\bm z_2\in V(\mathcal Z)$ with $\Delta^\top\bm z_2<\Delta^\top\bm z_1$. Because $\mathcal Z$ is full-dimensional, the cone $\mathcal C_1:=\operatorname{int}\big(-{\mathcal{N}}_{\mathcal{Z}}(\bm z_1)\big)$ is a nonempty open cone with apex $\bm 0$, on which $\bm Z^{\star}(\bm c)=\{\bm z_1\}$ (by the same vertex-perturbation argument as in Step 1 of the proof of Theorem~\ref{thm:Fisher_consistency}(ii)). Fix $\bm c_0\in \mathcal C_1$ and $\varepsilon>0$ with $\mathcal B(\bm c_0,\varepsilon)\subseteq \mathcal C_1$. Since $\mathcal C_1$ is a cone, $\mathcal B(t\bm c_0,t\varepsilon)\subseteq \mathcal C_1$ for every $t>0$. For $\bm w\in \mathcal B(t\bm c_0,t\varepsilon)$ we have $\bm z^{\star}(\bm w)=\bm z_1$ and $v^{\star}(\bm w)=\bm w^\top\bm z_1$, so, bounding $v^{\star}(\bm w+\Delta)\leq(\bm w+\Delta)^\top\bm z_2$,
\begin{equation*}
    v^{\star}(\bm w)+\Delta^\top\bm z^{\star}(\bm w)-v^{\star}(\bm w+\Delta)\geq \bm w^\top(\bm z_1-\bm z_2)+\Delta^\top(\bm z_1-\bm z_2)
    \geq \Delta^\top(\bm z_1-\bm z_2)-t\big(\|\bm c_0\|_2+\varepsilon\big)D_{\mathcal Z},
\end{equation*}
where $D_{\mathcal Z}$ is the diameter of $\mathcal Z$. Since $\Delta^\top(\bm z_1-\bm z_2)>0$ is fixed, choosing $t>0$ small enough makes $v^{\star}(\bm w)+\Delta^\top\bm z^{\star}(\bm w)-v^{\star}(\bm w+\Delta)\geq\Delta^\top(\bm z_1-\bm z_2)/2>0$ on $\mathcal B(t\bm c_0,t\varepsilon)$. Finally, $\bm y$ has a density positive on an open set $O\ni2\bar{\bm y}$, so $\bm w=2\bar{\bm y}-\bm y$ has a density positive on the open set $2\bar{\bm y}-O\ni\bm 0$. For $t$ small enough, $\mathcal B(t\bm c_0,t\varepsilon)\subseteq2\bar{\bm y}-O$, with $\mathbb P_{\bm y|\bm x}(\bm w\in \mathcal B(t\bm c_0,t\varepsilon))>0$ and the difference \eqref{eq:spo_plus_a1_diff} is strictly positive. Therefore, $2\bar{\bm y}+\Delta$ is not a surrogate minimizer, and hence $\mathcal Y^{\star}_{\mathcal{SPO}_+}=\{2\bar{\bm y}\}$ for $a=1$.

\emph{Step 4: conclusion.} By Steps 1-3, for every $\bm x$ the unique minimizer of the pointwise $\mathcal{SPO}_+$ risk with $a=1$ belongs to $\mathcal Y^{\star}_{\mathcal{SPO}}$. Integrating over $\mathbb P_{\bm x}$ via the interchangeability principle, as in Section~\ref{subsec:fisher_consistency}, yields Fisher consistency of the $\mathcal{SPO}_+$ loss with $a=1$ with respect to the $\mathcal{SPO}$ loss.
\qed

\bigskip
\noindent\textit{Proof of Proposition \ref{prop:zstar_gradient_polyhedral}.}~
Let $S=S_{\bm y_0}$ for notational simplicity and write $\bm z^0=\bm z_\gamma^\star(\bm y_0)$. Since the objective is strongly convex and the constraints are affine, the KKT conditions are necessary and sufficient: $\bm z\in\mathcal Z$ solves the problem at $\bm y$ if and only if there exists $\bm\lambda\geq\bm 0$ such that
\begin{equation*}
    \begin{aligned}
        \bm{y}+\gamma \bm{z}-\bm{A}^\top \bm{\lambda}&=\bm 0,\\
        \bm{A}\bm z&\geq \bm b,\\
        \lambda_i(\bm a_i^\top \bm z-b_i)&=0,\quad \forall i.
    \end{aligned}
\end{equation*}
At $\bm y_0$, LICQ by assumption (i) guarantees that the multiplier $\bm\lambda^0$ is unique, and strict complementarity by assumption (ii) gives $\lambda^0_i>0$ for all $i\in S$ and, by definition of the active set, $\bm a_i^\top\bm z^0>b_i$ for all $i\notin S$.

We now construct a candidate primal--dual pair for $\bm y$ near $\bm y_0$ and verify that it satisfies the KKT conditions with active set $S$. Guided by the stationarity condition restricted to $S$ (\textit{i.e.}, $\bm\lambda_{S^c}=\bm 0$ and $\bm A_S\bm z=\bm b_S$), consider the linear system
\begin{equation*}
    \gamma \bm{z}-\bm{A}_{S}^\top \bm{\lambda}_{S}=-\bm{y},\qquad \bm{A}_{S}\bm{z}=\bm{b}_S.
\end{equation*}
From the first equation, $\bm z=\frac{1}{\gamma}(\bm A_S^\top \bm \lambda_S-\bm y)$. Substituting into $\bm A_S\bm z=\bm b_S$ and using the invertibility of $\bm A_S\bm A_S^\top$ (which follows from LICQ) yields the unique solution
\begin{equation*}
    \bm{\lambda}_{S}(\bm y)
    =
    (\bm{A}_{S} \bm{A}_{S}^\top)^{-1}
    (\gamma \bm{b}_{S} + \bm{A}_{S} \bm{y}),
    \qquad
    \bm{z}(\bm{y})
    =
    \frac{1}{\gamma}\Big(\bm{A}_{S}^\top (\bm{A}_{S} \bm{A}_{S}^\top)^{-1}\bm{A}_{S}-\bm{I}\Big)\bm{y}+ \bm{A}_{S}^\top (\bm{A}_{S} \bm{A}_{S}^\top)^{-1}\bm{b}_{S}.
\end{equation*}
Both mappings are affine, hence continuous, in $\bm y$, and by construction $\bm z(\bm y_0)=\bm z^0$ and $\bm\lambda_S(\bm y_0)=\bm\lambda^0_S$ (uniqueness of the multiplier under LICQ). By continuity and strict complementarity, there exists a neighborhood $\mathcal N$ of $\bm y_0$ such that, for every $\bm y\in\mathcal N$,
\begin{equation*}
    \bm\lambda_S(\bm y)>\bm 0
    \qquad\text{and}\qquad
    \bm a_i^\top\bm z(\bm y)>b_i,\quad \forall i\notin S.
\end{equation*}
Consequently, for every $\bm y\in\mathcal N$, the pair $\big(\bm z(\bm y),(\bm\lambda_S(\bm y),\bm 0_{S^c})\big)$ satisfies stationarity, primal feasibility ($\bm A_S\bm z(\bm y)=\bm b_S$ and the inactive constraints hold strictly), dual feasibility, and complementary slackness. By sufficiency of the KKT conditions, $\bm z_\gamma^\star(\bm y)=\bm z(\bm y)$ for all $\bm y\in\mathcal N$, and the active set of $\bm z_\gamma^\star(\bm y)$ is exactly $S$ on $\mathcal N$.

Thus, $\bm z_\gamma^\star(\cdot)$ is affine on $\mathcal N$. In particular, it is differentiable at $\bm y_0$ with Jacobian
\begin{equation*}
     \nabla\bm{z}^{\star}_{\gamma}(\bm{y}_0)=\frac{1}{\gamma}\big(\bm{A}_{S_{\bm{y}_0}}^{\top}(\bm{A}_{S_{\bm{y}_0}}\bm{A}_{S_{\bm{y}_0}}^\top)^{-1}\bm{A}_{S_{\bm{y}_0}}-\bm{I}\big).
\end{equation*}
This completes the proof.
\qed

\subsection{Proofs of Section \ref{sec:statistical_guarantee}}

\noindent\textit{Proof of Proposition \ref{prop:rspo_generalization}.}~
As noted in Subsection~\ref{subsec:rspo_excess_risk_bounds}, since $\bm z^{\star}_{\gamma}(\hat{\bm y})\in\mathcal Z$, the definition of $b$ implies
\begin{equation*}
    0\leq \ell_{\mathcal{RSPO}}(\hat{\bm y},\bm y)= \bm y^\top\bm z^{\star}_{\gamma}(\hat{\bm y})-v^{\star}(\bm y)\leq b,
    \qquad \forall \hat{\bm y}\in\mathbb R^d,\ \bm y\in\mathcal Y.
\end{equation*}
Let $S=\{(\bm x_i,\bm y_i)\}_{i\in[N]}$ denote the sample and define
\begin{equation*}
    \Phi(S)
    :=
    \sup_{\bm g\in\mathcal G}
    \left(
    R(\bm g)
    -
    \widehat R(\bm g)
    \right).
\end{equation*}
If one sample point $(\bm x_i,\bm y_i)$ is replaced by an independent copy $(\bm x_i',\bm y_i')$, then for every $\bm g\in\mathcal G$ the empirical risk changes by at most $b/N$, and hence $\Phi$ changes by at most $b/N$ as well. By McDiarmid's bounded difference inequality, with probability at least $1-\delta/2$,
\begin{equation*}
    \Phi(S)
    \leq
    \mathbb E[\Phi(S)]
    +
    b\sqrt{\frac{\log(2/\delta)}{2N}}.
\end{equation*}
Next, by the standard symmetrization argument \citep{bartlett2002rademacher}, letting $S'=\{(\bm x_i',\bm y_i')\}_{i\in[N]}$ be an independent ghost sample and $\sigma_1,\ldots,\sigma_N$ be independent Rademacher random variables,
\begin{equation*}
    \begin{aligned}
        \mathbb E[\Phi(S)]
        &=
        \mathbb E_S\left[\sup_{\bm g\in\mathcal G}
        \mathbb E_{S'}\left[
        \frac{1}{N}\sum_{i\in[N]}
        \left(
        \ell_{\mathcal{RSPO}}(\bm g(\bm x_i'),\bm y_i')
        -
        \ell_{\mathcal{RSPO}}(\bm g(\bm x_i),\bm y_i)
        \right)
        \right]\right]\\
        &\leq
        \mathbb E_{S,S',\bm\sigma}\left[\sup_{\bm g\in\mathcal G}
        \frac{1}{N}\sum_{i\in[N]}
        \sigma_i
        \left(
        \ell_{\mathcal{RSPO}}(\bm g(\bm x_i'),\bm y_i')
        -
        \ell_{\mathcal{RSPO}}(\bm g(\bm x_i),\bm y_i)
        \right)\right]\\
        &\leq
        2\mathfrak R_1(\mathcal H_{\ell_{\mathcal{RSPO}}}).
    \end{aligned}
\end{equation*}
The same two steps applied to
$\Phi'(S):=\sup_{\bm g\in\mathcal G}(\widehat R(\bm g)-R(\bm g))$
give the identical bound with probability at least $1-\delta/2$. A union bound over the two events yields
\begin{equation*}
    \sup_{\bm g\in \mathcal G}\left|R(\bm{g})-\widehat{R}(\bm{g})\right|\leq 2\mathfrak{R}_1(\mathcal{H}_{{\ell}_{\mathcal{RSPO}}})+b\sqrt{\frac{\log(2/\delta)}{2N}}
\end{equation*}
with probability at least $1-\delta$.
\qed

\bigskip
\noindent\textit{Proof of Theorem \ref{thm:rspo_meta_generalization}.}~
Let $\mathcal E$ denote the event
\begin{equation*}
    \sup_{\bm g\in \mathcal G}\left|R(\bm{g})-\widehat{R}(\bm{g})\right|\leq 2\mathfrak{R}_1(\mathcal{H}_{{\ell}_{\mathcal{RSPO}}})+b\sqrt{\frac{\log(2/\delta)}{2N}},
\end{equation*}
so that $\mathbb P(\mathcal E)\geq 1-\delta$ by Proposition~\ref{prop:rspo_generalization}. Fix an arbitrary $\epsilon>0$ and choose $\bm g_\epsilon\in\mathcal G$ such that
$R(\bm g_\epsilon)\leq R^{\star}(\mathcal G)+\epsilon$ by the definition of the infimum. On the event $\mathcal E$, we have
\begin{equation*}
    \begin{aligned}
        R(\widehat{\bm g}_{\rm RSPO})-R^{\star}(\mathcal G)
        =&
        \left[R(\widehat{\bm g}_{\rm RSPO})-\widehat R(\widehat{\bm g}_{\rm RSPO})\right]
        +
        \left[\widehat R(\widehat{\bm g}_{\rm RSPO})-\widehat R(\bm g_\epsilon)\right]\\
        &+
        \left[\widehat R(\bm g_\epsilon)-R(\bm g_\epsilon)\right]
        +
        \left[R(\bm g_\epsilon)-R^{\star}(\mathcal G)\right]\\
        \leq&
        2\sup_{\bm g\in \mathcal G}\left|R(\bm{g})-\widehat{R}(\bm{g})\right|
        +0+\epsilon\\
        \leq&
        4\mathfrak{R}_1(\mathcal{H}_{{\ell}_{\mathcal{RSPO}}})+b\sqrt{\frac{2\log(2/\delta)}{N}}+\epsilon,
    \end{aligned}
\end{equation*}
where the second inequality uses $\widehat R(\widehat{\bm g}_{\rm RSPO})\leq \widehat R(\bm g_\epsilon)$, which holds because $\widehat{\bm g}_{\rm RSPO}$ minimizes the empirical risk, and the last inequality uses the inequality defining event $\mathcal E$. Since the event $\mathcal E$ does not depend on $\epsilon$, letting $\epsilon\downarrow0$ gives, on $\mathcal E$,
\begin{equation*}
    R(\widehat{\bm g}_{\rm RSPO})-R^{\star}(\mathcal G)
    \leq
    4\mathfrak{R}_1(\mathcal{H}_{{\ell}_{\mathcal{RSPO}}})+b\sqrt{\frac{2\log(2/\delta)}{N}}.
\end{equation*}
Finally, applying Lemma~\ref{lem:vector_contraction_inequality} to bound $4\mathfrak{R}_1(\mathcal{H}_{{\ell}_{\mathcal{RSPO}}})\leq \frac{4\sqrt2 r}{\gamma}\mathfrak R_d(\mathcal G)$ completes the proof.
\qed

\bigskip
\noindent\textit{Proof of Proposition \ref{prop:rspo+_generalization}.}~
Following a similar argument to that in Proposition~\ref{prop:rspo_generalization}, for any $\delta>0$, with probability at least $1-\delta$, for all $\bm{g}\in\mathcal{G}$ it holds that
\begin{equation*}
    |R_+(\bm{g})-\widehat R_+(\bm{g})|\leq 2\mathfrak{R}_1(\mathcal H_{\ell_{\mathcal{RSPO}_+}})+\mathfrak{L}(\gamma)\sqrt{\frac{\log(2/\delta)}{2N}}.
\end{equation*}
This result applies because the loss takes values in $[0,\mathfrak L(\gamma)]$ on $\mathcal G \times \mathcal Y$. Indeed, the upper bound holds by the definition of $\mathfrak L(\gamma)$, and nonnegativity follows from $\ell_{\mathcal{RSPO}_+} \geq \ell_{\mathcal{RSPO}} \geq 0$ (Proposition~\ref{prop:rspo+_loss_properties}(i)). 

Moreover, by Proposition~\ref{prop:rspo+_loss_properties}(iv), $\ell_{\mathcal{RSPO}_+}(\cdot,\bm y)$ is $aD_{\mathcal Z}$-Lipschitz continuous for every $\bm y\in\mathcal Y$. By the vector contraction inequality \citep[Corollary~4]{maurer2016vector}, $\mathfrak{R}_1(\mathcal H_{\ell_{\mathcal{RSPO}_+}})\leq\sqrt{2}aD_{\mathcal{Z}}\mathfrak{R}_d(\mathcal{G})$. Thus,
\begin{equation*}
    |R_+(\bm{g})-\widehat R_+(\bm{g})|\leq 2\sqrt{2}aD_{\mathcal{Z}}\mathfrak{R}_d(\mathcal{G})+\mathfrak{L}(\gamma)\sqrt{\frac{\log(2/\delta)}{2N}}.
\end{equation*}

Proceeding as in Theorem~\ref{thm:rspo_meta_generalization} and using $\mathfrak{R}_d(\mathcal{G})\leq C_0/\sqrt{N}$, we obtain that, for all $\delta\in(0,1)$, with probability $1-\delta$,
\begin{equation*}
    R_+(\widehat{\bm{g}}_{{\rm RSPO}_+}) - R^{\star}_+(\mathcal{G})\leq \frac{4\sqrt{2}aD_{\mathcal{Z}}C_0}{\sqrt{N}}+\mathfrak{L}(\gamma)\sqrt{\frac{2\log(2/\delta)}{N}}.
\end{equation*}
The proof is complete.
\qed

\bigskip
\noindent\textit{Proof of Theorem \ref{thm:direct_closure}.}~
We complete this proof in six steps. \textit{Step~1} bounds the surrogate excess risk of $\widehat{\bm g}_{{\rm RSPO}_+}$ relative to the oracle predictor $\bm g_0$ by $B_N+A^+_{\mathcal G}$.  \textit{Step~2} converts the local strong convexity of Condition~\ref{con:reference_condition}(ii) into a growth bound valid on all of $\mathbb R^d$.  \textit{Step~3} integrates that bound to control $\mathbb E_{\bm x\sim \bb P_{\bm x}}\|\widehat{\bm g}_{{\rm RSPO}_+}(\bm x)-\bm g_0(\bm x)\|_2$. \textit{Step~4} transfers this control to the target risk through the Lipschitz continuity of the decision map.  \textit{Step~5} identifies the oracle predictor with the target optimum, which turns the resulting estimate into a bound on the target excess risk.  \textit{Step~6} reads off the rate. To simplify the exposition, we define
\begin{equation*}
    \Delta_\gamma(\bm c;\bm x)
    :=
    R(\bm c;\bm x)-\inf_{\bm c'\in\mathbb R^d}R(\bm c';\bm x),
    \qquad
    \Delta^+_{\gamma}(\bm c;\bm x)
    :=
    R_+(\bm c;\bm x)-\inf_{\bm c'\in\mathbb R^d}R_+(\bm c';\bm x).
\end{equation*}

\emph{Step 1.}
Condition~\ref{con:reference_condition}(i) and the tower property imply $R_+(\bm g_0)\leq R_+(\bm g)$ for every measurable predictor $\bm g$ with finite surrogate risk.  Hence
\begin{equation}
    R_+(\bm g_0)
    =
    R_+^\star(\mathcal G_{\rm all}).
\label{eq:reference_point_surrogate_optimal}
\end{equation}
Proposition~\ref{prop:rspo+_generalization} implies that, with probability at least $1-\delta$,
\begin{equation*}
    R_+(\widehat{\bm g}_{{\rm RSPO}_+})
    -
    R_+^\star(\mathcal G)
    \leq
    \frac{4\sqrt 2\,D_{\mathcal Z}C_0}{\sqrt N}
    +
    \mathfrak L(\gamma)
    \sqrt{\frac{2\log(2/\delta)}{N}}.
\end{equation*}
Therefore, on this event,
\begin{align}
    R_+(\widehat{\bm g}_{{\rm RSPO}_+})-R_+(\bm g_0)
    &=
    \bigl[
        R_+(\widehat{\bm g}_{{\rm RSPO}_+})
        -
        R_+^\star(\mathcal G)
    \bigr]
    +
    \bigl[
        R_+^\star(\mathcal G)
        -
        R_+^\star(\mathcal G_{\rm all})
    \bigr]\notag\\
    &\leq
    \frac{4\sqrt 2\,D_{\mathcal Z}C_0}{\sqrt N}
    +
    \mathfrak L(\gamma)
    \sqrt{\frac{2\log(2/\delta)}{N}}
    +
    \left[
        R_+^\star(\mathcal G)
        -
        R_+^\star(\mathcal G_{\rm all})
    \right]\notag\\
    &=
    B_N+A^+_{\mathcal G}.
\label{eq:appE_surrogate_excess}
\end{align}
In particular, $B_N+A^+_{\mathcal G}\geq0$, so the square root appearing in \eqref{eq:target_bound} is well-defined.

\emph{Step 2.}
We claim that, for $\bb P_{\bm x}$-almost every $\bm x$ and every $\bm c\in\mathbb R^d$,
\begin{equation}\label{eq:appE_two_regime}
    \Delta^+_{\gamma}(\bm c;\bm x)
    \geq
    \frac{\mu}{2}
    \min\left\{
        \|\bm c-\bm g_0(\bm x)\|_2^2,
        \rho\|\bm c-\bm g_0(\bm x)\|_2
    \right\}.
\end{equation}
The exceptional null set is the union of those in Conditions~\ref{con:reference_condition}(i) and~(ii) and does not depend on $\bm c$, so \eqref{eq:appE_two_regime} may be applied at a sample-dependent argument.  Fix such an $\bm x$ and write $t:=\|\bm c-\bm g_0(\bm x)\|_2$.  Since $\bm g_0(\bm x)$ is a global minimizer of the convex function $R_+(\cdot;\bm x)$, we have $\bm 0\in\partial R_+(\bm g_0(\bm x);\bm x)$. Under Condition~\ref{con:reference_condition}(i) the pointwise surrogate excess risk takes the form
\begin{equation*}
    \Delta^+_{\gamma}(\bm c;\bm x)
    =
    R_+(\bm c;\bm x)-R_+(\bm g_0(\bm x);\bm x),
    \qquad
    \bm c\in\mathbb R^d.
\end{equation*}
If $t\leq\rho$, strong convexity on the closed ball $\mathcal B(\bm g_0(\bm x),\rho)$ gives
\begin{equation*}
    \Delta^+_{\gamma}(\bm c;\bm x)
    \geq
    \frac{\mu}{2}t^2.
\end{equation*}
If $t>\rho$, set $\bm c_\rho:=\bm g_0(\bm x)+\frac{\rho}{t}(\bm c-\bm g_0(\bm x))$, so that $\|\bm c_\rho-\bm g_0(\bm x)\|_2=\rho$.  Convexity and $\bm c_\rho=(1-\rho/t)\bm g_0(\bm x)+(\rho/t)\bm c$ give
\begin{equation*}
    \Delta^+_{\gamma}(\bm c_\rho;\bm x)
    \leq
    (1-\frac{\rho}{t})\Delta^+_{\gamma}(\bm g_0(\bm x);\bm x)+\frac{\rho}{t}\,\Delta^+_{\gamma}(\bm c;\bm x)=\frac{\rho}{t}\,\Delta^+_{\gamma}(\bm c;\bm x) .
\end{equation*}
The first case applies to $\bm c_\rho$, so
\begin{equation*}
    \Delta^+_{\gamma}(\bm c;\bm x)
    \geq
    \frac{t}{\rho}\,\Delta^+_{\gamma}(\bm c_\rho;\bm x)
    \geq
    \frac{t}{\rho}\cdot\frac{\mu}{2}\rho^2
    =
    \frac{\mu\rho}{2}t .
\end{equation*}
Since $\min\{t^2,\rho t\}$ equals $t^2$ when $t\leq\rho$ and $\rho t$ otherwise, the two cases prove \eqref{eq:appE_two_regime}.

\emph{Step 3.}
Taking expectations in \eqref{eq:appE_two_regime} at $\bm c=\widehat{\bm g}_{{\rm RSPO}_+}(\bm x)$, and using $\mathbb E_{\bm x\sim \bb P_{\bm x}}[\Delta^+_{\gamma}(\widehat{\bm g}_{{\rm RSPO}_+}(\bm x);\bm x)]=R_+(\widehat{\bm g}_{{\rm RSPO}_+})-R_+(\bm g_0)$ together with \eqref{eq:appE_surrogate_excess}, yields
\begin{equation}\label{eq:appE_truncated_moment}
    \mathbb E_{\bm x\sim \bb P_{\bm x}}
    \left[
        \min\left\{
            \|\widehat{\bm g}_{{\rm RSPO}_+}(\bm x)-\bm g_0(\bm x)\|_2^2,
            \rho
            \|\widehat{\bm g}_{{\rm RSPO}_+}(\bm x)-\bm g_0(\bm x)\|_2
        \right\}
    \right]
    \leq
    \frac{2(B_N+A^+_{\mathcal G})}{\mu}.
\end{equation}
For every $t\geq0$,
\begin{equation}\label{eq:appE_elementary}
    t
    \leq
    \sqrt{\min\{t^2,\rho t\}}
    +
    \frac{1}{\rho}\min\{t^2,\rho t\}.
\end{equation}
Indeed, if $t\leq\rho$ the first term on the right equals $t$, and if $t>\rho$ the second term equals $t$; in either case the remaining term is nonnegative.  Applying \eqref{eq:appE_elementary} with $t=\|\widehat{\bm g}_{{\rm RSPO}_+}(\bm x)-\bm g_0(\bm x)\|_2$, taking expectations, and bounding the first resulting term by Jensen's inequality (the square root is concave) gives
\begin{align}
    &\mathbb E_{\bm x\sim \bb P_{\bm x}}
    \left[
        \|\widehat{\bm g}_{{\rm RSPO}_+}(\bm x)-\bm g_0(\bm x)\|_2
    \right]\notag\\
    &\leq
    \left(
        \mathbb E_{\bm x\sim \bb P_{\bm x}}
        \left[
            \min\left\{
                \|\widehat{\bm g}_{{\rm RSPO}_+}(\bm x)-\bm g_0(\bm x)\|_2^2,
                \rho
                \|\widehat{\bm g}_{{\rm RSPO}_+}(\bm x)-\bm g_0(\bm x)\|_2
            \right\}
        \right]
    \right)^{1/2}\notag\\
    &\quad+
    \frac{1}{\rho}
    \mathbb E_{\bm x\sim \bb P_{\bm x}}
    \left[
        \min\left\{
            \|\widehat{\bm g}_{{\rm RSPO}_+}(\bm x)-\bm g_0(\bm x)\|_2^2,
            \rho
            \|\widehat{\bm g}_{{\rm RSPO}_+}(\bm x)-\bm g_0(\bm x)\|_2
        \right\}
    \right]\notag\\
    &\leq
    \sqrt{\frac{2(B_N+A^+_{\mathcal G})}{\mu}}
    +
    \frac{2(B_N+A^+_{\mathcal G})}{\mu\rho}.
\label{eq:appE_first_moment}
\end{align}

\emph{Step 4.}
By the definition of $\ell_{\mathcal{RSPO}}$ and the tower property, every measurable $\bm g$ satisfies
\begin{equation}\label{eq:appE_target_representation}
    R(\bm g)
    =
    \mathbb E_{\bm x\sim \bb P_{\bm x}}
    \left[
        \bar{\bm y}(\bm x)^\top\bm z_\gamma^\star(\bm g(\bm x))
    \right]
    -
    \mathbb E_{\bm y\sim\bb P_{\bm y}}[v^\star(\bm y)] ,
\end{equation}
all terms being finite by \eqref{eq:appE_loss_envelope}.  Subtracting \eqref{eq:appE_target_representation} at $\bm g$ and at $\bm g_0$ gives
\begin{equation}\label{eq:appE_target_reference_identity}
    R(\bm g)-R(\bm g_0)
    =
    \mathbb E_{\bm x\sim \bb P_{\bm x}}
    \left[
        \bar{\bm y}(\bm x)^\top
        \left(
            \bm z_\gamma^\star(\bm g(\bm x))
            -
            \bm z_\gamma^\star(\bm g_0(\bm x))
        \right)
    \right].
\end{equation}
By Lemma~\ref{lem:oracle_projection}, $\bm z_\gamma^\star(\cdot)$ is $1/\gamma$-Lipschitz on $\mathbb R^d$.  Hence Condition~\ref{con:reference_condition}(iv), Cauchy--Schwarz, and \eqref{eq:appE_first_moment} imply
\begin{align}
    \left|
        R(\widehat{\bm g}_{{\rm RSPO}_+})-R(\bm g_0)
    \right|
    &\leq
    \frac{\bar\beta}{2\gamma}
    \mathbb E_{\bm x\sim \bb P_{\bm x}}
    \left[
        \|\widehat{\bm g}_{{\rm RSPO}_+}(\bm x)-\bm g_0(\bm x)\|_2
    \right]\notag\\
    &\leq
    \frac{\bar\beta}{2\gamma}
    \left[
        \sqrt{\frac{2(B_N+A^+_{\mathcal G})}{\mu}}
        +
        \frac{2(B_N+A^+_{\mathcal G})}{\mu\rho}
    \right].
\label{eq:reference_point_target_bound}
\end{align}

\emph{Step 5.}
By Condition~\ref{con:reference_condition}(iii) and Lemma~\ref{lem:rspo_pointwise_excess}, $\Delta_\gamma(\bm g_0(\bm x);\bm x)=0$ for almost every $\bm x$, that is, $\bm g_0(\bm x)$ minimizes $R(\cdot;\bm x)$ over $\mathbb R^d$. Integrating the pointwise inequality $R(\bm g_0(\bm x);\bm x)\leq R(\bm g(\bm x);\bm x)$ and using the tower property gives $R(\bm g_0)\leq R(\bm g)$ for every measurable $\bm g$. Since $R^\star=R^\star(\mathcal G_{\rm all})$, this yields
\begin{equation}\label{eq:appE_fisher_global}
    R(\bm g_0)=R^\star.
\end{equation}
Combining \eqref{eq:appE_fisher_global} with \eqref{eq:reference_point_target_bound} proves \eqref{eq:target_bound}.

\emph{Step 6.}
Since $\bm g_0\in\mathcal G$, we have $R_+^\star(\mathcal G)=R_+^\star(\mathcal G_{\rm all})$ by Condition~\ref{con:reference_condition}(i). The stated constants do not depend on $N$.  Fix $\eta\in(0,1)$ and take $\delta=\eta$ in \eqref{eq:target_bound}. By \eqref{eq:appE_surrogate_excess},
\begin{equation}\label{eq:appE_scaled_budget}
    \bar c(\eta)
    :=
    \sqrt N\,(B_N+A^+_{\mathcal G})
    =
    4\sqrt 2\,D_{\mathcal Z}C_0
    +
    \mathfrak L(\gamma)\sqrt{2\log(2/\eta)}
    <
    \infty ,
\end{equation}
which does not depend on $N$. Hence, on the event of probability at least $1-\eta$ on which \eqref{eq:target_bound} holds, and for every $N\geq1$,
\begin{equation*}
    N^{1/4}
    \left[
        R(\widehat{\bm g}_{{\rm RSPO}_+})-R^\star
    \right]
    \leq
    \frac{\bar\beta}{2\gamma}
    \left[
        \sqrt{\frac{2\bar c(\eta)}{\mu}}
        +
        \frac{2\bar c(\eta)}{\mu\rho}N^{-1/4}
    \right]
    \leq
    \frac{\bar\beta}{2\gamma}
    \left[
        \sqrt{\frac{2\bar c(\eta)}{\mu}}
        +
        \frac{2\bar c(\eta)}{\mu\rho}
    \right]
    =:
    M(\eta).
\end{equation*}
Since $M(\eta)$ depends only on $\eta$ and the structural constants,
\begin{equation*}
    \mathbb P
    \left(
        N^{1/4}
        \left[
            R(\widehat{\bm g}_{{\rm RSPO}_+})-R^\star
        \right]
        >
        M(\eta)
    \right)
    \leq
    \eta,
    \qquad
    N\geq1 ,
\end{equation*}
which is precisely the statement that $R(\widehat{\bm g}_{{\rm RSPO}_+})-R^\star=\mathcal O_p(N^{-1/4})$.
\qed

\bigskip
\noindent\textit{Proof of Corollary \ref{cor:direct_fast_rate}.}~

Fix $\bm x$ outside the exceptional null sets and $\bm c\in\mathbb R^d$. For concise exposition, write $t:=\|\bm c-\bm g_0(\bm x)\|_2$, $\tilde r:=\min\{r_0,\rho\}$, and $\kappa_{\rm dir}:=\min\{\mu/L,\,\mu\gamma\tilde r/\bar\beta\}$.

If $0\leq t\leq\tilde r$, then $t\leq r_0$, so \eqref{eq:target_quadratic} applies, and $t\leq\rho$, so \eqref{eq:appE_two_regime} is in its quadratic regime. The two give, respectively,
\begin{equation*}
    \Delta_\gamma(\bm c;\bm x)
    \leq
    \frac{L}{2}t^2 ,\quad
     \Delta^+_{\gamma}(\bm c;\bm x)
    \geq
    \frac{\mu}{2}t^2.
\end{equation*}
Consequently,
\begin{equation*}
    \Delta_\gamma(\bm c;\bm x)
    \leq
    \frac{L}{\mu}\,\Delta^+_{\gamma}(\bm c;\bm x)
    \leq
    \frac{1}{\kappa_{\rm dir}}\,\Delta^+_{\gamma}(\bm c;\bm x) ,
\end{equation*}
where the second inequality uses $\kappa_{\rm dir}\leq\mu/L$.

If $t>\tilde r$, then Lemma~\ref{lem:rspo_pointwise_excess}, Condition~\ref{con:reference_condition}(iii)--(iv) and the $1/\gamma$-Lipschitz property of $\bm z_\gamma^\star(\cdot)$ give
\begin{equation*}
    \Delta_\gamma(\bm c;\bm x)
    =
    \bar{\bm y}(\bm x)^\top
    \left(
        \bm z_\gamma^\star(\bm c)
        -
        \bm z_\gamma^\star(\bm g_0(\bm x))
    \right)
    \leq
    \frac{\bar\beta}{2\gamma}t .
\end{equation*}
Since $\tilde r=\min\{r_0,\rho\}$, the condition $t>\tilde r$ implies
\begin{equation*}
    \min\{t^2,\rho t\}\geq\tilde r\,t.
\end{equation*}
Indeed, if $t\leq\rho$ then $t^2>\tilde r\,t$, and if $t>\rho$ then $\rho t\geq\tilde r\,t$.  Thus \eqref{eq:appE_two_regime} gives
\begin{equation*}
    \Delta^+_{\gamma}(\bm c;\bm x)
    \geq
    \frac{\mu\tilde r}{2}t ,
\end{equation*}
and it follows that
\begin{equation*}
    \Delta_\gamma(\bm c;\bm x)
    \leq
    \frac{\bar\beta}{\mu\gamma\tilde r}\,\Delta^+_{\gamma}(\bm c;\bm x)
    \leq
    \frac{1}{\kappa_{\rm dir}}\,\Delta^+_{\gamma}(\bm c;\bm x) ,
\end{equation*}
where the last inequality uses $\kappa_{\rm dir}\leq\mu\gamma\tilde r/\bar\beta$. The two cases establish the global comparison
\begin{equation}
    \Delta_\gamma(\bm c;\bm x)
    \leq
    \frac{1}{\kappa_{\rm dir}}\,\Delta^+_{\gamma}(\bm c;\bm x),
    \qquad
    \bm c\in\mathbb R^d .
\label{eq:appE_pointwise_fast}
\end{equation}

Applying \eqref{eq:appE_pointwise_fast} at $\bm c=\widehat{\bm g}_{{\rm RSPO}_+}(\bm x)$ and integrating, we have
\begin{align*}
    R(\widehat{\bm g}_{{\rm RSPO}_+})-R^\star
    &= \mathbb E_{\bm x\sim \bb P_{\bm x}}
    \left[
        R(\widehat{\bm g}_{{\rm RSPO}_+}(\bm x);\bm x)-R(\bm g_0(\bm x);\bm x)
    \right]\\
    &=\mathbb E_{\bm x\sim \bb P_{\bm x}}
    \left[
        \Delta_\gamma(
            \widehat{\bm g}_{{\rm RSPO}_+}(\bm x);
            \bm x
        )
    \right]\\
    &\leq\frac{1}{\kappa_{\rm dir}}\mathbb E_{\bm x\sim \bb P_{\bm x}}
    \left[
        \Delta_\gamma^+(
            \widehat{\bm g}_{{\rm RSPO}_+}(\bm x);
            \bm x
        )
    \right]\\
    &=\frac{1}{\kappa_{\rm dir}} \mathbb E_{\bm x\sim \bb P_{\bm x}}
    \left[
        R_+(\widehat{\bm g}_{{\rm RSPO}_+}(\bm x);\bm x)-R_+(\bm g_0(\bm x);\bm x)
    \right]\\
    &=
    \frac{1}{\kappa_{\rm dir}}
    \bigl[
        R_+(\widehat{\bm g}_{{\rm RSPO}_+})
        -
        R_+(\bm g_0)
    \bigr]\leq
    \frac{B_N+A^+_{\mathcal G}}{\kappa_{\rm dir}},
\end{align*}
which proves \eqref{eq:fast_target}.

For the rate, fix $\eta\in(0,1)$ and take $\delta=\eta$. Since $\bm g_0\in\mathcal G$, we have $R_+^\star(\mathcal G)=R_+^\star(\mathcal G_{\rm all})$ by Condition~\ref{con:reference_condition}(i). Thus, $\bar c(\eta)=\sqrt N\,B_N$ of \eqref{eq:appE_scaled_budget} does not depend on $N$, so on an event of probability at least $1-\eta$,
\begin{equation*}
    N^{1/2}
    \left[
        R(\widehat{\bm g}_{{\rm RSPO}_+})-R^\star
    \right]
    \leq
    \frac{\bar c(\eta)}{\kappa_{\rm dir}},
    \qquad
    N\geq1 ,
\end{equation*}
and $\kappa_{\rm dir}>0$ does not depend on $N$, which gives the stated $\mathcal O_p(N^{-1/2})$ rate.
\qed

\newpage
\section{The Stochastic Gradient-Descent Algorithm}
\label{app:gradient_algorithm}

The procedure of the stochastic gradient descent scheme with Armijo Backtracking for Problem~\eqref{eq:rspo_refine_obj} is summarized in Algorithm~\ref{alg:gradient-descent}, in which
\begin{equation*}
    \hat{L}_{\mathcal{B}_t}(\theta)
    =\frac{1}{B_{\mathrm s}}\sum_{i\in\mathcal{B}_t}
    \big[\bm y_i^\top \bm z_\gamma^\star(\bm g_\theta(\bm x_i))-v^\star(\bm y_i)\big]
    +\lambda\Omega(\bm{g}_{\theta})
\end{equation*}
denotes the batch counterpart of~\eqref{eq:rspo_refine_obj}. Taking $B_{\mathrm s}=N$ recovers the full-batch method. Per iteration, the gradient phase solves the decision problem \ref{ilroopt} once per sample and forms one $|S_i^{(t)}|\times|S_i^{(t)}|$ linear system per sample for $\bm q_i^{(t)}$; each line-search trial $k=0,1,\dots$ costs another $B_{\mathrm s}$ decision map evaluations. The total is $(k^{\star}+2)B_{\mathrm s}$ decision map evaluations when the step is accepted at trial $k^{\star}$, and at most $(K+2)B_{\mathrm s}$ otherwise. The cost is thus dominated by the decision map, which favors a small number of refinement steps over running the scheme to convergence.

For the full-batch version the method is a genuine descent method, and the line search terminates after at most $K+1$ trials by construction. The proposition below shows that a strictly decreasing step is accepted whenever $\hat L_N$ is differentiable at the current iterate with nonzero gradient, provided $K$ is large enough.

\begin{proposition}[Monotone Descent] \label{prop:descent}
Let $B_{\mathrm s}=N$, let $\mathcal Z$ be a nonempty bounded polyhedron, and let $\hat L_N$ be given by~\eqref{eq:rspo_refine_obj} with $\theta\to\Omega(\bm{g}_{\theta})$ differentiable and bounded below. 
Then $\{\hat L_N(\theta^{(t)})\}_{t\geq0}$ is nonincreasing and bounded below, hence convergent. 
Moreover, if $\hat L_N$ is differentiable at $\theta^{(t)}$ and $\bm h^{(t)}=\nabla\hat L_N(\theta^{(t)})\neq\bm 0$, then the Armijo test is satisfied for all sufficiently small stepsizes, so for $K$ large enough the step is accepted and the resulting decrease is at least $c_{\rm ls}\,\rho^{k^{\star}}_{\rm ls}\eta_t\|\bm h^{(t)}\|_2^2>0$.
\end{proposition}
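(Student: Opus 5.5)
The algorithm itself is not shown, so I assume the standard form it must take. At iterate $\theta^{(t)}$ with direction $\bm h^{(t)}$, the line search tries stepsizes $\eta_t\rho_{\rm ls}^k$ for $k=0,\dots,K$. It accepts the first trial satisfying the Armijo test
\[
\hat L_N(\theta^{(t)}-\eta_t\rho_{\rm ls}^k\bm h^{(t)})\le \hat L_N(\theta^{(t)})-c_{\rm ls}\eta_t\rho_{\rm ls}^k\|\bm h^{(t)}\|_2^2 .
\]
If no trial passes, it sets $\theta^{(t+1)}=\theta^{(t)}$.

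\textbf{Monotonicity.} This part is then immediate. In the full-batch case, either an accepted step satisfies the Armijo inequality, which gives $\hat L_N(\theta^{(t+1)})\le\hat L_N(\theta^{(t)})$ since $c_{\rm ls}>0$, or the iterate is unchanged. So $\{\hat L_N(\theta^{(t)})\}$ is nonincreasing.

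\textbf{Boundedness below.} Each summand satisfies $\bm y_i^\top\bm z_\gamma^\star(\cdot)-v^\star(\bm y_i)\ge0$ by Proposition~\ref{prop:rspo+_loss_properties}(i), and $\lambda\Omega$ is bounded below by assumption. A nonincreasing sequence that is bounded below converges.

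\textbf{Armijo test for small steps.} Suppose $\hat L_N$ is differentiable at $\theta^{(t)}$ with $\bm h^{(t)}=\nabla\hat L_N(\theta^{(t)})\neq\bm 0$. By the definition of differentiability,
\[
\hat L_N(\theta^{(t)}-s\bm h^{(t)})=\hat L_N(\theta^{(t)})-s\|\bm h^{(t)}\|_2^2+o(s)\quad\text{as } s\downarrow0 .
\]
Since $c_{\rm ls}<1$, there is $\bar s>0$ such that for all $s\in(0,\bar s]$,
\[
o(s)\le(1-c_{\rm ls})s\|\bm h^{(t)}\|_2^2 ,
\]
and the Armijo test holds for every such $s$. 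The trial stepsizes $\eta_t\rho_{\rm ls}^k$ decrease geometrically to $0$. Hence if $K\ge\lceil\log(\bar s/\eta_t)/\log\rho_{\rm ls}\rceil$, some trial $k^\star\le K$ is accepted. The decrease at that trial is at least $c_{\rm ls}\rho_{\rm ls}^{k^\star}\eta_t\|\bm h^{(t)}\|_2^2>0$.

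\textbf{Identifying the gradient.} The remaining step, and the main obstacle, is to justify that the algorithm's $\bm h^{(t)}$ really is the true gradient at points of differentiability. The algorithm builds it from the chain rule with $\bm q_i=\nabla\bm z_\gamma^\star(\bm g_\theta(\bm x_i))^\top\bm y_i$, using the active-set Jacobian formula \eqref{eq:zstar_gradient}. At degenerate points this formula is only the derivative of one affine piece.

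\textbf{Plan for the obstacle.} Use Proposition~\ref{prop:rspo_piecewise_affine}: $\bm z_\gamma^\star$ is continuous and piecewise affine on the polyhedral cover $\{\mathcal Q_S\}$. If $\hat L_N$ is differentiable at $\theta^{(t)}$, then the directional derivative along $-\bm h^{(t)}$ is linear in the direction. It coincides with the derivative of the affine piece active along that direction. So the selected-piece computation agrees with $\nabla\hat L_N$ there.

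I expect this consistency argument to need care. A fallback is to state the descent claim with $\bm h^{(t)}$ simply defined as $\nabla\hat L_N(\theta^{(t)})$, which the proposition's wording allows. The Taylor argument above then requires only differentiability of $\hat L_N$ at the iterate, together with $\Omega$ differentiable.
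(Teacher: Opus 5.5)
Your proof is correct and follows the paper's argument essentially line by line. Monotonicity comes from the acceptance rule (or the unchanged iterate), the lower bound from $\ell_{\mathcal{RSPO}}\geq 0$ and $\Omega$ bounded below, and acceptance from the expansion $\hat L_N(\theta^{(t)}-s\bm h^{(t)})=\hat L_N(\theta^{(t)})-s\|\bm h^{(t)}\|_2^2+o(s)$ together with $c_{\rm ls}<1$ and geometrically shrinking trial steps.

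The ``identifying the gradient'' step is unnecessary. The proposition assumes $\bm h^{(t)}=\nabla\hat L_N(\theta^{(t)})$, and the paper argues exactly as in your fallback. You should drop the piecewise-affine consistency plan, because it is false in general: kinks of individual summands can cancel in the sum. Take $\mathcal Z=[0,1]$, $\gamma=1$, $\lambda=0$, $\bm g_\theta(x)=\theta x$ and samples $(x,y)=(1,1),(-1,-1)$. Near $\theta=0$ the objective equals $\tfrac12-\tfrac{\theta}{2}$. Yet both active-set Jacobians from \eqref{eq:zstar_gradient} vanish at $\theta=0$, so the chain-rule vector is $0\neq-\tfrac12=\nabla\hat L_N(0)$.
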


\noindent\textit{Proof of Proposition \ref{prop:descent}.}~
Monotonicity is immediate from the acceptance rule: either the test succeeds, in which case
\begin{equation*}
    \hat L_N(\theta^{(t+1)})
    \leq
    \hat L_N(\theta^{(t)})-c_{\rm ls}\,\rho^{k^{\star}}_{\rm ls}\eta_t\|\bm h^{(t)}\|_2^2
    \leq
    \hat L_N(\theta^{(t)}),
\end{equation*}
or it fails and $\theta^{(t+1)}=\theta^{(t)}$. For the lower bound, $\ell_{\mathcal{RSPO}}\geq0$ by Proposition~\ref{prop:rspo+_loss_properties}(i) and $\Omega$ is bounded below by assumption, so $\hat L_N\geq\lambda\inf_{\theta\in\Theta}\Omega(\bm{g}_{\theta})>-\infty$. A nonincreasing sequence that is bounded below converges.
 
For the second claim, suppose $\hat L_N$ is differentiable at $\theta^{(t)}$ with $\bm h^{(t)}=\nabla\hat L_N(\theta^{(t)})\neq\bm 0$. Differentiability gives
\begin{equation*}
    \hat L_N\big(\theta^{(t)}-s\bm h^{(t)}\big)
    =
    \hat L_N(\theta^{(t)})-s\|\bm h^{(t)}\|_2^2+o(s),
    \qquad 
    s\downarrow0,
\end{equation*}
so that
\begin{equation*}
    \hat L_N\big(\theta^{(t)}-s\bm h^{(t)}\big)-\hat L_N(\theta^{(t)})+c_{\rm ls}\,s\|\bm h^{(t)}\|_2^2
    =
    -(1-c_{\rm ls})\,s\|\bm h^{(t)}\|_2^2+o(s)<0
\end{equation*}
for all sufficiently small $s>0$, because $c_{\rm ls}<1$ and $\|\bm h^{(t)}\|_2>0$. Since $\rho_{\rm ls}\in(0,1)$, the trial stepsizes $\rho^{k}_{\rm ls}\eta_t$ decrease to zero as $k$ grows, so the Armijo test is satisfied for all $k$ large enough. If $K$ is at least the smallest such index, the step is accepted, and the stated decrease is the acceptance inequality itself.
\qed

\begin{algorithm}[t]
\caption{Stochastic Gradient Descent with Armijo Backtracking for Problem~\eqref{eq:rspo_refine_obj}}
\label{alg:gradient-descent}
\KwIn{sample $\{(\bm{x}_i,\bm{y}_i)\}_{i=1}^N$; polyhedral set $\mathcal Z=\{\bm{z}:\bm{A}\bm{z}\geq \bm{b}\}$; $\gamma>0$; $\Omega(\cdot)$, $\lambda\geq0$; stepsize $\eta_0>0$; batch size $B_{\mathrm s}$ ($B_{\mathrm s}=N$ gives the full-batch method); iteration budget $T$; backtracking parameters $\rho_{\rm ls}\in(0,1)$, $K\in\mathbb{N}$, $c_{\rm ls}\in(0,1)$.}
\textbf{Initialize:} $\theta^{(0)}\leftarrow$ solution of the $\mathcal{RSPO}_+$ problem~\eqref{eq:rspo+_ermprob}\;
\For{$t=0,1,\dots,T-1$}{
  Sample $\mathcal{B}_t\subseteq[N]$, $|\mathcal{B}_t|=B_{\mathrm s}$, uniformly without replacement\;
  \tcp{batch gradient}
  \For{$i\in\mathcal{B}_t$}{
    $\hat{\bm y}_i^{(t)}\leftarrow \bm g_{\theta^{(t)}}(\bm x_i)$,\quad
    $\bm z_i^{(t)}\leftarrow \bm z^{\star}_{\gamma}(\hat{\bm y}_i^{(t)})$,\quad
    $\bm q_i^{(t)}\leftarrow \nabla\bm z^{\star}_{\gamma}(\hat{\bm y}_i^{(t)})^{\top}\bm y_i$ via \eqref{eq:zstar_gradient} with $S_i^{(t)}=\{j:\bm a_j^{\top}\bm z_i^{(t)}=b_j\}$\;
  }
  $\bm h^{(t)}\leftarrow \frac{1}{B_{\mathrm s}}\sum_{i\in\mathcal{B}_t}\big[\nabla_{\theta}\bm g_{\theta^{(t)}}(\bm x_i)\big]^{\top}\bm q_i^{(t)}+\lambda\nabla_{\theta}\Omega(\bm g_{\theta^{(t)}})$\;
  $\eta_t\leftarrow \eta_0$ if $B_{\mathrm s}=N$, and $\eta_t\leftarrow\eta_0/\sqrt{t+1}$ otherwise\;
  \tcp{Armijo backtracking}
  \eIf{$\exists\,k\in\{0,\dots,K\}:\ \hat{L}_{\mathcal{B}_t}\big(\theta^{(t)}-\rho^{k}_{\rm ls}\eta_t\,\bm h^{(t)}\big)\leq \hat{L}_{\mathcal{B}_t}(\theta^{(t)})-c_{\rm ls}\,\rho^{k}_{\rm ls}\eta_t\|\bm h^{(t)}\|^2_2$}{
    $\theta^{(t+1)}\leftarrow \theta^{(t)}-\rho^{k^{\star}}_{\rm ls}\eta_t\,\bm h^{(t)}$, where $k^{\star}$ is the smallest such $k$\;
  }{
    $\theta^{(t+1)}\leftarrow \theta^{(t)}$\;
  }
}
\KwOut{$\theta^{(T)}$.}
\end{algorithm}

\newpage
\section{Concrete Excess Risk Bounds of the $\mathcal{RSPO}$ Predictor}
\label{app:gen_examples}

In this appendix, we instantiate the meta generalization bound of Theorem~\ref{thm:rspo_meta_generalization} for four representative classes of predictors: (i) the bounded affine class; (ii) the polynomial discrimination class; (iii) the Dudley-entropy-integral bounded class; and (iv) the vector-valued RKHS class.
Throughout, we assume $\gamma>0$ and $\|\bm y\|_2\leq r$ for all $\bm y\in\mathcal Y$, as stated in Theorem~\ref{thm:rspo_meta_generalization}. The scalar antecedents of the complexity bounds below are classical; the content of this appendix is their adaptation to the multivariate Rademacher complexity $\mathfrak R_d$ of \citet{maurer2016vector}, with the explicit constants required by Table~\ref{tab:rspo_generalization_methods}. In each example, the excess risk bound follows from the complexity bound by a direct application of Theorem~\ref{thm:rspo_meta_generalization}, so only the complexity bounds require proof.

\subsection{Bounded Affine Class} We begin with affine predictors, which cover the linear models used in predict-then-optimize pipelines and yield an explicit dimension-dependent bound. The scalar analogue of the bound below is classical \citep{bartlett2002rademacher,kakade2008complexity}; see also \citet{maurer2006rademacher} for complexity bounds on classes of linear transformations. We include a short proof because the multivariate form with the explicit factor $\sqrt d$ does not appear verbatim in these references.

\begin{assumption}\label{ass:bounded_affine_class}
There exists a context map $\bm\varphi:\mathcal X\to\mathbb R^m$ with $\|\bm\varphi(\bm x)\|_2\leq \kappa$ for all $\bm x\in\mathcal X$, and the hypothesis class is
\begin{equation*}
    \mathcal G_{\rm aff}=\left\{\bm g_{\bm W}(\bm x)=\bm W\bm\varphi(\bm x):\bm W\in\mathbb R^{d\times m},\ \|\bm W\|_F\leq B\right\}.
\end{equation*}
\end{assumption}

\begin{example}[Bounded affine class]
\label{ex:rspo_gen_affine}
Under Assumption~\ref{ass:bounded_affine_class}, the multivariate Rademacher complexity satisfies
\begin{equation}\label{eq:rad_affine}
    \mathfrak R_d(\mathcal G_{\rm aff})
    \leq
    B\kappa\sqrt{\frac{d}{N}},
\end{equation}
and Theorem~\ref{thm:rspo_meta_generalization} therefore yields that, for any $\delta\in(0,1)$, with probability at least $1-\delta$,
\begin{equation*}
    R(\widehat{\bm{g}}_{\rm RSPO})-R^{\star}(\mathcal G_{\rm aff})
    \leq
    \frac{4\sqrt{2}\,rB\kappa}{\gamma}\sqrt{\frac{d}{N}}
    +
    b\sqrt{\frac{2\log(2/\delta)}{N}} .
\end{equation*}
The bound is explicit in the decision dimension: the two radii $B$ and $\kappa$ enter multiplicatively, and the factor $\sqrt d$ reflects that a $d$-dimensional cost vector must be predicted from the same sample.
\end{example}
\noindent\textit{Proof of Example \ref{ex:rspo_gen_affine}.}~
By the definition of multivariate Rademacher complexity,
\begin{equation*}
    \mathfrak R_d(\mathcal G_{\rm aff})
    =
    \mathbb E_{\bm\sigma,\bm X}
    \left[
    \sup_{\|\bm W\|_F\leq B}
    \frac{1}{N}
    \sum_{i=1}^N
    \bm\sigma_i^\top \bm W\bm\varphi(\bm x_i)
    \right],
\end{equation*}
where $\bm\sigma_i\in\{+1,-1\}^d$ are independent Rademacher random vectors. Using the Frobenius inner product, we have
\begin{equation*}
    \sum_{i=1}^N
    \bm\sigma_i^\top \bm W\bm\varphi(\bm x_i)
    =
    \left\langle
    \bm W,
    \sum_{i=1}^N
    \bm\sigma_i\bm\varphi(\bm x_i)^\top
    \right\rangle_F .
\end{equation*}
Therefore, by the Cauchy--Schwarz inequality,
\begin{equation*}
    \mathfrak R_d(\mathcal G_{\rm aff})
    \leq
    \frac{B}{N}
    \mathbb E_{\bm\sigma,\bm X}
    \left[
    \left\|
    \sum_{i=1}^N
    \bm\sigma_i\bm\varphi(\bm x_i)^\top
    \right\|_F
    \right].
\end{equation*}
By Jensen's inequality,
\begin{equation*}
    \mathbb E_{\bm\sigma,\bm X}
    \left[
    \left\|
    \sum_{i=1}^N
    \bm\sigma_i\bm\varphi(\bm x_i)^\top
    \right\|_F
    \right]
    \leq
    \left(
    \mathbb E_{\bm\sigma,\bm X}
    \left[
    \left\|
    \sum_{i=1}^N
    \bm\sigma_i\bm\varphi(\bm x_i)^\top
    \right\|_F^2
    \right]
    \right)^{1/2}.
\end{equation*}
Since the Rademacher vectors are independent and centered, the cross terms vanish, and thus
\begin{equation*}
    \mathbb E_{\bm\sigma,\bm X}
    \left[
    \left\|
    \sum_{i=1}^N
    \bm\sigma_i\bm\varphi(\bm x_i)^\top
    \right\|_F^2
    \right]
    =
    \sum_{i=1}^N
    \mathbb E_{\bm\sigma,\bm X}
    \left[
    \|\bm\sigma_i\|_2^2
    \|\bm\varphi(\bm x_i)\|_2^2
    \right]
    \leq
    Nd\kappa^2,
\end{equation*}
where we used $\|\bm\sigma_i\|_2^2=d$ and $\|\bm\varphi(\bm x_i)\|_2\leq \kappa$. Hence,
\begin{equation*}
    \mathfrak R_d(\mathcal G_{\rm aff})
    \leq
    \frac{B}{N}\sqrt{Nd\kappa^2}
    =
    B\kappa\sqrt{\frac{d}{N}}.
\end{equation*}
The proof is complete.
\qed

\subsection{Polynomial Discrimination Class} Affine classes do not cover discontinuous or finitely generated prediction rules. We next consider classes whose number of distinct prediction patterns on any finite sample grows only polynomially in the sample size. The scalar version of the resulting bound is standard \citep[see, e.g.,][Chapter~4]{wainwright2019high}; the vector-valued case reduces to it by identifying $(\mathbb R^d)^N$ with $\mathbb R^{Nd}$.

\begin{definition}\label{def:vector_polynomial_discrimination}
Let $\mathcal{G}\subseteq\{\bm{g}:\mathcal{X}\to\mathbb{R}^d\}$ and, for any collection $\bm{x}_1^N=(\bm{x}_1,\ldots,\bm{x}_N)$, define $\mathcal{G}(\bm{x}_1^N):=\left\{\left(\bm{g}(\bm{x}_1),\ldots,\bm{g}(\bm{x}_N)\right):\bm{g}\in\mathcal{G}\right\}\subseteq(\mathbb{R}^d)^N$. We say that $\mathcal{G}$ has polynomial discrimination of order $\nu\geq 1$ if $\operatorname{card}(\mathcal{G}(\bm{x}_1^N))\leq(N+1)^{\nu}$ for every positive integer $N$ and every collection $\bm{x}_1^N$. The exponent $\nu$ may depend on the output dimension $d$.
\end{definition}

\begin{assumption}
\label{ass:vector_g_complexity}
The hypothesis class $\mathcal{G}$ has polynomial discrimination of order $\nu\geq 1$ in the sense of Definition~\ref{def:vector_polynomial_discrimination}, and there is a constant $B_1>0$ such that
\begin{equation*}
    D_{\mathcal G}(\bm x^N):=\sup_{\bm{g}\in\mathcal{G}}\left(\frac{1}{N}\sum_{i=1}^N\|\bm{g}(\bm{x}_i)\|_2^2\right)^{1/2}\leq B_1
\end{equation*}
for every collection $\bm{x}_1^N$.
\end{assumption}

\begin{example}[Polynomial discrimination class]
\label{ex:rspo_gen_pd}
Under Assumption~\ref{ass:vector_g_complexity}, the multivariate Rademacher complexity satisfies
\begin{equation}\label{eq:rad_pd}
    \mathfrak R_d(\mathcal G)
    \leq
    B_1
    \sqrt{
    \frac{2\nu\log(N+1)}{N}
    },
\end{equation}
and Theorem~\ref{thm:rspo_meta_generalization} therefore yields that, for any $\delta\in(0,1)$, with probability at least $1-\delta$,
\begin{equation*}
    R(\widehat{\bm{g}}_{\rm RSPO})
    -
    R^{\star}(\mathcal G)
    \leq
    \frac{8rB_2}{\gamma}
    \sqrt{
    \frac{\nu\log(N+1)}{N}
    }
    +
    b\sqrt{
    \frac{2\log(2/\delta)}{N}
    }.
\end{equation*}
Here the dimension $d$ does not appear explicitly; its effect is absorbed into the discrimination order $\nu$, which counts prediction patterns rather than parameters. The price of this generality is the factor $\sqrt{\log(N+1)}$, since the argument controls a finite but growing set of sample patterns instead of exploiting a parameterization.
\end{example}

\noindent\textit{Proof of Example \ref{ex:rspo_gen_pd}.}~
Fix a collection $\bm{x}_1^N$ and identify each sample evaluation $\left(\bm{g}(\bm{x}_1),\ldots,\bm{g}(\bm{x}_N)\right)$ with a vector in $\mathbb{R}^{Nd}$, so that the empirical multivariate Rademacher complexity equals $\frac{1}{N}\,\mathbb{E}_{\bm{\sigma}}\big[\sup_{\bm v\in\mathcal{G}(\bm{x}_1^N)}\langle\bm{\sigma},\bm v\rangle\big]$ with $\bm{\sigma}\in\{\pm1\}^{Nd}$. Since $\operatorname{card}(\mathcal{G}(\bm{x}_1^N))\leq(N+1)^{\nu}$ by Definition~\ref{def:vector_polynomial_discrimination} and $\sup_{\bm v\in\mathcal{G}(\bm{x}_1^N)}\|\bm v\|_2\leq\sqrt{N}B_1$ by Assumption~\ref{ass:vector_g_complexity}, Massart's finite class lemma \citep{massart2000some} yields
\begin{equation*}
    \frac{1}{N}\,\mathbb{E}_{\bm{\sigma}}\left[\sup_{\bm v\in\mathcal{G}(\bm{x}_1^N)}\langle\bm{\sigma},\bm v\rangle\right]
    \leq
    \frac{1}{N}\sqrt{2\nu\log(N+1)}\cdot\sqrt{N}B_1
    =
    B_1\sqrt{\frac{2\nu\log(N+1)}{N}} .
\end{equation*}
Taking expectation over the sample establishes \eqref{eq:rad_pd}.
\qed

\begin{remark}[The two classes are not nested]
\label{rem:affine_pd_not_nested}
Neither class contains the other. A bounded affine class generates infinitely many prediction patterns already on even a single sample point: for the identity context map and any $\bm x_0\neq\bm 0$, the choice $\bm W_a=\frac{a}{\|\bm x_0\|_2^2}\bm e_1\bm x_0^\top$ satisfies $\|\bm W_a\|_F\leq B$ for every $a\in[-B\|\bm x_0\|_2,B\|\bm x_0\|_2]$ while producing the distinct values $\bm W_a\bm x_0=a\bm e_1$, so $\operatorname{card}(\mathcal G_{\rm aff}(\bm x_0))=\infty$ and polynomial discrimination fails. Conversely, the scalar threshold class $\mathcal G_{\rm th}=\{g_t(x)=\mathbf 1\{x\leq t\}:t\in\mathbb R\}$ induces at most $N+1$ patterns on any sample and hence has polynomial discrimination of order $\nu=1$, yet the family $\{\mathbf 1\{x\leq t\}:t\in\mathbb R\}$ contains infinitely many linearly independent functions, whereas any class of the form $\{\bm w^\top\bm\varphi(\cdot)\}$ spans a function space of dimension at most $m$. Therefore, $\mathcal G_{\rm th}$ is not contained in any bounded affine class.
\end{remark}

\subsection{Dudley-entropy-integral Bounded Class} The preceding arguments rely either on a linear parameterization or on a finite number of sample prediction patterns. For more general classes we control the complexity through a uniform entropy integral. The bound below is Dudley's entropy integral theorem \citep[Theorem~5.22]{wainwright2019high}; the only step specific to our setting is verifying that the vector-valued Rademacher process is sub-Gaussian with respect to the scaled empirical metric $d_N/\sqrt N$, which we record in the proof.

\begin{definition}[Covering number]
\label{def:covering_number}
Fix a sample $\bm x^N$ and equip $\mathcal G$ with the empirical metric $d_N$. For $\epsilon>0$, a finite collection $\bm g_1,\ldots,\bm g_M\in\mathcal G$ is an \emph{$\epsilon$-cover} of $\mathcal G$ if every $\bm g\in\mathcal G$ satisfies $d_N(\bm g,\bm g_m)\leq\epsilon$ for some $m\in[M]$. The \emph{covering number} $\mathcal N(\epsilon,\mathcal G,d_N)$ is the smallest cardinality of such a cover, set to $+\infty$ when no finite cover exists. 
\end{definition}
The covering number measures how many representative predictors are needed to approximate every member of $\mathcal G$ to accuracy $\epsilon$ on the sample, and its logarithm $\log\mathcal N(\epsilon,\mathcal G,d_N)$ is referred to as the metric entropy of $\mathcal G$.

\begin{assumption}
\label{ass:metric_entropy_class}
Let $\mathcal G\subseteq\{\bm g:\mathcal X\to\mathbb R^d\}$ and, for any sample $\bm x^N$, define the empirical metric $d_N(\bm g,\widetilde{\bm g}):=\big(\frac1N\sum_{i=1}^N\|\bm g(\bm x_i)-\widetilde{\bm g}(\bm x_i)\|_2^2\big)^{1/2}$. There exist constants $\Delta_{\mathcal G}>0$ and $J_{\mathcal G}<\infty$, independent of $N$ and of the sample, such that $\operatorname{diam}_N(\mathcal G):=\sup_{\bm g,\widetilde{\bm g}\in\mathcal G}d_N(\bm g,\widetilde{\bm g})\leq\Delta_{\mathcal G}$ and $\int_0^{\Delta_{\mathcal G}}\sqrt{\log\mathcal N(\epsilon,\mathcal G,d_N)}\,d\epsilon\leq J_{\mathcal G}$, where $\mathcal N(\epsilon,\mathcal G,d_N)$ is the covering number of $\mathcal G$ under $d_N$.
\end{assumption}

\begin{example}[Dudley-entropy-integral bounded class]
\label{ex:rspo_gen_dudley}
Under Assumption~\ref{ass:metric_entropy_class}, there exists a universal constant $C>0$ such that
\begin{equation}\label{eq:rad_dudley}
    \mathfrak R_d(\mathcal G)
    \leq
    C\frac{J_{\mathcal G}}{\sqrt N},
\end{equation}
and Theorem~\ref{thm:rspo_meta_generalization} therefore yields that, for any $\delta\in(0,1)$, with probability at least $1-\delta$,
\begin{equation*}
R(\widehat{\bm g}_{\rm RSPO})
-
R^\star(\mathcal G)
\leq
\frac{4\sqrt{2}\,Cr}{\gamma}
\frac{J_{\mathcal G}}{\sqrt N}
+
b
\sqrt{
\frac{2\log(2/\delta)}{N}
}.
\end{equation*}
The entire geometry of the class is summarized by the single quantity $J_{\mathcal G}$, and neither the dimension $d$ nor any parameterization appears. This is what allows the guarantee to cover infinite-dimensional classes, at the cost of requiring covering numbers that are uniform over samples.
\end{example}
\noindent\textit{Proof of Example \ref{ex:rspo_gen_dudley}.}~
Fix a sample $\bm x^N=(\bm x_1,\ldots,\bm x_N)$ and for each $\bm g\in\mathcal G$, define the Rademacher process
\begin{equation*}
Z_{\bm g}
:=
\frac1N
\sum_{i=1}^N
\bm\sigma_i^\top \bm g(\bm x_i),
\end{equation*}
where $\bm\sigma_i=(\sigma_{i1},\ldots,\sigma_{id})$ and the coordinates $\{\sigma_{ij}\}$ are independent Rademacher random variables. Then, for any $\bm g,\widetilde{\bm g}\in\mathcal G$ and any $\lambda\in\mathbb R$,
\begin{align*}
\mathbb E_{\bm\sigma}
\left[
\exp\left\{
\lambda
\left(
Z_{\bm g}-Z_{\widetilde{\bm g}}
\right)
\right\}
\right]
&=
\mathbb E_{\bm\sigma}
\left[
\exp\left\{
\frac{\lambda}{N}
\sum_{i=1}^N
\bm\sigma_i^\top
\left(
\bm g(\bm x_i)-\widetilde{\bm g}(\bm x_i)
\right)
\right\}
\right]                                                        \\
&\leq
\exp\left\{
\frac{\lambda^2}{2N^2}
\sum_{i=1}^N
\left\|
\bm g(\bm x_i)-\widetilde{\bm g}(\bm x_i)
\right\|_2^2
\right\}                                                        \\
&=
\exp\left\{
\frac{\lambda^2}{2N}
d_N^2(\bm g,\widetilde{\bm g})
\right\}.
\end{align*}
Hence $\{Z_{\bm g}:\bm g\in\mathcal G\}$ is a centered sub-Gaussian process with respect to the metric $d_N/\sqrt N$.

Fix any $\bm g_0\in\mathcal G$. Since $\mathbb E_{\bm\sigma}[Z_{\bm g_0}]=0$, we have
$\mathbb E_{\bm\sigma}[\sup_{\bm g\in\mathcal G}Z_{\bm g}]
\leq
\mathbb E_{\bm\sigma}[\sup_{\bm g\in\mathcal G}(Z_{\bm g}-Z_{\bm g_0})]$. By Dudley's entropy integral bound \citep[Theorem~5.22]{wainwright2019high}, there exists a universal constant
$C>0$ such that
\begin{equation*}
    \mathbb{E}_{\bm{\sigma}}
    \left[
    \sup_{\bm{g}\in\mathcal{G}}
    \frac{1}{N}
    \sum_{i=1}^N
    \bm{\sigma}_i^\top \bm{g}(\bm{x}_i)
    \right]  
    =
    \mathbb E_{\bm\sigma}
    \left[
    \sup_{\bm g\in\mathcal G} Z_{\bm g}
    \right]
    \leq
    \frac{C}{\sqrt N}
    \int_0^{\operatorname{diam}_N(\mathcal G)}
    \sqrt{
    \log
    \mathcal N
    \left(
    \epsilon,\mathcal G,d_N
    \right)
    }
    \,d\epsilon .
\end{equation*}
Since $\mathcal N(\epsilon,\mathcal G,d_N/\sqrt N)=\mathcal N(\sqrt N\epsilon,\mathcal G,d_N)$, the substitution $u=\sqrt N\epsilon$ pulls out the factor $1/\sqrt N$. By Assumption~\ref{ass:metric_entropy_class}, the entropy integral is uniformly
bounded by $J_{\mathcal G}$, and therefore
\begin{equation*}
    \mathbb{E}_{\bm{\sigma}}
    \left[
    \sup_{\bm{g}\in\mathcal{G}}
    \frac{1}{N}
    \sum_{i=1}^N
    \bm{\sigma}_i^\top \bm{g}(\bm{x}_i)
    \right] 
    \leq
    C
    \frac{J_{\mathcal G}}{\sqrt N}.
\end{equation*}
Since this bound holds for every fixed sample $\bm x^N$, taking expectation
with respect to the sample gives
\begin{equation*}
    \mathfrak R_d(\mathcal G)
    \leq
    C
    \frac{J_{\mathcal G}}{\sqrt N}.
\end{equation*}
This establishes \eqref{eq:rad_dudley}, and the proof is complete.
\qed

Assumption~\ref{ass:metric_entropy_class} covers smooth infinite-dimensional classes that the previous two conditions do not. Let $\mathcal X=[0,1]^p$ and consider the vector-valued H\"older class $\mathcal G_{s,L}=\{\bm g=(g_1,\ldots,g_d):g_j\in C^s([0,1]^p),\ \|g_j\|_{C^s}\leq L\}$ with $s>p/2$. Since $d_N(\bm g,\widetilde{\bm g})\leq\sqrt d\max_j\|g_j-\widetilde g_j\|_\infty$ and the $C^s$ norm dominates the uniform norm, one may take $\Delta_{\mathcal G}=2L\sqrt d$; applying the standard entropy bound for scalar H\"older balls coordinatewise and taking product covers gives $\log\mathcal N(\epsilon,\mathcal G_{s,L},d_N)\leq C_{p,s}d(L\sqrt d/\epsilon)^{p/s}$, whose entropy integral is finite because $s>p/2$. The class contains a continuum of values at any fixed $\bm x_0$ and therefore lies outside the finite-pattern regime of Definition~\ref{def:vector_polynomial_discrimination}.

\subsection{Vector-valued RKHS class} Finally, we consider norm balls in a vector-valued reproducing kernel Hilbert space \citep{micchelli2005learning}, which fit the meta theorem in the same way as the previous three classes. The scalar kernel-class bound is classical \citep{bartlett2002rademacher}; see also \citet{mohri2018foundations}. We give a short proof of the vector-valued form with the coordinate-sum norm constraint of Assumption~\ref{ass:rspo_rkhs_class}, since this exact statement does not appear in these references.

\begin{assumption}
\label{ass:rspo_rkhs_class}
Let $\mathcal H$ be an RKHS on $\mathcal X$ with reproducing kernel $K$, and assume that $K$ is bounded, in the sense that $\sup_{\bm x\in\mathcal X}K(\bm x,\bm x)\leq \kappa^2$. For some $R_{\mathcal H}>0$, the prediction class is
\begin{equation*}
    \mathcal G_{\mathcal H}:=\left\{\bm g=(g_1,\ldots,g_d):g_j\in\mathcal H,\ \sum_{j=1}^d\|g_j\|_{\mathcal H}^2\leq R_{\mathcal H}^2\right\}.
\end{equation*}
\end{assumption}

\begin{example}[Vector-valued RKHS ball]
\label{ex:rspo_gen_rkhs}
Under Assumption~\ref{ass:rspo_rkhs_class}, the multivariate Rademacher complexity satisfies
\begin{equation}\label{eq:rad_rkhs}
    \mathfrak R_d(\mathcal G_{\mathcal H})
    \leq
    \kappa R_{\mathcal H}\sqrt{\frac{d}{N}},
\end{equation}
and Theorem~\ref{thm:rspo_meta_generalization} therefore yields that, for any $\delta\in(0,1)$, with probability at least $1-\delta$,
\begin{equation*}
    R(\widehat{\bm g}_{\rm RSPO})
    -
    R^{\star}(\mathcal G_{\mathcal H})
    \leq
    \frac{4\sqrt{2}\,r\kappa R_{\mathcal H}}{\gamma}\sqrt{\frac{d}{N}}
    +
    b\sqrt{\frac{2\log(2/\delta)}{N}}.
\end{equation*}
The guarantee has the same form as the affine case, with the Frobenius radius $B$ replaced by the RKHS radius $R_{\mathcal H}$ and the context bound $\kappa$ by the kernel bound. The class is infinite-dimensional, yet the $\sqrt{d/N}$ dependence persists: it is the dimension of the cost vector, not of the function space, that the complexity tracks.
\end{example}
\noindent\textit{Proof of Example \ref{ex:rspo_gen_rkhs}.}~
Fix a sample $\bm x^N=(\bm x_1,\ldots,\bm x_N)$. By the reproducing property, $g_j(\bm x_i)=\langle g_j, K(\cdot,\bm x_i)\rangle_{\mathcal H}$ for every $j\in[d]$ and $i\in[N]$. Hence, for any $\bm g=(g_1,\ldots,g_d)\in\mathcal G_{\mathcal H}$,
\begin{equation*}
    \frac{1}{N}\sum_{i=1}^N \bm\sigma_i^\top\bm g(\bm x_i)
    =
    \sum_{j=1}^d
    \left\langle
    g_j,\ h_j
    \right\rangle_{\mathcal H},
    \qquad
    h_j:=\frac{1}{N}\sum_{i=1}^N \sigma_{ij}K(\cdot,\bm x_i).
\end{equation*}
By the Cauchy--Schwarz inequality, applied first in $\mathcal H$ and then in $\mathbb R^d$,
\begin{equation*}
    \sum_{j=1}^d
    \left\langle
    g_j, h_j
    \right\rangle_{\mathcal H}
    \leq
    \left(\sum_{j=1}^d\|g_j\|_{\mathcal H}^2\right)^{1/2}
    \left(\sum_{j=1}^d\|h_j\|_{\mathcal H}^2\right)^{1/2}
    \leq
    R_{\mathcal H}
    \left(\sum_{j=1}^d\|h_j\|_{\mathcal H}^2\right)^{1/2}.
\end{equation*}
Moreover, since the Rademacher variables are independent and centered,
\begin{equation*}
    \mathbb E_{\bm\sigma}\left[\|h_j\|_{\mathcal H}^2\right]
    =
    \frac{1}{N^2}\sum_{i=1}^N\sum_{i'=1}^N
    \mathbb E[\sigma_{ij}\sigma_{i'j}]\,
    K(\bm x_i,\bm x_{i'})
    =
    \frac{1}{N^2}\sum_{i=1}^N K(\bm x_i,\bm x_i)
    \leq
    \frac{\kappa^2}{N}.
\end{equation*}
Therefore, by Jensen's inequality,
\begin{equation*}
    \mathbb{E}_{\bm{\sigma}}
    \left[
    \sup_{\bm{g}\in\mathcal{G}_{\mathcal H}}
    \frac{1}{N}
    \sum_{i=1}^N
    \bm{\sigma}_i^\top \bm{g}(\bm{x}_i)
    \right]  
    \leq
    R_{\mathcal H}\,
    \mathbb E_{\bm\sigma}\left[\left(\sum_{j=1}^d\|h_j\|_{\mathcal H}^2\right)^{1/2}\right]
    \leq
    R_{\mathcal H}
    \left(\sum_{j=1}^d\mathbb E_{\bm\sigma}\left[\|h_j\|_{\mathcal H}^2\right]\right)^{1/2}
    \leq
    \kappa R_{\mathcal H}\sqrt{\frac{d}{N}}.
\end{equation*}
Taking the expectation with respect to the sample establishes \eqref{eq:rad_rkhs}.
\qed

\newpage
\section{Experimental Details and Additional Results}
\label{app:experimental_details}

This appendix records the portfolio learning reformulations, quadratic-risk results, and case-specific calibration and implementation details supporting Section~\ref{sec:numeric_study}. The common data-generating, validation, testing, and evaluation procedures are stated in that section.

\subsection{Portfolio Learning Reformulations}
\label{app:portfolio_reformulation}
For the main portfolio experiment, the constraints in Section~\ref{subsec:portfolio_rspo+} define a bounded polyhedron after the standard epigraph reformulation of $\|\bm\Sigma\bm z\|_1$. Theorem~\ref{thm:rspo_plus_reformulation} therefore yields finite-dimensional learning formulations for linear predictors.

\noindent\underline{ERM formulations.}~
For linear predictors, the $\mathcal{RSPO}_+$ empirical risk minimization problem admits the following equivalent reformulation:
    \begin{equation*}
        \begin{aligned}
            \min\ &\frac{1}{N}\sum_{i\in[N]}\Big[\beta\zeta_i+\mu_i+\frac{1}{2a\gamma}\|\bm s_i\|_2^2+a\bm{z}^{\star}_{\gamma}(\bm{y}_i)^\top\bm{B}\bm{x}_i+\frac{a\gamma}{2}\|\bm{z}^{\star}_{\gamma}(\bm{y}_i)\|^2_2-v^{\star}(\bm{y}_i)\Big]+  \lambda \|\bm B\|_F^2\\
            \mathrm{s.t.}\ &  -\zeta_i\bm 1\leq \bm p_i \leq \zeta_i\bm 1 &i\in[N] \\
            &\bm s_i \geq \bm{y}_i-a\bm{B}\bm{x}_i-\bm\Sigma\bm{p}
            _i-\mu_i\bm{1} &i\in[N]\\
            &\bm{p}_i\in \bb{R}^d,\ \bm s_i \ge 0,\ \mu_i\ge 0,\ \zeta_i \ge 0 &i\in[N]\\
            &\bm{B}\in\mathbb{R}^{d\times p}.
            \end{aligned}
    \end{equation*}

For comparison, the ILO benchmark based on the nominal $\mathcal{SPO}_+$ loss can be reformulated as
\begin{equation*}
        \begin{aligned}
            \min\quad & \frac{1}{N}\sum_{i\in[N]}\Big[\beta\zeta_i+\mu_i+a\bm{z}^{\star}(\bm{y}_i)^\top\bm{B}\bm{x}_i-v^{\star}(\bm{y}_i)\Big] +  \lambda \|\bm B\|_F^2\\
            \text{s.t.} \quad & \bm{y}_i-a\bm{B}\bm{x}_i-\bm\Sigma\bm{p}
            _i-\mu_i\bm{1}\leq \bm{0} & i\in[N]\\
            & -\zeta_i\bm 1\leq \bm p_i \leq \zeta_i\bm 1 &i\in[N] \\
            &\bm{p}_i\in \bb{R}^d,\ \mu_i\geq 0,\ \zeta_i\geq 0  &i\in[N]\\
            &\bm{B}\in\mathbb{R}^{d\times p}.
    \end{aligned}
\end{equation*}

\phantomsection
\noindent\underline{Dual representation.}~
To obtain the first formulation, let $\bm c=\bm y-a\hat{\bm y}$ and consider
\begin{equation*}
\max_{\bm z\in\mathcal Z}\left\{\bm c^\top\bm z-\frac{a\gamma}{2}\|\bm z\|_2^2\right\},
\qquad
\mathcal Z=\{\bm z\geq\bm0:\ \bm1^\top\bm z\leq1,\ \|\bm\Sigma\bm z\|_1\leq\beta\}.
\end{equation*}
Introducing $\bm w=\bm\Sigma\bm z$, multiplier $\mu\geq0$ for $\bm1^\top\bm z\leq1$, and multiplier $\bm p$ for $\bm w=\bm\Sigma\bm z$ gives the Lagrangian of the equivalent minimization problem,
\begin{equation*}
L(\bm z,\bm w,\mu,\bm p)
=\frac{a\gamma}{2}\|\bm z\|_2^2-\bm c^\top\bm z
+\mu(\bm1^\top\bm z-1)+\bm p^\top(\bm\Sigma\bm z-\bm w).
\end{equation*}
Because $\beta>0$, a sufficiently small $\bm z>\bm0$ satisfies both inequalities strictly, so Slater's condition yields strong duality. Minimizing the Lagrangian over $\bm z\geq\bm0$ and $\|\bm w\|_1\leq\beta$ gives
\begin{equation*}
\max_{\bm z\in\mathcal Z}\left\{\bm c^\top\bm z-\frac{a\gamma}{2}\|\bm z\|_2^2\right\}
=
\min_{\substack{\mu\geq0,\,\bm p\in\mathbb R^d}}
\left\{\frac{1}{2a\gamma}\|(\bm c-\mu\bm1-\bm\Sigma\bm p)_+\|_2^2+\beta\|\bm p\|_\infty+\mu\right\}.
\end{equation*}
Introducing $\bm s_i\geq\bm y_i-a\bm B\bm x_i-\mu_i\bm1-\bm\Sigma\bm p_i$ with $\bm s_i\geq\bm0$ and $-\zeta_i\bm1\leq\bm p_i\leq\zeta_i\bm1$ gives the stated $\mathcal{RSPO}_+$ ERM formulation; the $\mathcal{SPO}_+$ formulation follows from the corresponding linear-program dual.

\subsection{Quadratic-Risk Portfolio Experiment}\label{app:portfolio_l2}
We complement the main portfolio study by considering portfolio instances with the quadratic variance constraint used in the classical Markowitz formulation and in the portfolio experiment of \citet{elmachtoub2022smart}. Given a predicted cost vector $\hat{\bm y}$, the deployed portfolio solves
\begin{equation*}
\bm z_\gamma^\star(\hat{\bm y})
\in\argmin_{\bm z}
\left\{\hat{\bm y}^\top\bm z+\frac{\gamma}{2}\|\bm z\|_2^2:
\bm z\geq\bm0,\ \bm1^\top\bm z\leq1,\ \bm z^\top\bm\Sigma\bm z\leq\beta\right\},
\end{equation*}
where $\bm z^\top\bm\Sigma\bm z$ is the portfolio variance and $\beta$ is the variance budget; setting $\gamma=0$ recovers the nominal decision problem. We use the factor-model return process from Section~\ref{subsec:portfolio_rspo+}, augment each context vector with an unpenalized intercept, and set $\beta=2.25\,\bm z_{\rm unif}^\top\bm\Sigma\bm z_{\rm unif}$ for $\bm z_{\rm unif}=d^{-1}\bm1$. Because the quadratic constraint defines a nonpolyhedral feasible set, we train the four predictors using stochastic first-order updates based on their respective loss gradients or subgradients. For each configuration, we return the stepsize-weighted average of up to $1{,}000$ iterates, using minibatches of size $10$, stepsize $0.1/\sqrt{t}$, and gradient-norm tolerance $10^{-5}$. The validation, testing, and replication procedures otherwise follow Section~\ref{sec:numeric_study}.

\begin{figure}[!t]
    \centering
    \includegraphics[width=0.95\linewidth]{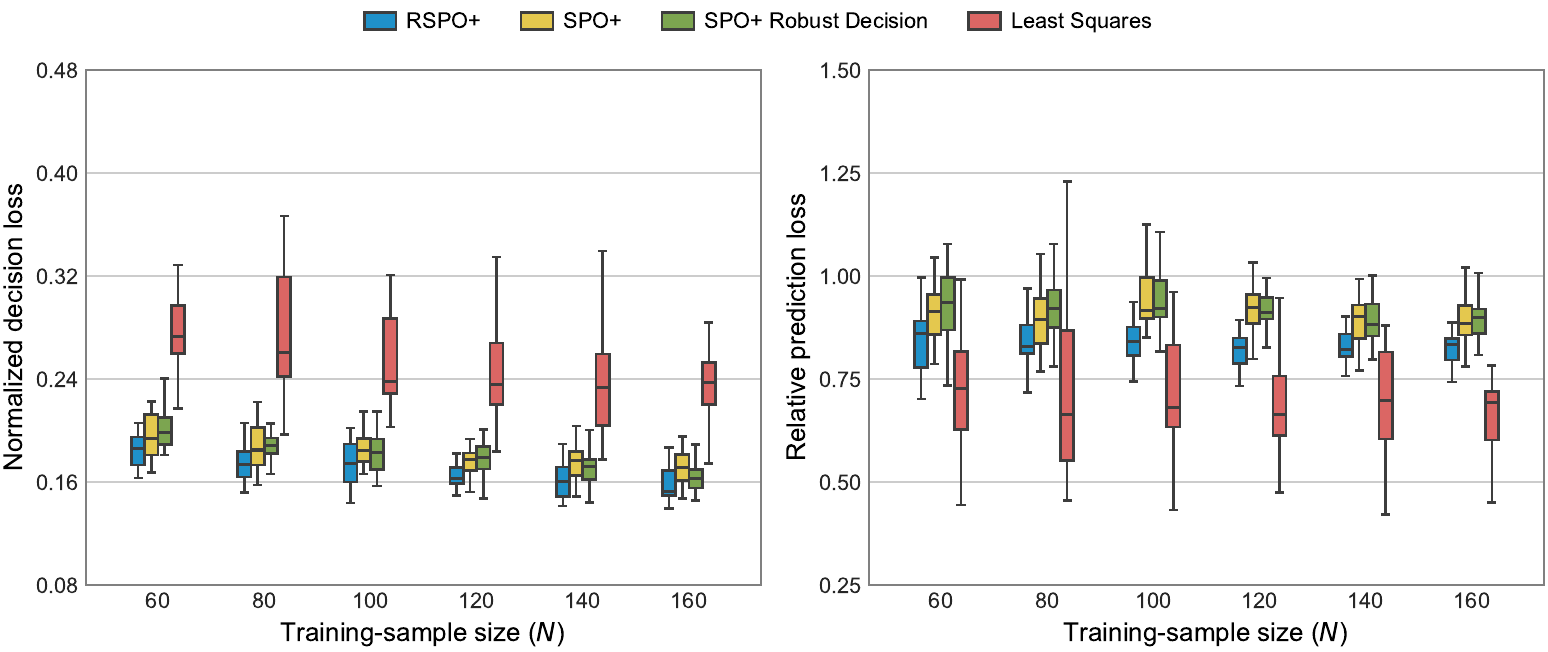}
    
    \caption{Normalized decision loss (left) and relative prediction loss (right) as $N$ varies under nonlinear returns on quadratic-risk portfolio optimization instances, with $\mathtt{deg}=4$, $p=80$, $d=60$, and $\tau=1$ fixed.}
    \label{fig:portfolio2_decision_dimension_combined_deg4}\vspace{-4mm}
\end{figure}

\begin{figure}[!t]
    \centering
    \includegraphics[width=0.95\linewidth]{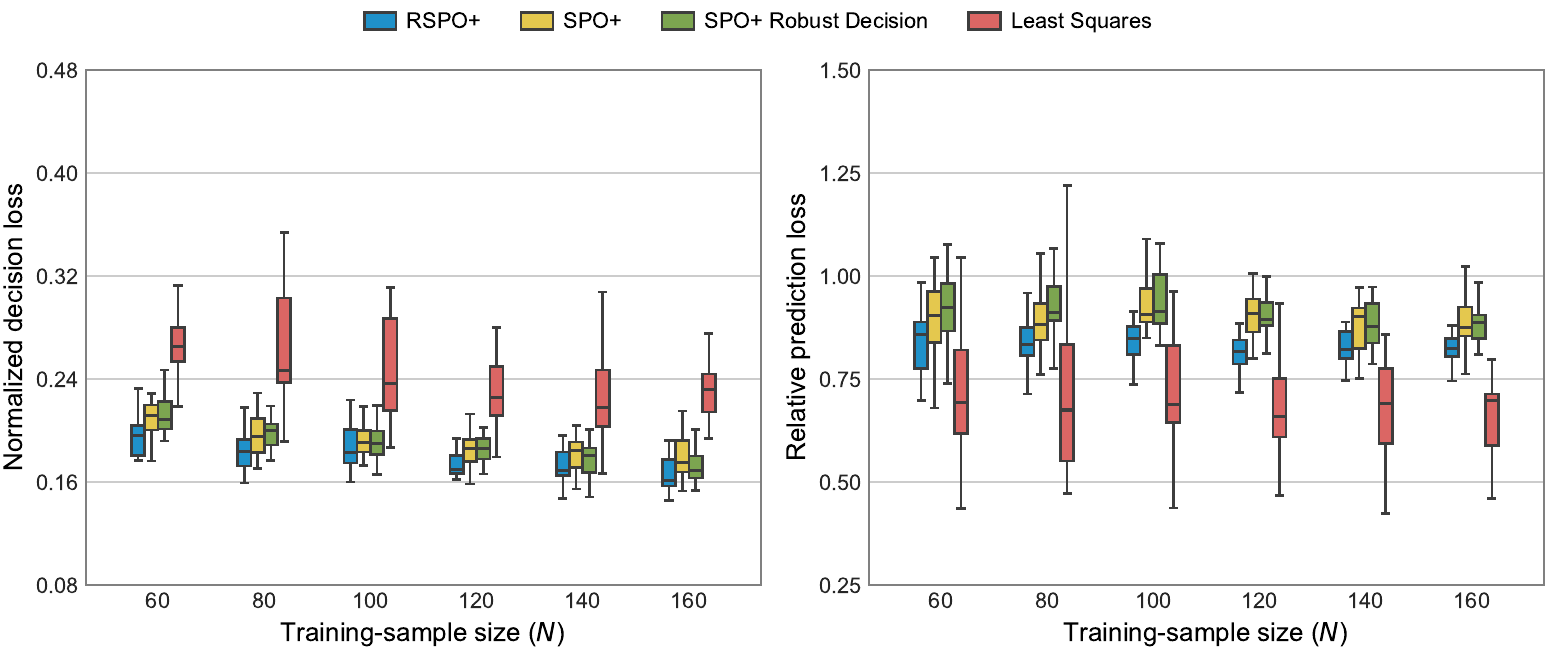}
    
    \caption{Normalized decision loss (left) and relative prediction loss (right) as $N$ varies under linear returns on quadratic-risk portfolio optimization instances, with $\mathtt{deg}=1$, $p=80$, $d=60$, and $\tau=1$ fixed.}
    \label{fig:portfolio2_decision_dimension_combined_deg1}\vspace{-4mm}
\end{figure}

Figures~\ref{fig:portfolio2_decision_dimension_combined_deg4} and~\ref{fig:portfolio2_decision_dimension_combined_deg1} examine the effect of the training-sample size under nonlinear and linear return specifications. In both cases, $\mathcal{RSPO}_+$ attains the lowest normalized decision loss at every reported $N$, and its decision loss declines overall as the training sample grows. Its advantage over $\mathcal{SPO}_+$ and $\mathcal{SPO}_+$ with robust decisions persists under both specifications, while least squares has the largest decision loss. The prediction panels show that least squares attains lower relative prediction loss, whereas $\mathcal{RSPO}_+$ converts its predictions into better portfolio decisions.

\begin{figure}[!t]
    \centering
    \includegraphics[width=0.95\linewidth]{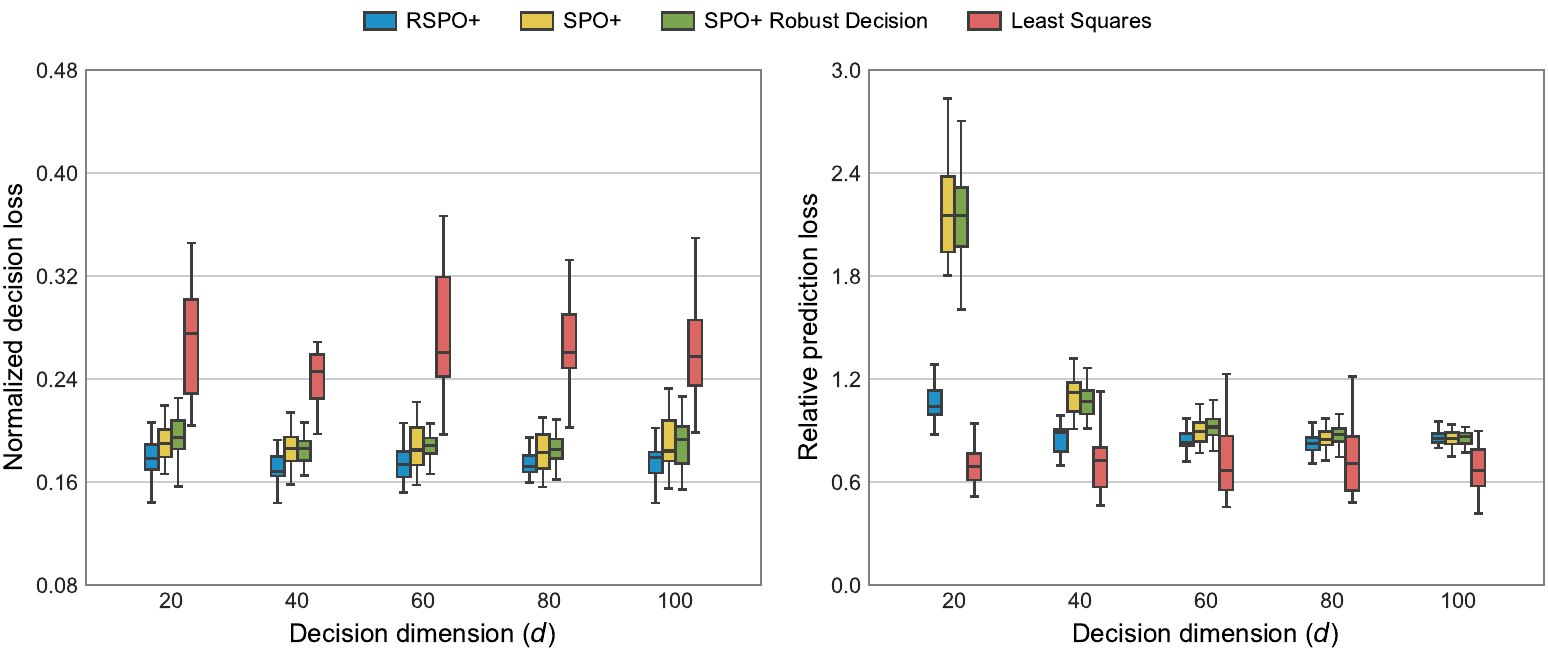}
    
    \caption{Normalized decision loss (left) and relative prediction loss (right) as $d$ varies on quadratic-risk portfolio optimization instances, with $N=80$, $p=80$, $\mathtt{deg}=4$, and $\tau=1$ fixed.}
    \label{fig:portfolio2_decision_dimension_combined}\vspace{-4mm}
\end{figure}

Figure~\ref{fig:portfolio2_decision_dimension_combined} examines the effect of the number of assets. Across $d\in\{20,40,60,80,100\}$, the decision loss of $\mathcal{RSPO}_+$ remains approximately stable and is the lowest among the four methods, while least squares has the largest decision loss. This advantage persists even as the relative prediction losses of $\mathcal{RSPO}_+$, $\mathcal{SPO}_+$, and $\mathcal{SPO}_+$ with robust decisions converge at the larger dimensions. Together, these results show that using the same robust decision map to define the training loss and generate deployed decisions continues to improve downstream decision quality under a smooth quadratic risk constraint, extending the empirical evidence beyond the polyhedral risk set used in the main experiment.

\subsection{Implementation and Calibration Details}
\label{app:data_implementation}
This subsection gives the risk-budget calibration for the main $\ell_1$-risk study and the implementation settings for gradient-based refinement.

\noindent\underline{Risk-budget calibration.}~
For the main $\ell_1$-risk experiment, we calibrate the budget against the uniform portfolio $\bm z_{\rm unif}=d^{-1}\bm1$. Defining its aggregate risk exposure as $\rho_{\rm base}=\|\bm\Sigma\bm z_{\rm unif}\|_1$, we set $\beta=2\rho_{\rm base}$.

\phantomsection
\label{app:gradient_implementation}
\noindent\underline{Gradient-based refinement.}~
At each iteration, Algorithm~\ref{alg:gradient-descent} predicts the cost vector, solves the robust portfolio problem, identifies the active constraints, and updates the predictor using the Jacobian in Proposition~\ref{prop:zstar_gradient_polyhedral}. The $\ell_1$ risk constraint is represented through its lifted epigraph, and the required Jacobian--vector products are obtained from the corresponding KKT system. We set the batch size to $B_{\mathrm s}=5$ and run $T=50$ iterations, with backtracking factor $\rho_{\rm ls}=0.5$ and at most $K=10$ trial steps per iteration. In place of the Armijo sufficient-decrease test of Algorithm~\ref{alg:gradient-descent}, our implementation accepts a trial step as soon as it does not increase the batch objective by more than a numerical tolerance $\epsilon=10^{-10}$, i.e., the acceptance rule with $c_{\rm ls}=0$ relaxed by $\epsilon$ to guard against floating-point error. The validation procedure jointly selects $(\gamma,\lambda)$ and the base stepsize $\eta_0\in\{10^{-3},3\times10^{-3},10^{-2}\}$ by minimizing validation $\mathcal{RSPO}$ loss.

\end{appendices}

\end{document}